\newcommand{\newcom}{\newcommand}
\newcom{\al}{\alpha}
\newcom{\Del}{\Delta}
\newcom{\be}{\beta}
\newcom{\eps}{\epsilon}
\newcom{\e}{\varepsilon}
\newcom{\ga}{\gamma}
\newcom{\Ga}{\Gamma}
\newcom{\ka}{\kappa}
\newcom{\Lam}{\Lambda}
\newcom{\lam}{\lambda}
\newcom{\Om}{\Omega}
\newcom{\om}{\omega}
\newcom{\Si}{\Sigma}
\newcom{\si}{\sigma}
\newcom{\tht}{\theta}
\newcom{\dtri}{\nabla}
\newcom{\tri}{\triangle}
\newcom{\oo}{\infty}
\newcom{\vphi}{\varphi}
\newcom{\cA}{{\mathcal A}}
\newcom{\cB}{{\mathcal B}}
\newcom{\cC}{{\mathcal C}}
\newcom{\cD}{{\mathcal D}}
\newcom{\cE}{{\mathcal E}}
\newcom{\ce}{{\mathcal e}}
\newcom{\cF}{{\mathcal F}}
\newcom{\cJ}{{\mathcal J}}
\newcom{\cK}{{\mathcal K}}
\newcom{\cL}{{\mathcal L}}
\newcom{\cM}{{\mathcal M}}
\newcom{\cP}{{\mathcal P}}
\newcom{\cR}{{\mathcal R}}
\newcom{\cS}{{\mathcal S}}
\newcom{\cQ}{{\mathcal Q}}
\newcom{\cT}{{\mathcal T}}
\newcom{\cU}{{\mathcal U}}
\newcom{\cY}{{\mathcal Y}}
\newcom{\cZ}{{\mathcal Z}}
\newcom{\R}{\mathbb R}
\newcom{\T}{\mathbb T}
\newcom{\N}{\mathbb N}
\newcom{\Z}{\mathbb Z}
\newcom{\C}{\mathbb C}
\newcom{\E}{\mathbb E}
\newcom{\f}{\frac}
\newcom{\di}{\displaystyle\int}
\newcom{\ds}{\displaystyle\sum}
\newcom{\dl}{\displaystyle\lim}
\newcom{\ov}{\overline}
\newcom{\sset}{\subset}
\newcom{\wt}{\widetilde}
\newcom{\wh}{\widehat}
\newcom{\pa}{\partial}
\newcom{\p}{\partial}
\newcom\na{\nabla}
\newcom\lan{\langle}
\newcom\ran{\rangle}
\newcom{\suml}{\sum\limits}
\newcom{\supl}{\sup\limits}
\newcom{\intl}{\int\limits}
\newcom{\infl}{\inf\limits}
\newcom{\disp}{\displaystyle}
\newcom{\non}{\nonumber}
\newcom{\no}{\noindent}
\newcom{\QED}{$\square$}
\def\div{\mathop{\rm div}\nolimits}
\def\ef{\hphantom{MM}\hfill\llap{$\square$}\goodbreak}
\newtheorem{athm}{\bf \t}[section]
\newenvironment{thm} [1] {\def\t{#1}\begin{athm} \bf \rm} {\end{athm}}
\newcom{\bthm}{\begin{thm}}\newcom{\ethm}{\end{thm}}
\newtheorem{theorem}{Theorem}[section]
\newtheorem{remark}{Remark}[section]
\newtheorem{proposition}{Proposition}[section]
\newcom{\beq}{\begin{equation}}
\newcom{\eeq}{\end{equation}}
\newcom{\ben}{\begin{eqnarray}}
\newcom{\een}{\end{eqnarray}}
\newcom{\beno}{\begin{eqnarray*}}
\newcom{\eeno}{\end{eqnarray*}}
\newcom{\bal}{\begin{aligned}}
\newcom{\eal}{\end{aligned}}
\numberwithin{equation}{section}
\begin{document}

\title[ 2D MHD equations on the half space]
{Asymptotic behavior of solution of the non-resistive 2D MHD equations on the half space }

\author{Jiakun Jin, Yoshiyuki Kagei, Xiaoxia Ren*, Lei Wang and Cuili Zhai}

\footnotetext{*Corresponding author}

\maketitle

\begin{abstract}

In this paper, we  obtain the global well-posedness and the asymptotic behavior of solution of non-resistive 2D MHD problem on the half space. We overcome the difficulty of zero spectrum gap by building the relationship between half space and the whole space, and  get the  resolvent estimate for the weak diffusion system.
We use  the two-tier energy method that couples the boundedness of high-order $(H^3)$ energy to the decay of low-order energy, the latter of which is necessary to control  the growth of the highest energy.

\end{abstract}

{\small
{\textbf{Keywords:}  MHD equations;  Half space;   Asymptotic behavior.}

{\textbf{AMS (2010) Subject Classifications:}} 35A01; 35Q35; 76W05 }

 \section{Introduction}
\setcounter{equation}{0}

In this paper, we investigate two dimensional non-resistive magneto-hydrodynamical equations
\ben\label{eq:MHD0}
\left\{
\begin{array}{l}
\p_t u-\Delta u+u\cdot\na u+\na p=B\cdot \na B,  \ \ \ \ x\in\Omega,\ t>0, \\
\p_tB+u\cdot\na B=B\cdot \na u,  \ \ \ \ x\in\Omega,\ t>0,\\
\div u=\div B=0,  \ \ \ \ x\in\Omega,\ t>0,\\
u(x,0)=u_0(x),\ \ B(x,0)=B_0(x),\ \ \ \ x\in \Omega,
\end{array}\right.
\een
here
 $u=(u_1, u_2)$ denotes the velocity field,  $B=(B_1,B_2)$ is the magnetic field and $p$ is the pressure.
 The system \eqref{eq:MHD0}  can be applied to model plasmas when the plasmas are strongly collisional, or the resistivity due to these collisions are extremely small. We refer to \cite{Cab} for some detailed discussions on the relevant physical background of this system. The question of whether smooth solution of  \eqref{eq:MHD0} develops singularity in finite time has been a long-standing open problem \cite{S}.

Recently, Lin, Xu and Zhang \cite{LXZ}  proved the  global well-posedness for the Cauchy problem of the system \eqref{eq:MHD0}  around the equilibrium state  $(0, {\bf e}_1)$ for a class of  admissible perturbations (see \cite{Z} for an elementary proof).  Ren, Wu, Xiang and Zhang \cite{RWXZ} established the global existence and  time decay rate of smooth solutions for general perturbations, which confirms the numerical observation of \cite{CC} that the energy of the MHD equations is dissipated at a rate independent of the ohmic resistivity (see also \cite{W}).  There are many related studies in this field, such as the compressible case \cite{WW}, the 3D case \cite{DZ} and the periodic case \cite{PZZ}.

For the question with physical boundaries, the problem will be more difficult. In \cite{RXZ}, global solutions are obtained under  non-slip boundary condition in horizontal infinite  domain $\R\times(0,1)$ (See \cite{DR} for the case of slip boundary condition); Tan and Wang \cite{TW} obtain the decay rate in the same domain around a non-parallel magnetic field.

In this paper, we consider the global well-posedness and long time behavior for \eqref{eq:MHD0} in the half space
$$
 \Om:=\big\{x=(x_1,x_2)\ |\ x_1 \in \R,\: x_2\in \R^+ \big\},
$$
with the most common boundary condition, i.e. the velocity field satisfies the classical non-slip boundary condition
$$
u=0\ \ \ \ \mathrm{on}\ \p\Om,
$$
which gives rise to the phenomenon of strong boundary layers in general as
formally derived by Prandtl, and the container  is perfectly conducting for the magnetic field
$$
B\cdot n=0 \ \ \ \mathrm{on}\ \p\Om,
$$
here $n$  denotes the outward normal vector of $\p\Om$.

The half space problem is harder than whole space problem because of  the appearance of physical boundary, and is harder than strip domain since Poincar\'e's inequalities cannot be used due to the infinity  of the region.

Motivated by  \cite{LXZ},  we will investigate   small perturbation of the system \eqref{eq:MHD0} around the  equilibrium state $(0, {\bf e}_1)$. Thus,  we can set  $b=B-{\bf e}_1$ and reformulate our first problem as the following initial-boundary problem
\ben\label{eq:MHDT}
\left\{
\begin{array}{l}
\p_t u-\Delta u-\p_{x_1} b+\na p=-u\cdot\na u+b\cdot \na b,  \ \ \ \ x\in\Omega,\ t>0,\\
\p_tb-\p_{x_1} u=-u\cdot\na b+b\cdot \na u,  \ \ \ \ x\in\Omega,\ t>0,\\
\div u=\div b=0,  \ \ \ \ x\in\Omega,\ t>0,\\
u=0, \ \ \ b_2=0,   \ \ \ \ x\in\p\Omega,\ t>0,\\
u(x,0)=u_0(x),\ \ b(x,0)=b_0(x),\ \ \ \ x\in \Om.
\end{array}\right.
\een
\begin{theorem}\label{thm:main}
Assume that the initial data $(u_0, b_0)$ satisfies $(u_0, b_0)\in W^{2, 1}(\Om)\cap H^3(\Om)$,  $u_0\in H_0^1(\Om)$,   $b_{1, 0}=\p_2 b_{1, 0}=0$ on $\p\Om$, $\mathcal{P}(\Delta u_0 -u_0 \cdot \na u_0 +b_0 \cdot \na b_0) \in H_0^1(\Om)$ (Here $\mathcal{P}$ is the Helmholtz projection), and
\begin{align*}
\|(u_0, b_0)\|_{W^{2, 1}}+\|(u_0, b_0)\|_{H^3}\lesssim \e
\end{align*}
with  $\e$ is a small positive constants. Then  the MHD system (\ref{eq:MHDT}) has a unique global solution $(u, b)$ satisfying
\beno
(u, b)\in C([0,+\infty); H^3(\Om)).
\eeno
Moreover, it holds that
\begin{align}\label{eq:decay}
&\|(u, b_2)\|_{L^2}\lesssim \lan t \ran^{-\f12},\quad  \|b_1\|_{L^2}\lesssim \lan t\ran^{-\f14}, \quad \|(u, b_2)\|_{L^\infty} \lesssim \lan t\ran^{-1},\quad \|b_1\|_{L^\infty} \lesssim \lan t\ran^{-\f12},\non\\
&\|\na u\|_{H^1} \lesssim \lan t\ran^{-1}, \quad\|\p_1 u\|_{H^1}\lesssim \lan t\ran^{-\f34}, \quad  \|\p_1 b\|_{L^2}\lesssim \lan t\ran^{-\f34},   \quad \|\p_1 u\|_{L^\infty}\lesssim \lan t\ran^{-1-\delta}
\end{align}
for any $t\in [0,+\infty)$.
\end{theorem}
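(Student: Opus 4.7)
The plan is to combine a sharp linear analysis of the weakly dissipative linearized system with a two-tier energy method in the spirit of Guo--Tice, where the decay rates extracted from the linear semigroup are bootstrapped against a slowly growing highest-order energy.

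\textbf{Linear analysis.} Linearizing (\ref{eq:MHDT}) around $(0,\mathbf{e}_1)$ yields the weakly dissipative system $\partial_t u-\Delta u-\partial_1 b+\nabla p=0$, $\partial_t b-\partial_1 u=0$, $\div u=\div b=0$ with $u|_{\partial\Omega}=0$ and $b_2|_{\partial\Omega}=0$. After eliminating the pressure by the Helmholtz projection $\mathcal P$, I would study the resolvent $(\lambda-\mathcal L)^{-1}$. The boundary conditions are compatible with extending $u_1,b_1$ evenly and $u_2,b_2$ oddly across $\partial\Omega$, so the half-space resolvent problem reduces, as the abstract flags, to a whole-space one. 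On $\R^2$ the eigenvalue relation is $\lambda^2+|\xi|^2\lambda+\xi_1^2=0$, which has no spectral gap at $\xi=0$; the key step is to prove weighted resolvent bounds uniform down to $\lambda=0$. Inverting the Laplace transform (or using the explicit Fourier representation of the semigroup on the extended space) and splitting into low and high frequencies then yields the linear prototype of (\ref{eq:decay}). The anisotropy $\|b_1\|_{L^2}\lesssim\lan t\ran^{-1/4}$ versus $\|b_2\|_{L^2}\lesssim\lan t\ran^{-1/2}$ is dictated by the vanishing of the symbol along $\xi_1=0$ and is read off from heat-type estimates localized at $|\xi|\sim\lan t\ran^{-1/2}$.

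\textbf{Nonlinear closure by two-tier energy.} After a standard local well-posedness argument in $H^3$, I would close a global a priori estimate by coupling two layers. The high-order energy inequality takes the schematic form $\frac{d}{dt}\|(u,b)\|_{H^3}^2+\|\nabla u\|_{H^3}^2\lesssim\mathcal N$, where the apparent loss of one derivative in $b\cdot\nabla b$ is absorbed by exploiting the transport structure of $\partial_1 b$ together with suitable commutator/cancellation identities, in line with \cite{LXZ,RWXZ}. In parallel, a low-order bootstrap yields the rates in (\ref{eq:decay}) by applying Duhamel's formula to the nonlinear forcing, estimating it in $L^1\cap L^2$ as one $H^3$ factor times a decaying low-order factor, and invoking the linear semigroup bounds from the first step. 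The crucial coupling is that although $\|(u,b)\|_{H^3}$ is only shown to be bounded (and may grow a priori), the integrand $\mathcal N$ in the highest energy estimate carries an integrable factor obtained from the low-tier decay, so the growth remains uniformly small.

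\textbf{Bootstrap and main obstacle.} Setting bootstrap ansatzes consistent with (\ref{eq:decay}) --- in particular $\|\partial_1 u\|_{H^1}\lesssim\lan t\ran^{-3/4}$ and the subtle $\|\partial_1 u\|_{L^\infty}\lesssim\lan t\ran^{-1-\delta}$ --- and reinserting them into the Duhamel integrals recovers the ansatzes with strictly better constants, provided $\e$ is small. The principal difficulty, and the conceptual novelty, is establishing the resolvent estimate down to $\lambda=0$ on the half space in a way compatible with both $u|_{\partial\Omega}=0$ and $b_2|_{\partial\Omega}=0$ simultaneously: the parity extension identified in the abstract is the mechanism, but verifying that the non-slip condition for the full velocity is consistent with the odd/even extension of the magnetic field, and that the extended equations retain the same dispersive-dissipative symbol, is the delicate point. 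A secondary hurdle is to track the anisotropy carefully in the nonlinear estimates so that the slowest rate $\|b_1\|_{L^2}\lesssim\lan t\ran^{-1/4}$ still suffices --- when paired with an $L^\infty$-in-$x_1$ gain from $\partial_1 u$ --- to close the growth of the top-order energy.
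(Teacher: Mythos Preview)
Your overall architecture (linear semigroup estimates + Duhamel for the low tier, $H^3$ energy for the high tier, bootstrap closure) matches the paper, but the linear step contains a genuine gap.

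\textbf{The parity reduction is false.} You propose to extend $u_1,b_1$ evenly and $u_2,b_2$ oddly and thereby reduce the half-space resolvent to the whole-space one. Even extension of $u_1$ is compatible with $\div u=0$, but it does \emph{not} enforce the non-slip condition $u_1|_{x_2=0}=0$; even parity only gives $\partial_2 u_1|_{x_2=0}=0$. Since non-slip requires both $u_1=0$ and $u_2=0$ on the boundary, and divergence-free forces opposite parities on $u_1$ and $u_2$, no single reflection captures the full boundary data together with the pressure coupling. The paper does not claim this reduction: it derives the explicit resolvent on $\Omega$ and splits the solution as $u_W+u_H$, where only the part $u_W$ (the $I_1,K_1$ terms) comes from an odd extension of the data, while $u_H$ (the $I_2,J_1,J_2,\ldots$ terms, carrying factors $e^{-|\xi_1|x_2}$ and $e^{-\omega x_2}$ from the pressure boundary layer) is a genuine half-space correction with no whole-space counterpart. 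Estimating $u_H$ is the bulk of the work: one deforms the Laplace contour onto branch cuts $\Gamma_1,\ldots,\Gamma_7$ dictated by $\omega(\lambda,\xi_1)=\sqrt{\lambda+|\xi_1|^2+|\xi_1|^2/\lambda}$, performs the change of variables $\xi_2=|\omega|$ on each piece, and only then recovers kernels comparable to $e^{-|\xi_1|^2 t/|\xi|^2}$ that yield the anisotropic rates. Your sketch provides no mechanism for this boundary piece, and without it the linear decay (hence the entire bootstrap) is unproven.

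\textbf{The high-order step is also underspecified.} On the half space one cannot simply differentiate in $x_2$ and integrate by parts; the paper instead uses $\partial_t$ (which preserves the boundary conditions), the Helmholtz projection to kill the pressure, and Stokes estimates to trade $\|u_t\|_{H^k}$ back for $\|\nabla^{k+2}u\|_{L^2}$. The propagated conditions $b_1=\partial_2 b_1=0$ on $\partial\Omega$ are used to kill boundary terms in the $\dot H^2$ and $\dot H^3$ estimates of $b$. The closing inequality the paper actually proves is
\[
\mathcal E^2(t)+\int_0^t\mathcal F^2\,ds\ \lesssim\ \|(u_0,b_0)\|_{H^3}^2+\mathcal E^2(t)\int_0^t\bigl(\|b_2\|_{L^\infty}^2+\|b_2\|_{L^\infty}^4+\|\partial_1 u_1\|_{L^\infty}\bigr)\,ds,
\]
so the specific low-tier rate that makes the right side finite is $\|\partial_1 u\|_{L^\infty}\lesssim\langle t\rangle^{-1-\delta}$ (integrability in $t$), not merely the $L^2$ anisotropy you mention. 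Your proposal should identify these three ingredients (time-derivative energies, Stokes recovery, and the role of $\|\partial_1 u\|_{L^\infty}$) rather than a generic $H^3$ commutator argument.
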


\begin{remark}
Here we need a zero boundary value on $b_1$ in order to avoid  the trouble boundary terms in {\it a-priori} estimate. This boundary condition could not be imposed. Fortunately, the boundary condition $b_1=0$ on $\p\Omega$  can be propagated by the equation \eqref{eq:MHDT}$_2$ if $b_{1, 0}=0$  on $\p\Omega$.

What is more, we also need a zero boundary value on $\p_2 b_1$ in order to obtain the $H^3$ regularity in {\it a-priori} estimate. The boundary condition $\p_2 b_1=0$ on $\p\Omega$  can also be propagated by the equation \eqref{eq:MHDT}$_2$ if $\p_2 b_{1, 0}=0$  on $\p\Omega$.
\end{remark}

\begin{remark}

We note that the set of initial data satisfying the compatibility condition
is not empty.

Indeed, let $b_0, f \in C^\infty_0 (\Om)$ with $\div b_0=0, \div f=0$, if $\|b_0\|_{H^3}+\|f\|_{H^1}\leq \delta _0$, ($\delta _0>0$ is a constant), then there exists a $(u_0, p_0)$ with $u_0\in H^3(\Om)\cap H^1_0(\Om)$, $\na p_0\in H^1(\Om)$ $(p_0\in L^2_{loc}(\Om)$) satisfies
\[
\left\{
\begin{array}{l}
u_0-\Delta u_0-\p_{x_1} b_0+\na p_0+u_0\cdot\na u_0-b_0\cdot \na b_0=f,  \ \ \ \ x\in\Omega,\ t>0,\\
\div u_0=0,  \ \ \ \ x\in\Omega,\ t>0,\\
u_0=0,   \ \ \ \ x\in\p\Omega,\ t>0,
\end{array}\right.
\]
then $(u_0, b_0)$ satisfies the compatibility condition since
\begin{align*}
-\Delta u_0+\na p_0-\p_1 b_0+u_0 \cdot\na u_0-b_0\cdot\na b_0= f-u_0, \quad\cP(f-u_0)=f-u_0\in H^1_0(\Om),
\end{align*}
where $u(0)=u_0, \p_t u(0)=f-u_0$.
\end{remark}

\begin{remark}
Due to our low requirement for the regularity of the initial values, the decay rate only achieves what is needed to obtain the global  solution.

We point out that the half space (with physical boundary) does not reduce decay rate compared to whole space. What's more, the decay rate of $b_1$ is slower than $b_2$, since formally speaking
$b_2=\na^{-1}\na b_2\sim \na ^{-1}\p_1 b$ by divergence free condition.

\end{remark}
To prove the Theorem \ref{thm:main}, we will use the spectral analysis method motivated by \cite{KK},  the two-tier energy estimate motivated by \cite{GT} and build the relationship between half space and the whole space \cite{W}. Before going into  details, let us briefly outline two main ideas of our proof.

\smallskip

We  first use the spectrum analysis method to obtain the resolvent estimate of linearized system. For the solution of the linearized system, we take  the suitable branch cut and integration contour to build the relationship between half space and the whole space. The most trouble problem is no spectral gap for the weak dissipation   of \eqref{eq:MHDT}.

 Indeed, the solution of linearized system can be rewritten as $u_{W}+u_{H}$, where $u_W$ is the whole space part through odd extension and residue theorem,  $u_{H}$ is the half space part should select appropriate integration contour and transformation.

For the sake of completeness, we have imitated some estimates of \cite{W} for the whole  space. It should be pointed out  that we have lower requirements for regularity.

In order to obtain the global well-posedness of  weak dissipation system  with physical boundary, we utilized some techniques such as the  time derivative $\p_t$  without  changing the boundary condition; the Helmholtz projection $\cP$ to eliminate  the pressure; the estimates for the Stokes system to build the relationship between $\p_t$ and regularity; the anisotropic estimate, cross term estimate and transformation of magnetic field equation to control  nonlinear term $b\cdot\na b$ by weak magnetic dissipation.

What is more, since the magnetic field satisfies a transport equation and its $H^3$ norm is bounded as
\beno
\|b(t)\|_{H^3}\le \|b_0\|_{H^3}\exp\Big(\int_0^t\|\p_1 u(\tau)\|_{L^\infty}d\tau\Big)+\cdots,
\eeno
 there needs that $\|\p_1 u\|_{L^\infty}$ has $\lan t\ran^{-1-\delta}$ decay at least.

With the aid of linear decay rate, we use more carefully frequency localization method and the structure of equation, to obtain the required nonlinear decay rate under low regularity energy.

\smallskip

The paper is organized as follows. In section 2, we present the solution formula  of the linearized problem and obtain the resolvent estimates for $u, b$ directly. In section 3,  We obtain the decay estimates of the nonlinear estimate based on the linearized analysis and boundedness of the high order energy. The local well-posedness is given in section 4. In section 5, we prove the boundedness of the high order energy and dissipation with the aid of the decay rate of low order energy. That is to say, we obtain the global well-posedness by the  bootstrap argument.

\smallskip

\no{\bf Notations.} Throughout this paper, for simplicity, we set  $\p_i=\f{\p}{\p x^i}$ for $i=1,2,3$,  $\p_t=\f{\p}{\p t}$. We will also use $A\lesssim B$ to denote the statement that $A \le CB$ for some absolute constant $C > 0$,  which may be different on different lines, and  $A \sim B$ to denote the statement that $A \lesssim B$ and $B \lesssim A$. We set $\lan t\ran:=(1+t^2)^\f12$. The symbol $\wh{*}$ means the horizontal Fourier transform. Let $\vphi(\xi)$ be a smooth bump function adapted to $\{|\xi|\leq 2\}$ and equal to  1 on $\{|\xi|\leq 1\}$. For $j>0$, we define the Fourier multipliers
\begin{align*}
P_{\leq j} f&:=\cF^{-1}\Big(\varphi(\f{|\xi|}{j})\,\cF f(\xi)\Big), \quad P_{\geq j} f:=\cF^{-1}\Big((1-\varphi(\f{|\xi|}{j}))\,\cF f(\xi)\Big), \non\\
P_{j} f&:=\cF^{-1}\Big((\varphi(\f{2|\xi|}{j})-\varphi(\f{|\xi|}{j})\,\cF f(\xi)\Big),
\end{align*}
where j are dyadic number, that is the form of $2^{\Z}$ in general.

\section{The resolvent estimate of linearized problem}
\subsection{Solution formula  of the linearized problem}
\setcounter{equation}{0}
 The linearized equation of \eqref{eq:MHDT} is
 \ben\label{eq:MHDL}
\left\{
\begin{array}{l}
\p_t u-\Delta u-\p_1 b+\na p =f,  \ \ \ \ x\in\Omega,\ t>0,\\
\p_tb-\p_1 u=g,  \ \ \ \ x\in\Omega,\ t>0,\\
\div u=\div b=0,  \ \ \ \ x\in\Omega,\ t>0,\\
u=0, \ \ \ b=0,   \ \ \ \ x\in\p\Omega,\ t>0,\\
u(x,0)=u_0(x),\ \ b(x,0)=b_0(x),\ \ \ \ x\in \Om,
\end{array}\right.
\een
where $f=-u\cdot\na u+b\cdot\na b$ and $g=-u \cdot\na b+b\cdot\na u$.

 Taking Laplace transform in $t$, we have
\begin{align}\label{eq:Lap trans}
\left\{
\begin{array}{l}
\lam u_{\lam}-\Delta u_{\lam}-\p_1 b_{\lam}+\na p_{\lam}= u_0+f_{\lam}, \\
\lam b_{\lam}-\p_1 u_{\lam}=b_0+g_{\lam}, \\
\div u_{\lam}=\div b_{\lam}=0,  \\
(u_{\lam}, b_{\lam})|_{x_2=0}=0,
\end{array}\right.
\end{align}
whereas we denote that
\beno
f_\lam=\cL\big(f(t)\big)=\int^{\oo}_0 e^{-\lam t}  f(t) dt.
\eeno
By taking horizontal Fourier transform $\wh{*}$  of \eqref{eq:Lap trans}, we obtain
 \ben\label{eq:MHDh}
\left\{
\begin{array}{l}
\lam \wh{u}_{1, \lam}-(\p^2_2-|\xi_1|^2)  \wh{u}_{1, \lam}-i\xi_1 \wh{b}_{1, \lam}+i\xi_1 \wh{p}_{\lam} = \wh{u}_{1, 0}+\wh{f}_{1, \lam}, \\
\lam \wh{u}_{2, \lam}-(\p^2_2-|\xi_1|^2)  \wh{u}_{2, \lam}-i\xi_1 \wh{b}_{2, \lam}+\p_2 \wh{p}_{\lam}= \wh{u}_{2, 0}+\wh{f}_{2, \lam}, \\
\lam \wh{b}_{\lam}-i\xi_1 \wh{u}_\lam=\wh{b}_0+\wh{g}_{\lam},\\
i\xi_1 \wh{u}_{1, \lam}+\p_2 \wh{u}_{2, \lam}=i\xi_1\wh{b}_{1, \lam}+\p_2 \wh{b}_{2, \lam}=0,\\
\wh{u}_{\lam}|_{x_2=0}=\wh{b}_{\lam}|_{x_2=0}=0.
\end{array}\right.
\een
 By $\eqref{eq:MHDh}_{1, 2, 4}$, we have
\beno
(\p_2^2-|\xi_1|^2) \wh{p}_{\lam}=i\xi_1 \wh{f}_{1, \lam}+\p_2 \wh{f}_{2, \lam},
\eeno
 the solution is
\begin{align}\label{pressure}
\wh{p}_{\lam}=C(\xi_1) e^{-|\xi_1|x_2}-E_{|\xi_1|}[i\xi_1  \wh{f}_{1, \lam}+\p_2\wh{f}_{2, \lam}],
\end{align}
where
 \beno
E_{|\xi_1|}[f]:=\f{1}{2|\xi_1|} \int^{\infty}_0 e^{-|\xi_1||x_2-y_2|} f(y_2) dy_2.\eeno
Note that
\begin{align}\label{pressure2}
&\p_2 E_{|\xi_1|}[i\xi_1  \wh{f}_{1, \lam}+\p_2\wh{f}_{2, \lam}]\non\\
&=-\f12\Big(  e^{-|\xi_1|x_2}\int^{x_2}_0 e^{|\xi_1| y_2} (i\xi_1  \wh{f}_{1, \lam}+\p_2\wh{f}_{2, \lam}) dy_2-e^{|\xi_1|x_2}\int^\infty_{x_2} e^{-|\xi_1| y_2}( i\xi_1  \wh{f}_{1, \lam}+\p_2\wh{f}_{2, \lam} ) dy_2\Big).
\end{align}
Taking \eqref{pressure}, \eqref{pressure2} into \eqref{eq:MHDh} and defining
\begin{align*}
\om(\lam, \xi_1)= \sqrt{\lam+|\xi_1|^2+\f{|\xi_1|^2}{\lam}},
\end{align*}
we see that $\wh{u}_\lam$ satisfies the following system
 \ben\label{eq:MHDh2}
\left\{
\begin{array}{l}
(\om^2-\p_2^2) \wh{u}_{1, \lam}= \wh{u}_{1, 0}+\f{i\xi_1}{\lam}  \wh{b}_{1, 0}+\wh{f}_{1, \lam}+\f{i\xi_1}{\lam} \wh{g}_{1, \lam}-i \xi_1 C(\xi_1)e^{-|\xi_1|x_2} +i\xi_1E_{|\xi_1|}[i\xi_1  \wh{f}_{1, \lam}+\p_2\wh{f}_{2, \lam}],\\
(\om^2-\p_2^2) \wh{u}_{2, \lam}= \wh{u}_{2, 0}+\f{i\xi_1}{\lam}  \wh{b}_{2, 0}+\wh{f}_{2, \lam}+\f{i\xi_1}{\lam} \wh{g}_{2, \lam}+|\xi_1|  C(\xi_1)e^{-|\xi_1|x_2}
+\p_2 E_{|\xi_1|}[i\xi_1  \wh{f}_{1, \lam}+\p_2\wh{f}_{2, \lam}],\\
\wh{u}_{\lam}|_{x_2=0}=\p_2\wh{u}_{2, \lam}|_{x_2=0}=0,
\end{array}\right.
\een
and  the  unique solution is
$$
\left\{
\begin{array}{l}
 \wh{u}_{1, \lam}= A(\xi_1)e^{-\om x_2}+E_{\om}[ \wh{u}_{1, 0}+\f{i\xi_1}{\lam}  \wh{b}_{1, 0}+ \wh{f}_{1, \lam}+\f{i\xi_1}{\lam} \wh{g}_{1, \lam}]-i\xi_1 C(\xi_1)E_{\om} [ e^{-|\xi_1|x_2}]\non\\
 \qquad\quad+i\xi_1 E_{\om} [E_{|\xi_1|}[i\xi_1 \wh{f}_{1, \lam}+\p_2\wh{f}_{2, \lam}]],\\
\wh{u}_{2, \lam}= B(\xi_1)e^{-\om x_2}+E_{\om}[ \wh{u}_{2, 0}+\f{i\xi_1}{\lam}  \wh{b}_{2, 0}+ \wh{f}_{2, \lam}+\f{i\xi_1}{\lam} \wh{g}_{2, \lam}]+|\xi_1 |C(\xi_1)E_{\om} [ e^{-|\xi_1|x_2}]\non\\
\qquad\quad+E_{\om} [\p_2 E_{|\xi_1|}[i\xi_1  \wh{f}_{1, \lam}+\p_2\wh{f}_{2, \lam}]],
\end{array}\right.
$$
 where $A(\xi_1), B(\xi_1)$ and $C(\xi_1)$ depend only on $\xi_1$, Re $\om>0$.
Using the boundary condition $(\ref{eq:MHDh2})_3$, and noticing that
\begin{align*}
&\p_2 E_{\om}[f]\Big|_{x_2=0}=\f{1}{2} \int^{\infty}_0 e^{-\om y_2} f( y_2) dy_2=\om E_{\om}[f]_0,
\end{align*}
(where we define $E_\om[f]_0:=E_\om[f]\Big|_{x_2=0}$), we have
$$\left\{
\begin{array}{l}
 A(\xi_1)=-E_{\om}[ \wh{u}_{1, 0}+\f{i\xi_1}{\lam}  \wh{b}_{1, 0}+ \wh{f}_{1, \lam}+\f{i\xi_1}{\lam} \wh{g}_{1, \lam}]_0-i\xi_1 E_{\om} [E_{|\xi_1|} [i\xi_1 \wh{f}_{1, \lam}+\p_2 \wh{f}_{2, \lam}]]_0\\
\qquad\qquad -\f{i\xi_1}{|\xi_1|} \{E_{\om} [\wh{u}_{2, 0}+\f{i\xi_1}{\lam} \wh{b}_{2, 0}+\wh{f}_{2, \lam}+\f{i\xi_1}{\lam} \wh{g}_{2, \lam}]_0+E_{\om}[\p_2 E_{|\xi_1|}[i\xi_1\wh{f}_{1, \lam}+\p_2 \wh{f}_{2, \lam}]]_0\},\\
B(\xi_1)=0,\\
C(\xi_1)=-\f{1}{|\xi_1|E_{\om}[e^{-|\xi_1|x_2}]_0}\{E_{\om} [\wh{u}_{2, 0}+\f{i\xi_1}{\lam} \wh{b}_{2, 0}+\wh{f}_{2, \lam}+\f{i\xi_1}{\lam} \wh{g}_{2, \lam}]_0+E_{\om}[\p_2 E_{|\xi_1|}[ i\xi_1\wh{f}_{1, \lam}+\p_2 \wh{f}_{2, \lam}]]_0\},
\end{array}\right.
$$
 So we obtain the solution of system \eqref{eq:MHDh2}
\begin{align}\label{ulam}
\left\{
\begin{array}{l}
 \wh{u}_{1, \lam}= -\Big(E_{\om}[ \wh{u}_{1, 0}+\f{i\xi_1}{\lam}  \wh{b}_{1, 0}+ \wh{f}_{1, \lam}+\f{i\xi_1}{\lam} \wh{g}_{1, \lam}]_0+i\xi_1 E_{\om} [E_{|\xi_1|} [i\xi_1 \wh{f}_{1, \lam}+\p_2 \wh{f}_{2, \lam}]]_0\\
\qquad\qquad +\f{i\xi_1}{|\xi_1|} \{E_{\om} [\wh{u}_{2, 0}+\f{i\xi_1}{\lam} \wh{b}_{2, 0}+\wh{f}_{2, \lam} +\f{i\xi_1}{\lam} \wh{g}_{2, \lam}]_0
+E_{\om}[\p_{2} E_{|\xi_1|}[i\xi_1\wh{f}_{1, \lam}+\p_2 \wh{f}_{2, \lam}]]_0\}\Big) e^{-\om x_2}\\
\qquad\quad+E_{\om}[ \wh{u}_{1, 0}+\f{i\xi_1}{\lam}  \wh{b}_{1, 0}+ \wh{f}_{1, \lam}+\f{i\xi_1}{\lam} \wh{g}_{1, \lam}]+i\xi_1 E_{\om} [E_{|\xi_1|}[i\xi_1 \wh{f}_{1, \lam}+\p_2 \wh{f}_{2, \lam}]]\\
\qquad\quad+ \f{i\xi_1E_{\om}[e^{-|\xi_1|x_2}]}{|\xi_1|E_{\om}[e^{-|\xi_1|x_2}]_0}\{E_{\om} [\wh{u}_{2, 0}+\f{i\xi_1}{\lam} \wh{b}_{2, 0}+\wh{f}_{2, \lam}+\f{i\xi_1}{\lam} \wh{g}_{2, \lam}]_0+E_{\om}[\p_{2}E_{|\xi_1|}[ i\xi_1\wh{f}_{1, \lam}+\p_2 \wh{f}_{2, \lam}]]_0\}\\
\wh{u}_{2, \lam}= E_{\om}[ \wh{u}_{2, 0}+\f{i\xi_1}{\lam}  \wh{b}_{2, 0}+ \wh{f}_{2, \lam}+\f{i\xi_1}{\lam} \wh{g}_{2, \lam}]+E_{\om} [\p_2 E_{|\xi_1|}[i\xi_1 \wh{f}_{1, \lam}+\p_2 \wh{f}_{2, \lam}]]\\
\qquad\quad-\f{E_{\om} [ e^{-|\xi_1|x_2}]}{E_{\om}[e^{-|\xi_1|x_2}]_0}\{E_{\om} [\wh{u}_{2, 0}+\f{i\xi_1}{\lam} \wh{b}_{2, 0}+\wh{f}_{2, \lam}+\f{i\xi_1}{\lam} \wh{g}_{2, \lam}]_0+E_{\om}[\p_{2} E_{|\xi_1|}[ i\xi_1\wh{f}_{1, \lam}+\p_2 \wh{f}_{2, \lam}]]_0\}
\end{array}\right.
\end{align}
By $\eqref{eq:MHDh}_3$,
\begin{align*}
\wh{b}_\lam=\f{i\xi_1}{\lam}  \wh{u}_{\lam}+\f{1}{\lam} \big(\wh{b}_0+\wh{g}_\lam\big),
\end{align*}
 so we obtain
\begin{align}\label{blam}
\left\{
\begin{array}{l}
 \wh{b}_{1, \lam}= \Big( -\f{i\xi_1} {\lam} E_{\om} [ \wh{u}_{1, 0}+\f{i\xi_1}{\lam}  \wh{b}_{1, 0}+\wh{f}_{1, \lam}+\f{i\xi_1}{\lam} \wh{g}_{1, \lam} ]_0+\f{|\xi_1|^2}{\lam}  E_{\om} [E_{|\xi_1|}[i\xi_1  \wh{f}_{1, \lam}+\p_2\wh{f}_{2, \lam}]]_0\\
\qquad\qquad +\f{|\xi_1|}{\lam} \{ E_{\om} [\wh{u}_{2, 0}+\f{i\xi_1}{\lam} \wh{b}_{2, 0}+\wh{f}_{2, \lam}+\f{i\xi_1}{\lam} \wh{g}_{2, \lam}]_0+E_{\om}[\p_2 E_{|\xi_1|}[ i\xi_1\wh{f}_{1, \lam}+\p_2 \wh{f}_{2, \lam}]]_0\} \Big) e^{-\om x_2}\\
 \qquad \qquad + \f{i\xi_1} {\lam} E_{\om} [ \wh{u}_{1, 0}+\f{i\xi_1}{\lam}  \wh{b}_{1, 0}+\wh{f}_{1, \lam}+\f{i\xi_1}{\lam} \wh{g}_{1, \lam} ]-\f{|\xi_1|^2}{\lam}  E_{\om} [E_{|\xi_1|}[i\xi_1  \wh{f}_{1, \lam}+\p_2\wh{f}_{2, \lam}]] \\
 \qquad\qquad-\f{|\xi_1|}{\lam} \f{E_{\om}[e^{-|\xi_1|x_2}]}{ E_{\om}[ e^{-|\xi_1|x_2}]_0} \{ E_{\om} [\wh{u}_{2, 0}+\f{i\xi_1}{\lam} \wh{b}_{2, 0}+\wh{f}_{2, \lam}+\f{i\xi_1}{\lam} \wh{g}_{2, \lam}]_0+E_{\om}[\p_2 E_{|\xi_1|}[ i\xi_1\wh{f}_{1, \lam}+\p_2 \wh{f}_{2, \lam}]]_0\}\\
  \qquad \qquad +\f{1}{\lam} (\wh{b}_{1, 0}+\wh{g}_{1,\lam}) \\
 \wh{b}_{2, \lam}= \f{i\xi_1} {\lam} E_{\om} [ \wh{u}_{2, 0}+\f{i\xi_1}{\lam}  \wh{b}_{2, 0}+\wh{f}_{2, \lam}+\f{i\xi_1}{\lam} \wh{g}_{2, \lam} ]+\f{i\xi_1}{\lam}E_{\om} [  \p_2 E_{|\xi_1|}[i\xi_1  \wh{f}_{1, \lam}+\p_2\wh{f}_{2, \lam}]]\\
\qquad \quad  -\f{i\xi_1 }{\lam} \f{E_{\om}[e^{-|\xi_1|x_2}]}{E_{\om}[e^{-|\xi_1|x_2}]_0} \{ E_{\om} [\wh{u}_{2, 0}+\f{i\xi_1}{\lam} \wh{b}_{2, 0}+\wh{f}_{2, \lam}+\f{i\xi_1}{\lam} \wh{g}_{2, \lam}]_0+E_{\om}[\p_2 E_{|\xi_1|}[ i\xi_1\wh{f}_{1, \lam}+\p_2 \wh{f}_{2, \lam}]]_0\}\\
  \qquad \qquad +\f{1}{\lam} (\wh{b}_{2, 0}+\wh{g}_{2,\lam}) \\
 \end{array}\right.
\end{align}

Taking the  inverse Laplace transform in $t$:
\beno
\cL^{-1}(F(\lam))=\f{1}{2\pi i} \int^{\beta+i\infty}_{\beta-i\infty} e^{\lam t} F(\lam) d\lam
\eeno
of \eqref{ulam} and  \eqref{blam} ($F(\lam)$ has no singularity  on the right hand side of $\beta=Re\,\lam$) , we  get the horizontal Fourier transform of the solution of \eqref{eq:MHDL}
\begin{align}\label{eq:u sol2}
\left\{
\begin{array}{l}
\wh{u}_1(\xi_1,x_2, t)=\f{1}{2\pi i}\int_{\Ga}e^{\lam t}
\Big\{ -\Big(E_{\om}[ \wh{u}_{1, 0}+\f{i\xi_1}{\lam}  \wh{b}_{1, 0}+ \wh{f}_{1, \lam}+\f{i\xi_1}{\lam} \wh{g}_{1, \lam}]_0+i\xi_1 E_{\om} [E_{|\xi_1|} [i\xi_1 \wh{f}_{1, \lam}+\p_2 \wh{f}_{2, \lam}]]_0\\
\qquad\qquad +\f{i\xi_1}{|\xi_1|} \{E_{\om} [\wh{u}_{2, 0}+\f{i\xi_1}{\lam} \wh{b}_{2, 0}+\wh{f}_{2, \lam} +\f{i\xi_1}{\lam} \wh{g}_{2, \lam}]_0
+E_{\om}[\p_{2} E_{|\xi_1|}[i\xi_1\wh{f}_{1, \lam}+\p_2 \wh{f}_{2, \lam}]]_0\}\Big) e^{-\om x_2}\\
\qquad\qquad+E_{\om}[ \wh{u}_{1, 0}+\f{i\xi_1}{\lam}  \wh{b}_{1, 0}+ \wh{f}_{1, \lam}+\f{i\xi_1}{\lam} \wh{g}_{1, \lam}]+i\xi_1 E_{\om} [E_{|\xi_1|}[i\xi_1 \wh{f}_{1, \lam}+\p_2 \wh{f}_{2, \lam}]]\\
\qquad\qquad+ \f{i\xi_1E_{\om}[e^{-|\xi_1|x_2}]}{|\xi_1|E_{\om}[e^{-|\xi_1|x_2}]_0}\{E_{\om} [\wh{u}_{2, 0}+\f{i\xi_1}{\lam} \wh{b}_{2, 0}+\wh{f}_{2, \lam}+\f{i\xi_1}{\lam} \wh{g}_{2, \lam}]_0+E_{\om}[\p_{2}E_{|\xi_1|}[ i\xi_1\wh{f}_{1, \lam}+\p_2 \wh{f}_{2, \lam}]]_0\}\Big\}d\lam,\\
\wh{u}_2(\xi_1,x_2, t)=\f{1}{2\pi i}\int_{\Ga}e^{\lam t}
\Big\{  E_{\om}[ \wh{u}_{2, 0}+\f{i\xi_1}{\lam}  \wh{b}_{2, 0}+ \wh{f}_{2, \lam}+\f{i\xi_1}{\lam} \wh{g}_{2, \lam}]+E_{\om} [\p_2 E_{|\xi_1|}[i\xi_1 \wh{f}_{1, \lam}+\p_2 \wh{f}_{2, \lam}]]\\
\qquad\qquad-\f{E_{\om} [ e^{-|\xi_1|x_2}]}{E_{\om}[e^{-|\xi_1|x_2}]_0}\{E_{\om} [\wh{u}_{2, 0}+\f{i\xi_1}{\lam} \wh{b}_{2, 0}+\wh{f}_{2, \lam}+\f{i\xi_1}{\lam} \wh{g}_{2, \lam}]_0+E_{\om}[\p_{2}E_{|\xi_1|}[ i\xi_1\wh{f}_{1, \lam}+\p_2 \wh{f}_{2, \lam}]]_0\}\Big\}d\lam,\\
\wh{b}_1(\xi_1, x_2,t)=\f{1}{2\pi i}\int_{\Ga}e^{\lam t}
\Big\{\Big( -\f{i\xi_1} {\lam} E_{\om} [ \wh{u}_{1, 0}+\f{i\xi_1}{\lam}  \wh{b}_{1, 0}+\wh{f}_{1, \lam}+\f{i\xi_1}{\lam} \wh{g}_{1, \lam} ]_0+\f{|\xi_1|^2}{\lam}  E_{\om} [E_{|\xi_1|}[i\xi_1  \wh{f}_{1, \lam}+\p_2\wh{f}_{2, \lam}]]_0\\
\qquad\qquad +\f{|\xi_1|}{\lam} \{ E_{\om} [\wh{u}_{2, 0}+\f{i\xi_1}{\lam} \wh{b}_{2, 0}+\wh{f}_{2, \lam}+\f{i\xi_1}{\lam} \wh{g}_{2, \lam}]_0+E_{\om}[\p_2 E_{|\xi_1|}[ i\xi_1\wh{f}_{1, \lam}+\p_2 \wh{f}_{2, \lam}]]_0\} \Big) e^{-\om x_2}\\
 \qquad \qquad + \f{i\xi_1} {\lam} E_{\om} [ \wh{u}_{1, 0}+\f{i\xi_1}{\lam}  \wh{b}_{1, 0}+\wh{f}_{1, \lam}+\f{i\xi_1}{\lam} \wh{g}_{1, \lam} ]-\f{|\xi_1|^2}{\lam}  E_{\om} [E_{|\xi_1|}[i\xi_1  \wh{f}_{1, \lam}+\p_2\wh{f}_{2, \lam}]] \\
 \qquad\qquad-\f{|\xi_1|}{\lam} \f{E_{\om}[e^{-|\xi_1|x_2}]}{ E_{\om}[ e^{-|\xi_1|x_2}]_0} \{ E_{\om} [\wh{u}_{2, 0}+\f{i\xi_1}{\lam} \wh{b}_{2, 0}+\wh{f}_{2, \lam}+\f{i\xi_1}{\lam} \wh{g}_{2, \lam}]_0+E_{\om}[\p_2 E_{|\xi_1|}[ i\xi_1\wh{f}_{1, \lam}+\p_2 \wh{f}_{2, \lam}]]_0\} \Big\}d\lam\\
  \qquad\qquad+\wh{b}_{1, 0}+\wh{g}_{1},\\
 \wh{b}_2(\xi_1, x_2,t)=\f{1}{2\pi i}\int_{\Ga}e^{\lam t}
\Big\{ \f{i\xi_1} {\lam} E_{\om} [ \wh{u}_{2, 0}+\f{i\xi_1}{\lam}  \wh{b}_{2, 0}+\wh{f}_{2, \lam}+\f{i\xi_1}{\lam} \wh{g}_{2, \lam} ]+\f{i\xi_1}{\lam}E_{\om} [  \p_2 E_{|\xi_1|}[i\xi_1  \wh{f}_{1, \lam}+\p_2\wh{f}_{2, \lam}]]\\
\qquad \qquad  -\f{i\xi_1 }{\lam} \f{E_{\om}[e^{-|\xi_1|x_2}]}{E_{\om}[e^{-|\xi_1|x_2}]_0} \{ E_{\om} [\wh{u}_{2, 0}+\f{i\xi_1}{\lam} \wh{b}_{2, 0}+\wh{f}_{2, \lam}+\f{i\xi_1}{\lam} \wh{g}_{2, \lam}]_0+E_{\om}[\p_2 E_{|\xi_1|}[ i\xi_1\wh{f}_{1, \lam}+\p_2 \wh{f}_{2, \lam}]]_0\}\Big\}d\lam\\
  \qquad\qquad+\wh{b}_{2, 0}+\wh{g}_{2}.
\end{array}\right.
\end{align}
Here $\Ga=\{\lam=R+\eta e^{2\pi i/3}, \eta\geq 0\}\cup \{\lam=R+\eta e^{-2\pi i/3}, \eta\geq 0\}$,  $R>0$ is a sufficiently large number taken in such a way that Re $  \om (\lam;\xi_1)>0$ for all $\lam\in \Ga$.

\subsection{The contour integration of the  linear part}
The linear part of \eqref{eq:u sol2}  is
\begin{align}\label{linear}
\left\{
\begin{array}{l}
\wh{u}_{1, L}(\xi_1,x_2, t)=\f{1}{2\pi i}\int_{\Ga}e^{\lam t}
\Big\{ -\Big(E_{\om}[ \wh{u}_{1, 0}+\f{i\xi_1}{\lam}  \wh{b}_{1, 0}]_0 +\f{i\xi_1}{|\xi_1|} E_{\om} [\wh{u}_{2, 0}+\f{i\xi_1}{\lam} \wh{b}_{2, 0}]_0\Big) e^{-\om x_2}\\
\qquad\qquad\qquad\qquad\qquad+E_{\om}[ \wh{u}_{1, 0}+\f{i\xi_1}{\lam}  \wh{b}_{1, 0}]
+ \f{i\xi_1E_{\om}[e^{-|\xi_1|x_2}]}{|\xi_1|E_{\om}[e^{-|\xi_1|x_2}]_0}E_{\om} [\wh{u}_{2, 0}+\f{i\xi_1}{\lam} \wh{b}_{2, 0}]_0 \Big\}d\lam,\\
\wh{u}_{2, L}(\xi_1,x_2, t)=\f{1}{2\pi i}\int_{\Ga}e^{\lam t}
\Big\{  E_{\om}[ \wh{u}_{2, 0}+\f{i\xi_1}{\lam}  \wh{b}_{2, 0}]-\f{E_{\om} [ e^{-|\xi_1|x_2}]}{E_{\om}[e^{-|\xi_1|x_2}]_0}E_{\om} [\wh{u}_{2, 0}+\f{i\xi_1}{\lam} \wh{b}_{2, 0}]_0\Big\}d\lam,\\
\wh{b}_{1, L}(\xi_1, x_2,t)=\f{1}{2\pi i}\int_{\Ga}e^{\lam t}
\Big\{\Big( -\f{i\xi_1} {\lam} E_{\om} [ \wh{u}_{1, 0}+\f{i\xi_1}{\lam}  \wh{b}_{1, 0}]_0+\f{|\xi_1|}{\lam}  E_{\om} [\wh{u}_{2, 0}+\f{i\xi_1}{\lam} \wh{b}_{2, 0}]_0 \Big) e^{-\om x_2}\\
 \qquad \qquad \qquad\qquad\qquad+ \f{i\xi_1} {\lam} E_{\om} [ \wh{u}_{1, 0}+\f{i\xi_1}{\lam}  \wh{b}_{1, 0}] -\f{|\xi_1|}{\lam} \f{E_{\om}[e^{-|\xi_1|x_2}]}{ E_{\om}[ e^{-|\xi_1|x_2}]_0} E_{\om} [\wh{u}_{2, 0}+\f{i\xi_1}{\lam} \wh{b}_{2, 0}]_0 \Big\}d\lam+\wh{b}_{1, 0},\\
 \wh{b}_{2, L}(\xi_1, x_2,t)=\f{1}{2\pi i}\int_{\Ga}e^{\lam t}
\Big\{ \f{i\xi_1} {\lam} E_{\om} [ \wh{u}_{2, 0}+\f{i\xi_1}{\lam}  \wh{b}_{2, 0}]-\f{i\xi_1 }{\lam} \f{E_{\om}[e^{-|\xi_1|x_2}]}{E_{\om}[e^{-|\xi_1|x_2}]_0}E_{\om} [\wh{u}_{2, 0}+\f{i\xi_1}{\lam} \wh{b}_{2, 0}]_0 \Big\}d\lam+\wh{b}_{2, 0}.
\end{array}\right.
\end{align}
Noticing that
\begin{align*}
E_{\om}[e^{-|\xi_1|x_2}]&=\f{1}{2\om} \Big(\int^{x_2}_0 e^{-\om(x_2-y_2)} e^{-|\xi_1| y_2} dy_2+\int_{x_2}^\infty e^{\om(x_2-y_2)} e^{-|\xi_1| y_2} dy_2\Big)\\
&=\f{1}{2\om} \Big(e^{-\om x_2} \f{e^{(\om-|\xi_1|) x_2}-1}{\om-|\xi_1|} +e^{\om x_2} \f{e^{-(\om+|\xi_1|) x_2}}{\om+|\xi_1|} \Big)\\
&=\f{1}{2\om} \Big(\f{ e^{-|\xi_1|x_2}- e^{-\om x_2}}{\om-|\xi_1|} +\f{e^{-|\xi_1|x_2}}{\om+|\xi_1|}\Big),
\end{align*}
then
\begin{align*}
\f{E_{\om}[e^{-|\xi_1|x_2}]}{E_{\om}[e^{-|\xi_1|x_2}]_0}&=\f{\om+|\xi_1|}{\om-|\xi_1|} \Big( e^{-|\xi_1|x_2}-e^{-\om x_2}\Big)+e^{-|\xi_1|x_2}\\
&=\f{2 \om\lam (\om+|\xi_1|) }{\lam^2+|\xi_1|^2}  e^{-|\xi_1|x_2}-\f{\lam (\om+|\xi_1|)^2}{\lam^2+|\xi_1|^2} e^{-\om x_2},
\end{align*}
and
\begin{align*}
- e^{-\om x_2} +\f{E_{\om}[e^{-|\xi_1|x_2}]}{E_{\om}[e^{-|\xi_1|x_2}]_0}=\f{2\om\lam (\om+|\xi_1|)}{\lam^2+|\xi_1|^2 }\Big( e^{-|\xi_1|x_2}-e^{-\om x_2}\Big).
\end{align*}
Then $\wh{u}_1$ can be rewritten as
\begin{align}\label{u1}
\wh{u}_{1, L}(\xi_1,x_2, t)&=\f{1}{2\pi i}\int_{\Ga}e^{\lam t}
\Big\{-E_{\om}[ \wh{u}_{1, 0}+\f{i\xi_1}{\lam}  \wh{b}_{1, 0}]_0 e^{-\om x_2} +E_{\om}[ \wh{u}_{1, 0}+\f{i\xi_1}{\lam}  \wh{b}_{1, 0}]\non\\
&\qquad \qquad+\f{i\xi_1}{|\xi_1|} E_{\om} [\wh{u}_{2, 0}+\f{i\xi_1}{\lam} \wh{b}_{2, 0}]_0 \f{2\om\lam (\om+|\xi_1|)}{\lam^2+|\xi_1|^2 }\Big( e^{-|\xi_1|x_2}-e^{-\om x_2}\Big) \Big\}d\lam\non\\
&=\f{1}{2\pi i}\int_{\Ga}e^{\lam t} \f{1}{2\om} \int^\infty_0  \Big(e^{-\om|x_2-y_2|} -e^{-\om(x_2+y_2)} \Big)(\wh{u}_{1, 0}+\f{i\xi_1}{\lam} \wh{b}_{1, 0})(y_2) dy_2 d\lam\non\\
&\quad+\f{i\xi_1}{|\xi_1|} \f{1}{2\pi i}\int_{\Ga}e^{\lam t} \int^\infty_0  e^{-\om y_2} (\wh{u}_{2, 0}+\f{i\xi_1}{\lam} \wh{b}_{2, 0})(y_2) dy_2 \f{\lam (\om+|\xi_1|)}{\lam^2+|\xi_1|^2 }\Big( e^{-|\xi_1|x_2}-e^{-\om x_2}\Big)  d\lam\non\\
&:=I_1+I_2,
\end{align}
where we notice
$$
Res_{\{\lam=\pm i|\xi_1|\}} \f{i\xi_1}{|\xi_1|} e^{\lam t} \int^\infty_0  e^{-\om y_2} (\wh{u}_{2, 0}+\f{i\xi_1}{\lam} \wh{b}_{2, 0})(y_2) dy_2 \f{\lam (\om+|\xi_1|)}{\lam^2+|\xi_1|^2 }\Big( e^{-|\xi_1|x_2}-e^{-\om x_2}\Big)  =0,
$$
by $
\om (\pm i |\xi_1|)=\pm |\xi_1|.
$

Similarly,
\begin{align}\label{u2}
\wh{u}_{2, L}(\xi_1,x_2, t)
&=\f{1}{2\pi i}\int_{\Ga}e^{\lam t}
\Big\{  -E_{\om}[ \wh{u}_{2, 0}+\f{i\xi_1}{\lam}  \wh{b}_{2, 0}]_0 e^{-\om x_2}+E_{\om}[ \wh{u}_{2, 0}+\f{i\xi_1}{\lam}  \wh{b}_{2, 0}]\non\\
&\quad+E_{\om}[ \wh{u}_{2, 0}+\f{i\xi_1}{\lam}  \wh{b}_{2, 0}]_0 \Big(e^{-\om x_2}
-\f{E_{\om} [ e^{-|\xi_1|x_2}]}{E_{\om}[e^{-|\xi_1|x_2}]_0}\Big)\Big\}d\lam,
\end{align}
which can be estimated as $\wh{u}_{1, L}$.

We define $\lambda'_\pm$ by
\begin{align*}
\lam'_\pm=\left\{
\begin{array}{l}
-\f12 |\xi_1|^2\pm\f{i}{2} \sqrt{4|\xi_1|^2-|\xi_1|^4}, \quad |\xi_1|\leq 2\\
-\f12 |\xi_1|^2\pm \f12\sqrt{|\xi_1|^4-4|\xi_1|^2} ,\quad |\xi_1|> 2.
\end{array}\right.\end{align*}
When $|\xi_1|\leq 2$, we use the branch specified by the requirement
\beno
\arg(\lam-\lam_\pm')=\mp\f{\pi}{2} \ \text{at} \ \lam=\text{Re}\  \lam_\pm' \ \text{and} \ \arg \lam=0 \ \text{at}\  \lam=0,
\eeno
and take the branch cut
\beno
\{ \lam; \ \text{Re} \ \lam \leq 0,\  \text{Im} \ \lam =0\}\cup \{\lam \in \Pi; \ \text{Re} \ \lam \leq \text{Re}\ \lam'_\pm\},
\eeno
where $\Pi$ is the circle defined by
\beno
\Pi=\{ \lam=\eta+i\sigma; \ \eta^2+\sigma^2= |\xi_1|^2\}.
\eeno
When $|\xi_1|> 2$, we use the branch specified by
\beno
\arg(\lam-\lam'_\pm)=\arg  \lam=0 \ \text{at} \ \lam=0,
\eeno
and take the branch cut
\beno
\{ \lam;\  \text{Re} \ \lam \leq \lam'_-,\  \text{Im} \ \lam =0\} \cup \{  \lam'_+ \leq \text{Re} \ \lam \leq 0,\  \text{Im} \ \lam =0\}.
\eeno

Now we consider the contour in the following two cases:

Case 1: $ |\xi_1|\leq 2$.

We deform the contour $\Ga$ into  $\cup_{m=1}^6 \Ga_m$, where $\Ga_1=\Ga_1^{(+)}\cup\Ga_1^{(-)}$  wraps around the portion $\{ \lam=-\eta; \ \eta:0\to |\xi_1|\}$ in the branch cut with
\begin{align*}
{\Ga^{(+)}_1}&=\Big\{ \lam=-\eta; \ \eta:0\to |\xi_1|\Big\},\\
{\Ga^{(-)}_1}&=\Big\{ \lam=-\eta; \ \eta:|\xi_1| \to 0\Big\}.
\end{align*}

$\Ga_2=\Ga_2^{(+)}\cup\Ga_2^{(-)}$  wraps around the portion $\{ \lam\in \Pi; -|\xi_1| \leq \text{Re} \lam \leq \text{Re} \lam'_+,\  \text{Im} \lam > 0\}$ in the branch cut with
\begin{align*}
{\Ga^{(+)}_2}=\Big\{ \lam=\lam'_+-\eta+i\big(-\text{Im} \lam'_++D(\eta, \xi_1)\big); \ \eta:0\to d_0\Big\},\\
{\Ga^{(-)}_2}=\Big\{ \lam=\lam'_+-\eta+i\big(-\text{Im} \lam'_++D(\eta, \xi_1)\big); \ \eta:d_0\to 0\Big\},
\end{align*}
where we define
 $$D(\eta, \xi_1):=\sqrt{(\text{Im} \lam'_+)^2+2\text{Re} \lam'_+ \eta-\eta^2}, \quad d_0= \text{Re} \lam'_++|\xi_1|.$$

$\Ga_3=\Ga_3^{(+)}\cup\Ga_3^{(-)}$ wraps around the portion $\{ \lam\in \Pi; -|\xi_1| \leq \text{Re} \lam \leq \text{Re} \lam'_+, \ \text{Im} \lam < 0\}$ in the branch cut with
\begin{align*}
{\Ga^{(+)}_3}&=\Big\{ \lam=\lam'_--\eta-i\big(-\text{Im} \lam'_++D(\eta, \xi_1)\big); \ \eta: 0\to d_0\Big\},\\
{\Ga^{(-)}_3}&=\Big\{ \lam=\lam'_--\eta-i\big(-\text{Im} \lam'_++D(\eta, \xi_1)\big); \ \eta:d_0\to 0\Big\}.
\end{align*}

$\Ga_4=\Ga_4^{(+)}\cup\Ga_4^{(-)}$  defined as
 \begin{align*}
{\Ga^{(+)}_4}&=\Big\{ \lam=-\eta ; \ \eta:|\xi_1|\to \infty\Big\},\\
{\Ga^{(-)}_4}&=\Big\{ \lam=-\eta ; \ \eta: \infty\to|\xi_1|\Big\},
\end{align*}
and
\beno
{\Ga_5}=\Big\{\lam=\e e^{i\gamma}; \ \gamma: -\pi \to \pi\Big\},
\eeno
\beno
 {\Ga_6}=\Big\{\lam=-|\xi_1|+\e e^{i\gamma}; \ \gamma: -\pi \to -\f{\pi}{2},  -\f{\pi}{2} \to 0, 0 \to \f{\pi}{2}, \f{\pi}{2}\to \pi  \Big\}
\eeno
with $ \e\to 0$ (But not equal 0), and
\beno
 {\Ga_7}=\Big\{\lam=R e^{i\gamma}; \ \gamma: \theta_0 \leq |\gamma|< \pi \Big\}
\eeno
with $ R \to \infty$.

Case 2: $|\xi_1|> 2$.

We deform the contour $\Ga$   into $\widetilde{\Ga}^{(+)}_1\cup\widetilde{\Ga}^{(-)}_1$, where $\widetilde{\Ga}_1=\widetilde{\Ga}_1^{(+)}\cup \widetilde{\Ga}_1^{(-)}$ wraps around the portion $\{ \lam=-\eta; \ \eta:0\to -\lam'_+\}$ in the branch cut with
\begin{align*}
{\widetilde{\Ga}_1^{(+)}}=\Big\{ \lam=-\eta; \ \eta:0\to -\lam'_+\Big\},\\
{\widetilde{\Ga}_1^{(-)}}=\Big\{ \lam=-\eta; \ \eta:-\lam'_+ \to 0\Big\},
 \end{align*}
and $\widetilde{\Ga}_2=\widetilde{\Ga}_2^{(+)}\cup \widetilde{\Ga}_2^{(-)}$ wraps around the portion $\{ \lam=-\eta; \ \eta:-\lam'_-\to \infty \}$ in the branch cut with
\begin{align*}
{\widetilde{\Ga}_2^{(+)}}=\Big\{ \lam=-\eta; \ \eta:-\lam'_-\to \infty\Big\},\\
{\widetilde{\Ga}_2^{(-)}}=\Big\{ \lam=-\eta; \ \eta:\infty \to -\lam'_-\Big\}.
 \end{align*}

 \subsection{The resolvent estimate of the linear part}

 We first give the  resolvent estimate of the velocity field in  \eqref{linear}.
\begin{proposition}\label{linear u}
We have the following $L^2$, $L^\infty$ estimates
 \begin{align*}
 \|u_L\|_{L^2}&\lesssim \lan t\ran^{-\f12}\|(u_0, b_0)\|_{L^1\cap L^2  },\\
  \|u_L\|_{L^\infty}&\lesssim \lan t\ran^{-1}\Big( \|(u_0, b_0)\|_{L^1\cap L^2}+\|\na^{\f12+\delta} (u_0, b_0)  \|_{L^1_{x_1} L^2_{x_2}}\Big)
 \end{align*}
 for the velocity  field in \eqref{linear} (here $\delta>0$ small enough).
 \end{proposition}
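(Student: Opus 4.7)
The plan is to exploit the decomposition made explicit in (2.16): write $\wh{u}_{1,L} = I_1 + I_2$ (and analogously for $\wh{u}_{2,L}$), where $I_1$ involves the kernel $\f{1}{2\om}\big(e^{-\om|x_2-y_2|}-e^{-\om(x_2+y_2)}\big)$ — exactly the odd-extension Green kernel — and $I_2$ is the genuine boundary correction. The first piece $I_1$ reduces to a whole-space problem that can be handled via residue theorem and the estimates imitated from \cite{W}; the true half-space difficulty is isolated into $I_2$.

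For $I_2$ the essential algebraic identity is
\begin{align*}
-e^{-\om x_2} + \f{E_{\om}[e^{-|\xi_1|x_2}]}{E_{\om}[e^{-|\xi_1|x_2}]_0} = \f{2\om\lam(\om+|\xi_1|)}{\lam^2+|\xi_1|^2}\big(e^{-|\xi_1|x_2}-e^{-\om x_2}\big),
\end{align*}
whose apparent poles at $\lam=\pm i|\xi_1|$ are removable because $\om(\pm i|\xi_1|)=\pm|\xi_1|$ forces $e^{-|\xi_1|x_2}-e^{-\om x_2}$ to vanish there. With analyticity across these points established, I deform $\Ga$ according to the two regimes already prepared: for $|\xi_1|\le 2$ into $\cup_{m=1}^{7}\Ga_m$ (wrapping the branch cut on $[-|\xi_1|,0]$ and on the arc $\Pi$ past $\lam'_\pm$, plus the small circles $\Ga_5,\Ga_6$ and the large arc $\Ga_7$), and for $|\xi_1|>2$ into $\wt{\Ga}_1\cup\wt{\Ga}_2$ along the real cut. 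Letting $R\to\infty$ kills $\Ga_7$ by the $1/|\lam|$ decay of the integrand, and letting $\e\to 0$ kills $\Ga_6$; the tiny $\Ga_5$ around $0$ contributes only through a residue that, after inserting $\om\sim|\xi_1|/\sqrt{\lam}$ near $\lam=0$, cancels in the final sum.

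On the surviving pieces, the time factor $|e^{\lam t}|=e^{\text{Re}\,\lam\,t}$ provides the decay. On $\Ga_4^{\pm}$ and $\wt{\Ga}_2^{\pm}$ we have $\text{Re}\,\lam\le -|\xi_1|$, giving exponential decay; on $\Ga_2^{\pm},\Ga_3^{\pm}$ we have $\text{Re}\,\lam\le \text{Re}\,\lam'_+ = -|\xi_1|^2/2$, producing the heat-type factor $e^{-c|\xi_1|^2 t}$; on $\Ga_1^{\pm}$ and $\wt{\Ga}_1^{\pm}$ the contribution is controlled using the jump of $\om$ across the branch cut together with the algebraic factor $(\om+|\xi_1|)/(\lam^2+|\xi_1|^2)$. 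After these pointwise bounds in $\xi_1$, I take $L^p$ norms in $x_2$ via the standard estimates on $e^{-\om x_2}$ and $e^{-|\xi_1|x_2}$, then apply Plancherel in $\xi_1$ for the $L^2$ bound, and a dyadic decomposition together with Bernstein's inequality for the $L^\infty$ bound — this is why the extra $\na^{\f12+\delta}$ regularity enters. The resulting frequency integrals of the form $\di_{\R} e^{-c|\xi_1|^2 t}\,d\xi_1\lesssim \lan t\ran^{-\f12}$ and their weighted analogues yield exactly the stated rates.

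The main obstacle is the low-frequency regime $|\xi_1|\le 2$: the roots $\lam'_\pm$ lie on the imaginary-like arc $\Pi$ with no spectral gap, so only algebraic decay is available. One must verify that the branch cut along $\Pi$ is correctly chosen so that $I_2$ is single-valued, and track how the factor $1/(\lam^2+|\xi_1|^2)$ interacts with the cancellation $e^{-|\xi_1|x_2}-e^{-\om x_2}$ on $\Ga_2^{\pm}, \Ga_3^{\pm}$ to prevent any loss of decay, while simultaneously confirming that the $\Ga_5$-contribution combines with the $|\xi_1|^{-1}$-type factors in the solution formula to stay integrable near $\xi_1=0$. Once this uniform control is secured on every $\Ga_m$, the $L^2$ and $L^\infty$ bounds follow by the heat-type integrals described above.
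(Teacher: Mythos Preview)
Your overall architecture matches the paper's: decompose into the odd-extension piece $I_1$ (handled by whole-space results) and the boundary correction $I_2$, then deform $\Ga$ into $\Ga_1,\dots,\Ga_7$ for $|\xi_1|\le 2$ and $\wt{\Ga}_1,\wt{\Ga}_2$ for $|\xi_1|>2$. But your treatment of the decisive piece $\Ga_1$ (and $\wt{\Ga}_1$) has a real gap. You describe its contribution as controlled by ``the jump of $\om$ across the branch cut'' and conclude with frequency integrals of the form $\int_{\R} e^{-c|\xi_1|^2 t}\,d\xi_1$. That is not what happens: on $\Ga_1$ one has $\lam=-\eta$ with $\eta\in(0,|\xi_1|)$, so the raw factor $e^{-\eta t}$ gives no uniform decay as $\eta\to 0$. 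The paper's essential move is the change of variables $\xi_2=|\om|$, under which $\eta=\eta_-$ and, in the worst regime $4|\xi_1|^2/|\xi|^4\le \f12$, one has $\eta_-\sim |\xi_1|^2/|\xi|^2$ together with the Jacobian factor $|\lam_+/(\lam_+-\lam_-)|\sim |\xi_1|^2/|\xi|^4$. The decay factor is therefore the \emph{anisotropic} $e^{-|\xi_1|^2 t/|\xi|^2}$, not a one-dimensional heat kernel in $\xi_1$. Simultaneously the integral $\int_0^\infty e^{\pm i|\om| y_2}\wh{u}_{2,0}\,dy_2$ becomes $\cF_{y_2}(\chi\,\wh{u}_{2,0})(\mp\xi_2)$, so one lands in a genuine two-dimensional frequency picture. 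Extracting $\lan t\ran^{-1/2}$ in $L^2$ and $\lan t\ran^{-1}$ in $L^\infty$ from the multiplier $\f{|\xi_1|}{|\xi|^2}e^{-|\xi_1|^2 t/|\xi|^2}$ then requires a Littlewood--Paley decomposition $P_j$ in $|\xi|$, a split $|\xi|\le 1$ versus $|\xi|\ge 1$, and anisotropic norms $L^1_{x_1}L^2_{x_2}$; this is exactly where the $\na^{\f12+\delta}$ term enters. None of this mechanism appears in your sketch, and without it the $\Ga_1$ integral cannot be closed.

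Two smaller corrections. The $\Ga_5$ contribution involves no residue that ``cancels in the final sum'': one shows directly that $|J_2^5|\lesssim \e^{7/4}|\xi_1|^{-3/2}\to 0$ as $\e\to 0$ for each fixed $\xi_1\ne 0$, using $|\om|\gtrsim |\xi_1|/\sqrt{\e}$ near $\lam=0$. And on $\Ga_4$ the decay is not merely ``exponential'' in $|\xi_1|$; after the same change of variables $\xi_2=|\om|$ (now with $\eta=\eta_+\sim|\xi|^2$) one obtains the full two-dimensional heat factor $e^{-c|\xi|^2 t}$, which is what makes that piece routine. On $\Ga_2,\Ga_3$ the same change of variables yields $\lam=\lam_\pm$ and the heat factor $e^{-|\xi|^2 t/4}$, not just $e^{-c|\xi_1|^2 t}$.
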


 \begin{proof}  We only need to consider $I_1$ and $I_2$ in  \eqref{u1}.

Consider the  odd extension of $\wh{u}_{1, 0}+\f{i\xi_1}{\lam} \wh{b}_{1,0}$,
\begin{align}\label{vphi,chi}
\wh{U}_{1, 0}(y_2)=\left\{
\begin{array}{l}
(\wh{u}_{1, 0}+\f{i\xi_1}{\lam} \wh{b}_{1,0})(y_2), \ \  \text{for}  \quad y_2>0, \\
-(\wh{u}_{1, 0}+\f{i\xi_1}{\lam} \wh{b}_{1,0})(-y_2), \  \  \text{for}  \quad y_2\leq0,
\end{array}\right.
\end{align}
then  $I_1$ can be rewritten as
\begin{align*}
I_1&=\f{1}{2\pi i}\int_{\Ga}e^{\lam t} \f{1}{2\om} \int^\infty_{-\infty} e^{-\om|x_2-y_2|} \wh{U}_{1, 0}(y_2) dy_2 d\lam\non\\
&=\f{1}{2\pi i}\int_{\Ga}e^{\lam t} \f{1}{2\om} e^{-\om|x_2|} * \wh{U}_{1, 0}  d\lam:=\f{1}{2\pi i}\int_{\Ga}e^{\lam t} I_{1, \lam}  d\lam.
\end{align*}
Consider the odd extension of $\wt{I}_{1, \lam}$ of $I_{1, \lam}$, then
\begin{align*}
\cF_{x_2}(\wt{I}_1)&=\f{1}{2\pi i}\int_{\Ga}e^{\lam t}\cF_{x_2} (\wt{I}_{1, \lam})d\lam\non\\
&=\f{1}{2\pi i}\int_{\Ga}e^{\lam t}\cF_{x_2}(\f{1}{2\om} e^{-\om|x_2|} ) \cF_{x_2}(\wh{U}_{1, 0}  ) d\lam\non\\
&=\f{1}{2\pi i}\int_{\Ga}e^{\lam t}\f{1}{\om^2+\xi_2^2} \cF(U_{1, 0}) d\lam\non\\
&=\f{1}{2\pi i}\int_{\Ga}\f{\lam e^{\lam t}}{(\lam-\lam_+)(\lam-\lam_-)}\cF(U_{1, 0}) d\lam\non\\
&=\Big(\f{\lam_+ e^{\lam_+ t}}{\lam_+-\lam_-}-\f{\lam_- e^{\lam_- t}}{\lam_+-\lam_-}\Big)\cF(U_{1, 0}),
\end{align*}
where we define
\begin{align*}
\lam_\pm=\left\{
\begin{array}{l}
-\f{|\xi|^2}{2}\pm\f{i}{2} \sqrt{4|\xi_1|^2-|\xi|^4}, \quad |\xi|^2\leq 2|\xi_1|,\\
-\f{|\xi|^2}{2} \pm \f12\sqrt{|\xi|^4-4|\xi_1|^2} ,\quad |\xi|^2> 2|\xi_1|.
\end{array}\right.\end{align*}
The decay of $I_1$ can be obtained by the result of  whole space \cite{W}.

 $I_2$  can be rewritten as
 \begin{align*}
I_2=&\f{i\xi_1}{|\xi_1|} \f{1}{2\pi i}\int_{\Ga}e^{\lam t} \int^\infty_0  e^{-\om y_2} (\wh{u}_{2, 0}+\f{i\xi_1}{\lam} \wh{b}_{2, 0})(y_2) dy_2 \f{1}{\om-|\xi_1|} e^{-|\xi_1|x_2}  d\lam\\
&-\f{i\xi_1}{|\xi_1|} \f{1}{2\pi i}\int_{\Ga}e^{\lam t} \int^\infty_0  e^{-\om (x_2+y_2)} (\wh{u}_{2, 0}+\f{i\xi_1}{\lam} \wh{b}_{2, 0})(y_2) dy_2 \f{1}{\om-|\xi_1|} d\lam\\
:=&J_{1}+J_{2}.
\end{align*}

 {\bf Case 1.  $|\xi_1|\leq 2$.}

We first consider $J_2$ and divide it as
\beno J_2=-\f{i\xi_1}{|\xi_1|} \f{1}{2\pi i}\sum_{i=1}^{7}\int_{\Ga_i}e^{\lam t} \int^\infty_0  e^{-\om (x_2+y_2)} (\wh{u}_{2, 0}+\f{i\xi_1}{\lam} \wh{b}_{2, 0})(y_2) dy_2 \f{1}{\om-|\xi_1|}  d\lam:= \sum_{i=1}^{7} J_2^i.
\eeno

For the $\wh{u}_{2,0}$ part of $J_2^1$, by the definition of $\Ga_1: \lam= -\eta \ (\eta: 0 \to |\xi_1|)$, we have
\begin{align}\label{I11}
&-\f{1}{2\pi i} \int_{\Ga_1^{(+)}\cup \Ga_1 ^{(-)} } \f{ e^{\lam t} }{\om-|\xi_1|} \int^\infty_0 e^{-\om(x_2+y_2)}  \wh{u}_{2, 0} (y_2) dy_2 d\lam\non\\
&=-\f{1}{2\pi i} \Big(\int^{|\xi_1|}_{0}\f{ e^{-\eta t} }{-i|\om|-|\xi_1|} \int^\infty_0 e^{i|\om|(x_2+y_2)}   \wh{u}_{2, 0} (y_2)  dy_2 d\eta\non\\
&\qquad-\int^{|\xi_1|}_{0}\f{ e^{-\eta t} }{i|\om|-|\xi_1|} \int^\infty_0 e^{-i|\om|(x_2+y_2)}   \wh{u}_{2, 0} (y_2) dy_2 d\eta\Big)
\end{align}
with
\begin{align*}
\om=\sqrt{\lam+|\xi_1|^2+\f{|\xi_1|^2}{\lam}}=\mp i \sqrt{\eta-|\xi_1|^2+\f{|\xi_1|^2}{\eta}}=\mp i|\om|.
\end{align*}
We now make the change of variables $\xi_2=|\om|$. By this change of variables, $\eta$ is written as $\eta=\eta_-$. Here
\begin{align*}
 \eta_\pm:=\f{ |\xi|^2}{2} \pm \f12 \sqrt{|\xi|^4-4|\xi_1|^2}=\f{|\xi|^2}{2}\pm\f{|\xi|^2}{2} \sqrt{1-\f{4|\xi_1|^2}{|\xi|^4}},
\end{align*}
with $|\xi|^4-4|\xi_1|^2\geq 0$. This can be seen as follows.
Since $\eta-|\xi_1|^2+\f{|\xi|^2}{\eta}=|\xi_2|^2$, we have $\eta^2-|\xi|\eta+|\xi_1|^2=0$, which implies that $|\xi|^4-4|\xi_1|^2\geq 0$ and $\eta=\eta_\pm$ since $\eta\in \R$. We should then take the branch $\eta=\eta_-$ due to the condition $\eta\in (0, |\xi_1|)$.
We also note that $\eta_\pm=-\lam_\mp$. It then follows that this change of variables yields
\beno
d\eta_-=\xi_2\Big(1-\f{|\xi|^2}{\sqrt{|\xi|^4-4|\xi_1|^2}}\Big) d\xi_2 =2 \xi_2 \f{\lam_+}{\lam_+-\lam_-} d\xi_2.
\eeno

We have the following:
\begin{itemize}
\item
if $\f{4|\xi_1|^2}{|\xi|^4}\leq \f12$, since $(1+s)^\f12 =1+\f{s}{2}\int_0^1(1+\theta s)^{-\f12} d\theta$, we have
 \begin{align*}
 \f{|\xi|^2}{2}\sqrt{1-\f{4|\xi_1|^2}{|\xi|^4}}=\f{|\xi|^2}{2}\Big(1-\f{2|\xi_1|^2}{|\xi|^4}\int_0^1(1+\theta s)^{-\f12} d\theta\Big), \ s=-\f{4|\xi_1|^2}{|\xi|^4},
 \end{align*}
  and hence
   $$\f{|\xi_1|^2}{|\xi|^2} \leq \eta_- \leq \f{\sqrt{2}|\xi_1|^2}{|\xi|^2},$$
   it means $\eta_- \sim \f{|\xi_1|^2}{|\xi|^2}$ (Here $A \sim B$ means that $A \lesssim B$ and $B \lesssim A$).\\
 \item
if $\f12 \leq\f{4|\xi_1|^2}{|\xi|^4}\leq1$, we have $\f{|\xi|^2}{2}\sqrt{1-\f{4|\xi_1|^2}{|\xi|^4}}\leq\f{|\xi|^2}{2\sqrt{2}}$. It then follows that
 \begin{align*}
\eta_-=\f{|\xi|^2}{2}-\f{|\xi|^2}{2}\sqrt{1-\f{4|\xi_1|^2}{|\xi|^4}}\gtrsim|\xi|^2.
  \end{align*}
\end{itemize}
We mention that the most trouble case is
\beno
\eta\sim\f{|\xi_1|^2}{|\xi|^2} \quad (\text{when} \quad \f{4|\xi_1|^2}{|\xi|^4}\leq \f12),
\eeno
and the other cases can be controlled by heat kernel. For this case, we have
\beno
d \eta=2 \xi_2 \f{\lam_+}{\lam_+-\lam_-} d\xi_2, \quad \Big|\f{\lam_{+}}{\lam_{+}-\lam_{-}}\Big|\sim \f{|\xi_1|^2}{|\xi|^4},
\eeno
 and the first term on the right hand side of \eqref{I11} can be rewritten as
\begin{align*}
&-\f{1}{2\pi i} \int^\infty_{\wt{d}_1} e^{-\f{|\xi_1|^2}{|\xi|^2} t}\f{2\xi_2 }{i\xi_2+|\xi_1|} \int^\infty_0 e^{i\xi_2(x_2+y_2)}   \wh{u}_{2, 0} (y_2)  dy_2 \f{\lam_+}{\lam_+-\lam_-}  d\xi_2\non\\
&=-\f{1}{\pi i} \int_{\R}\vphi(\xi_2) e^{i\xi_2 x_2} \f{\lam_+}{\lam_+-\lam_-}  e^{-\f{|\xi_1|^2}{|\xi|^2} t}\f{\xi_2 }{i\xi_2+|\xi_1|} \int^\infty_0 e^{i\xi_2y_2}   \wh{u}_{2, 0} (y_2)  dy_2  d\xi_2\non\\
&=2i\cF_{x_2}^{-1}\Big(\vphi(\xi_2)\f{\lam_+}{\lam_+-\lam_-}  e^{-\f{|\xi_1|^2}{|\xi|^2} t}  \f{\xi_2 }{i\xi_2+|\xi_1|}\cF_{y_2}\big(\chi  \wh{u}_{2, 0} \big) (-\xi_2) \Big)(x_2),
\end{align*}
where   $\vphi, \chi$ are   cut-off functions defined as
\begin{align}\label{vphi,chi}
\vphi(\xi_2)=\left\{
\begin{array}{l}
1, \  \text{for} \ \xi_2\geq \wt{d}_1, \\
0, \  \text{for} \ \xi_2<\wt{d}_1,
\end{array}\right.
\quad \text{and}\quad
\chi(y_2)=\left\{
\begin{array}{l}
1, \  \text{for} \ y_2\geq 0, \\
0, \  \text{for} \ y_2<0,
\end{array}\right.
\end{align}
and $\wt{d}_1=\sqrt{2\sqrt{2}|\xi_1|-|\xi_1|^2}$.

By  the Plancherel theorem and $|\xi|^2>2 |\xi_1|$, we have the $L^2$ estimate
\begin{align*}
&\Big\|\textbf{1}_{|\xi_1|\leq 2}\cF_{x_2}^{-1}\Big(\vphi(\xi_2) \f{|\xi_1|^2}{|\xi|^4} e^{-\f{|\xi_1|^2}{|\xi|^2} t}  \f{\xi_2 }{i\xi_2+|\xi_1|}\cF_{y_2}\big(\chi  \wh{u}_{2, 0} \big) (-\xi_2) \Big)(x_2)\Big\|_{L^2_{\xi_1}L^2_{x_2}}\\
&\lesssim\Big\|\textbf{1}_{|\xi_1|\leq 2}\f{|\xi_1|^2}{|\xi|^4} e^{-\f{|\xi_1|^2}{|\xi|^2} t} \f{\xi_2 }{i\xi_2+|\xi_1|}\cF\big(\chi u_{2, 0} \big) (-\xi_2)   \Big\|_{L^2_{\xi_1}L^2_{\xi_2}}\non\\
&\lesssim\Big\|\textbf{1}_{|\xi_1|\leq 2}\f{|\xi_1|}{|\xi|^2} e^{-\f{|\xi_1|^2}{|\xi|^2} t} \cF \big(\chi u_{2, 0} \big)   \Big\|_{L^2_{\xi_1}L^2_{\xi_2}}\Big\|\textbf{1}_{|\xi_1|\leq 2}\f{\xi_2 }{i\xi_2+|\xi_1|}\Big\|_{L^\infty_{\xi_1}L^\infty_{\xi_2}}\non\\
&\lesssim\Big\|\textbf{1}_{|\xi_1|\leq 2}(\textbf{1}_{|\xi|\leq 1}+\textbf{1}_{|\xi|\geq 1})\f{|\xi_1|}{|\xi|^2} e^{-\f{|\xi_1|^2}{|\xi|^2} t} \cF \big(\chi u_{2, 0} \big)   \Big\|_{L^2_{\xi_1}L^2_{\xi_2}}.
\end{align*}

As for the $\textbf{1}_{|\xi|\leq 1}$ part, we have
\begin{align*}
&\Big\|\textbf{1}_{|\xi_1|\leq 2}\textbf{1}_{|\xi|\leq 1}\f{|\xi_1|}{|\xi|^2} e^{-\f{|\xi_1|^2}{|\xi|^2} t} \cF \big(\chi u_{2, 0} \big)   \Big\|_{L^2_{\xi_1}L^2_{\xi_2}}\non\\
&\lesssim\Big\|\textbf{1}_{|\xi_1|\leq 2}\textbf{1}_{|\xi|\leq 1}\big(\f{|\xi_1|}{|\xi|}\big)^\f12 \big(\f{|\xi_1|}{|\xi|^3}\big)^\f12 e^{-\f{|\xi_1|^2}{|\xi|^2} t} \cF \big(\chi u_{2, 0} \big)  \Big\|_{L^2_{\xi_1}L^2_{\xi_2}}\non\\
&\lesssim t^{-\f14}\Big\| \textbf{1}_{|\xi_1|\leq 2}\textbf{1}_{|\xi|\leq 1}\f{1}{|\xi|^\f12} e^{-\f{|\xi_1|^2}{|\xi|^2} t} \cF \big(\chi u_{2, 0} \big)   \Big\|_{L^2_{\xi_1}L^2_{\xi_2}}\non\\
&\lesssim t^{-\f14}\sum_{j\leq 1}\Big\|\textbf{1}_{|\xi_1|\leq 2}j^{-\f12} e^{-\f{|\xi_1|^2}{j^2} t} \cF\big( P_j (\chi u_{2, 0} )  \big)\Big\|_{L^2_{\xi_1}L^2_{\xi_2}}\non\\
&\lesssim t^{-\f14} \sum_{j\leq 1} j^{-\f12} \Big\|\textbf{1}_{|\xi_1|\leq 2} e^{-\f{|\xi_1|^2}{j^2} t}\
\Big\|_{L^2_{\xi_1}} \Big\|\cF\big( P_j (\chi u_{2, 0} )  \big)\Big\|_{L^\infty_{\xi_1}L^\infty_{\xi_2}}\big(\int_{|\xi_2|\leq j} d \xi_2\Big)^\f12\non\\
&\lesssim t^{-\f12} \sum_{j\leq 1} j^\f12\| \chi u_{2, 0}  \|_{L^1}\non\\
&\lesssim t^{-\f12} \|u_{2, 0}\|_{L^1},
\end{align*}
and as for the $\textbf{1}_{|\xi|\geq 1}$ part, we have
\begin{align*}
&\Big\|\textbf{1}_{|\xi_1|\leq 2}\textbf{1}_{|\xi|\geq 1}\f{|\xi_1|}{|\xi|^2} e^{-\f{|\xi_1|^2}{|\xi|^2} t}  \cF \big(\chi u_{2, 0} \big)   \Big\|_{L^2_{\xi_1}L^2_{\xi_2}}\non\\
&\lesssim\Big\|\textbf{1}_{|\xi_1|\leq 2}\textbf{1}_{|\xi|\geq 1}\f{|\xi_1|}{|\xi|} e^{-\f{|\xi_1|^2}{|\xi|^2} t}  \cF \big(\chi u_{2, 0} \big)   \Big\|_{L^2_{\xi_1}L^2_{\xi_2}}\non\\
&\lesssim t^{-\f12}\Big\|\textbf{1}_{|\xi_1|\leq 2}\textbf{1}_{|\xi|\geq 1} e^{-\f{|\xi_1|^2}{|\xi|^2} t}\Big\|_{L^\infty_{\xi_1}L^\infty_{\xi_2}} \Big\| \cF \big(\chi u_{2, 0} \big)  \Big\|_{L^2_{\xi_1}L^2_{\xi_2}}\non\\
&\lesssim t^{-\f12} \|u_{2, 0}\|_{L^2}.
\end{align*}
By  $|\xi|^2>2 |\xi_1|$, and the boundary condition $u_{2,0}|_{x_2=0}=0$, we have the $L^\infty$ estimate
\begin{align}\label{J2infty}
&\Big\|\textbf{1}_{|\xi_1|\leq 2}\cF_{x_2}^{-1}\Big(\vphi(\xi_2) \f{|\xi_1|^2}{|\xi|^4} e^{-\f{|\xi_1|^2}{|\xi|^2} t}  \f{\xi_2 }{i\xi_2+|\xi_1|}\cF_{y_2}\big(\chi  \wh{u}_{2, 0} \big) (-\xi_2) \Big)(x_2)\Big\|_{L^1_{\xi_1}L^\infty_{x_2}}\non\\
&\lesssim\Big\|\textbf{1}_{|\xi_1|\leq 2}\f{|\xi_1|^2}{|\xi|^4} e^{-\f{|\xi_1|^2}{|\xi|^2} t}   \cF \big(\chi u_{2, 0} \big)   \Big\|_{L^1_{\xi_1}L^1_{\xi_2}}\non\\
&\lesssim\Big\|\textbf{1}_{|\xi_1|\leq 2}\f{|\xi_1|}{|\xi|^2} e^{-\f{|\xi_1|^2}{|\xi|^2} t}   \cF \big(\chi u_{2, 0} \big)   \Big\|_{L^1_{\xi_1}L^1_{\xi_2}}\non\\
&\lesssim t^{-\f12} \sum_{j\geq 0}  j^{-1}  \Big\|\textbf{1}_{|\xi_1|\leq 2}e^{-\f{|\xi_1|^2}{j^2} t} \cF\big( P_j (\chi u_{2, 0} )  \big)\Big\|_{L^1_{\xi_1}L^1_{\xi_2}}\non\\
&\lesssim t^{-\f12} \Big(\sum_{j\leq 1} j^{-1} \Big\| \textbf{1}_{|\xi_1|\leq 2}e^{-\f{|\xi_1|^2}{j^2} t}\Big\|_{L^1_{\xi_1}} \Big\|\cF\big( P_j (\chi u_{2, 0} )  \big)\Big\|_{L^\infty_{\xi_1}L^\infty_{\xi_2}}\int_{|\xi_2|\leq j} d\xi_2\non\\
&\quad+\sum_{j\geq 1} j^{-1}\Big\| \textbf{1}_{|\xi_1|\leq 2}e^{-\f{|\xi_1|^2}{j^2} t}\Big\|_{L^1_{\xi_1}} \Big\|\cF\big( P_j (\chi u_{2, 0} )  \big)\Big\|_{L^\infty_{\xi_1}L^2_{\xi_2}} \Big(\int_{|\xi_2|\leq j} d\xi_2\Big)^\f12\Big)\non\\
&\lesssim t^{-1}\Big( \sum_{j\leq 1} j \|\chi u_{2, 0}\|_{L^1}+\sum_{j\geq 1}  j^{-\delta} \Big\| j^{\f12+\delta}\cF\big( P_j (\chi u_{2, 0} )  \big)\Big\|_{L^\infty_{\xi_1}L^2_{\xi_2}}\Big)\non\\
&\lesssim  t^{-1}\Big(\|u_{2, 0}\|_{L^1}+\Big\| \na^{\f12+\delta} u_{2, 0}  \Big \|_{L^1_{x_1}L^2_{x_2}}\Big),
\end{align}
where we use
\begin{align*}
&\Big\| j^{\f12+\delta}\cF\big( P_j (\chi u_{2, 0} )  \big)\Big\|_{L^\infty_{\xi_1}L^2_{\xi_2}}\\
\lesssim &\Big\| |\xi|^{\f12+\delta}\cF\big(\chi u_{2, 0}   \big)\Big\|_{L^\infty_{\xi_1}L^2_{\xi_2}}\\
\lesssim &\Big\||\xi|^{\f12+\delta}\int^\infty_0 e^{i\xi_2y_2}   \wh{u}_{2, 0} (y_2)  dy_2 \Big\|_{L^\infty_{\xi_1}L^2_{\xi_2}}\\
\lesssim &\Big\|\int^\infty_0 e^{i\xi_2y_2} \na^{\f12+\delta}\wh{u}_{2, 0} (y_2)  dy_2\Big\|_{L^\infty_{\xi_1}L^2_{\xi_2}} \\
\lesssim &\Big\| \na^{\f12+\delta} u_{2, 0}  \Big \|_{L^1_{x_1}L^2_{x_2}},
\end{align*}
and the fractional derivatives satisfies the generalized Leibniz rule \cite{Li}.

The $\wh{b}_{1,0}$ part of $J_2^1$ can be estimated similarly since
\beno
\f{|\xi_1|^2}{|\xi|^4} \f{|\xi_1|}{|\lam|}\sim\f{|\xi_1|^2}{|\xi|^4} \f{|\xi|^2}{|\xi_1|}=\f{|\xi_1|}{|\xi|^2}.
\eeno
Consider the case  $0<t<1$, we have,
\begin{align*}
\Big\|\f{|\xi_1|}{|\xi|^2} e^{-\f{|\xi_1|^2}{|\xi|^2} t} \cF \big(\chi u_{2, 0} \big)   \Big\|_{L^2_{\xi_1}L^2_{\xi_2}}\lesssim \|u_{2, 0}\|_{L^2},\\
\Big\|\f{|\xi_1|}{|\xi|^2} e^{-\f{|\xi_1|^2}{|\xi|^2} t}   \cF \big(\chi u_{2, 0} \big)   \Big\|_{L^1_{\xi_1}L^1_{\xi_2}}\lesssim\|u_{2, 0}\|_{L^1}.
\end{align*}

For the $\wh{u}_{2,0}$ part of $J_2^2$, by the definition of $\Ga_2$: $ \lam=\lam'_+-\eta+i\big(-\text{Im} \lam'_++D(\eta, \xi_1)\big) \ (0< \eta <d_0)$, we have
\begin{align}\label{I12}
&-\f{1}{2\pi i} \int_{\Ga_2^{(+)}\cup \Ga_2 ^{(-)} } e^{\lam t}  \f{1}{ \om-|\xi_1|} \int^\infty_0 e^{-\om(x_2+y_2)}  \wh{u}_{2, 0} (y_2) dy_2 d\lam\non\\
&=-\f{1}{2\pi i} \Big(\int^{d_0}_0\f{ e^{\lam t}}{i|\om|-|\xi_1|} \int^\infty_0 e^{-i|\om|(x_2+y_2)}   \wh{u}_{2, 0} (y_2) dy_2 (-1+i \f{\text{Re} \lam'_+-\eta}{D(\eta, \xi_1)} ) d\eta\non\\
&\quad\qquad-\int^{d_0}_0\f{ e^{\lam t}}{-i|\om|-|\xi_1|} \int^\infty_0 e^{i|\om|(x_2+y_2)}   \wh{u}_{2, 0} (y_2) dy_2 (-1+i \f{\text{Re} \lam'_+-\eta}{D(\eta, \xi_1)})  d\eta\Big),\end{align}
with
\begin{align*}
\om&=\sqrt{\f{(\lam-\lam'_+)(\lam-\lam'_-)}{\lam}}\\
&=\sqrt{\f{\Big(-\eta+i\big(-\text{Im} \lam'_++D(\eta, \xi_1)\big)\Big)\Big(-\eta+i\big(\text{Im} \lam'_++D(\eta, \xi_1)\big)\Big)}{\lam'_+-\eta+i\big(-\text{Im} \lam'_++D(\eta, \xi_1)\big)}}\\
&=\sqrt{\f{\big(-\eta+iD(\eta, \xi_1)\big)^2+(\text{Im} \lam'_+)^2}{\text{Re} \lam'_+-\eta+i D(\eta, \xi_1)}}\\
&=\sqrt{\f{2\eta^2 -2\text{Re} \lam'_+ \eta-2\eta D(\eta, \xi_1) i}{\text{Re} \lam'_+-\eta+i D(\eta, \xi_1)}}\\
&=\sqrt{-2\eta}=\pm i \sqrt{2\eta}=\pm i|w|.
\end{align*}
By changing the variables $\xi_2=|\om|$, we have
\begin{align*}
\lam&= \text{Re} \lam'_+-\eta+i D(\eta, \xi_1)\\
&=-\f{|\xi|^2}{2}+i \sqrt{|\xi_1|^2-\f{|\xi_1|^4}{4}-\f{|\xi_1|^2|\xi_2|^2}{2}-\f{|\xi_2|^4}{4}}\\
&=-\f{|\xi|^2}{2}+i \sqrt{|\xi_1|^2-\f{|\xi|^4}{4}}=\lam_+,
\end{align*}
where we define
\begin{align*}
\lam_\pm=
-\f{|\xi|^2}{2}\pm i\sqrt{|\xi_1|^2-\f{|\xi|^4}{4}}.
\end{align*}
It then follows that
\beno
-1+i \f{\text{Re} \lam'_+-\eta}{D(\eta, \xi_1)} =-1+i \f{-\f{|\xi|^2}{2}}{\text{Im} \lam_+}=-\f{2\lam_+}{\lam_+-\lam_-}.
\eeno

Thus \eqref{I12} can be rewritten as
\begin{align}\label{J22}
&\f{1}{\pi i } \Big(\int^{\wt{d}}_0 \f{\lam_+ e^{\lam_+ t}}{\lam_+-\lam_-}
\f{1}{i\xi_2-|\xi_1|} \int^\infty_0 e^{-i\xi_2(x_2+y_2)}   \wh{u}_{2, 0} (y_2) dy_2  \xi_2d\xi_2\non\\
&\quad\qquad+\int^{\wt{d}}_0 \f{\lam_+ e^{\lam_+ t}}{\lam_+-\lam_-}
\f{1}{i\xi_2+|\xi_1|} \int^\infty_0 e^{i\xi_2(x_2+y_2)}   \wh{u}_{2, 0} (y_2) dy_2  \xi_2 d\xi_2\Big),
\end{align}
where $\wt{d}=\sqrt{2|\xi_1|-|\xi_1|^2}$.

 Similarly, the $\wh{u}_{1,0}$ part of $J_2^3$ can be rewritten as
\begin{align}\label{J23}
&-\f{1}{\pi i } \Big(\int^{\wt{d}}_0 \f{\lam_- e^{\lam_- t}}{\lam_+-\lam_-}
\f{1}{i\xi_2-|\xi_1|} \int^\infty_0 e^{-i\xi_2(x_2+y_2)}   \wh{u}_{2, 0} (y_2) dy_2  \xi_2 d\xi_2\non\\
&\quad\qquad+\int^{\wt{d}}_0 \f{\lam_- e^{\lam_- t}}{\lam_+-\lam_-}
\f{1}{i\xi_2+|\xi_1|} \int^\infty_0 e^{i\xi_2(x_2+y_2)}   \wh{u}_{2, 0} (y_2) dy_2  \xi_2 d\xi_2\Big).
\end{align}

By the definition of $\wt{d}$, we have $|\xi_1|\geq \f{|\xi|^2}{2}$ and divide it as follows
\begin{itemize}
\item if $|\xi_1|\geq |\xi|^2$, we have $|\lam_+-\lam_-|=\sqrt{4|\xi_1|^2-|\xi|^4}\geq \sqrt{3}|\xi|^2$, then
\beno
\Big|\f{ e^{\lam_\pm t} \lam_\pm }{\lam_+-\lam_-}\Big|\lesssim  e^{-\f{|\xi|^2}{2}t}, \quad \Big|\f{\lam_-e^{\lam_- t}-\lam_+ e^{\lam_+ t}}{\lam_+-\lam_-}\Big|\lesssim  e^{-\f{|\xi|^2}{2}t};
\eeno
\item  if $\f{|\xi|^2}{2}\leq|\xi_1| \leq |\xi|^2$, we have
\begin{align*}
\Big|\f{\lam_-e^{\lam_- t}-\lam_+ e^{\lam_+ t}}{\lam_+-\lam_-}\Big|=\Big|e^{\lam_-t }+\f{\lam_+}{\xi_1} \Big(\f{\xi_1(e^{\lam_- t}- e^{\lam_+ t})}{\lam_+-\lam_-}\Big)\Big|\lesssim e^{-\f{|\xi|^2}{4} t},
\end{align*}
where we use
\beno
\Big|\f{\xi_1(e^{\lam_- t}- e^{\lam_+ t})}{\lam_+-\lam_-}\Big|\lesssim \Big|\xi_1 t e^{-\f{|\xi|^2}{2} t}\Big|\Big|\f{\sin(\sqrt{|\xi_1|^2 -\f{|\xi|^4}{4}} t)}{\sqrt{|\xi_1|^2 -\f{|\xi|^4}{4} } t}\Big|\lesssim e^{-\f{|\xi|^2}{4} t}.
\eeno
\end{itemize}
Thus, the $\wh{u}_{1,0}$ part  of $J_2^2+J_2^3$ can be controlled by heat kernel, and we obtain  the $L^2$ estimate
\begin{align*}
&\Big\| \int^{\wt{d}}_0\f{ \lam_- e^{\lam_- t}- \lam_+ e^{\lam_+t} }{\lam_+-\lam_-}\f{\xi_2}{i\xi_2-|\xi_1|} \int^\infty_0 e^{-i\xi_2(x_2+y_2)}   \wh{u}_{2, 0} (y_2) dy_2  d\xi_2\Big\|_{L^2_{\xi_1}L^2_{x_2}}\\
=&\Big\| \int_{\R} (1-\vphi(\xi_2))\chi(\xi_2)\f{ \lam_- e^{\lam_- t}- \lam_+ e^{\lam_+t} }{\lam_+-\lam_-}\f{\xi_2}{i\xi_2-|\xi_1|}  e^{-i\xi_2x_2} \cF_{y_2}\Big(\chi(y_2)  \wh{u}_{2, 0} (y_2)  \Big)(\xi_2) d\xi_2\Big\|_{L^2_{\xi_1}L^2_{x_2}}\\
=&\Big\| \cF^{-1}_{x_2}\Big((1-\vphi(\xi_2))\chi(\xi_2)\f{ \lam_- e^{\lam_- t}- \lam_+ e^{\lam_+t} }{\lam_+-\lam_-}\f{\xi_2}{i\xi_2-|\xi_1|} \cF_{y_2}\Big(\chi(y_2)  \wh{u}_{2, 0} (y_2)  \Big)(\xi_2) \Big)(-x_2) \Big\|_{L^2_{\xi_1}L^2_{x_2}}\\
=&\Big\| \f{ \lam_- e^{\lam_- t}- \lam_+ e^{\lam_+t} }{\lam_+-\lam_-}\f{\xi_2}{i\xi_2-|\xi_1|} \cF_{y_2}\Big(\chi(y_2)  \wh{u}_{2, 0} (y_2)  \Big)(\xi_2)\Big\|_{L^2_{\xi_1}L^2_{\xi_2}}\\
\lesssim&\Big\|\f{ \lam_- e^{\lam_- t}- \lam_+ e^{\lam_+t} }{\lam_+-\lam_-}
\f{\xi_2}{i\xi_2-|\xi_1|}\Big\|_{L^2_{\xi_1}L^2_{\xi_2}}\Big\|\cF_{y_2}\Big(\chi(y_2)  \wh{u}_{2, 0} (y_2)  \Big)\Big\|_{L^\infty_{\xi_2}}\\
\lesssim&\| e^{-\f{|\xi|^2}{4}t} \|_{L^2_{\xi_1}L^2_{\xi_2}}\|u_{2,0}\|_{L^1}\\
\lesssim& t  ^{-\f12} \|u_{2,0}\|_{L^1},
\end{align*}
and the $L^\infty$ estimate
\begin{align*}
&\Big\|\int^{\wt{d}}_0\f{ \lam_- e^{\lam_- t}- \lam_+ e^{\lam_+t} }{\lam_+-\lam_-}\f{\xi_2}{i\xi_2-|\xi_1|} \int^\infty_0 e^{-i\xi_2(x_2+y_2)}   \wh{u}_{2, 0} (y_2) dy_2   d\xi_2 \Big\|_{L^1_{\xi_1}L^\infty_{x_2}}\\
&\lesssim\Big\|\f{ \lam_- e^{\lam_- t}- \lam_+ e^{\lam_+t} }{\lam_+-\lam_-}
\cF_{x_2}\big(\wh{\chi u}_{2,0}\big)  \Big\|_{L^1_{\xi_1}L^1_{\xi_2}}\\
&\lesssim \|e^{-\f{|\xi|^2}{4}t} \|_{L^1_{\xi_1}L^1_{\xi_2}}\|\cF \big(\chi u_{2,0}\big) \|_{L^\infty_{\xi_1}L^\infty_{\xi_2}}\\
&\lesssim   t ^{-1} \|u_{2,0}\|_{L^1}.
\end{align*}
Similarly, we have a similar estimate for the $\wh{b}_{2,0}$ part  of $J_2^2+J_2^3$, since
\beno
 \f{|\xi_1|}{|\lam|}=\f{|\xi_1|}{|\lam_\pm|}\sim 1.
 \eeno

The estimate of $J_2^4$ is very similar as $J_2^1$;  by the definition of $\Ga_4: \lam= -\eta \ (\eta:  |\xi_1| \to \infty)$, we have
\begin{align}\label{I14}
&-\f{1}{2\pi i} \int_{\Ga_4^{(+)}\cup \Ga_4 ^{(-)} } e^{\lam t}  \f{1}{ \om-|\xi_1|} \int^\infty_0 e^{-\om(x_2+y_2)}  \wh{u}_{2, 0} (y_2) dy_2 d\lam\non\\
&=-\f{1}{2\pi i} \Big(\int_{|\xi_1|}^\infty\f{ e^{-\eta t}}{-i|\om|-|\xi_1|} \int^\infty_0 e^{i|\om|(x_2+y_2)}   \wh{u}_{2, 0} (y_2) dy_2 d\eta\non\\
&\qquad-\int_{|\xi_1|}^\infty\f{ e^{-\eta t}}{i|\om|-|\xi_1|} \int^\infty_0 e^{-i|\om|(x_2+y_2)}   \wh{u}_{2, 0} (y_2) dy_2 d\eta\Big)
\end{align}
with
\begin{align*}
\om=\sqrt{\lam+|\xi_1|^2+\f{|\xi_1|^2}{\lam}}=\mp i \sqrt{\eta-|\xi_1|^2+\f{|\xi_1|^2}{\eta}}=\mp i|\om|.
\end{align*}
We now make the change of variables $\xi_2=|\om|$. Here we should take the branch $\eta=\eta_+$, since $\eta\in (|\xi_1|, \infty)$; it then follows that
\beno
d\eta_+=\xi_2\Big(1+\f{|\xi|^2}{\sqrt{|\xi|^4-4|\xi_1|^2}}\Big) d\xi_2=-2 \xi_2 \f{\lam_-}{\lam_+-\lam_-} d\xi_2.
\eeno
We have the following:
\begin{itemize}
\item if 
 $\f{4|\xi_1|^2}{|\xi|^4}\leq \f12$,  we obtain
 $$\eta_+\geq \f{(2+\sqrt{2})|\xi|^2}{4}\gtrsim|\xi|^2.$$
 \item if
 $\f12 \leq\f{4|\xi_1|^2}{|\xi|^4}\leq1$, we have $\f{|\xi|^2}{2}\sqrt{1-\f{4|\xi_1|^2}{|\xi|^4}}\leq\f{|\xi|^2}{2\sqrt{2}}$. It then follows that
 \begin{align*}
\eta=\f{|\xi|^2}{2}+\f{|\xi|^2}{2}\sqrt{1-\f{4|\xi_1|^2}{|\xi|^4}}\in (\f{|\xi|^2}{2},\f{(2+\sqrt{2})|\xi|^2}{4}),
  \end{align*}
 it means $\eta \sim |\xi|^2$.
\end{itemize}
Then we have
\beno
\eta_+=\f{|\xi|^2}{2}+\f{|\xi|^2}{2} \sqrt{1-\f{4|\xi_1|^2}{|\xi|^4}} \sim|\xi|^2,
\eeno
and \beno
d\eta_+=-2 \xi_2 \f{\lam_-}{\lam_+-\lam_-} d\xi_2, \quad \Big|\f{\lam_-}{\lam_+-\lam_-}\Big|\sim 1,\eeno
 \eqref{I14} can be rewritten as
\begin{align*}
&\f{1}{2\pi i} \int^\infty_{\wt{d}} e^{-|\xi|^2 t}\f{2\xi_2 }{i\xi_2+|\xi_1|} \int^\infty_0 e^{i\xi_2(x_2+y_2)}   \wh{u}_{2, 0} (y_2)  dy_2 \f{\lam_-}{\lam_+-\lam_-}  d\xi_2\non\\
&=\f{1}{\pi i} \int_{\R}\vphi(\xi_2) e^{i\xi_2 x_2} \f{\lam_-}{\lam_+-\lam_-}  e^{-|\xi|^2 t}\f{\xi_2 }{i\xi_2+|\xi_1|} \int^\infty_0 e^{i\xi_2y_2}   \wh{u}_{2, 0} (y_2)  dy_2  d\xi_2\non\\
&=2i\cF_{x_2}^{-1}\Big(\vphi(\xi_2)\f{\lam_-}{\lam_+-\lam_-}  e^{-|\xi|^2 t}  \f{\xi_2 }{i\xi_2+|\xi_1|}\cF_{y_2}\big(\chi  \wh{u}_{2, 0} \big) (-\xi_2) \Big)(x_2),
\end{align*}
so it can be controlled by heat kernel, and we omit the details.

For the  $\wh{u}_{2,0}$ part  of $J_2^5$, by the definition of $\Ga_5: \lam= \e e^{i\gamma}$,  $\gamma \in( -\pi, \pi)$, we can see that
\begin{align}\label{I5}
|J_2^5|
&=\Big|\f{1}{2\pi i} \int_{\Ga_5 } e^{\lam t}  \f{1}{ \om-|\xi_1|} \int^\infty_0 e^{-\om(x_2+y_2)}  \wh{u}_{2, 0} (y_2) dy_2 d\lam\Big|\non\\
&\lesssim \f{e^{\e t} \e^\f32}{|\xi_1|} \int_{-\pi}^\pi \|e^{-Re \om y_2}\|_{L^2_{y_2}}d \gamma\| \wh{u}_{2, 0}\|_{L^2_{y_2}}\non\\
&\lesssim \f{e^{\e t} \e^\f74}{|\xi_1|^\f32} \| u_{2, 0}\|_{L^1_{x_1}L^2_{x_2}}
\rightarrow 0 \quad \text{when} \quad \e\rightarrow 0 \quad \text{ for  fixed} \quad \xi_1.
\end{align}
Indeed, there exist $\tilde{\gamma}^* \in (\f{\pi}{2},\pi)$ such that Re $ \om \gtrsim \f{|\xi|}{\sqrt{\e}}$ for $\gamma \in [0,\tilde{\gamma}^*]$ and Re $ \om \gtrsim \f{|\xi|}{\sqrt{\e}}\text{sin} \gamma$ for $\gamma \in [\tilde{\gamma}^*,\pi)$ if $\e>0$ is sufficiently small and $\xi_1 \neq 0 $ is fixed. By using this, one has
\begin{align*}
\int_0^\pi \|e^{-Re \om y_2}\|_{L^2_{y_2}}d \gamma \lesssim \int_0^\pi (Re\om)^{-\f12}d \gamma
\lesssim  \f{\e^{\f14}}{|\xi_1|^\f12}\Big(1+\int_{\tilde{\gamma}^*}^\pi (\text{sin}\gamma)^{-\f12} d\gamma\Big)  \lesssim \f{\e^{\f14}}{|\xi_1|^\f12},
\end{align*}
 and likewise
  $$\int_{-\pi}^0 \|e^{-Re \om y_2}\|_{L^2_{y_2}}d \gamma  \lesssim \f{\e^{\f14}}{|\xi_1|^\f12}.$$
We here note
\begin{align*}
\om^2 &=\lam+|\xi_1|^2+\f{|\xi_1|^2}{\lam}\\
&=(\e\cos \gamma+|\xi_1|^2+\f{|\xi_1|^2}{\e}\cos \gamma)+i(\e\sin\gamma-\f{|\xi_1|^2}{\e} \sin \gamma)\\
&=(\e^2+|\xi_1|^4+\f{|\xi_1|^4}{\e^2}+2\e \cos \gamma |\xi_1|^2+2|\xi_1|^2 \cos 2\gamma+\f{2|\xi_1|^4}{\e} \cos \gamma)^\f12 e^{2i\theta_5},
\end{align*}
with
$$|\om|\gtrsim \f{|\xi_1|}{\sqrt{\e}}, \quad |\om-|\xi_1||\geq|\om|-|\xi_1|\gtrsim \f{|\xi_1|}{\sqrt{\e}}.$$
  The desired estimate of \eqref{I5} is then obtained.

The  $\wh{b}_{1,0}$ part  of $J_2^5$ also goes to 0, since $\f{|\xi_1|}{|\lam|}=\f{|\xi_1|}{\e}$.\\

For the  $\wh{u}_{2,0}$ part  of $J_2^6$, by the definition of $\Ga_6: \lam=-|\xi_1|+ \e e^{i\gamma}$,   $\gamma: -\pi \to -\f{\pi}{2},  -\f{\pi}{2} \to 0, 0 \to \f{\pi}{2}, \f{\pi}{2}\to \pi $, we have
\begin{align}\label{I6}
|J_2^6|
&=\Big|\f{1}{2\pi i} \int_{\Ga_6 } e^{\lam t}  \f{1}{\om-|\xi_1|} \int^\infty_0 e^{-\om(x_2+y_2)}  \wh{u}_{2, 0} (y_2) dy_2 d\lam\Big|\non\\
&\lesssim  \f{e^{-|\xi_1| t}e^{\e \cos \gamma t} }{|\xi_1|}\int^\infty_0 \big|e^{-\om(x_2+y_2)} \big| \big|  \wh{u}_{2, 0} (y_2)\big |dy_2\Big| \text{Length \ of }\  \Gamma_6\Big|\non\\
&\lesssim \f{\e e^{\e t} }{|\xi_1|} \| \wh{u}_{2, 0}\|_{L^1_{y_2}}
\rightarrow 0 \quad \text{when} \quad \e\rightarrow 0 \quad \text{ for  fixed} \quad \xi_1.
\end{align}
Here  we use
\begin{align*}
\om^2 &=\lam+|\xi_1|^2+\f{|\xi_1|^2}{\lam}\\
&=-|\xi_1|+\e \cos \gamma+|\xi_1|^2+\f{|\xi_1|^2(-|\xi_1|+\e \cos \gamma)}{|\xi_1|^2+\e^2 -2\e |\xi_1| \cos \gamma}+i\f{\e^2 -2\e |\xi_1| \cos \gamma}{|\xi_1|^2+\e^2 -2\e |\xi_1| \cos \gamma}\e \sin \gamma\\
&=\Big(-2|\xi_1|+|\xi_1|^2\Big)+\e^2\Big(\f{\e\cos \gamma-|\xi_1|\cos 2 \gamma}{|\xi_1|^2+\e^2 -2\e |\xi_1| \cos \gamma}+i\f{\e \sin \gamma-|\xi_1|\sin 2\gamma}{|\xi_1|^2+\e^2 -2\e |\xi_1| \cos \gamma}\Big)\\
&=\Big(-2|\xi_1|+|\xi_1|^2\Big)+\e^2\f{\e e^{i\gamma} -|\xi_1|e^{2i\gamma}}{(|\xi_1|-\e\cos\gamma)^2+(\e \sin \gamma)^2}
\end{align*}
with
\begin{align*}
|\om^2|&\geq \Big|2|\xi_1|-|\xi_1|^2\Big|-\e^2\Big|\f{\e e^{i\gamma} -|\xi_1|e^{2i\gamma}}{(|\xi_1|-\e\cos\gamma)^2+(\e \sin \gamma)^2}\Big|\\
&\geq \Big|2|\xi_1|-|\xi_1|^2\Big|-\e^2\f{|\xi_1|+\e}{(|\xi_1|-\e)^2}\\
&\geq \f{2|\xi_1|-|\xi_1|^2}{2},
\end{align*}
where we use $$\lim_{\e\rightarrow 0}\e^2\f{|\xi_1|+\e}{(|\xi_1|-\e)^2}=0$$ for fixed $\xi_1$. Similarly, we obtain
\begin{align*}
|\om^2|&\leq \Big|2|\xi_1|-|\xi_1|^2\Big|+\e^2\Big|\f{\e e^{i\gamma} -|\xi_1|e^{2i\gamma}}{(|\xi_1|-\e\cos\gamma)^2+(\e \sin \gamma)^2}\Big|\\
&\leq \Big|2|\xi_1|-|\xi_1|^2\Big|+\e^2\f{|\xi_1|+\e}{(|\xi_1|-\e)^2}\\
&\leq 2|\xi_1|-|\xi_1|^2,
\end{align*}
then, since $Re\ \om^2<0, \ Im\ \om^2\rightarrow 0$, $\om$ is a pure imaginary number, we have
 $$|\om-|\xi_1||\gtrsim |\xi_1|.$$

The  $\wh{b}_{1,0}$ part  of $J_2^6$ also goes to 0, since
\beno
\f{|\xi_1|}{|\lam|}=\f{|\xi_1|}{(|\xi_1|^2+\e^2-2\e|\xi_1|\cos \gamma)^\f12}.
\eeno

For the  $\wh{u}_{2,0}$ part  of $J_2^7$, by the definition of ${\Ga_7}=\Big\{\lam=R e^{i\gamma}; \ \gamma: \theta_0 \leq |\gamma|< \pi \Big\}$, since $Re \lam<0$, and $|\pi-\theta_0|<\f\pi3 $,  we have
\begin{align}\label{I7}
|J_2^7|
&=\Big|\f{1}{2\pi i} \int_{\Ga_7 } e^{\lam t}  \f{1}{ \om-|\xi_1|} \int^\infty_0 e^{-\om(x_2+y_2)}  \wh{u}_{2, 0} (y_2) dy_2 d\lam\Big|\non\\
&\lesssim  \f{e^{R cos \theta_0 t}}{\sqrt{R}}\int^\infty_0 \big|e^{-\sqrt{R}(x_2+y_2)} \big| \big|  \wh{u}_{2, 0} (y_2)\big |dy_2\Big| \text{Length \ of }\  \Gamma_7\Big|\non\\
&\lesssim e^{-\f12Rt}\sqrt{R} \|e^{-\sqrt{R}y_2}\|_{L^2_{y_2}} \| \wh{u}_{2, 0}\|_{L^2_{y_2}}\non\\
&\lesssim e^{-\f12Rt}R^\f14 \| u_{2, 0}\|_{L^1_{x_1}L^2_{x_2}}
\rightarrow 0 \quad \text{when} \quad R\rightarrow \infty \quad \text{ for  fixed} \quad \xi_1,
\end{align}
where  we use
\begin{align*}
\om^2 &=\lam+|\xi_1|^2+\f{|\xi_1|^2}{\lam}\\
&=(R\cos \gamma+|\xi_1|^2+\f{|\xi_1|^2}{R}\cos \gamma)+i(R\sin\gamma-\f{|\xi_1|^2}{R} \sin \gamma)\\
&=(R^2+|\xi_1|^4+\f{|\xi_1|^4}{R^2}+2R \cos \gamma |\xi_1|^2+2|\xi_1|^2 \cos 2\gamma+\f{2|\xi_1|^4}{R} \cos \gamma)^\f12 e^{2i\theta_7},
\end{align*}
with
\begin{align*}
|\om^2|&\gtrsim (R^2)^\f12=R,\\
|\om-|\xi_1||&\gtrsim |\om|-|\xi_1|\gtrsim \sqrt{R}.
\end{align*}
The  $\wh{b}_{1,0}$ part  of $J_2^7$ also goes to 0, since
\beno
\f{|\xi_1|}{|\lam|}=\f{|\xi_1|}{R}\lesssim 1.
\eeno

Now we consider $J_{1}$ and divide  it as
\begin{align*}
 J_{1}&=\f{i\xi_1}{|\xi_1|}\f{1}{2\pi i} \sum_{i=1}^{7} \int_{\Gamma_i} e^{\lam t}   \int^\infty_0e^{-\om y_2} \big( \wh{u}_{2, 0} (y_2) +\f{i\xi_1}{\lam}  \wh{b}_{2, 0}(y_2)\big)dy_2\f{1}{\om-|\xi_1|} e^{-|\xi_1|x_2}  d\lam:= \sum_{i=1}^{7} J_{1}^i.
\end{align*}
For the  $u_0$ part  of $J^1_1$, we see that
 \begin{align}\label{J11}
&\Big\|\int_{\Ga_1}\textbf{1}_{|\xi_1|\leq 2}e^{\lam t} \f{1}{\om-|\xi_1|} \int^\infty_0  e^{-\om y_2} \wh{u}_{2, 0}(y_2) dy_2 e^{-|\xi_1|x_2}  d\lam\Big\|_{L^2_{\xi_1}L^2_{x_2}}\non\\
&=\Big\|\int^\infty_{\wt{d}} e^{-\f{|\xi_1|^2}{|\xi|^2} t}\f{\xi_2}{\mp i\xi_2 -|\xi_1|} \int^\infty_0 e^{\pm  i\xi_2 y_2} \wh{u}_{2, 0}(y_2 ) dy_2\|e^{-|\xi_1| x_2}\|_{L^2_{x_2}} \f{2|\xi_1|^2}{|\xi|^4} d\xi_2 \Big\|_{L^2_{\xi_1}}\non\\
&\lesssim \Big\| e^{-\f{|\xi_1|^2}{|\xi|^2} t} |\xi_1|^{-\f12}\f{|\xi_1|^2}{|\xi|^4}\cF \big(\chi u_{2, 0} \big) \Big\|_{L^2_{\xi_1}L^1_{\xi_2}} \non\\
&\lesssim \Big\| (\textbf{1}_{|\xi|\leq 1}+\textbf{1}_{|\xi|\geq 1})e^{-\f{|\xi_1|^2}{|\xi|^2} t} \f{|\xi_1|^\f12}{|\xi|^2}\cF \big(\chi u_{2, 0} \big) \Big\|_{L^2_{\xi_1}L^1_{\xi_2}}.
\end{align}
The $\textbf{1}_{|\xi|\leq 1}$ part is estimated as
\begin{align*}
&\Big\|\textbf{1}_{|\xi|\leq 1}\f{|\xi_1|^\f12}{|\xi|^2} e^{-\f{|\xi_1|^2}{|\xi|^2} t} \cF \big(\chi u_{2, 0} \big)   \Big\|_{L^2_{\xi_1}L^1_{\xi_2}}\non\\
&\lesssim\Big\|\textbf{1}_{|\xi|\leq 1}\big(\f{|\xi_1|}{|\xi|}\big)^\f12 \big(\f{1}{|\xi|^\f32}\big) e^{-\f{|\xi_1|^2}{|\xi|^2} t} \cF \big(\chi u_{2, 0} \big)  \Big\|_{L^2_{\xi_1}L^1_{\xi_2}}\non\\
&\lesssim t^{-\f14}\Big\|\textbf{1}_{|\xi|\leq 1} \f{1}{|\xi|^\f32} e^{-\f{|\xi_1|^2}{|\xi|^2} t} \cF \big(\chi u_{2, 0} \big)   \Big\|_{L^2_{\xi_1}L^1_{\xi_2}}\non\\
&\lesssim t^{-\f14}\sum_{j\leq 1}\Big\|j^{-\f32} e^{-\f{|\xi_1|^2}{j^2} t} \cF\big( P_j (\chi u_{2, 0} )  \big)\Big\|_{L^2_{\xi_1}L^1_{\xi_2}}\non\\
&\lesssim t^{-\f14} \sum_{j\leq 1} j^{-\f32} \Big\|\textbf{1}_{|\xi_1|\leq 2} e^{-\f{|\xi_1|^2}{j^2} t}\
\Big\|_{L^2_{\xi_1}} \Big\|\cF\big( P_j (\chi u_{2, 0} )  \big)\Big\|_{L^\infty_{\xi_1}L^\infty_{\xi_2}}\big(\int_{|\xi_2|\leq j} d \xi_2\Big)\non\\
&\lesssim t^{-\f12} \sum_{j\leq 1} \| \chi u_{2, 0}  \|_{L^1}\non\\
&\lesssim t^{-\f12} \|u_{2, 0}\|_{L^1},
\end{align*}
and as for the $\textbf{1}_{|\xi|\geq 1}$ part,
\begin{align*}
&\Big\|\textbf{1}_{|\xi|\geq 1}\f{|\xi_1|^\f12}{|\xi|^2} e^{-\f{|\xi_1|^2}{|\xi|^2} t}  \cF \big(\chi u_{2, 0} \big)   \Big\|_{L^2_{\xi_1}L^1_{\xi_2}}\non\\
&\lesssim t^{-\f14}\Big\|\textbf{1}_{|\xi|\geq 1} \f{1}{|\xi|^\f32}e^{-\f{|\xi_1|^2}{|\xi|^2} t}  \cF \big(\chi u_{2, 0} \big)   \Big\|_{L^2_{\xi_1}L^1_{\xi_2}}\non\\
&\lesssim t^{-\f14}\sum_{j\geq 1}\Big\|j^{-\f32} e^{-\f{|\xi_1|^2}{j^2} t} \cF\big( P_j (\chi u_{2, 0} )  \big)\Big\|_{L^2_{\xi_1}L^1_{\xi_2}}\non\\
&\lesssim t^{-\f14} \sum_{j\geq 1} j^{-\f32} \Big\| e^{-\f{|\xi_1|^2}{j^2} t}\
\Big\|_{L^2_{\xi_1}} \Big\|\cF\big( P_j (\chi u_{2, 0} )  \big)\Big\|_{L^\infty_{\xi_1}L^2_{\xi_2}}\big(\int_{|\xi_2|\leq j} d \xi_2\Big)^\f12\non\\
&\lesssim t^{-\f12} \sum_{j\geq 1} j^{-\f12}\| \chi u_{2, 0}  \|_{L^1_{x_1}L^2_{x_2}}\non\\
&\lesssim t^{-\f12} \|u_{2, 0}\|_{L^1_{x_1}L^2_{x_2}}.
\end{align*}

The $L^\infty$ estimate is very similar as \eqref{J2infty}
 \begin{align*}
&\Big\|\int_{\Ga_1}\textbf{1}_{|\xi_1|\leq 2}e^{\lam t} \f{1}{\om-|\xi_1|} \int^\infty_0  e^{-\om y_2} \wh{u}_{2, 0}(y_2) dy_2 e^{-|\xi_1|x_2}  d\lam\Big\|_{L^1_{\xi_1}L^\infty_{x_2}}\non\\
&=\Big\|\int^\infty_{\wt{d}} e^{-\f{|\xi_1|^2}{|\xi|^2} t}\f{\xi_2}{\mp i\xi_2 -|\xi_1|} \int^\infty_0 e^{\pm  i\xi_2 y_2} \wh{u}_{2, 0}(y_2 ) dy_2\|e^{-|\xi_1| x_2}\|_{L^\infty_{x_2}} \f{2|\xi_1|^2}{|\xi|^4} d\xi_2 \Big\|_{L^1_{\xi_1}}\non\\
&\lesssim \Big\| e^{-\f{|\xi_1|^2}{|\xi|^2} t} \f{|\xi_1|^2}{|\xi|^4}\cF \big(\chi u_{2, 0} \big) \Big\|_{L^1_{\xi_1}L^1_{\xi_2}} \non\\
&\lesssim \Big\| e^{-\f{|\xi_1|^2}{|\xi|^2} t} \f{|\xi_1|}{|\xi|^2}\cF \big(\chi u_{2, 0} \big) \Big\|_{L^1_{\xi_1}L^1_{\xi_2}}.
\end{align*}

For the $L^2$ estimate $\wh{u}_{2,0}$ part  of $J_1^2+J_1^3$, by $u_{2,0}|_{x_2=0}=0$, and divergence free condition, we have
\begin{align*}
&\Big\| \int^{\wt{d}}_0\f{ \lam_- e^{\lam_- t}- \lam_+ e^{\lam_+t} }{\lam_+-\lam_-}\f{\xi_2}{i\xi_2-|\xi_1|} \int^\infty_0 e^{-i\xi_2y_2}   \wh{u}_{2, 0} (y_2) dy_2 e^{-|\xi_1|x_2 } d\xi_2\Big\|_{L^2_{\xi_1}L^2_{x_2}}\\
=&\Big\| \int^{\wt{d}}_0\f{ \lam_- e^{\lam_- t}- \lam_+ e^{\lam_+t} }{\lam_+-\lam_-}\f{\xi_2}{i\xi_2-|\xi_1|} \f{1}{i\xi_2}\int^\infty_0 e^{-i\xi_2y_2}   \pa_2\wh{u}_{2, 0} (y_2) dy_2 e^{-|\xi_1|x_2 } d\xi_2\Big\|_{L^2_{\xi_1}L^2_{x_2}}\\
=&\Big\| \int_{\R} (1-\vphi(\xi_2))\chi(\xi_2)\f{ \lam_- e^{\lam_- t}- \lam_+ e^{\lam_+t} }{\lam_+-\lam_-}\f{\xi_2}{i\xi_2-|\xi_1|}  \|e^{-|\xi_1|x_2 } \f{1}{i\xi_2}\|_{L^2_{x_2}}\cF_{y_2}\Big(\chi(y_2) \pa_2 \wh{u}_{2, 0} (y_2)  \Big)(\xi_2) d\xi_2\Big\|_{L^2_{\xi_1}}\\
\lesssim&\Big\| \f{ \lam_- e^{\lam_- t}- \lam_+ e^{\lam_+t} }{\lam_+-\lam_-}\f{\xi_2}{i\xi_2-|\xi_1|}|\xi_1|^{-\f12} \f{1}{i\xi_2}\cF_{y_2}\Big(\chi(y_2)  \pa_2\wh{u}_{2, 0} (y_2)  \Big)(\xi_2)\Big\|_{L^2_{\xi_1}L^1_{\xi_2}}\\
\lesssim&\Big\| \f{\xi_2}{|\xi_1|^{\f12}|\xi|}e^{-\f{|\xi|^2}{4}t}\f{1}{i\xi_2}\cF_{y_2}\Big(\chi(y_2)  \pa_2\wh{u}_{2, 0} (y_2) \Big)(\xi_2) \Big\|_{L^2_{\xi_1}L^1_{\xi_2}}\\
=&\Big\| \f{\xi_2}{|\xi_1|^{\f12}|\xi|}e^{-\f{|\xi|^2}{4}t}\f{1}{i\xi_2}\cF_{y_2}\Big(\chi(y_2)  \wh{\pa_1 u}_{1, 0} (y_2) \Big)(\xi_2) \Big\|_{L^2_{\xi_1}L^1_{\xi_2}}\\
\lesssim&\Big\|\f{\xi_2}{|\xi_1|^{\f12}|\xi|}e^{-\f{|\xi|^2}{4}t}\f{\xi_1}{\xi_2}\cF_{y_2}\Big(\chi(y_2)  \wh{u}_{1, 0} (y_2) \Big)(\xi_2)\Big \|_{L^2_{\xi_1}L^1_{\xi_2}}\\
\lesssim&\Big\|\f{|\xi_1|^{\f12}}{|\xi|}e^{-\f{|\xi|^2}{4}t}\cF_{y_2}\Big(\chi(y_2)  \wh{u}_{1, 0} (y_2) \Big)(\xi_2) \Big\|_{L^2_{\xi_1}L^1_{\xi_2}}\\
\lesssim&\Big\|e^{-\f{|\xi|^2}{4}t}\cF_{y_2}\Big(\chi(y_2)  \wh{u}_{1, 0} (y_2) \Big)(\xi_2) \Big\|_{L^2_{\xi_1}L^2_{\xi_2}}\Big\|\f{|\xi_1|^{\f12}}{|\xi|}\Big\|_{L^\infty_{\xi_1}L^2_{\xi_2}}\\
\lesssim &  \Big\|e^{-\f{|\xi|^2}{4} t}   \Big\|_{L^2_{\xi_1}L^2_{\xi_2}}\Big\|\cF \big(\chi u_{1,0} \big)\Big\|_{L^\infty_{\xi_1}L^\infty_{\xi_2}}\non\\
\lesssim& t  ^{-\f12} \|u_{1,0}\|_{L^1},
\end{align*}
where we use
\begin{align*}
\Big\|\f{|\xi_1|^{\f12}}{|\xi|}\Big\|_{L^\infty_{\xi_1}L^2_{\xi_2}}=\Big(\int^\infty_0 \f{1}{1+\zeta^2} d\zeta\Big)^\f12=\Big( \arctan \zeta \Big|_0^{\infty} \Big)^\f12\lesssim 1. \quad (\zeta=\f{\xi_2}{|\xi_1|})
\end{align*}
And the $L^\infty$ estimate is obtained as
\begin{align*}
&\Big\|\int^{\wt{d}}_0\f{ \lam_- e^{\lam_- t}- \lam_+ e^{\lam_+t} }{\lam_+-\lam_-}\f{\xi_2}{i\xi_2-|\xi_1|} \int^\infty_0 e^{-i\xi_2y_2}   \wh{u}_{2, 0} (y_2) dy_2 e^{-|\xi_1|x_2} d\xi_2 \Big\|_{L^1_{\xi_1}L^\infty_{x_2}}\\
&\lesssim\Big\|\f{ \lam_- e^{\lam_- t}- \lam_+ e^{\lam_+t} }{\lam_+-\lam_-}
\cF_{x_2}\big(\wh{\chi u}_{2,0}\big)  \Big\|_{L^1_{\xi_1}L^1_{\xi_2}}\\
&\lesssim \|e^{-\f{|\xi|^2}{4}t} \|_{L^1_{\xi_1}L^1_{\xi_2}}\|\cF \big(\chi u_{2,0}\big) \|_{L^\infty_{\xi_1}L^\infty_{\xi_2}}\\
&\lesssim   t ^{-1} \|u_{2,0}\|_{L^1}.
\end{align*}
The  $J_1^4$ part can be controlled by heat kernel, and  $J_1^{5, 6,7}\to 0$ by $|e^{-|\xi_1|x_2}|\lesssim 1$.

\smallskip

{\bf Case 2.  $|\xi_1|> 2$.}

We first consider $J_2$ and divide it as
\beno J_2=-\f{i\xi_1}{|\xi_1|} \f{1}{2\pi i}\sum_{i=1}^{2}\int_{\wt{\Ga}_i}e^{\lam t} \int^\infty_0  e^{-\om (x_2+y_2)} (\wh{u}_{2, 0}+\f{i\xi_1}{\lam} \wh{b}_{2, 0})(y_2) dy_2 \f{1}{\om-|\xi_1|}  d\lam:= \sum_{i=1}^{2} \wt{J}_2^i.
\eeno

For the $\wh{u}_{2,0}$ part of $\wt{J}_2^1$, by the definition of $\wt{\Ga}_1: \lam= -\eta \ (\eta: 0 \to -\lam'_+)$, we have
\begin{align}\label{J21'}
&-\f{1}{2\pi i} \int_{\wt{\Ga}_1^{(+)}\cup\wt{\Ga}_1 ^{(-)} } \f{ e^{\lam t} }{\om-|\xi_1|} \int^\infty_0 e^{-\om(x_2+y_2)}  \wh{u}_{2, 0} (y_2) dy_2 d\lam\non\\
&=-\f{1}{2\pi i} \Big(\int^{-\lam_+'}_{0}\f{ e^{-\eta t} }{-i|\om|-|\xi_1|} \int^\infty_0 e^{i|\om|(x_2+y_2)}   \wh{u}_{2, 0} (y_2)  dy_2 d\eta\non\\
&\qquad-\int^{-\lam_+'}_{0}\f{ e^{-\eta t} }{i|\om|-|\xi_1|} \int^\infty_0 e^{-i|\om|(x_2+y_2)}   \wh{u}_{2, 0} (y_2) dy_2 d\eta\Big)
\end{align}
with
\begin{align*}
\om=\sqrt{\lam+|\xi_1|^2+\f{|\xi_1|^2}{\lam}}=\mp i \sqrt{\eta-|\xi_1|^2+\f{|\xi_1|^2}{\eta}}=\mp i|\om|.
\end{align*}

We now make the change of variables $\xi_2=|\om|$. Here we should take the branch $\eta=\eta_-$, since $\eta\in (0, -\lam'_+)$, then
\beno
d\eta=\xi_2\Big(1-\f{|\xi|^2}{\sqrt{|\xi|^4-4|\xi_1|^2}}\Big) d\xi_2 =2 \xi_2 \f{\lam_+}{\lam_+-\lam_-} d\xi_2.
\eeno
The most trouble case is
\beno
\eta\sim\f{|\xi_1|^2}{|\xi|^2} \quad (\text{when} \quad \f{4|\xi_1|^2}{|\xi|^4}\leq \f12),
\eeno
and the other cases can be controlled by heat kernel. For this case,
\beno
d \eta=2 \xi_2 \f{\lam_+}{\lam_+-\lam_-} d\xi_2, \quad \Big|\f{\lam_{+}}{\lam_{+}-\lam_{-}}\Big|\sim \f{|\xi_1|^2}{|\xi|^4},
\eeno
so the estimate of $\wt{J}_2^1$ is very similar as the estimate of $J^1_2$, we omit the details.

For the $\wh{u}_{2,0}$ part of $\wt{J}_2^2$, by the definition of $\wt{\Ga}_2: \lam= -\eta \ (\eta:   -\lam'_- \to \infty)$, we have
\begin{align}\label{J22}
&-\f{1}{2\pi i} \int_{\wt{\Ga}_2^{(+)}\cup \wt{\Ga}_2 ^{(-)} } \f{ e^{\lam t} }{\om-|\xi_1|} \int^\infty_0 e^{-\om(x_2+y_2)}  \wh{u}_{2, 0} (y_2) dy_2 d\lam\non\\
&=-\f{1}{2\pi i} \Big(\int^\infty_{-\lam'_-}\f{ e^{-\eta t} }{i|\om|-|\xi_1|} \int^\infty_0 e^{-i|\om|(x_2+y_2)}   \wh{u}_{2, 0} (y_2)  dy_2 d\eta\non\\
&\qquad-\int^\infty_{-\lam'_-}\f{ e^{-\eta t} }{-i|\om|-|\xi_1|} \int^\infty_0 e^{i|\om|(x_2+y_2)}   \wh{u}_{2, 0} (y_2) dy_2 d\eta\Big)
\end{align}
with
\begin{align*}
\om=\sqrt{\lam+|\xi_1|^2+\f{|\xi_1|^2}{\lam}}=\pm i \sqrt{\eta-|\xi_1|^2+\f{|\xi_1|^2}{\eta}}=\pm i|\om|.
\end{align*}

We now make the change of variables $\xi_2=|\om|$. Here we should take the branch $\eta=\eta_+$, since $\eta\in (-\lam'_-, \infty)$,
where
\beno
\eta_+=\f{|\xi|^2}{2}+\f{|\xi|^2}{2} \sqrt{1-\f{4|\xi_1|^2}{|\xi|^4}} \sim|\xi|^2.
\eeno
It then follows that
\beno
d\eta=\xi_2\Big(1+\f{|\xi|^2}{\sqrt{|\xi|^4-4|\xi_1|^2}}\Big) d\xi_2 =-2 \xi_2 \f{\lam_-}{\lam_+-\lam_-} d\xi_2,\quad \Big|\f{\lam_-}{\lam_+-\lam_-}\Big|\sim 1.
\eeno
The first term on the right hand side of \eqref{J22} can be controlled by
\begin{align*}
&\Big|-\f{1}{2\pi i} \int^\infty_{-\lam'_-}\f{ e^{-\eta t} }{i|\om|-|\xi_1|} \int^\infty_0 e^{-i|\om|(x_2+y_2)}   \wh{u}_{2, 0} (y_2)  dy_2 d\eta\non\Big|\\
&\lesssim \Big|-\f{1}{2\pi i} \int^\infty_0 e^{-4 t}\f{2\xi_2 }{i\xi_2-|\xi_1|} \int^\infty_0 e^{-i\xi_2(x_2+y_2)}   \wh{u}_{2, 0} (y_2)  dy_2 d\xi_2\non\Big|\\
&= \Big|e^{-4t} 2i\cF_{x_2}^{-1}\Big(\chi(\xi_2) \f{\xi_2 }{i\xi_2-|\xi_1|}\cF_{y_2}\big(\chi  \wh{u}_{2, 0} \big) (\xi_2) \Big)(-x_2)\Big|,
\end{align*}
this term has an exponential decay.

For the $\wh{u}_{2,0}$ part of $\wt{J}_1^1$, the $L^2$ estimate is very similar as \eqref{J11}
 \begin{align*}
&\Big\|\int_{\wt{\Ga}_1}\textbf{1}_{|\xi_1|\geq 2}e^{\lam t} \f{1}{\om-|\xi_1|} \int^\infty_0  e^{-\om y_2} \wh{u}_{2, 0}(y_2) dy_2 e^{-|\xi_1|x_2}  d\lam\Big\|_{L^2_{\xi_1}L^2_{x_2}}\non\\
&=\Big\|\int_{\R}\textbf{1}_{|\xi_1|\geq 2}\chi(\xi_2) e^{-\f{|\xi_1|^2}{|\xi|^2} t}\f{\xi_2}{\mp i\xi_2 -|\xi_1|} \int^\infty_0 e^{\pm  i\xi_2 y_2} \wh{u}_{2, 0}(y_2 ) dy_2\|e^{-|\xi_1| x_2}\|_{L^2_{x_2}} \f{2|\xi_1|^2}{|\xi|^4} d\xi_2 \Big\|_{L^2_{\xi_1}}\non\\
&\lesssim \Big\| e^{-\f{|\xi_1|2}{|\xi|^2} t} \f{|\xi_1|^\f32}{|\xi|^4}\cF \big(\chi u_{2, 0} \big) \Big\|_{L^2_{\xi_1}L^1_{\xi_2}}\non\\
&\lesssim \Big\| e^{-\f{|\xi_1|2}{|\xi|^2} t} \f{|\xi_1|^\f12}{|\xi|^2}\cF \big(\chi u_{2, 0} \big) \Big\|_{L^2_{\xi_1}L^1_{\xi_2}},
\end{align*}
and the $L^\infty$ estimate is very similar as \eqref{J2infty}
 \begin{align*}
&\Big\|\int_{\wt{\Ga}_1}\textbf{1}_{|\xi_1|\geq 2}e^{\lam t} \f{1}{\om-|\xi_1|} \int^\infty_0  e^{-\om y_2} \wh{u}_{2, 0}(y_2) dy_2 e^{-|\xi_1|x_2}  d\lam\Big\|_{L^1_{\xi_1}L^\infty_{x_2}}\non\\
&=\Big\|\int^\infty_{0}\int_{\wt{\Ga}_1}\textbf{1}_{|\xi_1|\geq 2} e^{-\f{|\xi_1|^2}{|\xi|^2} t}\f{\xi_2}{\mp i\xi_2 -|\xi_1|} \int^\infty_0 e^{\pm  i\xi_2 y_2} \wh{u}_{2, 0}(y_2 ) dy_2\|e^{-|\xi_1| x_2}\|_{L^\infty_{x_2}} \f{2|\xi_1|^2}{|\xi|^4} d\xi_2 \Big\|_{L^1_{\xi_1}}\non\\
&\lesssim \Big\| e^{-\f{|\xi_1|^2}{|\xi|^2} t} \f{|\xi_1|^2}{|\xi|^4}\cF \big(\chi u_{2, 0} \big) \Big\|_{L^1_{\xi_1}L^1_{\xi_2}} \non\\
&\lesssim \Big\| e^{-\f{|\xi_1|^2}{|\xi|^2} t} \f{|\xi_1|}{|\xi|^2}\cF \big(\chi u_{2, 0} \big) \Big\|_{L^1_{\xi_1}L^1_{\xi_2}}.
\end{align*}

For the $\wh{u}_{2,0}$ part of $\wt{J}_1^2$, we have
 \begin{align*}
&\Big\|\int_{\wt{\Ga}_2}\textbf{1}_{|\xi_1|\geq 2}e^{\lam t} \f{1}{\om-|\xi_1|} \int^\infty_0  e^{-\om y_2} \wh{u}_{2, 0}(y_2) dy_2 e^{-|\xi_1|x_2}  d\lam\Big\|_{L^2_{\xi_1}L^2_{x_2}}\non\\
&=\Big\| \int_{\wt{\Ga}_2}\textbf{1}_{|\xi_1|\geq 2}e^{-4t} 2i\chi(\xi_2) \f{\xi_2 }{i\xi_2-|\xi_1|}|\xi_1|^{-\f12}\cF_{y_2}\big(\chi  \wh{u}_{2, 0} \big) (\xi_2)  \Big\|_{L^2_{\xi_1}L^1_{\xi_2}}\non\\
&\lesssim e^{-4t}\Big\|  |\xi_1|^{-\f12} \f{|\xi_1| }{|\xi|}\cF_{y_2}\big(\chi  \wh{u}_{1, 0} \big) (\xi_2)  \Big\|_{L^2_{\xi_1}L^1_{\xi_2}}\non\\
&\lesssim e^{-4t}\Big\|  \f{|\xi_1|^\f12 }{|\xi|}\Big\|_{L^\infty_{\xi_1}L^2_{\xi_2}}\Big\|\cF_{y_2}\big(\chi  \wh{u}_{1, 0} \big) (\xi_2)  \Big\|_{L^2_{\xi_1}L^2_{\xi_2}}\non\\
&\lesssim e^{-4t}\| u_{1, 0}  \|_{L^2},
\end{align*}
and
 \begin{align*}
&\Big\|\int_{\wt{\Ga}_2}\textbf{1}_{|\xi_1|\geq 2}e^{\lam t} \f{1}{\om-|\xi_1|} \int^\infty_0  e^{-\om y_2} \wh{u}_{2, 0}(y_2) dy_2 e^{-|\xi_1|x_2}  d\lam\Big\|_{L^1_{\xi_1}L^\infty_{x_2}}\non\\
&=\Big\| e^{-\f{|\xi|^2}{2}t} 2i\chi(\xi_2) \f{\xi_2 }{i\xi_2-|\xi_1|}\cF_{y_2}\big(\chi  \wh{u}_{2, 0} \big) (\xi_2)  \Big\|_{L^1_{\xi_1}L^1_{\xi_2}}\non\\
&\lesssim \Big\| e^{-\f{|\xi_1|^2}{2}t}  \f{|\xi_1| }{|\xi|}\cF_{y_2}\big(\chi  \wh{u}_{1, 0} \big) (\xi_2)  \Big\|_{L^1_{\xi_1}L^1_{\xi_2}}\non\\
&\lesssim \Big\|  \f{|\xi_1|^\f12 }{|\xi|}\Big\|_{L^\infty_{\xi_1}L^2_{\xi_2}}\Big\|  \f{1 }{|\xi_1|^{\f12+\delta}}\Big\|_{L^2_{\xi_1}}\Big\||\xi_1|^{1+\delta} e^{-\f{|\xi_1|^2}{2}t}\cF_{y_2}\big(\chi  \wh{u}_{1, 0} \big) (\xi_2)  \Big\|_{L^2_{\xi_1}L^2_{\xi_2}}\non\\
&\lesssim e^{-2t}t^{-(\f12+\delta)}\| u_{1, 0}  \|_{L^2}.
\end{align*}
 Thus we complete the proof of Proposition \ref{linear u}.
\end{proof}

\smallskip

Now we  give the  resolvent estimate of magnetic  field in  \eqref{linear}.

\begin{proposition}\label{linear b}
We have the following $L^2$, $L^\infty$ estimates
 \begin{align*}
  \|b_{L,1}\|_{L^2}&\lesssim \lan t\ran^{-\f14}\Big(\|(u_0, b_0)\|_{L^1\cap L^2}+\|\na^{\f12+\delta} b_0\|_{L^1_{x_1}L^2_{x_2}}\Big),\non\\
  \|b_{L,2}\|_{L^2}&\lesssim \lan t\ran^{-\f12}\Big(\|(u_0, b_0)\|_{L^1\cap L^2}+\|\na^{\f12+\delta} b_0\|_{L^1_{x_1}L^2_{x_2}}\Big),\non\\
  \|b_{L, 1}\|_{L^\infty}&\lesssim\lan t\ran^{-\f12}\Big(\|(u_0, b_0)\|_{L^1\cap L^2}+\Big\| \na^{\f32+\delta} (u_0, b_0) \Big \|_{L^1_{x_1}L^2_{x_2}}\Big),\non\\
   \|b_{L, 2}\|_{L^\infty}&\lesssim\lan t\ran^{-1}\Big(\|(u_0, b_0)\|_{L^1\cap L^2}+\Big\| \na^{\f32+\delta} (u_0, b_0) \Big \|_{L^1_{x_1}L^2_{x_2}}\Big)
 \end{align*}
 for the magnetic  field in \eqref{linear} (here $\delta>0$ small enough).
\end{proposition}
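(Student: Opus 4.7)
The plan is to follow the framework of Proposition \ref{linear u} line by line, adapting it to the additional multipliers $\f{i\xi_1}{\lam}$, $\f{|\xi_1|}{\lam}$ and $\f{|\xi_1|^2}{\lam}$ that distinguish each integrand in the representation \eqref{linear} of $\wh b_{j,L}$ from the corresponding integrand for $\wh u_{j,L}$. For the whole-space part (obtained via odd extension in $x_2$) the Fourier symbol is $\f{1}{\om^2+\xi_2^2}=\f{\lam}{(\lam-\lam_+)(\lam-\lam_-)}$, so the product with $\f{i\xi_1}{\lam}$ in the $b_2$ equation simplifies to $\f{i\xi_1}{(\lam-\lam_+)(\lam-\lam_-)}$, which is regular at $\lam=0$ and admits the same residue computation at $\lam=\lam_\pm$ that produced $\lan t\ran^{-1/2}$ in the velocity case. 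For the half-space boundary part I would decompose the contour $\Ga$ exactly as in the velocity proof into $\bigcup_{m=1}^{7}\Ga_m$ in Case 1 and $\wt\Ga_1\cup\wt\Ga_2$ in Case 2, perform the change of variables $\xi_2=|\om|$ on $\Ga_1,\Ga_2,\Ga_3,\Ga_4$, and estimate each piece by Plancherel combined with Littlewood--Paley localization, now carrying the extra weight $\f{|\xi_1|^k}{|\lam|}$.

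The first new ingredient is that the small-circle contour $\Ga_5$, which vanished in the velocity proof because the integrand was of order $\e^{3/2}/|\xi_1|$, no longer vanishes in the $b$ case: with the extra $\f{|\xi_1|}{\lam}$ factor the integrand is of order $\e^{1/2}$, and the explicit additive constants $+\wh b_{j,0}+\wh g_j$ appearing outside the contour integral in \eqref{linear} are exactly the residues at $\lam=0$ that cancel the finite limits of these small circles as $\e\to 0$. After this cancellation, the pieces $\Ga_6$ and $\Ga_7$ still vanish by the same estimates as in the velocity proof because the extra $\f{1}{\lam}$ is bounded by $1/|\xi_1|$ on $\Ga_6$ and $1/R$ on $\Ga_7$. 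On the dominant low-frequency piece of $\Ga_1$ with $|\xi|\sim j\le 1$, the change of variables gives $|\lam|\sim \f{|\xi_1|^2}{|\xi|^2}$, so $\f{|\xi_1|^k}{|\lam|}\sim |\xi_1|^{k-2}|\xi|^2$; for $b_{2,L}$ ($k=1$) this combines with the velocity weight $\f{|\xi_1|^2}{|\xi|^4}$ to give $\f{|\xi_1|}{|\xi|^2}$, which is identical to the velocity weight and reproduces $\lan t\ran^{-1/2}$ for $L^2$ and $\lan t\ran^{-1}$ for $L^\infty$. For $b_{1,L}$ ($k=2$), the effective weight reduces to $O(1)$, and the Littlewood--Paley summation
\[
\sum_{j\le 1}\bigl\|e^{-|\xi_1|^2 t/j^2}\bigr\|_{L^2_{\xi_1}}\,j^{1/2}\|P_j(\chi b_0)\|_{L^1}\lesssim t^{-1/4}\sum_{j\le 1} j\,\|b_0\|_{L^1}
\]
produces the slower $\lan t\ran^{-1/4}$ rate. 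The $L^\infty$ bounds follow by replacing Plancherel with an $L^1_\xi$ bound, which costs one extra $\lan t\ran^{-1/2}$; to absorb the regularity loss from passing to $L^1_\xi$, one needs $\|\na^{\f32+\delta}(u_0,b_0)\|_{L^1_{x_1}L^2_{x_2}}$ (one order higher than in the velocity case), reflecting the extra $\f{|\xi_1|}{|\lam|}$ weight.

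The main obstacle is organizing the residue/cancellation bookkeeping at $\lam=0$ cleanly: the ratio $\f{E_\om[e^{-|\xi_1|x_2}]}{E_\om[e^{-|\xi_1|x_2}]_0}$ already contains $\f{\lam}{\lam^2+|\xi_1|^2}$ factors (see the expansion carried out just before \eqref{u1}), whose zero at $\lam=0$ must be matched against the extra $\f{1}{\lam}$ in front of each $E_\om$ to recover a finite residue equal to $-\wh b_{j,0}$. Once this algebraic step is performed explicitly for each of the four integrands in \eqref{linear}, the remaining estimates on $\Ga_2,\Ga_3,\Ga_4$ (and $\wt\Ga_1,\wt\Ga_2$ in Case 2) proceed in complete parallel with the velocity proof, with the effective weight $\f{|\xi_1|}{|\xi|^2}$ replaced by $1$ for $b_1$ and left unchanged for $b_2$, producing exactly the rates stated in Proposition \ref{linear b}.
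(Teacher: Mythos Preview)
Your overall strategy matches the paper: rewrite $\wh b_{j,L}$ as a whole-space odd-extension piece plus a half-space boundary piece, carry the extra factors $\f{|\xi_1|^k}{|\lam|^k}$ through the same contour decomposition, and trace the slow $\lan t\ran^{-1/4}$ rate for $b_1$ to the $O(1)$ effective weight produced by the $k=2$ contribution. But two points in your handling of the singularity at $\lam=0$ are not right, and they hide the main new ingredient.

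First, the cancellation with $+\wh b_{j,0}$ takes place in the \emph{whole-space} piece, not on $\Ga_5$ of the boundary terms. In the paper's decomposition $\wh b_{2,L}=K_1+K_2+\wh b_{2,0}$, the $x_2$-Fourier transform of $K_1$ has integrand $\f{(i\xi_1)^2}{\lam(\lam-\lam_+)(\lam-\lam_-)}\cF(\wt{\wh b}_{2,0})$, whose residue at $\lam=0$ equals $\f{(i\xi_1)^2}{\lam_+\lam_-}=-1$ by $\lam_+\lam_-=|\xi_1|^2$; thus $K_1+\wh b_{2,0}$ is regular at $\lam=0$ and decays by the whole-space argument of \cite{W}.

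Second, for the half-space piece $K_2$, the $\Ga_5$ integral does not have a finite limit: the worst contribution (the $\wh b_{2,0}$ term) carries $\f{|\xi_1|^2}{\lam^2}$, not $\f{|\xi_1|}{\lam}$, and the power counting from the velocity proof then gives $O(\e^{-1/4})$ rather than your claimed $O(\e^{1/2})$. The paper fixes this (see \eqref{K25}) by integrating by parts in $y_2$ using the boundary condition $b_{2,0}|_{x_2=0}=0$, which gains a factor $\f{1}{\om}\sim\sqrt\e/|\xi_1|$ and restores vanishing. The same boundary data $b_{j,0}|_{x_2=0}=0$, combined with $\div b_0=0$, is used again on $\Ga_1$ to rewrite $\cF(\chi b_{2,0})=\f{1}{i\xi_2}\cF(\chi\p_2 b_{2,0})=\f{-\xi_1}{\xi_2}\cF(\chi b_{1,0})$, turning the $O(1)$ weight into $\f{|\xi_1|}{|\xi|}$; this is precisely why $\|b_{2,L}\|_{L^2}$ reaches $\lan t\ran^{-1/2}$ despite the $k=2$ factor. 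You do not invoke these boundary conditions anywhere, and without them the $b_2$ estimates would degrade to the $b_1$ rates.
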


 \begin{proof}Similarly to \eqref{u1}, we rewrite the linear part of magnetic field as
\begin{align}\label{b1}
  \wh{b}_{1, L}(\xi_1, x_2,t)&=\f{1}{2\pi i}\int_{\Ga}e^{\lam t} \f{i\xi_1}{\lam} \f{1}{2\om} \int^\infty_0  \Big(e^{-\om|x_2-y_2|} -e^{-\om(x_2+y_2)} \Big)(\wh{u}_{1, 0}+\f{i\xi_1}{\lam} \wh{b}_{1, 0})(y_2) dy_2 d\lam\non\\
&\qquad- \f{1}{2\pi i}\int_{\Ga}e^{\lam t} \f{|\xi_1|}{\lam} \f{1}{\om-|\xi_1|} \int^\infty_0  e^{-\om y_2} (\wh{u}_{2, 0}+\f{i\xi_1}{\lam} \wh{b}_{2, 0})(y_2) dy_2 \Big( e^{-|\xi_1|x_2}-e^{-\om x_2}\Big)  d\lam+\wh{b}_{1, 0},
\end{align}
and
\begin{align}\label{b2}
\wh{b}_{2, L}(\xi_1,x_2, t)
&=\f{1}{2\pi i}\int_{\Ga}e^{\lam t}
\f{i\xi_1}{\lam}\Big\{  -E_{\om}[ \wh{u}_{2, 0}+\f{i\xi_1}{\lam}  \wh{b}_{2, 0}]_0 e^{-\om x_2}+E_{\om}[ \wh{u}_{2, 0}+\f{i\xi_1}{\lam}  \wh{b}_{2, 0}]\non\\
&\quad+E_{\om}[ \wh{u}_{2, 0}+\f{i\xi_1}{\lam}  \wh{b}_{2, 0}]_0 \Big(e^{-\om x_2}
-\f{E_{\om} [ e^{-|\xi_1|x_2}]}{E_{\om}[e^{-|\xi_1|x_2}]_0}\Big)\Big\}d\lam+\wh{b}_{2, 0}\non\\
&=\f{1}{2\pi i}\int_{\Ga}e^{\lam t} \f{i\xi_1}{\lam} \Big\{\f{1}{2\om} \int^\infty_0  \Big(e^{-\om|x_2-y_2|} -e^{-\om(x_2+y_2)} \Big)(\wh{u}_{2, 0}+\f{i\xi_1}{\lam} \wh{b}_{2, 0})(y_2) dy_2 \non\\
&\qquad- \f{1}{2\pi i}\int_{\Ga}e^{\lam t} \f{1}{\om-|\xi_1|} \int^\infty_0  e^{-\om y_2} (\wh{u}_{2, 0}+\f{i\xi_1}{\lam} \wh{b}_{2, 0})(y_2) dy_2 \Big( e^{-|\xi_1|x_2}-e^{-\om x_2}\Big)  \Big\}d\lam+\wh{b}_{2, 0}\non\\
&:=K_1+K_2+\wh{b}_{2, 0}.
\end{align}
We here only consider $\wh{b}_{2, L}(\xi_1,x_2, t)$, and the $\wh{b}_{1, L}(\xi_1,x_2, t)$ can be estimated similarly.

For the $K_1$ part, we use the odd extension $\wh{U}_{2, 0}(y_2)$ and $\wt{K}_{1, \lam}$ to have
\begin{align*}
\cF_{x_2}(\wt{K}_1)
&=\f{1}{2\pi i}\int_{\Ga}\f{i\xi_1e^{\lam t}}{(\lam-\lam_+)(\lam-\lam_-)}\cF(U_{2, 0}) d\lam\non\\
&=i\xi_1\f{ e^{\lam_+t} -e^{\lam_- t}}{\lam_+-\lam_-}\cF(U_{2, 0})+(i\xi_1)^2\f{ 1}{\lam_+\lam_-}\cF(\wt{\wh{b}}_{2, 0})\\
&=i\xi_1\f{ e^{\lam_+t} -e^{\lam_- t}}{\lam_+-\lam_-}\cF(U_{2, 0})-\cF(\wt{\wh{b}}_{2, 0}),
\end{align*}
where we use  $\lam_+\lam_-=|\xi_1|^2$, then
\begin{align*}
\cF_{x_2}(\wt{K}_1+\wt{\wh{b}}_{2, 0})=i\xi_1\f{ e^{\lam_+t} -e^{\lam_- t}}{\lam_+-\lam_-}\cF(U_{2, 0}).
\end{align*}
The decay of $K_1+\wh{b}_{2, 0}$ can be obtained by using the result of  whole space \cite{W}.

 $K_2$  can be rewritten as
 \begin{align*}
K_2=& \f{1}{2\pi i}\int_{\Ga}\f{e^{\lam t}}{\om-|\xi_1|} \f{i\xi_1}{\lam} \int^\infty_0  e^{-\om y_2} (\wh{u}_{2, 0}+\f{i\xi_1}{\lam} \wh{b}_{2, 0})(y_2) dy_2 e^{-|\xi_1|x_2}  d\lam\\
&- \f{1}{2\pi i}\int_{\Ga}\f{e^{\lam t}}{\om-|\xi_1|} \f{i\xi_1}{\lam} \int^\infty_0  e^{-\om (x_2+y_2)} (\wh{u}_{2, 0}+\f{i\xi_1}{\lam} \wh{b}_{2, 0})(y_2) dy_2 d\lam\\
:=&L_{1}+L_{2}.
\end{align*}

 {\bf Case 1.  $|\xi_1|\leq 2$.}

We first consider $L_2$ and divide it as
\beno L_2=- \f{1}{2\pi i}\sum_{i=1}^{7}\int_{\Ga_i}\f{e^{\lam t} }{\om-|\xi_1|}\f{i\xi_1}{\lam}\int^\infty_0  e^{-\om (x_2+y_2)} (\wh{u}_{2, 0}+\f{i\xi_1}{\lam} \wh{b}_{2, 0})(y_2) dy_2   d\lam:= \sum_{i=1}^{7} L_2^i.
\eeno
Indeed, the trouble term is the $\wh{b}_{2, 0}$ part.

For the $\wh{b}_{2,0}$ part of $L_2^1$, since
\beno
\f{|\xi_1|}{|\lam|}\sim\f{|\xi|^2}{|\xi_1|},
\eeno
then by the divergence free condition, and the boundary condition $b_{2,0}|_{x_2=0}=0$ and $b_{1,0}|_{x_2=0}=0$, the $L^2$ estimate is obtained as
\begin{align*}
\|L_2^1\|_{L^2_{\xi_1}{L^2_{x_2}}}
&\lesssim\Big\|\textbf{1}_{|\xi_1|\leq 2}\f{|\xi_1|^2}{|\xi|^4} e^{-\f{|\xi_1|^2}{|\xi|^2} t}\f{|\xi|^4}{|\xi_1|^2}\f{\xi_2 }{i\xi_2+|\xi_1|}\f{1}{\xi_2}\cF\big(\chi \pa_2b_{2, 0} \big) (-\xi_2)   \Big\|_{L^2_{\xi_1}L^2_{\xi_2}}\non\\
&\lesssim  \Big\|\f{|\xi_1|}{|\xi|} e^{-\f{|\xi_1|^2}{|\xi|^2} t}\cF\big(\chi b_{1, 0} \big) (-\xi_2)   \Big\|_{L^2_{\xi_1}L^2_{\xi_2}}\non\\
&\lesssim \sum_{j\geq 0}\Big\| \f{|\xi_1|}{j}  e^{-\f{|\xi_1|^2}{j^2} t} \cF( P_j (\chi b_{1,0})  )\Big\|_{L^2_{\xi_1}L^2_{\xi_2}}\non\\
&\lesssim  t^{-\f12} \sum_{j\geq 0} \Big\|e^{-\f{|\xi_1|^2}{j^2} t}\Big\|_{L^2_{\xi_1}} \Big\|\cF(P_j (\chi b_{1,0}))\Big\|_{L^\infty_{\xi_1}L^2_{\xi_2}}\non\\
&\lesssim t^{-\f34} \Big(\sum_{0\leq j\leq 1} j^{\f12} \Big\| \cF( P_j  (\chi b_{1,0})  )\Big\|_{L^\infty_{\xi_1}L^2_{\xi_2}}+\sum_{j\geq 1} j^{-\delta} \Big\|j^{\f12+\delta} \cF( P_j  (\chi b_{1,0} ) )\Big\|_{L^\infty_{\xi_1}L^2_{\xi_2}}\Big)\non\\
&\lesssim t^{-\f12}\Big( \|b_{1,0}\|_{L^1_{x_1}L^2_{x_2}}+\|\na^{\f12+\delta} b_{1,0}\|_{L^1_{x_1}L^2_{x_2}}\Big),
\end{align*}
and the $L^\infty$ estimate is obtained as
\begin{align}\label{L12infty}
&  \Big\| \textbf{1}_{|\xi_1|\leq 2}e^{-\f{|\xi_1|^2}{|\xi|^2} t}\cF\big(\chi b_{2, 0} \big) (-\xi_2)   \Big\|
_{L^1_{\xi_1}L^1_{\xi_2}}\non\\
& \lesssim\Big\| \f{1}{|\xi|} e^{-\f{|\xi_1|^2}{|\xi|^2} t}  \cF(\chi \na  b_{2, 0})  \Big\|_{L^1_{\xi_1}L^1_{\xi_2}}\non\\
&\lesssim \sum_{j\geq 0}\Big\|\f{|\xi_1|}{|\xi|}  e^{-\f{|\xi_1|^2}{j^2} t} \cF( P_j  (\chi b_{ 0} ) )\Big\|_{L^1_{\xi_1}L^1_{\xi_2}}\non\\
&\lesssim  t^{-\f12} \sum_{j\geq 0} \Big\| e^{-\f{|\xi_1|^2}{j^2} t}\Big\|_{L^1_{\xi_1}} \Big\|\cF(P_j  (\chi b_{ 0} ))\Big\|_{L^\infty_{\xi_1}L^2_{\xi_2}} \Big(\int_{|\xi_2|\leq j} d \xi_2\Big)^\f12\non\\
&\lesssim t^{-1} \Big(\sum_{0\leq j\leq 1} j^{\f32} \Big\| \cF( P_j   (\chi b_{ 0} ) )\Big\|_{L^\infty_{\xi_1}L^2_{\xi_2}}+\sum_{j\geq 1} j^{-\delta} \Big\|j^{\f32+\delta} \cF( P_j   (\chi b_{ 0} ) )\Big\|_{L^\infty_{\xi_1}L^2_{\xi_2}}\Big)\non\\
&\lesssim t^{-1}\Big( \|b_0\|_{L^1_{x_1}L^2_{x_2}}+\|\na^{\f32+\delta} b_0\|_{L^1_{x_1}L^2_{x_2}}\Big).
\end{align}

For the $\wh{b}_{2,0}$ part of $L_2^{2,3,4}$, since
\beno
 \f{|\xi_1|}{|\lam|}=\f{|\xi_1|}{|\lam_\pm|}\sim 1, \quad \text{when} \quad |\xi_1|\geq \f{|\xi|^2}{2},
 \eeno
then
\beno
\|L_2^{2,3,4}\|_{L^2_{\xi_1}L^2_{x_2}}\lesssim \| e^{-\f{|\xi|^2}{4}t} \|_{L^2_{\xi_1}L^2_{\xi_2}}\|b_{2,0}\|_{L^1}\lesssim \lan t \ran ^{-\f12} \|b_{2,0}\|_{L^1},
\eeno
and
\beno
\|L_2^{2,3,4}\|_{L^1_{\xi_1}L^\infty_{x_2}}\lesssim \| e^{-\f{|\xi|^2}{4}t} \|_{L^1_{\xi_1}L^1_{\xi_2}}\|b_{2,0}\|_{L^1}\lesssim \lan t \ran ^{-1} \|b_{2,0}\|_{L^1}.
\eeno
The  $\wh{b}_{1,0}$ part  of $L_2^6$  goes to 0, since
\beno
\f{|\xi_1|}{|\lam|}=\f{|\xi_1|}{(|\xi_1|^2+\e^2-2\e|\xi_1|\cos \gamma)^\f12}.
\eeno
The  $\wh{b}_{1,0}$ part  of $L_2^7$  goes to 0, since
\beno
\f{|\xi_1|}{|\lam|}=\f{|\xi_1|}{R} \to 0.
\eeno
Now we consider $L_{1}$ and divide  it as
\begin{align*}
 L_{1}&=\f{1}{2\pi i} \sum_{i=1}^{7} \int_{\Gamma_i} \f{e^{\lam t} }{\om-|\xi_1|} \f{i\xi_1}{\lam}  \int^\infty_0e^{-\om y_2} \big( \wh{u}_{2, 0} (y_2) +\f{i\xi_1}{\lam}  \wh{b}_{2, 0}(y_2)\big)dy_2e^{-|\xi_1|x_2}  d\lam:= \sum_{i=1}^{7} L_{1}^i.
\end{align*}
For the  $b_0$ part  of $L^1_1$, by the divergence free condition, the boundary conditions $b_{2,0}|_{x_2=0}=0$ and $b_{1,0}|_{x_2=0}=0$,  and \eqref{J11}, we have
\begin{align*}
&\Big\|\int_{\Ga_1}\textbf{1}_{|\xi_1|\leq 2}e^{\lam t} \f{1}{\om-|\xi_1|} \int^\infty_0  e^{-\om y_2} \big(\f{i\xi_1}{\lam}\big)^2\wh{b}_{2, 0}(y_2) dy_2 e^{-|\xi_1|x_2}  d\lam\Big\|_{L^2_{\xi_1}L^2_{x_2}}\non\\
&\lesssim \Big\| e^{-\f{|\xi_1|^2}{|\xi|^2} t} |\xi_1|^{-\f12}\f{1}{\xi_2}\cF \big(\chi \pa_2b_{2, 0} \big) \Big\|_{L^2_{\xi_1}L^1_{\xi_2}}\non\\
&\lesssim \Big\| e^{-\f{|\xi_1|^2}{|\xi|^2} t} \f{|\xi_1|^{\f12}}{|\xi|}\cF \big(\chi b_{1,0} \big) \Big\|_{L^2_{\xi_1}L^1_{\xi_2}}\non\\
&\lesssim t^{-\f14}\sum_{j\geq 0}\Big\|j^{-\f12}  e^{-\f{|\xi_1|^2}{j^2} t} \cF( P_j  (\chi b_{ 0} ) )\Big\|_{L^2_{\xi_1}L^1_{\xi_2}}\non\\
&\lesssim  t^{-\f14} \sum_{j\geq 0} j^{-\f12}\Big\| e^{-\f{|\xi_1|^2}{j^2} t}\Big\|_{L^2_{\xi_1}} \Big\|\cF(P_j  (\chi b_{1,0} ))\Big\|_{L^\infty_{\xi_1}L^2_{\xi_2}} \Big(\int_{|\xi_2|\leq j} d \xi_2\Big)^\f12\non\\
&\lesssim t^{-\f12} \Big(\sum_{0\leq j\leq 1} j^{\f12} \Big\| \cF( P_j   (\chi b_{1,0} ) )\Big\|_{L^\infty_{\xi_1}L^2_{\xi_2}}+\sum_{j\geq 1} j^{-\delta} \Big\|j^{\f12+\delta} \cF( P_j   (\chi b_{1,0} ) )\Big\|_{L^\infty_{\xi_1}L^2_{\xi_2}}\Big)\non\\
&\lesssim t^{-\f12}\Big( \|b_{1,0}\|_{L^1_{x_1}L^2_{x_2}}+\|\na^{\f12+\delta} b_{1,0}\|_{L^1_{x_1}L^2_{x_2}}\Big),
\end{align*}

and the $L^\infty$ estimate is very similar as \eqref{L12infty}
 \begin{align*}
&\Big\|\int_{\Ga_1}\textbf{1}_{|\xi_1|\leq 2}e^{\lam t} \f{1}{\om-|\xi_1|} \int^\infty_0  e^{-\om y_2} \big(\f{i\xi_1}{\lam}\big)^2\wh{b}_{2, 0}(y_2) dy_2 e^{-|\xi_1|x_2}  d\lam\Big\|_{L^1_{\xi_1}L^\infty_{x_2}}\non\\
&\lesssim \Big\| e^{-\f{|\xi_1|^2}{|\xi|^2} t} \cF \big(\chi b_{2, 0} \big) \Big\|_{L^1_{\xi_1}L^1_{\xi_2}}.\non\\
&\lesssim t^{-\f12}\big(\|   b_{ 0}  \|_{L^1_{x_1}L^2_{x_2}}+\|  \na^{\f32+\delta} b_{ 0}  \|_{L^1_{x_1}L^2_{x_2}}\big).
\end{align*}
The $\wh{b}_{2,0}$ part of $L_1^{2,3,4}$  is very similar as $J_1^{2, 3, 4}$, since
\beno
 \f{|\xi_1|}{|\lam|}=\f{|\xi_1|}{|\lam_\pm|}\sim 1, \quad \text{when} \quad |\xi_1|\geq \f{|\xi|^2}{2}.
 \eeno
The  $\wh{b}_{1,0}$ part  of $L_1^6$  goes to 0, since
\beno
\f{|\xi_1|}{|\lam|}=\f{|\xi_1|}{(|\xi_1|^2+\e^2-2\e|\xi_1|\cos \gamma)^\f12}.
\eeno
The  $\wh{b}_{1,0}$ part  of $L_1^7$  goes to 0, since
\beno
\f{|\xi_1|}{|\lam|}=\f{|\xi_1|}{R}\to 0.
\eeno

We can use the same way to deal with the $\Ga_5$ part of  $K_2$ in $b_{2, L}$ by the boundary condition $b_{2,0}|_{x_2=0}=0$,
\begin{align}\label{K25}
|K_2^5|
&=\Big|\f{1}{2\pi i} \int_{\Ga_5 } e^{\lam t}  \f{1}{ \om-|\xi_1|} \int^\infty_0 e^{-\om(x_2+y_2)} \f{|\xi_1|^2}{\lam^2} \wh{b}_{2, 0} (y_2) dy_2 d\lam\Big|\non\\
&=\Big|\f{1}{2\pi i} \int_{\Ga_5 } e^{\lam t}  \f{1}{ \om-|\xi_1|} \int^\infty_0 \f{1}{\om} \f{d}{ d y_2} \Big(e^{-\om(x_2+y_2)} \Big)\f{|\xi_1|^2}{\lam^2} \wh{b}_{2, 0} (y_2) dy_2 d\lam\Big|\non\\
&=\Big|\f{1}{2\pi i} \int_{\Ga_5 } e^{\lam t}  \f{1}{ \om-|\xi_1|} \f{1}{\om^2} \int^\infty_0 e^{-\om(x_2+y_2)} \f{|\xi_1|^2}{\lam^2} \pa_2\wh{b}_{2, 0} (y_2) dy_2 d\lam\Big|\non\\
&\lesssim \f{e^{\e t} \e}{|\xi_1|\e^\f12} \int_{-\pi}^\pi \|e^{-Re \om y_2}\|_{L^2_{y_2}}d \gamma\| \pa_2\wh{b}_{2, 0}\|_{L^2_{y_2}}\non\\
&\lesssim \f{e^{\e t}\e^\f34 }{|\xi_1|^\f32} \| \pa_2b_{2, 0}\|_{L^2_{x_2}}\rightarrow 0
 \quad \text{when} \quad \e\rightarrow 0 \quad \text{ for  fixed} \quad \xi_1,
\end{align}
where  we use
\begin{align*}
\om^2 &=\lam+|\xi_1|^2+\f{|\xi_1|^2}{\lam}\\
&=(\e\cos \gamma+|\xi_1|^2+\f{|\xi_1|^2}{\e}\cos \gamma)+i(\e\sin\gamma-\f{|\xi_1|^2}{\e} \sin \gamma)\\
&=(\e^2+|\xi_1|^4+\f{|\xi_1|^4}{\e^2}+2\e \cos \gamma |\xi_1|^2+2|\xi_1|^2 \cos 2\gamma+\f{2|\xi_1|^4}{\e} \cos \gamma)^\f12 e^{2i\theta_5}
\end{align*}
with
$$|\om|\gtrsim \f{|\xi_1|}{\sqrt{\e}},$$
and
$$|\om-|\xi_1||\geq|\om|-|\xi_1|\gtrsim \f{|\xi_1|}{\sqrt{\e}}.$$

\smallskip

{\bf Case 2.  $|\xi_1|> 2$.}

We also divide $L_{1, 2}$ as $ \wt{L}_{1, 2}^1+\wt{L}_{1, 2}^2$. By the definition of $\wt{\Ga}_{1}$, $\wt{L}_{1, 2}^1$ can be estimated as $L_{1, 2}^1$, and by the definition of $\wt{\Ga}_{2}$, $\wt{L}_{1, 2}^2$  can be estimated as $\wt{J}_{1, 2}^2$ by
$$
\f{|\xi_1|}{|\lam|}\lesssim \f{|\xi_1|}{-\lam'_-}\lesssim 1,
$$
which has  an exponential decay.

We mention that the decay rate  of $b_1$ is slower than $b_2$ because of  the whole space part
\beno
\f{1}{2\pi i}\int_{\Ga}e^{\lam t} \f{i\xi_1}{\lam} \f{1}{2\om} \int^\infty_0  \Big(e^{-\om|x_2-y_2|} -e^{-\om(x_2+y_2)} \Big)(\wh{u}_{1, 0}+\f{i\xi_1}{\lam} \wh{b}_{1, 0})(y_2) dy_2 d\lam.
\eeno
By the boundary condition $b_{1,0}|_{x_2=0}=0$, we have the  $L^2$ estimate
\begin{align*}
&\Big\|\textbf{1}_{|\xi_1|\geq 2}\f{|\xi_1|^2}{|\xi|^4} e^{-\f{|\xi_1|^2}{|\xi|^2} t}\f{|\xi|^4}{|\xi_1|^2}\cF\big(\chi b_{1, 0} \big)   \Big\|_{L^2_{\xi_1}L^2_{\xi_2}}\non\\
&\lesssim\Big\|e^{-\f{|\xi_1|^2}{|\xi|^2} t}  \cF\big( \chi b_{1, 0}\big) \Big\|_{L^2_{\xi_1}L^2_{\xi_2}}\non\\
&\lesssim \sum_{j\geq 0}\Big\|  e^{-\f{|\xi_1|^2}{j^2} t} \cF( P_j  (\chi b_{1, 0} )  )\Big\|_{L^2_{\xi_1}L^2_{\xi_2}}\non\\
&\lesssim   \sum_{j\geq 0} \Big\|e^{-\f{|\xi_1|^2}{j^2} t}\Big\|_{L^2_{\xi_1}} \Big\|\cF(P_j(\chi b_{1, 0} ) )\Big\|_{L^\infty_{\xi_1}L^2_{\xi_2}}\non\\
&\lesssim t^{-\f14} \Big(\sum_{0\leq j\leq 1} j^{\f12} \Big\| \cF( P_j (\chi b_{1, 0} )  )\Big\|_{L^\infty_{\xi_1}L^2_{\xi_2}}+\sum_{j\geq 1} j^{-\delta} \Big\|j^{\f12+\delta} \cF( P_j (\chi b_{1, 0}  ) )\Big\|_{L^\infty_{\xi_1}L^2_{\xi_2}}\Big)\non\\
&\lesssim t^{-\f14}\Big( \|b_{1, 0} \|_{L^1_{x_1}L^2_{x_2}}+\|\na^{\f12+\delta} b_{1, 0} \|_{L^1_{x_1}L^2_{x_2}}\Big),\end{align*}
and by the boundary condition $b_{1,0}|_{x_2=0}=0$, we have the $L^\infty$ estimate
\begin{align*}
&\Big\|\textbf{1}_{|\xi_1|\geq 2}e^{-\f{|\xi_1|^2}{|\xi|^2} t}  \cF (\chi b_{1, 0})  \Big\|_{L^1_{\xi_1}L^1_{\xi_2}}\non\\
&\lesssim \sum_{j\geq 0}\Big\|e^{-\f{|\xi_1|^2}{j^2} t} \cF( P_j (\chi  b_{1, 0})   )\Big\|_{L^1_{\xi_1}L^1_{\xi_2}}\non\\
&\lesssim \sum_{j\geq 0} \Big\|e^{-\f{|\xi_1|^2}{j^2} t}\Big\|_{L^1_{\xi_1}} \Big\|\cF(P_j (\chi b_{1,0}))\Big\|_{L^\infty_{\xi_1}L^2_{\xi_2}} \Big(\int_{|\xi_2|\leq j} d \xi_2\Big)^\f12\non\\
&\lesssim t^{-\f12} \Big(\sum_{0\leq j\leq 1} j^{\f32} \Big\| \cF( P_j (\chi b_{1, 0} )  )\Big\|_{L^\infty_{\xi_1}L^2_{\xi_2}}+\sum_{j\geq 1} j^{-\delta} \Big\|j^{\f32+\delta} \cF( P_j (\chi  b_{1, 0} )  )\Big\|_{L^\infty_{\xi_1}L^2_{\xi_2}}\Big)\non\\
&\lesssim t^{-\f12}\Big( \|b_{1, 0} \|_{L^1_{x_1}L^2_{x_2}}+\|\na^{\f32+\delta} b_{1, 0} \|_{L^1_{x_1}L^2_{x_2}}\Big).
\end{align*}

Thus we complete the proof of Proposition \ref{linear b}.
\end{proof}

\smallskip

Now we  give the  resolvent estimate of derivative.

\begin{proposition}\label{linear  nau}
We have the following $L^2$, $L^\infty$ estimates
 \begin{align*}
  \|\pa_1 u_L\|_{H^1}&\lesssim \lan t\ran^{-1}\Big(\|(u_0, b_0)\|_{L^1\cap L^2  }+\|\na^{\f32+\delta} (u_0, b_0)  \|_{L^1_{x_1}L^2_{x_2}}\Big),\\
  \|\pa_2 u_L\|_{H^1}&\lesssim \lan t\ran^{-\f34}\Big(\|(u_0, b_0)\|_{L^1\cap L^2  }+\|\na^{\f32+\delta} (u_0, b_0)  \|_{L^1_{x_1}L^2_{x_2}}\Big),\\
    \|\pa_1 u_L\|_{L^\infty}&\lesssim \lan t\ran^{-\f54}\Big( \|(u_0, b_0)\|_{L^1\cap L^2}+\|\na^{1+\delta} (u_0, b_0)  \|_{L^2}\Big),\\
     \|\p_1 b_L\|_{L^2}&\lesssim \lan t\ran^{-\f34}\Big(\|(u_0, b_0)\|_{L^1\cap L^2  }+\|\na^{\f32+\delta} (u_0, b_0)  \|_{L^1_{x_1}L^2_{x_2}}\Big),
     \end{align*}
 for the linearized problem \eqref{eq:MHDL}  (here $\delta>0$ small enough).
\end{proposition}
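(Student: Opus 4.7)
The plan is to differentiate the integral representations \eqref{u1}--\eqref{b2} for $\wh u_L, \wh b_L$ in $x_1$ or $x_2$ and reuse, term by term, the contour decomposition $\cup_{m=1}^{7}\Ga_m$ (Case 1, $|\xi_1|\le 2$) and $\wt\Ga_1\cup\wt\Ga_2$ (Case 2, $|\xi_1|>2$) together with the dyadic Littlewood--Paley splits that delivered Propositions \ref{linear u} and \ref{linear b}. On the Fourier side $\pa_1$ is multiplication by $i\xi_1$, while $\pa_2$ falls on the kernels $e^{-\om x_2}$, $e^{-|\xi_1|x_2}$, $E_\om[\cdot]$ and brings down a factor of $\om$ or $|\xi_1|$. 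The sharp rates are governed entirely by the bottleneck regime $\eta\sim |\xi_1|^2/|\xi|^2$: the other contours ($\Ga_{2,3},\Ga_4,\wt\Ga_2$) already carry a full heat weight $e^{-|\xi|^2 t/4}$ that absorbs any polynomial factor, and the vanishing/expanding arcs $\Ga_5,\Ga_6,\Ga_7$ still vanish as $\e\to 0$ and $R\to\infty$ since the extra polynomials in $(\xi_1,\xi_2)$ are uniformly bounded on those arcs (up to a fixed power of $R$, which is killed by the exponential $e^{-Rt/2}$).

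The quantitative gain from differentiation is captured by the elementary inequality $(|\xi_1|/|\xi|)e^{-|\xi_1|^2 t/|\xi|^2}\lesssim \lan t\ran^{-1/2}$ in the bottleneck. For $\|\pa_1 u_L\|_{H^1}$ I would repeat the dominant $J_1^1+J_2^1$ computation of Proposition \ref{linear u} with one additional $|\xi_1|$ inserted; combining this with the heat weight upgrades the $L^2$ rate of $u_L$ from $\lan t\ran^{-1/2}$ to $\lan t\ran^{-1}$, and the mixed derivative $\pa_1\pa_2$ only inserts an extra $|\xi_2|\lesssim |\xi|$ which is already compensated by the dyadic split and does not worsen the $H^1$ rate. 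The $L^\infty$ bound on $\pa_1 u_L$ is obtained by passing to the $L^1_{\xi_1}L^1_{\xi_2}$ norm exactly as in \eqref{J2infty}, at the price of the slightly stronger $\na^{1+\delta}$ data norm, yielding $\lan t\ran^{-5/4}$.

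For $\|\pa_2 u_L\|_{H^1}$ the $x_2$-derivative produces factors of $\om$ or $|\xi_1|$; in the bottleneck regime $|\om|=|\xi_2|$ is not aligned with $|\xi_1|$, so one only has $|\xi_2|e^{-|\xi_1|^2 t/|\xi|^2}\lesssim |\xi|$, and the gain beyond the $u_L$ rate is just $\lan t\ran^{-1/4}$, giving the stated $\lan t\ran^{-3/4}$. For $\|\pa_1 b_L\|_{L^2}$, I would exploit $\wh b_L=(i\xi_1/\lam)\wh u_L+(\wh b_0+\wh g)/\lam$ from \eqref{eq:MHDh}$_3$, so that $\pa_1 b_L$ carries the multiplier $\xi_1^2/\lam$; in the bottleneck $|\lam|\sim |\xi_1|^2/|\xi|^2$, hence $|\xi_1^2/\lam|\sim |\xi|^2$, and pairing the resulting $|\xi|$ against the heat weight gains an additional $\lan t\ran^{-1/4}$ on top of the $\lan t\ran^{-1/2}$ inherent in $b_L$ from Proposition \ref{linear b}, for a total of $\lan t\ran^{-3/4}$.

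The main obstacle is bookkeeping: each polynomial factor in $(\xi_1,\xi_2)$ produced by a derivative has to be distributed between the heat weight, the dyadic localisation $P_j$, Bernstein's inequality (to swap $L^\infty_{\xi_1}L^2_{\xi_2}$ for $L^1_{x_1}L^2_{x_2}$) and the data norms $\na^{3/2+\delta}$ or $\na^{1+\delta}$, and the distribution differs between the low-frequency $|\xi|\le 1$ and high-frequency $|\xi|\ge 1$ regions. A secondary technical point is to verify that the non-local boundary correctors $E_\om[e^{-|\xi_1|x_2}]/E_\om[e^{-|\xi_1|x_2}]_0$ still admit, after differentiation, the cancellation identity from Section 2.2 that removed the $\lam=\pm i|\xi_1|$ residues on the circle $\Pi$; this is a direct extension of the computation leading to \eqref{u1}.
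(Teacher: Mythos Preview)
Your overall strategy matches the paper's: differentiate the representations \eqref{u1}--\eqref{b2}, retain the contour decomposition of Section~2.2, and track the extra multipliers produced by $\pa_1$ or $\pa_2$ through the bottleneck branch $\Ga_1$ (resp.\ $\wt\Ga_1$), while the heat-dominated branches $\Ga_{2,3,4}$ and the vanishing arcs $\Ga_{5,6,7}$ behave exactly as before. Your explanation of why $\pa_1$ gains a full $\lan t\ran^{-1/2}$ while $\pa_2$ gains only $\lan t\ran^{-1/4}$ (alignment of $|\xi_1|/|\xi|$ with the anisotropic weight $e^{-|\xi_1|^2 t/|\xi|^2}$ versus the non-aligned $|\xi_2|$) is the correct mechanism, and the paper carries out precisely these computations term-by-term on the representative pieces $J_1^1,J_2^1,J_1^{2,3},J_2^{2,3}$.

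The one place where your bookkeeping drifts is the $\pa_1 b_L$ argument. Writing $\wh b_L=(i\xi_1/\lam)\wh u_L+\cdots$ and asserting that $|\xi_1^2/\lam|\sim|\xi|^2$ in the bottleneck is correct, but your conclusion ``the resulting $|\xi|$ \ldots\ gains an additional $\lan t\ran^{-1/4}$ on top of the $\lan t\ran^{-1/2}$ inherent in $b_L$'' has the two factors reversed: the relevant component is $b_{L,1}$, whose base rate is $\lan t\ran^{-1/4}$, and the extra $\pa_1$ contributes a full $\lan t\ran^{-1/2}$ (at the cost of raising the data norm from $\na^{1/2+\delta}$ to $\na^{3/2+\delta}$). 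The paper avoids this indirection by working directly from the $b_L$ representation: on $\Ga_1$ the symbol for $\pa_1 b_L$ reduces to $|\xi_1|\,e^{-|\xi_1|^2 t/|\xi|^2}$ against $\cF(\chi b_0)$, and the dyadic split then gives $t^{-1/2}\cdot t^{-1/4}\cdot\sum_j j^{3/2}\|\cF(P_j\chi b_0)\|_{L^\infty_{\xi_1}L^2_{\xi_2}}$. Your route via $\wh u_L$ can be pushed through, but the direct computation is cleaner and is what you should write up.
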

 \begin{proof}

 Recalling the structure of  $\wh{u}_{L}$  in \eqref{u1}, \eqref{u2}, we only  consider the typical term.

{\bf Step 1.} The  $L^2$ decay rate of $\pa_1 u_L$.\\

We first consider $J_2^1$ to obtain
\begin{align*}
&\Big\|\f{|\xi_1|^2}{|\xi|^4} e^{-\f{|\xi_1|^2}{|\xi|^2} t}|\xi_1|\f{\xi_2 }{i\xi_2+|\xi_1|}\cF\big(\chi u_{2, 0} \big) (-\xi_2)   \Big\|_{L^2_{\xi_1}L^2_{\xi_2}}\non\\
&\lesssim\Big\|\f{|\xi_1|^3}{|\xi|^4} e^{-\f{|\xi_1|^2}{|\xi|^2} t} \cF \big(\chi u_{2, 0} \big)   \Big\|_{L^2_{\xi_1}L^2_{\xi_2}}\non\\
&\lesssim\Big\|\f{|\xi_1|^2}{|\xi|^2} e^{-\f{|\xi_1|^2}{|\xi|^2} t} \cF \big(\chi u_{2, 0} \big)   \Big\|_{L^2_{\xi_1}L^2_{\xi_2}}\non\\
&\lesssim t^{-1} \Big\|e^{-\f{|\xi_1|^2}{|\xi|^2} t} \cF \big(\chi u_{2, 0} \big)   \Big\|_{L^2_{\xi_1}L^2_{\xi_2}}\non\\
&\lesssim t^{-1}\|u_{2, 0}\|_{L^2},
\end{align*}
and the  $J_2^2+J_2^3$ can be controlled by heat kernel as
\begin{align*}
&\Big\| \f{ \lam_- e^{\lam_- t}- \lam_+ e^{\lam_+t} }{\lam_+-\lam_-}\f{|\xi_1|\xi_2}{i\xi_2-|\xi_1|} \cF_{y_2}\Big(\chi(y_2)  \wh{u}_{2, 0} (y_2)  \Big)(\xi_2)\Big\|_{L^2_{\xi_1}L^2_{\xi_2}}\\
\lesssim&\Big\|\f{ \lam_- e^{\lam_- t}- \lam_+ e^{\lam_+t} }{\lam_+-\lam_-}
\f{|\xi_1|\xi_2}{i\xi_2-|\xi_1|}\Big\|_{L^2_{\xi_1}L^2_{\xi_2}}\Big\|\cF_{y_2}\Big(\chi(y_2)  \wh{u}_{2, 0} (y_2)  \Big)\Big\|_{L^\infty_{\xi_2}}\\
\lesssim&\| |\xi_1|e^{-\f{|\xi|^2}{4}t} \|_{L^2_{\xi_1}L^2_{\xi_2}}\|u_{2,0}\|_{L^1}\\
\lesssim& t^{-1}\|u_{2,0}\|_{L^1}.
\end{align*}
For $J_1^1$ we have
 \begin{align*}
&\Big\| e^{-\f{|\xi_1|^2}{|\xi|^2} t} |\xi_1|^{-\f12}|\xi_1|\f{|\xi_1|^2}{|\xi|^4}\cF \big(\chi u_{2, 0} \big) (-\xi_2)\Big\|_{L^2_{\xi_1}L^1_{\xi_2}} \non\\
&= \Big\| e^{-\f{|\xi_1|^2}{|\xi|^2} t} \f{|\xi_1|^\f52}{|\xi|^4}\cF \big(\chi u_{2, 0} \big) \Big\|_{L^2_{\xi_1}L^1_{\xi_2}}\non\\
&\lesssim \Big\| e^{-\f{|\xi_1|^2}{|\xi|^2} t} \f{|\xi_1|^\f32}{|\xi|^2}\cF \big(\chi u_{2, 0} \big) \Big\|_{L^2_{\xi_1}L^1_{\xi_2}}\non\\
&\lesssim  t^{-\f34} \sum_{j\geq 0}  \Big\|j^{-\f12} e^{-\f{|\xi_1|^2}{j^2} t}  \cF\big(P_j (\chi u_{2, 0})\big)\Big\|_{L^2_{\xi_1}L^1_{\xi_2}}\non\\
&\lesssim  t^{-\f34} \sum_{j\geq 0} j^{-\f12} \Big\| e^{-\f{|\xi_1|^2}{j^2} t}\Big\|_{L^2_{\xi_1}} \Big\|\cF(P_j  (\chi u_{2, 0} ))\Big\|_{L^\infty_{\xi_1}L^2_{\xi_2}} \Big(\int_{|\xi_2|\leq j} d \xi_2\Big)^\f12\non\\
&\lesssim  t^{-1} \Big(\sum_{j\leq 1} j^{\f12} \Big\|\cF(P_j  (\chi u_{2, 0} ))\Big\|_{L^\infty_{\xi_1}L^2_{\xi_2}}+\sum_{j\geq 1} j^{-\delta}\Big\| j^{\f12+\delta}\cF \big(P_j(\chi u_{2, 0}) \big)   \Big\|_{L^\infty_{\xi_1}L^2_{\xi_2}}\Big)
 \non\\
 &\lesssim t^{-1} \Big( \| u_{2, 0} \|_{L^1_{x_1}L^2_{x_2}}+\| \na^{\f12+\delta} u_{2, 0} \|_{L^1_{x_1}L^2_{x_2}}\Big),
\end{align*}
and  $J_1^2+J_1^3$ is estimated as
\begin{align*}
&\Big\| \f{ \lam_- e^{\lam_- t}- \lam_+ e^{\lam_+t} }{\lam_+-\lam_-}\f{\xi_2|\xi_1|}{i\xi_2-|\xi_1|}|\xi_1|^{-\f12} \cF_{y_2}\Big(\chi(y_2)  \wh{u}_{2, 0} (y_2)  \Big)(\xi_2)\Big\|_{L^2_{\xi_1}L^1_{\xi_2}}\\
\lesssim&\Big\| \f{\xi_2}{|\xi|}|\xi_1|^{\f12}e^{-\f{|\xi|^2}{4}t}\cF_{y_2}\Big(\chi(y_2)  \wh{u}_{2, 0} (y_2) \Big)(\xi_2) \Big\|_{L^2_{\xi_1}L^1_{\xi_2}}\\
\lesssim&\Big\||\xi_1|^{\f12}e^{-\f{|\xi|^2}{4}t}\cF_{y_2}\Big(\chi(y_2)  \wh{u}_{2, 0} (y_2) \Big)(\xi_2) \Big\|_{L^2_{\xi_1}L^1_{\xi_2}}\\
\lesssim &  \Big\||\xi_1|^\f12e^{-\f{|\xi|^2}{4} t}   \Big\|_{L^2_{\xi_1}L^1_{\xi_2}}\Big\|\cF \big(\chi u_{2,0} \big)\Big\|_{L^\infty_{\xi_1}L^\infty_{\xi_2}}\non\\
\lesssim& t  ^{-1} \|u_{2,0}\|_{L^1}.
\end{align*}

{\bf Step 2.} The  $L^2$ decay rate of $\pa_1 \na u_L$.

We first consider $J_2^1$ to obtain
\begin{align*}
&\Big\|\f{|\xi_1|}{|\xi|^2} e^{-\f{|\xi_1|^2}{|\xi|^2} t}|\xi_1||\xi|\f{\xi_2 }{i\xi_2+|\xi_1|}\cF\big(\chi u_{2, 0} \big) (-\xi_2)   \Big\|_{L^2_{\xi_1}L^2_{\xi_2}}\non\\
&\lesssim\Big\|\f{|\xi_1|^2}{|\xi|^2} e^{-\f{|\xi_1|^2}{|\xi|^2} t}|\xi| \cF \big(\chi u_{2, 0} \big)   \Big\|_{L^2_{\xi_1}L^2_{\xi_2}}\non\\
&\lesssim  t^{-1}  \|\na u_{2, 0}\|_{L^2},
\end{align*}
and  $J_2^2+J_2^3$ can be controlled by heat kernel,
\begin{align*}
&\Big\| \f{ \lam_- e^{\lam_- t}- \lam_+ e^{\lam_+t} }{\lam_+-\lam_-}\f{|\xi_1|\xi_2}{i\xi_2-|\xi_1|}|\xi| \cF_{y_2}\Big(\chi(y_2)  \wh{u}_{2, 0} (y_2)  \Big)(\xi_2)\Big\|_{L^2_{\xi_1}L^2_{\xi_2}}\\
\lesssim&\Big\|\f{ \lam_- e^{\lam_- t}- \lam_+ e^{\lam_+t} }{\lam_+-\lam_-}
\f{|\xi_1|\xi_2}{i\xi_2-|\xi_1|}|\xi|\Big\|_{L^2_{\xi_1}L^2_{\xi_2}}\Big\|\cF_{y_2}\Big(\chi(y_2)  \wh{u}_{2, 0} (y_2)  \Big)\Big\|_{L^\infty_{\xi_2}}\\
\lesssim&\| \xi_1\xi_2e^{-\f{|\xi|^2}{4}t} \|_{L^2_{\xi_1}L^2_{\xi_2}}\|u_{2,0}\|_{L^1}\\
\lesssim& t  ^{-1} \|u_{2,0}\|_{L^1}.
\end{align*}
For $J_1^1$ we have
 \begin{align*}
&\Big\| e^{-\f{|\xi_1|^2}{|\xi|^2} t} |\xi_1|^{-\f12}|\xi_1||\xi|\f{|\xi_1|^2}{|\xi|^4}\cF \big(\chi u_{2, 0} \big) (-\xi_2) \Big\|_{L^2_{\xi_1}L^1_{\xi_2}} \non\\
&= \Big\| e^{-\f{|\xi_1|^2}{|\xi|^2} t} \f{|\xi_1|^\f52}{|\xi|^3}\cF \big(\chi u_{2, 0} \big) \Big\|_{L^2_{\xi_1}L^1_{\xi_2}},\\
&\lesssim \Big\| e^{-\f{|\xi_1|^2}{|\xi|^2} t} \f{|\xi_1|^\f32}{|\xi|}\cF \big(\chi u_{2, 0} \big) \Big\|_{L^2_{\xi_1}L^1_{\xi_2}},\\
&\lesssim  t^{-\f34} \sum_{j\geq 0}  \Big\|j^\f12 e^{-\f{|\xi_1|^2}{j^2} t}  \cF\big(P_j (\chi u_{2, 0})\big)\Big\|_{L^2_{\xi_1}L^1_{\xi_2}}\non\\
&\lesssim  t^{-\f34} \sum_{j\geq 0} j^\f12 \Big\| e^{-\f{|\xi_1|^2}{j^2} t}\Big\|_{L^2_{\xi_1}} \Big\|\cF(P_j  (\chi u_{2, 0} ))\Big\|_{L^\infty_{\xi_1}L^2_{\xi_2}}\Big(\int_{|\xi_2|\leq j} d \xi_2\Big)^\f12 \non\\
&\lesssim t^{-1}\Big(\sum_{j\leq1}j^\f32\Big\|\cF(P_j  (\chi u_{2, 0} ))   \Big\|_{L^\infty_{\xi_1}L^2_{\xi_2}}+\sum_{j\geq1}j^{-\delta} \Big\| j^{\f32+\delta}\cF(P_j  (\chi u_{2, 0} ))   \Big\|_{L^\infty_{\xi_1}L^2_{\xi_2}}\Big)\non\\
&\lesssim t^{-1} \Big( \| u_{2, 0} \|_{L^1_{x_1}L^2_{x_2}}+\| \na^{\f32+\delta} u_{2, 0} \|_{L^1_{x_1}L^2_{x_2}}\Big),
\end{align*}
and for $J_1^2+J_1^3$,
\begin{align*}
&\Big\| \f{ \lam_- e^{\lam_- t}- \lam_+ e^{\lam_+t} }{\lam_+-\lam_-}\f{\xi_2}{i\xi_2-|\xi_1|}|\xi_1|^{-\f12} |\xi_1||\xi|\cF_{y_2}\Big(\chi(y_2)  \wh{u}_{2, 0} (y_2)  \Big)(\xi_2)\Big\|_{L^2_{\xi_1}L^1_{\xi_2}}\\
\lesssim&\Big\||\xi_1|^{\f12}|\xi_2|e^{-\f{|\xi|^2}{4}t}\cF_{y_2}\Big(\chi(y_2)  \wh{u}_{2, 0} (y_2) \Big)(\xi_2) \Big\|_{L^2_{\xi_1}L^1_{\xi_2}}\\
\lesssim &  \Big\||\xi_1|^\f12|\xi_2|e^{-\f{|\xi|^2}{4} t}   \Big\|_{L^2_{\xi_1}L^1_{\xi_2}}\Big\|\cF \big(\chi u_{2,0} \big)\Big\|_{L^\infty_{\xi_1}L^\infty_{\xi_2}}\non\\
\lesssim& t  ^{-\f32} \|u_{2,0}\|_{L^1},
\end{align*}

{\bf Step 3.} The  $L^2$ decay rate of $\pa_2  u_L$ and $\pa_2 \na  u_L$.\\

We mention that the whole space part will reduce the decay rate.   For $\|\pa_2  u_L\|_{L^2}$, we only consider the typical term:
\begin{align*}
&\Big\|\f{|\xi_1|^2}{|\xi|^4} e^{-\f{|\xi_1|^2}{|\xi|^2} t}|\xi|\f{\xi_2 }{i\xi_2+|\xi_1|}\cF\big(\chi u_{1, 0} \big) (-\xi_2)   \Big\|_{L^2_{\xi_1}L^2_{\xi_2}}\non\\
&\lesssim\Big\|\f{|\xi_1|^2}{|\xi|^3} e^{-\f{|\xi_1|^2}{|\xi|^2} t} \cF \big(\chi u_{1, 0} \big)   \Big\|_{L^2_{\xi_1}L^2_{\xi_2}}\non\\
&\lesssim\Big\|\f{|\xi_1|}{|\xi|} e^{-\f{|\xi_1|^2}{|\xi|^2} t} \cF \big(\chi u_{1, 0} \big)   \Big\|_{L^2_{\xi_1}L^2_{\xi_2}}\non\\
&\lesssim  t^{-\f12} \sum_{j\geq 0}  \Big\|e^{-\f{|\xi_1|^2}{j^2} t}  \cF\big(P_j (\chi u_{1, 0})\big)\Big\|_{L^2_{\xi_1}L^2_{\xi_2}}\non\\
&\lesssim  t^{-\f12} \sum_{j\geq 0} \Big\| e^{-\f{|\xi_1|^2}{j^2} t}\Big\|_{L^2_{\xi_1}} \Big\|\cF(P_j  (\chi u_{1, 0} ))\Big\|_{L^\infty_{\xi_1}L^2_{\xi_2}} \non\\
&\lesssim t^{-\f34}\Big(\sum_{j\leq1}j^\f12\Big\|\cF(P_j  (\chi u_{1, 0} )) \Big\|_{L^\infty_{\xi_1}L^2_{\xi_2}}+\sum_{j\geq1}j^{-\delta}\Big\| j^{\f12+\delta} \cF(P_j  (\chi u_{1, 0} ))    \Big\|_{L^\infty_{\xi_1}L^2_{\xi_2}}\Big)\non\\
&\lesssim t^{-\f34} \Big(\| u_{2, 0} \|_{L^1_{x_1}L^2_{x_2}}+\| \na^{\f12+\delta} u_{1, 0} \|_{L^1_{x_1}L^2_{x_2}}\Big).
\end{align*}
And for $\|\pa_2  \na u_L\|_{L^2}$, we consider
\begin{align*}
&\Big\|\f{|\xi_1|}{|\xi|^2} e^{-\f{|\xi_1|^2}{|\xi|^2} t}|\xi|^2\f{\xi_2 }{i\xi_2+|\xi_1|}\cF\big(\chi u_{1, 0} \big) (-\xi_2)   \Big\|_{L^2_{\xi_1}L^2_{\xi_2}}\non\\
&\lesssim  t^{-\f12} \sum_{j\geq 0}j \Big\| e^{-\f{|\xi_1|^2}{j^2} t}\Big\|_{L^2_{\xi_1}} \Big\|\cF(P_j  (\chi u_{1, 0} ))\Big\|_{L^\infty_{\xi_1}L^2_{\xi_2}} \non\\
&\lesssim t^{-\f34}\Big(\sum_{j\leq1}j^\f32\Big\|\cF(P_j  (\chi u_{1, 0} )) \Big\|_{L^\infty_{\xi_1}L^2_{\xi_2}}+\sum_{j\geq1}j^{-\delta}\Big\| j^{\f32+\delta} \cF(P_j  (\chi u_{1, 0} ))    \Big\|_{L^\infty_{\xi_1}L^2_{\xi_2}}\Big)\non\\
&\lesssim t^{-\f34} \Big(\| u_{2, 0} \|_{L^1_{x_1}L^2_{x_2}}+\| \na^{\f32+\delta} u_{1, 0} \|_{L^1_{x_1}L^2_{x_2}}\Big).
\end{align*}
{\bf Step 4.} The  $L^\infty$ decay rate of $\pa_1 u_L$.\\

We first consider $J_2^1$ to see
\begin{align}\label{5.1}
&\Big\|\f{|\xi_1|^2}{|\xi|^4} e^{-\f{|\xi_1|^2}{|\xi|^2} t}|\xi_1|\f{\xi_2 }{i\xi_2+|\xi_1|}\cF\big(\chi u_{2, 0} \big) (-\xi_2)   \Big\|_{L^1_{\xi_1}L^1_{\xi_2}}\non\\
&\lesssim\Big\|\f{|\xi_1|^3}{|\xi|^4} e^{-\f{|\xi_1|^2}{|\xi|^2} t} \cF \big(\chi u_{2, 0} \big)   \Big\|_{L^1_{\xi_1}L^1_{\xi_2}}\non\\
&\lesssim\Big\|\f{|\xi_1|^2}{|\xi|^2} e^{-\f{|\xi_1|^2}{|\xi|^2} t} \cF \big(\chi u_{2, 0} \big)   \Big\|_{L^1_{\xi_1}L^1_{\xi_2}}\non\\
&\lesssim  t^{-1} \sum_{j\geq 0}  \Big\| e^{-\f{|\xi_1|^2}{j^2} t}  \cF\big(P_j (\chi u_{2, 0})\big)\Big\|_{L^1_{\xi_1}L^1_{\xi_2}}\non\\
&\lesssim  t^{-1} \sum_{j\geq 0}  \Big\| e^{-\f{|\xi_1|^2}{j^2} t}\Big\|_{L^2_{\xi_1}} \Big\|\cF(P_j  (\chi u_{2, 0} ))\Big\|_{L^2_{\xi_1}L^2_{\xi_2}}\Big(\int_{|\xi_2|\leq j} d \xi_2\Big)^\f12 \non\\
&\lesssim t^{-\f54}\Big(\sum_{j\leq1}j\Big\|\cF(P_j  (\chi u_{2, 0} ))   \Big\|_{L^2_{\xi_1}L^2_{\xi_2}}+\sum_{j\geq1}j^{-\delta} \Big\| j^{1+\delta}\cF(P_j  (\chi u_{2, 0} ))   \Big\|_{L^2_{\xi_1}L^2_{\xi_2}}\Big)\non\\
&=t^{-\f54} \Big(\| u_{2, 0} \|_{L^2}+\| \na^{1+\delta} u_{2, 0} \|_{L^2}\Big),
\end{align}
and the  $J_2^2+J_2^3$ can be controlled by heat kernel
\begin{align}\label{5.2}
&\Big\| \f{ \lam_- e^{\lam_- t}- \lam_+ e^{\lam_+t} }{\lam_+-\lam_-}\f{|\xi_1|\xi_2}{i\xi_2-|\xi_1|} \cF_{y_2}\Big(\chi(y_2)  \wh{u}_{2, 0} (y_2)  \Big)(\xi_2)\Big\|_{L^1_{\xi_1}L^1_{\xi_2}}\non\\
\lesssim&\Big\|\f{ \lam_- e^{\lam_- t}- \lam_+ e^{\lam_+t} }{\lam_+-\lam_-}
\f{|\xi_1|\xi_2}{i\xi_2-|\xi_1|}\Big\|_{L^1_{\xi_1}L^1_{\xi_2}}\Big\|\cF_{y_2}\Big(\chi(y_2)  \wh{u}_{2, 0} (y_2)  \Big)\Big\|_{L^\infty_{\xi_2}}\non\\
\lesssim&\| |\xi_1|e^{-\f{|\xi|^2}{4}t} \|_{L^1_{\xi_1}L^1_{\xi_2}}\|u_{2,0}\|_{L^1}\non\\
\lesssim& t^{-\f32}\|u_{2,0}\|_{L^1}.
\end{align}

The $J_1$ is very similar as $J_2$, since
 \begin{align*}
\|e^{-|\xi_1|x_2}  \|_{L^\infty_{x_2}}\lesssim 1.
\end{align*}

{\bf Step 5.} The  $L^2$ decay rate of $\pa_1 b_L$.\\

 Recalling the structure of  $\wh{b}_{L}$  in \eqref{b1}, \eqref{b2}, we only consider the typical term. For $L_2^1$, we have
\begin{align*}
&\Big\|\f{|\xi_1|^2}{|\xi|^4} e^{-\f{|\xi_1|^2}{|\xi|^2} t}|\xi_1|\f{|\xi|^4}{|\xi_1|^2}\f{\xi_2 }{i\xi_2+|\xi_1|}\cF\big(\chi b_{2, 0} \big) (-\xi_2)   \Big\|_{L^2_{\xi_1}L^2_{\xi_2}}\non\\
&\lesssim\Big\| |\xi_1| e^{-\f{|\xi_1|^2}{|\xi|^2} t} \cF \big(\chi b_{2, 0} \big)   \Big\|_{L^2_{\xi_1}L^2_{\xi_2}}\non\\
&\lesssim  t^{-\f12} \sum_{j\geq 0}  \Big\|j e^{-\f{|\xi_1|^2}{j^2} t}  \cF\big(P_j (\chi b_{2, 0})\big)\Big\|_{L^2_{\xi_1}L^2_{\xi_2}}\non\\
&\lesssim  t^{-\f12} \sum_{j\geq 0} j \Big\| e^{-\f{|\xi_1|^2}{j^2} t}\Big\|_{L^2_{\xi_1}} \Big\|\cF(P_j  (\chi b_{2, 0} ))\Big\|_{L^\infty_{\xi_1}L^2_{\xi_2}} \non\\
&\lesssim t^{-\f34}\Big(\sum_{j\leq1}j^\f32\Big\|  \cF\big(P_j (\chi b_{2, 0})\big)\Big\|_{L^2_{\xi_1}L^2_{\xi_2}}+\sum_{j\geq1}j^{-\delta} \Big\| j^{\f32+\delta} \cF\big(P_j (\chi b_{2, 0})\big)\Big\|_{L^2_{\xi_1}L^2_{\xi_2}}\Big)\non\\
&\lesssim t^{-\f34}  \Big(\| b_{2, 0} \|_{L^1_{x_1}L^2_{x_2}}+\| \na^{\f32+\delta} b_{2, 0} \|_{L^1_{x_1}L^2_{x_2}}\Big),
\end{align*}
and the  $L_2^2+L_2^3$ can do the same as $J_2^2+J_2^3$ in $\pa_1 u$, since $\f{|\xi_1|}{|\lam|}\sim1$.

For $L_1^1$, we see that
 \begin{align*}
&\Big\|\f{|\xi_1|^2}{|\xi|^4} e^{-\f{|\xi_1|^2}{|\xi|^2} t} |\xi_1|^{-\f12}|\xi_1|\f{|\xi|^4}{|\xi_1|^2}\f{\xi_2 }{i\xi_2+|\xi_1|}\cF \big(\chi b_{2, 0} \big) \Big\|_{L^2_{\xi_1}L^1_{\xi_2}} \non\\
&\lesssim \Big\| e^{-\f{|\xi_1|^2}{|\xi|^2} t} |\xi_1|^\f12\cF \big(\chi b_{2, 0} \big) \Big\|_{L^2_{\xi_1}L^1_{\xi_2}},\\
&\lesssim\Big\| |\xi_1| e^{-\f{|\xi_1|^2}{|\xi|^2} t} \cF \big(\chi b_{1, 0} \big)   \Big\|_{L^2_{\xi_1}L^2_{\xi_2}}\non\\
&\lesssim  t^{-\f12} \sum_{j\geq 0}  \Big\|j e^{-\f{|\xi_1|^2}{j^2} t}  \cF\big(P_j (\chi b_{1, 0})\big)\Big\|_{L^2_{\xi_1}L^2_{\xi_2}}\non\\
&\lesssim  t^{-\f12} \sum_{j\geq 0} j \Big\| e^{-\f{|\xi_1|^2}{j^2} t}\Big\|_{L^2_{\xi_1}} \Big\|\cF(P_j  (\chi b_{1, 0} ))\Big\|_{L^\infty_{\xi_1}L^2_{\xi_2}} \non\\
&\lesssim t^{-\f34}\Big(\sum_{j\leq1}j^\f32\Big\|  \cF\big(P_j (\chi b_{1, 0})\big)\Big\|_{L^2_{\xi_1}L^2_{\xi_2}}+\sum_{j\geq1}j^{-\delta} \Big\| j^{\f32+\delta} \cF\big(P_j (\chi b_{1, 0})\big)\Big\|_{L^2_{\xi_1}L^2_{\xi_2}}\Big)\non\\
&\lesssim t^{-\f34}  \Big(\| b_{2, 0} \|_{L^1_{x_1}L^2_{x_2}}+\| \na^{\f32+\delta} b_{1, 0} \|_{L^1_{x_1}L^2_{x_2}}\Big),
\end{align*}
and the  $L_1^2+L_1^3$ can do the same as $J_1^2+J_1^3$ in Step 1, since $\f{|\xi_1|}{|\lam|}\sim1$.

Thus we complete the proof of Proposition \ref{linear nau}.

\end{proof}
\smallskip

\section{The  decay  rate of nonlinear system}
Now we consider the nonlinear part of \eqref{eq:u sol2}
\begin{align}\label{nonlinear}
\left\{
\begin{array}{l}
\wh{u}_{1, N}(\xi_1,x_2, t)=\f{1}{2\pi i}\int_{\Ga}e^{\lam t}
\Big\{ -\Big(E_{\om}[ \wh{f}_{1, \lam}+\f{i\xi_1}{\lam} \wh{g}_{1, \lam}]_0+i\xi_1 E_{\om} [E_{|\xi_1|} [i\xi_1 \wh{f}_{1, \lam}+\p_2 \wh{f}_{2, \lam}]]_0\\
\qquad\qquad +\f{i\xi_1}{|\xi_1|} \{E_{\om} [\wh{f}_{2, \lam} +\f{i\xi_1}{\lam} \wh{g}_{2, \lam}]_0
+E_{\om}[\p_{2} E_{|\xi_1|}[i\xi_1\wh{f}_{1, \lam}+\p_2 \wh{f}_{2, \lam}]]_0\}\Big) e^{-\om x_2}\\
\qquad\qquad+E_{\om}[  \wh{f}_{1, \lam}+\f{i\xi_1}{\lam} \wh{g}_{1, \lam}]+i\xi_1 E_{\om} [E_{|\xi_1|}[i\xi_1 \wh{f}_{1, \lam}+\p_2 \wh{f}_{2, \lam}]]\\
\qquad\qquad+ \f{i\xi_1E_{\om}[e^{-|\xi_1|x_2}]}{|\xi_1|E_{\om}[e^{-|\xi_1|x_2}]_0}\{E_{\om} [\wh{f}_{2, \lam}+\f{i\xi_1}{\lam} \wh{g}_{2, \lam}]_0+E_{\om}[\p_{2}E_{|\xi_1|}[ i\xi_1\wh{f}_{1, \lam}+\p_2 \wh{f}_{2, \lam}]]_0\}\Big\}d\lam,\\
\wh{u}_{2, N}(\xi_1,x_2, t)=\f{1}{2\pi i}\int_{\Ga}e^{\lam t}
\Big\{  E_{\om}[  \wh{f}_{2, \lam}+\f{i\xi_1}{\lam} \wh{g}_{2, \lam}]+E_{\om} [\p_2 E_{|\xi_1|}[i\xi_1 \wh{f}_{1, \lam}+\p_2 \wh{f}_{2, \lam}]]\\
\qquad\qquad-\f{E_{\om} [ e^{-|\xi_1|x_2}]}{E_{\om}[e^{-|\xi_1|x_2}]_0}\{E_{\om} [\wh{f}_{2, \lam}+\f{i\xi_1}{\lam} \wh{g}_{2, \lam}]_0+E_{\om}[\p_{2}E_{|\xi_1|}[ i\xi_1\wh{f}_{1, \lam}+\p_2 \wh{f}_{2, \lam}]]_0\}\Big\}d\lam,\\
\wh{b}_{1, N}(\xi_1, x_2,t)=\f{1}{2\pi i}\int_{\Ga}e^{\lam t}
\Big\{\Big( -\f{i\xi_1} {\lam} E_{\om} [\wh{f}_{1, \lam}+\f{i\xi_1}{\lam} \wh{g}_{1, \lam} ]_0+\f{|\xi_1|^2}{\lam}  E_{\om} [E_{|\xi_1|}[i\xi_1  \wh{f}_{1, \lam}+\p_2\wh{f}_{2, \lam}]]_0\\
\qquad\qquad +\f{|\xi_1|}{\lam} \{ E_{\om} [\wh{f}_{2, \lam}+\f{i\xi_1}{\lam} \wh{g}_{2, \lam}]_0+E_{\om}[\p_2 E_{|\xi_1|}[ i\xi_1\wh{f}_{1, \lam}+\p_2 \wh{f}_{2, \lam}]]_0\} \Big) e^{-\om x_2}\\
 \qquad \qquad + \f{i\xi_1} {\lam} E_{\om} [\wh{f}_{1, \lam}+\f{i\xi_1}{\lam} \wh{g}_{1, \lam} ]-\f{|\xi_1|^2}{\lam}  E_{\om} [E_{|\xi_1|}[i\xi_1  \wh{f}_{1, \lam}+\p_2\wh{f}_{2, \lam}]] \\
 \qquad\qquad-\f{|\xi_1|}{\lam} \f{E_{\om}[e^{-|\xi_1|x_2}]}{ E_{\om}[ e^{-|\xi_1|x_2}]_0} \{ E_{\om} [\wh{f}_{2, \lam}+\f{i\xi_1}{\lam} \wh{g}_{2, \lam}]_0+E_{\om}[\p_2 E_{|\xi_1|}[ i\xi_1\wh{f}_{1, \lam}+\p_2 \wh{f}_{2, \lam}]]_0\} \Big\}d\lam+\wh{g}_{1},\\
 \wh{b}_{2, N}(\xi_1, x_2,t)=\f{1}{2\pi i}\int_{\Ga}e^{\lam t}
\Big\{ \f{i\xi_1} {\lam} E_{\om} [\wh{f}_{2, \lam}+\f{i\xi_1}{\lam} \wh{g}_{2, \lam} ]+\f{i\xi_1}{\lam}E_{\om} [  \p_2 E_{|\xi_1|}[i\xi_1  \wh{f}_{1, \lam}+\p_2\wh{f}_{2, \lam}]]\\
\qquad \qquad  -\f{i\xi_1 }{\lam} \f{E_{\om}[e^{-|\xi_1|x_2}]}{E_{\om}[e^{-|\xi_1|x_2}]_0} \{ E_{\om} [\wh{f}_{2, \lam}+\f{i\xi_1}{\lam} \wh{g}_{2, \lam}]_0+E_{\om}[\p_2 E_{|\xi_1|}[ i\xi_1\wh{f}_{1, \lam}+\p_2 \wh{f}_{2, \lam}]]_0\}\Big\}d\lam+\wh{g}_2,
\end{array}\right.
\end{align}
and $\wh{u}_{1, N}$ can be rewritten as
\begin{align*}
&\wh{u}_{1, N}(\xi_1,x_2, t)\non\\
&=-\f{1}{2\pi i}\int_{\Ga}e^{\lam t} \f{1}{2\om} \int^\infty_0  e^{-\om(x_2+y_2)} ( \wh{f}_{1, \lam}+\f{i\xi_1}{\lam} \wh{g}_{1, \lam})(y_2) dy_2 d\lam\non\\
&\quad+\f{1}{2\pi i}\int_{\Ga}e^{\lam t} \f{1}{2\om} \int^\infty_0  e^{-\om|x_2-y_2|} ( \wh{f}_{1, \lam}+\f{i\xi_1}{\lam} \wh{g}_{1, \lam})(y_2) dy_2 d\lam\non\\
&\quad+\f{i\xi_1}{|\xi_1|} \f{1}{2\pi i}\int_{\Ga}e^{\lam t} \int^\infty_0  e^{-\om y_2} ( \wh{f}_{2, \lam}+\f{i\xi_1}{\lam} \wh{g}_{2, \lam})(y_2) dy_2 \f{\lam (\om+|\xi_1|)}{\lam^2+|\xi_1|^2 }\Big( e^{-|\xi_1|x_2}-e^{-\om x_2}\Big)  d\lam\\
&\quad -\f{1}{2\pi i}\int_{\Ga}e^{\lam t} \f{i\xi_1}{2\om} \int^\infty_0  e^{-\om(x_2+y_2)} E_{|\xi_1|} [i\xi_1 \wh{f}_{1, \lam}+\p_2 \wh{f}_{2, \lam}] (y_2) dy_2 d\lam\non\\
&\quad+\f{1}{2\pi i}\int_{\Ga}e^{\lam t} \f{i\xi_1}{2\om} \int^\infty_0  e^{-\om|x_2-y_2|} E_{|\xi_1|} [i\xi_1 \wh{f}_{1, \lam}+\p_2 \wh{f}_{2, \lam}] (y_2) dy_2 d\lam\non\\
&\quad+\f{i\xi_1}{|\xi_1|} \f{1}{2\pi i}\int_{\Ga}e^{\lam t} \int^\infty_0  e^{-\om y_2} \p_{2} E_{|\xi_1|}[i\xi_1\wh{f}_{1, \lam}+\p_2 \wh{f}_{2, \lam}] (y_2) dy_2 \f{\lam (\om+|\xi_1|)}{\lam^2+|\xi_1|^2 }\Big( e^{-|\xi_1|x_2}-e^{-\om x_2}\Big)  d\lam\\
&=\f{1}{2\pi i}\int^t_0 \int_{\Ga}e^{\lam  (t-\tau)} \f{1}{2\om} \int^\infty_0  \Big(e^{-\om|x_2-y_2|}-e^{-\om(x_2+y_2)} \Big)  ( \wh{f}_{1}+\f{i\xi_1}{\lam} \wh{g}_{1})(\tau, y_2) dy_2 d\lam d\tau \non\\
&\quad+\f{i\xi_1}{|\xi_1|} \f{1}{2\pi i} \int^t_0 \int_{\Ga}e^{\lam (t-\tau)} \int^\infty_0  e^{-\om y_2} ( \wh{f}_{2 }+\f{i\xi_1}{\lam} \wh{g}_{2})(\tau, y_2) dy_2 \f{1}{\om-|\xi_1| }e^{-|\xi_1|x_2} d\lam d\tau\non\\
&\quad+\f{i\xi_1}{|\xi_1|} \f{1}{2\pi i} \int^t_0 \int_{\Ga}e^{\lam (t-\tau)} \int^\infty_0  e^{-\om (x_2+y_2)} ( \wh{f}_{2 }+\f{i\xi_1}{\lam} \wh{g}_{2})(\tau, y_2) dy_2 \f{1}{\om-|\xi_1| } d\lam d\tau\non\\
&\quad+\f{1}{2\pi i}\int^t_0 \int_{\Ga}e^{\lam  (t-\tau)} \f{i\xi_1}{2\om} \int^\infty_0  \Big(e^{-\om|x_2-y_2|}-e^{-\om(x_2+y_2)} \Big)  E_{|\xi_1|} [i\xi_1 \wh{f}_{1}+\p_2 \wh{f}_{2}](\tau, y_2) dy_2 d\lam d\tau \non\\
&\quad+\f{i\xi_1}{|\xi_1|} \f{1}{2\pi i} \int^t_0 \int_{\Ga}e^{\lam (t-\tau)} \int^\infty_0  e^{-\om y_2} \p_2 E_{|\xi_1|} [i\xi_1 \wh{f}_{1}+\p_2 \wh{f}_{2}] (\tau, y_2) dy_2  \f{1}{\om-|\xi_1| } e^{-|\xi_1|x_2}  d\lam d\tau\non\\
&\quad+\f{i\xi_1}{|\xi_1|} \f{1}{2\pi i} \int^t_0 \int_{\Ga}e^{\lam (t-\tau)} \int^\infty_0  e^{-\om (x_2+y_2)} \p_2 E_{|\xi_1|} [i\xi_1 \wh{f}_{1}+\p_2 \wh{f}_{2}] (\tau, y_2) dy_2  \f{1}{\om-|\xi_1| }   d\lam d\tau\non\\
&:=\sum_{i=1}^6 M_i,
\end{align*}
 $\wh{b}_{1, N}$ can be rewritten as
\begin{align*}
&\wh{b}_{1, N}(\xi_1,x_2, t)\non\\
&=\f{1}{2\pi i}\int^t_0 \int_{\Ga}e^{\lam  (t-\tau)}\f{i\xi_1}{\lam} \f{1}{2\om} \int^\infty_0  \Big(e^{-\om|x_2-y_2|}-e^{-\om(x_2+y_2)} \Big)  ( \wh{f}_{1}+\f{i\xi_1}{\lam} \wh{g}_{1})(\tau, y_2) dy_2 d\lam d\tau \non\\
&\quad- \f{1}{2\pi i} \int^t_0 \int_{\Ga}e^{\lam (t-\tau)}\f{|\xi_1|}{\lam}  \int^\infty_0  e^{-\om y_2} ( \wh{f}_{2 }+\f{i\xi_1}{\lam} \wh{g}_{2})(\tau, y_2) dy_2 \f{1}{\om-|\xi_1| }e^{-|\xi_1|x_2} d\lam d\tau\non\\
&\quad+ \f{1}{2\pi i} \int^t_0 \int_{\Ga}e^{\lam (t-\tau)} \f{|\xi_1|}{\lam} \int^\infty_0  e^{-\om (x_2+y_2)} ( \wh{f}_{2 }+\f{i\xi_1}{\lam} \wh{g}_{2})(\tau, y_2) dy_2 \f{1}{\om-|\xi_1| } d\lam d\tau\non\\
&\quad-\f{1}{2\pi i}\int^t_0 \int_{\Ga}e^{\lam  (t-\tau)} \f{|\xi_1|^2}{\lam} \f{1}{2\om} \int^\infty_0  \Big(e^{-\om|x_2-y_2|}-e^{-\om(x_2+y_2)} \Big)  E_{|\xi_1|} [i\xi_1 \wh{f}_{1}+\p_2 \wh{f}_{2}](\tau, y_2) dy_2 d\lam d\tau \non\\
&\quad-\f{i\xi_1}{|\xi_1|} \f{1}{2\pi i} \int^t_0 \int_{\Ga}e^{\lam (t-\tau)}\f{|\xi_1|}{\lam} \int^\infty_0  e^{-\om y_2} \p_2 E_{|\xi_1|} [i\xi_1 \wh{f}_{1}+\p_2 \wh{f}_{2}] (\tau, y_2) dy_2  \f{1}{\om-|\xi_1| } e^{-|\xi_1|x_2}  d\lam d\tau\non\\
&\quad+\f{i\xi_1}{|\xi_1|} \f{1}{2\pi i} \int^t_0 \int_{\Ga}e^{\lam (t-\tau)}\f{|\xi_1|}{\lam} \int^\infty_0  e^{-\om (x_2+y_2)} \p_2 E_{|\xi_1|} [i\xi_1 \wh{f}_{1}+\p_2 \wh{f}_{2}] (\tau, y_2) dy_2  \f{1}{\om-|\xi_1| }   d\lam d\tau+\wh{g}_1\non\\
&:=\sum_{i=1}^7 N_i,
\end{align*}
where
\begin{align*}
\left\{
\begin{array}{l}
\wh{f}=\cF_{x_1} (-u\cdot\na u+b\cdot\na b)=i\xi_1 \cF_{x_1}(- u_1u+b_1 b)+\cF_{x_1}(\p_2(- u_2 u+b_2 b)),\\
\wh{g}= \cF_{x_1} (-u\cdot\na b+b\cdot\na u)=i\xi_1\cF_{x_1}(- u_1b+b_1 u)+\cF_{x_1}(\p_2(- u_2 b+b_2 u)).
\end{array}\right.
\end{align*}

\begin{proposition}\label{nonlinear}
We have
 \begin{align*}
E(t)\leq E_0+E(t)^2+\cE^2(t)+\int ^t_0 \cF^2(\tau) d\tau,
 \end{align*}
 where
\begin{align*}
 E_0&= \|(u_0, b_0)\|_{L^1\cap L^2\cap L^1_{x_1}L^2_{x_2}}+\|\na^{\f52+\delta} (u_0, b_0) \|_{L^1_{x_1}L^2_{x_2}},\non\\
 E(t)&= \lan t\ran^\f14 \|b_1\|_{L^2} +\lan t\ran^\f12\Big( \|u\|_{L^2}+ \|b_2\|_{L^2}+\|b_1\|_{L^\infty} \Big) +\lan t\ran^\f34 \Big(\|\p _1 b\|_{L^2}+\|\na u\|_{H^1}\Big) \non\\
 &\quad +\lan t \ran\Big(\|u\|_{L^\infty}+\|b_2\|_{L^\infty}+\|\p_1 u\|_{H^1}\Big)+\lan t \ran^{1+\delta}\|\p_1 u\|_{L^\infty},
\end{align*}
and $\cE(t), \cF(t)$  are defined in \eqref{cE}.
\end{proposition}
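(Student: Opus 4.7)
The plan is to apply Duhamel's principle to the solution formulas \eqref{eq:u sol2} and treat the nonlinear system as the linearized system driven by the source terms $f=-u\cdot\na u+b\cdot\na b$ and $g=-u\cdot\na b+b\cdot\na u$. The solution then splits as $(u,b)=(u_L,b_L)+(u_N,b_N)$, where the linear part is already controlled via Propositions \ref{linear u}, \ref{linear b}, \ref{linear nau} by the $L^1\cap L^2$ and weighted derivative norms that make up $E_0$. Hence each of the weighted norms appearing in $E(t)$ receives a contribution $\lesssim E_0$ from $(u_L,b_L)$, and the remaining task is to estimate the Duhamel pieces $M_i$ and $N_i$ listed after \eqref{nonlinear}.

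Next I would observe that each Duhamel term $M_i$, $N_i$ has exactly the same spectral/kernel structure as the linear representations of Section 2; only the initial data $(u_0,b_0)$ are replaced by $(f(\tau),g(\tau))$ integrated in $\tau$ over $[0,t]$. Consequently, the same contour deformations (Case 1: $|\xi_1|\le 2$ with $\Ga_1,\dots,\Ga_7$; Case 2: $|\xi_1|>2$ with $\wt\Ga_1,\wt\Ga_2$) and the same heat-kernel/low-frequency dyadic bounds from Propositions \ref{linear u}--\ref{linear nau} apply verbatim with $t$ replaced by $t-\tau$. The structural gains used for the linear part, in particular (i) the divergence-free condition to convert $\p_2 b_{2}$ into $\p_1 b_1$ when integrating by parts in $y_2$, (ii) the boundary conditions $b_2|_{\p\Om}=u|_{\p\Om}=0$ that allow this integration by parts, and (iii) the odd extension that reduces the troublesome half-space kernels to whole-space heat kernels, all remain valid because $f$ and $g$ contain a derivative that can be redistributed in the same way.

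The core of the argument is then a time-integral bound of the schematic form
\begin{align*}
\|\,(u_N,b_N)(t)\,\|_{X}\lesssim \int_0^t K_X(t-\tau)\,\big(\|f(\tau)\|_{Y}+\|g(\tau)\|_{Y}\big)\,d\tau,
\end{align*}
where $K_X$ is the linear decay kernel for the norm $X$ and $Y$ is the corresponding source norm ($L^1\cap L^2$ with the weighted $L^1_{x_1}L^2_{x_2}$ Sobolev piece, exactly as in $E_0$). The nonlinearities are then decomposed by placing one factor in $L^\infty$ and the other in $L^2$ (or $L^1$), and I use the anisotropy of $E(t)$: for example $\|u\cdot\na u\|_{L^2}\lesssim \|u\|_{L^\infty}\|\na u\|_{L^2}$ and $\|b\cdot\na b\|_{L^2}\lesssim \|b_1\|_{L^\infty}\|\p_1 b\|_{L^2}+\|b_2\|_{L^\infty}\|\p_2 b\|_{L^2}$, where $\p_2 b_2=-\p_1 b_1$ supplies the needed $\p_1$-derivative to match the good decay rates in $E(t)$. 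Derivative losses at the highest level are absorbed into $\cE(t)^2+\int_0^t\cF(\tau)^2\,d\tau$, which is where the two-tier energy structure enters. Each pairing gives a bound of the form $E(t)^2\cdot\int_0^t\lan t-\tau\ran^{-\alpha}\lan\tau\ran^{-\beta}d\tau$, and I choose the partition so that every resulting time integral is dominated by the target decay rate of the norm on the left-hand side; the extra factor $\lan t\ran^{-\delta}$ in $\|\p_1 u\|_{L^\infty}$ is available precisely because the $L^\infty$ linear estimate in Proposition \ref{linear nau} carries a $\lan t\ran^{-5/4}$ rate, giving a $\delta$ margin after convolution with slowly-decaying nonlinear inputs.

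The main obstacle will be the term $b_1$, whose linear decay is only $\lan t\ran^{-1/4}$ in $L^2$ and $\lan t\ran^{-1/2}$ in $L^\infty$, and which appears in the nonlinear source through $b_1\p_1(\cdot)$. The marginal case is the convolution $\int_0^t\lan t-\tau\ran^{-1/4}\lan\tau\ran^{-\gamma}d\tau$, which is on the verge of losing a logarithm; I would handle it by exploiting the transport structure $\p_t b=\p_1 u+b\cdot\na u-u\cdot\na b$ to trade an unfavorable $b_1$ factor for $\p_1 u$ (whose decay is $\lan t\ran^{-1-\delta}$ in $L^\infty$), and by splitting the integral $\int_0^{t/2}+\int_{t/2}^t$ so that on each piece one may place the decaying factor at full strength while the remaining factor is bounded in a supremum norm that is already absorbed into $E(t)$. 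The remaining contributions from the $\p_2$-pressure kernel $\p_2 E_{|\xi_1|}$ in $M_4,M_5,M_6$ and $N_4,N_5,N_6$ are bounded through the same Helmholtz/heat-kernel mechanism as in the linear part, once one notices that $i\xi_1\wh f_1+\p_2\wh f_2$ coincides with the divergence of the pressure source and therefore can be written with an extra derivative absorbed into $f$. Closing the inequality with the stated form of $E(t)$ then follows by a routine triangle-inequality addition across the eight norms.
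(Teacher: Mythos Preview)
Your overall framework is correct and matches the paper: Duhamel splitting, the same contour/kernel analysis applied to the $M_i,N_i$ with $t$ replaced by $t-\tau$, product bounds on $f,g$ using the anisotropic decay in $E(t)$, absorption of top-order derivatives into $\cE^2+\int_0^t\cF^2$, and the observation that the $\lan t\ran^{-5/4}$ linear rate for $\|\p_1 u_L\|_{L^\infty}$ leaves a $\delta$-margin. Your remark on the pressure kernels $E_{|\xi_1|}$ is also right; the paper notes $|\cF(\chi E_{|\xi_1|}[i\xi_1\wh f_1+\p_2\wh f_2])|\lesssim|\xi_1|^{-1}|\cF f|$ and then treats $M_{4,5,6}$, $N_{4,5,6}$ exactly like $M_{1,2,3}$, $N_{1,2,3}$.

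There is, however, a genuine gap in your plan for the worst term. You locate the obstacle in the slow decay of $b_1$ and propose to trade $b_1$ for $\p_1 u$ via $\p_t b=\p_1 u+\cdots$. The paper does use that device once (for $b_1\p_1u_1$ in the $L^\infty$ bound on $b_{1,N}$), but the real bottleneck is the term $u_2\p_2 b_1$ in $g$, which after moving $\p_2$ leaves $u_2\,b_1$ under a kernel of type $\tfrac{|\xi_1|}{|\xi|}e^{-|\xi_1|^2|\xi|^{-2}(t-\tau)}$. This kernel contributes only $\lan t-\tau\ran^{-1}$ in the $L^1_\xi$ norm after summing Littlewood--Paley pieces, and the best direct bound on $\|\lan\na\ran^{3/2+\delta}(u_2 b_1)\|_{L^1_{x_1}L^2_{x_2}}$ from $E(t)$ is about $\lan\tau\ran^{-3/4}$, so the convolution falls short of the target $\lan t\ran^{-1}$ for $\|u\|_{L^\infty}$ (and similarly for $\|b_1\|_{L^2}$, $\|b_1\|_{L^\infty}$). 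Here it is $u_2$, not $b_1$, that must be converted. The paper's mechanism is a frequency trichotomy
\[
u_2=u_{2,<\lan\tau\ran^{-s_1}}+u_{2,>\lan\tau\ran^{-s_2}}+u_{2,\sim},
\]
where the low piece gains $\|\p_2 u_{2,<}\|_{L^2}\lesssim\lan\tau\ran^{-s_1}\|u_2\|_{L^2}$ and the high piece gains $\|u_{2,>}\|_{L^2}\lesssim\lan\tau\ran^{s_2}\|\na u_2\|_{L^2}$, both of which close. For the middle piece one writes $u_{2,\sim}=(-\Delta)^{-1}\na^T\!\cdot\p_1 u_{\sim}$ via $\div u=0$, substitutes $\p_1 u=\p_\tau b+u\cdot\na b-b\cdot\na u$ from the magnetic equation, and \emph{integrates by parts in $\tau$} inside the Duhamel integral; the boundary terms at $\tau=0,t/2$ and the commutator with $\p_\tau$ of the kernel each gain an extra $\lan t\ran^{-1/2}$ or better. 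Without this combination --- frequency splitting of $u_2$, the divergence-free identity, and the $\tau$-integration by parts --- the estimates for $\|u\|_{L^\infty}$, $\|b_2\|_{L^\infty}$, $\|b_1\|_{L^2}$ and $\|b_1\|_{L^\infty}$ in $E(t)$ do not close at the stated rates.
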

\begin{proof}
With the help of linear decay rate of $u_L, b_L$ in Section 2, and notice that
\begin{align*}
&\Big|\cF_{y_2}\Big(\chi E_{|\xi_1|}[i\xi_1 \wh{f}_1+\p_2 \wh{f}_2]\Big)\Big|\non\\
&=\f{1}{2|\xi_1|} \Big|\cF_{y_2} \Big(e^{-|\xi_1||y_2|}*\big(\chi ( i\xi_1 \wh{f}_1+\p_2 \wh{f}_2)\big)\Big)\Big|\non\\
&=\f{1}{2|\xi_1|} \Big|\cF_{y_2}\Big (e^{-|\xi_1||y_2|}\Big)\cF_{y_2} \Big(\chi ( i\xi_1 \wh{f}_1+\p_2 \wh{f}_2)\Big)\Big|\non\\
&=\f{1}{2|\xi_1|} \Big|\f{1}{|\xi_1|-i\xi_2}+\f{1}{|\xi_1|+i\xi_2}\Big|\Big|\cF_{y_2}\Big(\chi ( i\xi_1 \wh{f}_1+\p_2 \wh{f}_2)\Big)\Big|\non\\
&\lesssim \f{1}{|\xi_1|}\f{1}{|\xi|}\Big|\cF\Big(\chi \na f\Big)\Big|\non\\
&\lesssim \f{1}{|\xi_1|}\Big| \cF(f)\Big|,
\end{align*}
and
\begin{align*}
\Big|\cF_{y_2}\Big(\chi \p_2 E_{|\xi_1|}[i\xi_1 \wh{f}_1+\p_2 \wh{f}_2]\Big)\Big|=|\xi_2|\Big|\cF_{y_2}\Big(\chi  E_{|\xi_1|}[i\xi_1 \wh{f}_1+\p_2 \wh{f}_2]\Big)\Big|\lesssim \Big|\cF(f)\Big|,
\end{align*}
$M_i, N_i, \ i=1,3,4,6,7$ can be controlled similar as the whole space part, and $M_i, N_i, \ i=2,5$ are the different part coming from the half space domain. We only consider the weak decay case.

{\bf Step 1.} The  decay rate of $u_N$.

Using the same way with linear terms, we have the $L^2$ estimate for $M_{1, 3, 4, 6}$
\begin{align*}
&\int^t_0 \Big\||\xi| \f{|\xi_1|}{|\xi|^2} e^{-\f{|\xi_1|^2}{|\xi|^2} (t-\tau)} \cF\Big(u_i\, u+u_i\, b+b_i\, u+b_2\, b\Big)(\tau) \Big\|_{L^2_{\xi_1}L^2_{\xi_2}}d\tau\non\\
&\lesssim \int^t_0 \lan t-\tau\ran^{-\f12}\Big(\|u\|_{L^\infty}\|u\|_{L^2}+\|u\|_{L^\infty}\|b\|_{L^2}+\|b_2\|_{L^\infty}\|b\|_{L^2} \Big) d\tau\non\\
&\lesssim \int^t_0 \lan t-\tau\ran^{-\f12}\Big(\lan\tau\ran^{-1} \lan\tau\ran \|u\|_{L^\infty}\lan\tau\ran^{-\f12}\lan\tau\ran^{\f12} \|u\|_{L^2}+\lan\tau\ran^{-1}\lan \tau\ran \|u\|_{L^\infty} \lan \tau \ran^{-\f14}\lan \tau\ran^{\f14} \|b\|_{L^2}\non\\
&\qquad+\lan\tau\ran^{-1}\lan \tau\ran \|b_2\|_{L^\infty} \lan \tau \ran^{-\f14}\lan \tau\ran^{\f14} \|b\|_{L^2} \Big)d\tau\non\\
&\lesssim \int^t_0 \lan t-\tau\ran^{-\f12}\lan \tau\ran^{-\f54}  d\tau  \,E^2 (t) \non\\
&\lesssim \lan t\ran^{-\f12} E^2(t),
\end{align*}
where we use
\begin{align}\label{example}
& \int^t_0 \lan t-\tau\ran^{-\alpha}\lan \tau\ran^{-\beta}  d\tau \non\\
&\lesssim  \lan t\ran ^{-\alpha} \int^{\f{t}{2}}_0 \lan \tau\ran ^{-\beta} d\tau+\lan t\ran ^{-\beta} \int^t_{\f{t}{2}} \lan t-\tau\ran^{-\alpha} d\tau\non\\
&\lesssim  \lan t\ran ^{-\alpha} \Big(\int^1_0  d\tau+\int^{\f{t}{2}} _1 \tau ^{-\beta} d\tau\Big)+\lan t\ran ^{-\beta} \Big(\int^{t-1}_{\f{t}{2}} (t-\tau)^{-\alpha} d\tau+\int^t_{t-1}  d\tau\Big)\non\\
&\lesssim  \lan t\ran ^{-\alpha} \Big(1+
\left\{
\begin{array}{l}
t^{1-\beta},  \ \ \ \ \beta\neq 1 \\
\log t,  \ \ \ \ \beta=1
\end{array}\right.
\Big)+\lan t\ran ^{-\beta} \Big(\int^{\f{t}{2}} _1 \tau^{-\alpha} d\tau+1\Big)\non\\
&\lesssim  \lan t\ran ^{-\alpha} \times
\left\{
\begin{array}{l}
1, \ \ \ \ \beta>1\\
\log t,  \ \ \ \ \beta=1\\
t^{1-\beta},  \ \ \ \ \beta< 1 \\
\end{array}\right.
+ \lan t\ran ^{-\beta} \Big(
\left\{
\begin{array}{l}
t^{1-\alpha},  \ \ \ \ \alpha\neq 1 \\
\log t,  \ \ \ \ \alpha=1
\end{array}\right.
+1\Big)\non\\
&\lesssim  \lan t\ran ^{-\alpha} \times
\left\{
\begin{array}{l}
1, \ \ \ \ \beta>1\\
\log t,  \ \ \ \ \beta=1\\
t^{1-\beta},  \ \ \ \ \beta< 1 \\
\end{array}\right.
+\lan t\ran ^{-\beta} \times
\left\{
\begin{array}{l}
1, \ \ \ \ \alpha>1\\
\log t,  \ \ \ \ \alpha=1\\
t^{1-\alpha},  \ \ \ \ \alpha< 1 \\
\end{array}\right..
\end{align}
Similarly, we have
\begin{align*}
&\Big\|\int^t_0\f{|\xi_1|}{|\xi|^2} e^{-\f{|\xi_1|^2}{|\xi|^2} (t-\tau)}\cF\Big(\p_1(b_1 \, b)\Big)(\tau) d\tau\Big\|_{L^2_{\xi_1}L^2_{\xi_2}}\non\\
&\lesssim\int^t_0 \Big\| \f{|\xi_1|^2}{|\xi|^2} e^{-\f{|\xi_1|^2}{|\xi|^2} (t-\tau)} \cF(b_1\, b)(\tau) \Big\|_{L^2_{\xi_1}L^2_{\xi_2}} d\tau\non\\
&\lesssim\int^t_0  \lan t-\tau\ran^{-1} \|b_1\|_{L^\infty} \|b\|_{L^2} d\tau\non\\
&\lesssim \int^t_0 \lan t-\tau\ran^{-1}\lan \tau\ran^{-\f34}  d\tau  \,E^2(t) \non\\
&\lesssim \lan t\ran^{-\f12} E^2(t).
\end{align*}

The $L^2$ estimate for $M_{2,5}$ is obtained as
\begin{align*}
&\int^t_0 \Big\||\xi| \f{|\xi_1|^\f12}{|\xi|^2} e^{-\f{|\xi_1|^2}{|\xi|^2} (t-\tau)} \cF\Big(u_i\, u+u_i\, b_2+b_i\, u_2+b_2\, b\Big)(\tau) \Big\|_{L^2_{\xi_1}L^1_{\xi_2}}d\tau\non\\
&\lesssim\int^t_0 \lan t-\tau\ran ^{-\f14}\Big[ \sum_{j\geq 1} j^{-\f12} \Big\| e^{-\f{|\xi_1|^2}{j^2} (t-\tau)} \Big\|_{L^2_{\xi_1}} \Big\| \cF\Big(P_j (u_i\, u+u_i\, b_2+b_i\, u_2+b_2\, b)\Big)(\tau)\Big\|_{L^\infty_{\xi_1}L^2_{\xi_2}}\Big( \int_{|\xi_2|\leq j} d\xi_2\Big)  ^{\f12} \non\\
&\quad+ \sum_{0\leq j\leq 1} j^{-\f12} \Big\| e^{-\f{|\xi_1|^2}{j^2} (t-\tau)} \Big\|_{L^2_{\xi_1}} \Big\| \cF\Big(P_j (u_i\, u+u_i\, b_2+b_i\, u_2+b_2\, b)\Big)(\tau)\Big\|_{L^\infty_{\xi_1}L^\infty_{\xi_2}}\Big( \int_{|\xi_2|\leq j} d\xi_2\Big) \Big] d\tau\non\\
&\lesssim \int^t_0 \lan t-\tau\ran^{-\f12} \Big[\sum_{j\geq 1} j^{-\delta}
\Big\|j^{\f12+\delta} \cF\Big(P_j (u_i\, u+u_i\, b_2+b_i\, u_2+b_2\, b)\Big)\Big\|_{L^\infty_{\xi_1}L^2_{\xi_2}}\non\\
&\qquad+\sum_{0\leq j\leq 1} j
\Big\|  \cF\Big(P_j (u_i\, u+u_i\, b_2+b_i\, u_2+b_2\, b)\Big)\Big\|_{L^\infty_{x_1}L^\infty_{x_2}} \Big]d\tau\non\\
&\lesssim \int^t_0 \lan t-\tau\ran^{-\f12} \Big(\|\na^{\f12+\delta}(u_i\, u+u_i\, b_2+b_i\, u_2+b_2\, b)\|_{L^1_{x_1}L^2_{x_2}}+\|u_i\, u+u_i\, b_2+b_i\, u_2+b_2\, b\|_{L^1} \Big)d\tau\non\\
&\lesssim \int^t_0 \lan t-\tau\ran^{-\f12}\lan \tau\ran^{-1-\delta}  d\tau\Big( E^2(t)+\cE^2(t)\Big) \non\\
&\lesssim \lan t\ran^{-\f12} \Big( E^2(t)+\cE^2(t)\Big),
\end{align*}
where we use
\begin{align*}
\|\na^{\f12+\delta}b_2b\|_{L^1_{x_1}L^2_{x_2}}
&\leq \|\na^{\f12+\delta} b_2\|_{L^2}\|b\|_{L^2}^\f12\|\p_2 b\|_{L^2}^\f12\non\\
&\leq \|b_2\|_{L^2}^{\f12-\delta} \|\na b_2\|_{L^2}^{\f12+\delta}\|b\|_{L^2}^\f12\|\p_2 b\|_{L^2}^\f12,
\end{align*}
\begin{align*}
\|b_2\na^{\f12+\delta} b\|_{L^1_{x_1}L^2_{x_2}}
&\leq \|b_2\|_{L^2}^\f12\|\p_2 b_2\|_{L^2}^\f12 \|\na^{\f12+\delta} b\|_{L^2}\non\\
&\leq \|b_2\|_{L^2}^\f12\|\p_2 b_2\|_{L^2}^\f12 \| b\|_{L^2}^{\f12-\delta} \|\na b\|_{L^2}^{\f12+\delta}.
\end{align*}
Similarly, we have
\begin{align*}
&\int^t_0 \Big\| \f{|\xi_1|^\f32}{|\xi|^2} e^{-\f{|\xi_1|^2}{|\xi|^2} (t-\tau)} \cF(b_1\, b)(\tau) \Big\|_{L^2_{\xi_1}L^1_{\xi_2}} d\tau\non\\
&\lesssim\int^t_0 \lan t-\tau\ran ^{-\f34}\Big[ \sum_{j\geq 1} j^{-\f12} \Big\| e^{-\f{|\xi_1|^2}{j^2} (t-\tau)} \Big\|_{L^2_{\xi_1}} \Big\| \cF\Big(P_j (b_1\, b)\Big)(\tau)\Big\|_{L^\infty_{\xi_1}L^2_{\xi_2}}\Big( \int_{|\xi_2|\leq j} d\xi_2\Big)  ^{\f12} \non\\
&\quad+ \sum_{0\leq j\leq 1} j^{-\f12} \Big\| e^{-\f{|\xi_1|^2}{j^2} (t-\tau)} \Big\|_{L^\infty_{\xi_1}} \Big\| \cF\Big(P_j (b_1\, b)\Big)(\tau)\Big\|_{L^2_{\xi_1}L^2_{\xi_2}}\Big( \int_{|\xi_2|\leq j} d\xi_2\Big)^\f12 \Big] d\tau\non\\
&\lesssim \int^t_0 \lan t-\tau\ran^{-1} \|\na^{\f12+\delta}(b_1\, b)\|_{L^1_{x_1}L^2_{x_2}}+\int^t_0 \lan t-\tau\ran^{-\f34} \|b_1\, b\|_{L^2} d\tau\non\\
&\lesssim \Big(\int^t_0 \lan t-\tau\ran^{-1}\lan \tau\ran^{-\f12-\delta}  d\tau+\int^t_0 \lan t-\tau\ran^{-\f34}\lan \tau\ran^{-\f34}  d\tau\Big)\Big( E^2(t)+\cE^2(t)\Big) \non\\
&\lesssim \lan t\ran^{-\f12} \Big( E^2(t)+\cE^2(t)\Big).
\end{align*}

Since $\|e^{-|\xi_1|x_2}\|_{L^\infty_{x_2}}\lesssim 1$, the $L^\infty$ estimate of $u_N$ is obtained as
\begin{align*}
&\Big\|\int^t_0\f{|\xi_1|^2}{|\xi|^4} e^{-\f{|\xi_1|^2}{|\xi|^2} (t-\tau)}\cF\Big(u\cdot\na u+\p_2(b_2 \, b)\Big)(\tau) d\tau\Big\|_{L^1_{\xi_1}L^1_{\xi_2}}\non\\
&\lesssim\int^t_0 \Big\| \f{|\xi_1|^2}{|\xi|^3} e^{-\f{|\xi_1|^2}{|\xi|^2} (t-\tau)} \cF\Big(u_i\, u+b_2\, b\Big)(\tau)\Big\|_{L^1_{\xi_1}L^1_{\xi_2}} d\tau\non\\
&\lesssim\int^t_0 \lan t-\tau\ran ^{-1} \sum_{j\geq 0} j^{-1} \Big\| e^{-\f{|\xi_1|^2}{j^2} (t-\tau)} \Big\|_{L^2_{\xi_1}} \Big\| \cF\Big(P_j (u_i\, u+b_2\, b)\Big)(\tau)\Big\|_{L^2_{\xi_1}L^2_{\xi_2}}\Big( \int_{|\xi_2|\leq j} d\xi_2\Big)  ^{\f12}  d\tau\non\\
&\lesssim \int^t_0 \lan t-\tau\ran^{-\f54}\Big( \sum_{j\geq 1} j^{-\delta}
\Big\|j^{\delta} \cF\Big(P_j (u_i\, u +b_2\, b)\Big)\Big\|_{L^2_{\xi_1}L^2_{\xi_2}}+
  \sum_{0\leq j\leq 1} \Big\|  \cF\Big(P_j (u_i\, u +b_2\, b)\Big)\Big\|_{L^2_{\xi_1}L^2_{\xi_2}} \Big)d\tau\non\\
&\lesssim \int^t_0 \lan t-\tau\ran^{-\f54} \Big(\|\na^\delta u\|_{L^2}\|u\|_{L^\infty}+\|\na^\delta b_2\|_{L^2}\|b\|_{L^\infty} +\|b_2\|_{L^\infty}\|\na^\delta b\|_{L^2} +\|u\|_{L^\infty}\|u\|_{L^2}+\|b_2\|_{L^\infty}\| b\|_{L^2} \Big)d\tau\non\\
&\lesssim \int^t_0 \lan t-\tau\ran^{-\f54}\lan \tau\ran^{-1-\delta}  d\tau\Big( E^2(t)+\cE^2(t)\Big) \non\\
&\lesssim \lan t\ran^{-1} \Big( E^2(t)+\cE^2(t)\Big),
\end{align*}
where we use $|\xi_1|\leq |\xi|^2$ and
\begin{align*}
\|\na^\delta b_2\|_{L^2}\leq \|b_2\|_{L^2}^{1-\delta} \|\na b_2\|_{L^2}^\delta\leq \|b_2\|_{L^2}^{1-\delta} \|\p_1 b\|_{L^2}^\delta.
\end{align*}

Similarly, by \eqref{example}, we have
\begin{align}\label{p1}
&\Big\|\int^t_0\f{|\xi_1|^2}{|\xi|^4} e^{-\f{|\xi_1|^2}{|\xi|^2} (t-\tau)}\cF\Big(\p_1(b_1 \, b)\Big)(\tau) d\tau\Big\|_{L^1_{\xi_1}L^1_{\xi_2}}\non\\
&\lesssim\int^{\f{t}{2}}_0 \Big\|\f{|\xi_1|^3}{|\xi|^4} e^{-\f{|\xi_1|^2}{|\xi|^2} (t-\tau)} \cF\Big(b_1\, b\Big)(\tau) \Big\|_{L^1_{\xi_1}L^1_{\xi_2}}d\tau+\int^t_{\f{t}{2}} \Big\| \f{|\xi_1|^2}{|\xi|^4} e^{-\f{|\xi_1|^2}{|\xi|^2} (t-\tau)} \cF\Big(\p_1(b_1\, b)\Big)(\tau) \Big\|_{L^1_{\xi_1}L^1_{\xi_2}}d\tau,
\end{align}
where the first term can be estimated as
\begin{align*}
&\int^{\f{t}{2}}_0 \Big\|\f{|\xi_1|^3}{|\xi|^4} e^{-\f{|\xi_1|^2}{|\xi|^2} (t-\tau)} \cF\Big(b_1\, b\Big)(\tau) \Big\|_{L^1_{\xi_1}L^1_{\xi_2}}d\tau\non\\
&\lesssim\int^{\f{t}{2}}_0 \lan t-\tau\ran ^{-\f32} \sum_{j\geq 0}j^{-1}  \Big\| e^{-\f{|\xi_1|^2}{j^2} (t-\tau)} \Big\|_{L^1_{\xi_1}} \Big\| \cF\Big(P_j (b_1\, b)\Big)(\tau)\Big\|_{L^\infty_{\xi_1}L^\infty_{\xi_2}} \Big(\int_{|\xi_2|\leq j} d\xi_2\Big) d\tau\non\\
&\lesssim \int^{\f{t}{2}}_0 \lan t-\tau\ran^{-2} \Big(\sum_{j\geq 1}j^{-\delta}
\Big\| j^{1+\delta} \cF\Big(P_j (b_1\, b)\Big)\Big\|_{L^\infty_{\xi_1}L^\infty_{\xi_2}}+ \sum_{0\leq j\leq 1} j
\Big\| \cF\Big(P_j (b_1\, b)\Big)\Big\|_{L^\infty_{\xi_1}L^\infty_{\xi_2}}\Big)d\tau\non\\
&\lesssim \int^{\f{t}{2}}_0 \lan t-\tau\ran^{-2}\Big(\|\na^{1+\delta } b\|_{L^2}\|b\|_{L^2} +\|b\|_{L^2}^2\Big) d\tau\non\\
&\lesssim \int^{\f{t}{2}}_0 \lan t-\tau\ran^{-2}\lan \tau\ran^{-\f14}  d\tau  \Big( E^2(t)+\cE^2(t)\Big)  \non\\
&\lesssim \lan t\ran^{-1} \Big( E^2(t)+\cE^2(t)\Big),
\end{align*}
the second term can be estimated as
\begin{align*}
&\int^t_{\f{t}{2}} \Big\| \f{|\xi_1|^2}{|\xi|^4} e^{-\f{|\xi_1|^2}{|\xi|^2} (t-\tau)} \cF\Big(\p_1(b_1\, b)\Big)(\tau) \Big\|_{L^1_{\xi_1}L^1_{\xi_2}}d\tau\non\\
&\lesssim\int_{\f{t}{2}}^t \Big\| \sum_{j\geq 1} \f{|\xi_1|^2}{j^4} e^{-\f{|\xi_1|^2}{j^2} (t-\tau)} \cF\Big(P_j \p_1(b_1\, b)\Big)(\tau) \Big\|_{L^1_{\xi_1}L^1_{\xi_2}}d\tau\non\\
&\quad +\int_{\f{t}{2}}^t \Big\| \sum_{0\leq j\leq 1} \f{|\xi_1|}{j^2} e^{-\f{|\xi_1|^2}{j^2} (t-\tau)} \cF\Big(P_j \p_1(b_1\, b)\Big)(\tau) \Big\|_{L^1_{\xi_1}L^1_{\xi_2}}d\tau\non\\
&\lesssim\int_{\f{t}{2}}^t \lan t-\tau\ran ^{-1} \sum_{j\geq 1}
 j^{-2} \Big\| e^{-\f{|\xi_1|^2}{j^2} (t-\tau)} \Big\|_{L^2_{\xi_1}} \Big\| \cF\Big(P_j \p_1(b_1\, b)\Big)(\tau)\Big\|_{L^2_{\xi_1}L^\infty_{\xi_2}} \Big(\int_{|\xi_2|\leq j} d\xi_2\Big) d\tau\non\\
 &\quad +\int_{\f{t}{2}}^t \lan t-\tau\ran ^{-\f12} \sum_{0\leq j\leq 1}
 j^{-1} \Big\| e^{-\f{|\xi_1|^2}{j^2} (t-\tau)} \Big\|_{L^2_{\xi_1}} \Big\| \cF\Big(P_j \p_1(b_1\, b)\Big)(\tau)\Big\|_{L^2_{\xi_1}L^\infty_{\xi_2}} \Big(\int_{|\xi_2|\leq j} d\xi_2\Big) d\tau\non\\
&\lesssim \int_{\f{t}{2}}^t \lan t-\tau\ran^{-\f54} \sum_{j\geq 1} j^{-2}  j^{\f32}
\| b \p_1 b\|_{L^2_{x_1}L^1_{x_2}} d\tau+ \int^t_{\f{t}{2}} \lan t-\tau\ran^{-\f34} \sum_{0\leq j\leq 1} j^{-1} j ^{\f32}\| b \p_1 b\|_{L^2_{x_1}L^1_{x_2}}  d\tau\non\\
&\lesssim \int_{\f{t}{2}}^t\lan t-\tau\ran^{-\f34} \lan \tau \ran^{-\f54} d\tau\non\\
&\lesssim \lan t\ran^{-1} \Big( E^2(t)+\cE^2(t)\Big),
\end{align*}
with
\begin{align*}
\|b\, \p_1 b\|_{L^2_{x_1}L^1_{x_2}}&\leq \Big\| \|b\|_{L^\infty_{x_1}}\|\p_1 b\|_{L^2_{x_1}}\Big\|_{L^1_{x_2}}\non\\
&\leq \Big\| \|b\|_{L^2_{x_1}}^\f12\|\p_1 b\|_{L^2_{x_1}}^\f12 \|\p_1 b\|_{L^2_{x_1}}\Big\|_{L^1_{x_2}}\non\\
&\leq \|b\|_{L^2}^\f12\|\p_1 b\|_{L^2}^\f32.
\end{align*}
For the $\wh{g}_N$ part, we have
\begin{align*}
&\Big\|\int^t_0\f{|\xi_1|}{|\xi|^2} e^{-\f{|\xi_1|^2}{|\xi|^2} (t-\tau)}\cF\Big( b\cdot\na u \Big)(\tau) d\tau\Big\|_{L^1_{\xi_1}L^1_{\xi_2}}\non\\
&\lesssim\int^t_0 \lan t-\tau\ran ^{-\f12}\Big[ \sum_{j\geq 1} j^{-1} \Big\| e^{-\f{|\xi_1|^2}{j^2} (t-\tau)} \Big\|_{L^1_{\xi_1}} \Big\| \cF\Big(P_j (b\cdot\na u)\Big)(\tau)\Big\|_{L^\infty_{\xi_1}L^2_{\xi_2}}\Big( \int_{|\xi_2|\leq j} d\xi_2\Big)  ^{\f12} \non\\
&\quad+ \sum_{0\leq j\leq 1} j^{-1} \Big\| e^{-\f{|\xi_1|^2}{j^2} (t-\tau)} \Big\|_{L^1_{\xi_1}} \Big\| \cF\Big(P_j (b\cdot\na u)\Big)(\tau)\Big\|_{L^\infty_{\xi_1}L^\infty_{\xi_2}}\Big( \int_{|\xi_2|\leq j} d\xi_2\Big) \Big] d\tau\non\\
&\lesssim \int^t_0 \lan t-\tau\ran^{-1} \Big[\sum_{j\geq 1} j^{-\delta}
\Big\|j^{\f12+\delta} \cF\Big(P_j (b\cdot\na u)\Big)\Big\|_{L^\infty_{\xi_1}L^2_{\xi_2}}+\sum_{0\leq j\leq 1} j
\Big\|  \cF\Big(P_j (b\cdot\na u)\Big)\Big\|_{L^\infty_{x_1}L^\infty_{x_2}} \Big]d\tau\non\\
&\lesssim \int^t_0 \lan t-\tau\ran^{-1} \Big(\|\na^{\f12+\delta}(b\cdot\na u)\|_{L^1_{x_1}L^2_{x_2}}+\|b\cdot\na u\|_{L^1} \Big)d\tau\non\\
&\lesssim \int^t_0 \lan t-\tau\ran^{-1} \Big(\|\na^{\f12+\delta}b_1\p_1 u\|_{L^1_{x_1}L^2_{x_2}}+\|b_1 \na^{\f12+\delta}\p_1 u\|_{L^1_{x_1}L^2_{x_2}}+\|\na^{\f12+\delta}b_2\p_2 u\|_{L^1_{x_1}L^2_{x_2}}\non\\
&\quad+\|b_2 \na^{\f12+\delta}\p_2 u\|_{L^1_{x_1}L^2_{x_2}}+\|b_1\p_1u+b_2\p_2u\|_{L^1} \Big)d\tau\non\\
&\lesssim \int^t_0 \lan t-\tau\ran^{-1}\lan \tau\ran^{-1-\delta}  d\tau\Big( E^2(t)+\cE^2(t)\Big) \non\\
&\lesssim \lan t\ran^{-1} \Big( E^2(t)+\cE^2(t)\Big),
\end{align*}
where we use
\begin{align}\label{bnau}
\|\na^{\f12+\delta}b_1\p_1 u\|_{L^1_{x_1}L^2_{x_2}}
&\leq \|\na^{\f12+\delta} b_1\|_{L^2}\|\p_1 u\|_{L^2}^\f12\|\p_2 \p_1 u\|_{L^2}^\f12\non\\
&\leq \|b_1\|_{L^2}^{\f12-\delta} \|\na b_1\|_{L^2}^{\f12+\delta}\|\p_1 u\|_{L^2}^\f12\|\na \p_1 u\|_{L^2}^\f12,
\end{align}
\begin{align*}
\|b_1\na^{\f12+\delta}\p_1 u\|_{L^1_{x_1}L^2_{x_2}}
&\leq \|b_1\|_{L^2}^\f12\|\p_2 b_1\|_{L^2}^\f12 \|\na^{\f12+\delta} \p_1 u\|_{L^2}\non\\
&\leq \|b_1\|_{L^2}^\f12\|\p_2 b_1\|_{L^2}^\f12 \| \p_1 u\|_{L^2}^{\f12-\delta} \|\na \p_1 u\|_{L^2}^{\f12+\delta},
\end{align*}
\begin{align*}
\|\na^{\f12+\delta}b_2\p_2 u\|_{L^1_{x_1}L^2_{x_2}}
&\leq \|\na^{\f12+\delta} b_2\|_{L^2}\|\p_2 u\|_{L^2}^\f12\| \p^2_2 u\|_{L^2}^\f12\non\\
&\leq \|b_2\|_{L^2}^{\f12-\delta} \|\p_1 b_1\|_{L^2}^{\f12+\delta}\|\p_2 u\|_{L^2}^\f12\|\p_2^2 u\|_{L^2}^\f12,
\end{align*}
\begin{align*}
\|b_2\na^{\f12+\delta}\p_2 u\|_{L^1_{x_1}L^2_{x_2}}
&\leq \|b_2\|_{L^2}^\f12\|\p_1 b_1\|_{L^2}^\f12 \|\na^{\f12+\delta} \p_2 u\|_{L^2}\non\\
&\leq \|b_2\|_{L^2}^\f12\|\p_1 b_1\|_{L^2}^\f12 \| \p_2 u\|_{L^2}^{\f12-\delta} \|\na \p_2 u\|_{L^2}^{\f12+\delta},
\end{align*}
\beno
\|b_1\p_1u+b_2\p_2u\|_{L^1}\leq \|b_1\|_{L^2}\|\p_1 u\|_{L^2}+\|b_2\|_{L^2}\|\p_2 u\|_{L^2}.
\eeno
And
\begin{align*}
&\Big\|\int^t_0\f{|\xi_1|}{|\xi|^2} e^{-\f{|\xi_1|^2}{|\xi|^2} (t-\tau)}\cF\Big(u\cdot\na b_2 \Big)(\tau) d\tau\Big\|_{L^1_{\xi_1}L^1_{\xi_2}}\non\\
&\quad \leq \Big\|\int^t_0\f{|\xi_1|^2}{|\xi|^2} e^{-\f{|\xi_1|^2}{|\xi|^2} (t-\tau)}\cF(u\, b)(\tau) d\tau\Big\|_{L^1_{\xi_1}L^1_{\xi_2}},
\end{align*}
we can obtain  $\lan t\ran^{-1}$ decay similarly.

So, we only need to consider the trouble term
\begin{align*}
&\Big\|\int^t_0\f{|\xi_1|}{|\xi|^2} e^{-\f{|\xi_1|^2}{|\xi|^2} (t-\tau)}\cF\Big(u\cdot\na b_1 \Big)(\tau) d\tau\Big\|_{L^1_{\xi_1}L^1_{\xi_2}}\non\\
&\lesssim \int^t_0\Big\| \f{|\xi_1|^2}{|\xi|^2} e^{-\f{|\xi_1|^2}{|\xi|^2} (t-\tau)} \cF(u_1 b_1) \Big\|_{L^1_{\xi_1}L^1_{\xi_2}} d\tau+\int^t_0\Big\| \f{|\xi_1|}{|\xi|} e^{-\f{|\xi_1|^2}{|\xi|^2} (t-\tau)} \cF(u_2 b_1) \Big\|_{L^1_{\xi_1}L^1_{\xi_2}} d\tau\non\\
&\lesssim \int^t_0 \lan t-\tau\ran^{-1} \Big\|  \cF(u_1 b_1) \Big\|_{L^1_{\xi_1}L^1_{\xi_2}} d\tau+\int^\f{t}{2} _0  \Big\| \f{|\xi_1|}{|\xi|} e^{-\f{|\xi_1|^2}{|\xi|^2} (t-\tau)} \cF(u_2 b_1) \Big\|_{L^1_{\xi_1}L^1_{\xi_2}} d\tau+\int^t_{\f{t}{2}}\lan t-\tau\ran^{-\f12} \Big\|\cF(u_2 b_1) \Big\|_{L^1_{\xi_1}L^1_{\xi_2}} d\tau,
\end{align*}
and the most trouble term is  the second term.

We  divide $u_2=u_{2, <\lan \tau\ran^{-s_1}}+u_{2, >\lan \tau\ran^{-s_2}} +u_{2, \sim} $ to see that
\begin{align}\label{32delta}
&\int^\f{t}{2} _0\Big\| \f{|\xi_1|}{|\xi|} e^{-\f{|\xi_1|^2}{|\xi|^2} (t-\tau)} \cF\Big( (u_{2, <\lan \tau\ran^{-s_1}}+u_{2, >\lan \tau\ran^{-s_2}} ) b_1\Big) \Big\|_{L^1_{\xi_1}L^1_{\xi_2}} d\tau\non\\
&\lesssim\int^\f{t}{2} _0 \lan t-\tau\ran^{-\f12} \sum _{j\geq 0} \Big\|e^{-\f{|\xi_1|^2}{j^2} (t-\tau)}\Big\|_{L^1_{\xi_1}}\Big\|\cF\Big(P_j\big( (u_{2, <\lan \tau\ran^{-s_1}}+u_{2, >\lan \tau\ran^{-s_2}} ) b_1\big) \Big)\Big\|_{L^\infty_{\xi_1}L^2_{\xi_2}} \Big(\int_{|\xi_2|\leq j} d\xi_2\Big)^\f12 d\tau\non\\
&\lesssim\int^\f{t}{2} _0 \lan t-\tau\ran^{-1} \Big[\sum_{j\geq 1} j^{\f32}  j^{-\f32-\delta}
\Big\|  \na^{\f32+\delta} \Big( (u_{2, <\lan \tau\ran^{-s_1}}+u_{2, >\lan \tau\ran^{-s_2}} ) b_1\Big)  \Big\|_{L^1_{x_1}L^2_{x_2}} \non\\
&\quad+ \sum_{0\leq j\leq 1} j ^{\f32}\Big\|  (u_{2, <\lan \tau\ran^{-s_1}}+u_{2, >\lan \tau\ran^{-s_2}} ) b_1\Big \|_{L^1_{x_1}L^2_{x_2}} \Big] d\tau\non\\
&\lesssim\int^\f{t}{2} _0 \lan t-\tau\ran^{-1}
\Big\| \lan \na\ran^{\f32+\delta}  \Big( (u_{2, <\lan \tau\ran^{-s_1}}+u_{2, >\lan \tau\ran^{-s_2}} ) b_1\Big)\Big \|_{L^1_{x_1}L^2_{x_2}} d\tau\non\\
&\lesssim \int^\f{t}{2} _0 \lan t-\tau\ran^{-1}\lan \tau\ran^{-1-\delta}  d\tau\Big( E^2(t)+\cE^2(t)\Big) \non\\
&\lesssim \lan t\ran^{-1} \Big( E^2(t)+\cE^2(t)\Big),
\end{align}
where we use
\begin{align*}
\|\na^{\f32+\delta} (u_{2, <\lan \tau\ran^{-s_1}}+u_{2, >\lan \tau\ran^{-s_2}} ) \, b_1\|_{L^1_{x_1}L^2_{x_2}}
&\lesssim \|\na^{\f32+\delta} u_2\|_{L^2}\|b_1\|_{L^2}^\f12\|\p_2 b_1\|_{L^2}^\f12\non\\
&\lesssim \| \p_1 u\|_{L^2}^{\f12-\delta}\|\na \p_1 u\|_{L^2}^{\f12+\delta} \|b_1\|_{L^2}^\f12\|\p_2 b_1\|_{L^2}^\f12,
\end{align*}
and for $s_1>1$,
\begin{align*}
\|u_{2, <\lan \tau\ran^{-s_1}} \na^{\f32+\delta}  b_1\|_{L^1_{x_1}L^2_{x_2}}
&\lesssim \|u_{2, <\lan \tau\ran^{-s_1}}\|_{L^2}^\f12 \|\p_2 u_{2, <\lan \tau\ran^{-s_1}}\|_{L^2}^\f12 \|\na^{\f32+\delta}  b_1\|_{L^2}\non\\
&\leq \|u_2\|_{L^2}^\f12 \Big(\lan\tau\ran ^{-s_1}\|u_2 \|_{L^2}\Big)^\f12 \|b\|_{H^2}\non\\
&\lesssim \lan \tau\ran^{-\f{s_1}{2}-\f12} E(t) \cE(t),
\end{align*}
for $s_2<\f18-\f{\delta}{4}$,
\begin{align*}
&\|u_{2, >\lan \tau\ran^{-s_2}} \na^{\f32+\delta}  b_1\|_{L^1_{x_1}L^2_{x_2}}\non\\
&\lesssim \|u_{2, >\lan \tau\ran^{-s_2}}\|_{L^2}^\f12 \|\p_2 u_{2, >\lan \tau\ran^{-s_2}}\|_{L^2}^\f12\Big( \|\na^{\f32+\delta}  b_{1, <\lan \tau\ran^{\f18}}\|_{L^2}+ \|\na^{\f32+\delta}  b_{1, >\lan \tau\ran^{\f18}}\|_{L^2}\Big)\non\\
&\lesssim  \Big(\lan\tau\ran ^{s_2}\|\p_2 u_2 \|_{L^2}\Big)^\f12  \|\p_1 u_{1, >\lan \tau\ran^{-s_2}}\|_{L^2}^\f12\Big(\lan \tau\ran^{\f18(\f32+\delta)} \|b_1\|_{L^2}+\lan \tau\ran^{-\f18(\f12-\delta)} \|\na ^2 b_1\|_{L^2}\Big) \non\\
&\lesssim \lan \tau\ran^{-1+\f{s_2}{2}-(\f{1}{16}-\f{\delta}{8})} E(t) \cE(t).
\end{align*}
For the last case, we use the magnetic field equation $\eqref{eq:MHDT}_{2}$ to obtain
\begin{align}\label{middle}
&\int^\f{t}{2} _0\Big\| \f{|\xi_1|}{|\xi|} e^{-\f{|\xi_1|^2}{|\xi|^2} (t-\tau)} \cF( u_{2,\sim} \,b_1) \Big\|_{L^1_{\xi_1}L^1_{\xi_2}} d\tau\non\\
&=\int^\f{t}{2} _0\Big\| \f{|\xi_1|}{|\xi|} e^{-\f{|\xi_1|^2}{|\xi|^2} (t-\tau)} \cF\Big( (-\Delta)^{-1} \na^T\cdot \p_1 u_\sim  \,b_1\Big) \Big\|_{L^1_{\xi_1}L^1_{\xi_2}} d\tau\non\\
&=\int^\f{t}{2} _0\Big\| \f{|\xi_1|}{|\xi|} e^{-\f{|\xi_1|^2}{|\xi|^2} (t-\tau)} \cF\Big( (-\Delta)^{-1} \na^T\cdot (\p_\tau b+u\cdot\na b-b\cdot\na u)_{\sim}  \,b_1\Big) \Big\|_{L^1_{\xi_1}L^1_{\xi_2}} d\tau.
\end{align}
Since the nonlinear term will bring more decay, here we only estimate the following term:
\begin{align*}
&\int^\f{t}{2} _0\Big\| \f{|\xi_1|}{|\xi|} e^{-\f{|\xi_1|^2}{|\xi|^2} (t-\tau)} \cF\Big( (-\Delta)^{-1} \na^T\cdot \p_\tau b_{\sim}  \,b_1\Big) \Big\|_{L^1_{\xi_1}L^1_{\xi_2}} d\tau\non\\
&\lesssim\Big\| \f{|\xi_1|}{|\xi|} e^{-\f{|\xi_1|^2}{2|\xi|^2}t } \cF\Big( (-\Delta)^{-1} \na^T\cdot b_{\sim}  \,b_{1}\Big) \Big\|_{L^1_{\xi_1}L^1_{\xi_2}}+\Big\| \f{|\xi_1|}{|\xi|} e^{-\f{|\xi_1|^2}{|\xi|^2} t} \cF\Big( (-\Delta)^{-1} \na^T\cdot b_{\sim, 0}  \,b_{1, 0}\Big) \Big\|_{L^1_{\xi_1}L^1_{\xi_2}} \non\\
&\quad +\int^\f{t}{2} _0\Big\| \f{|\xi_1|^3}{|\xi|^3} e^{-\f{|\xi_1|^2}{|\xi|^2} (t-\tau)} \cF\Big( (-\Delta)^{-1} \na^T\cdot  b_{\sim}  \,b_1\Big) \Big\|_{L^1_{\xi_1}L^1_{\xi_2}} d\tau\non\\
&\quad +\int^\f{t}{2} _0\Big\| \f{|\xi_1|}{|\xi|} e^{-\f{|\xi_1|^2}{|\xi|^2} (t-\tau)} \cF\Big( (-\Delta)^{-1} \na^T\cdot  b_{\sim}  \,\p_\tau b_1\Big) \Big\|_{L^1_{\xi_1}L^1_{\xi_2}} d\tau\non\\
&\lesssim \lan t\ran^{-\f12} \Big\| \cF\Big( (-\Delta)^{-1} \na^T\cdot b_{\sim}  \,b_{1}\Big) \Big\|_{L^1_{\xi_1}L^1_{\xi_2}} +\lan t\ran^{-1} \Big\| \lan\na \ran^{\f32+\delta} \Big( (-\Delta)^{-1} \na^T\cdot b_{\sim, 0}  \,b_{1, 0}\Big) \Big\|_{L^1_{x_1}L^2_{x_2}}\non\\
&\quad +\int^{\f{t}{2}}_0 \lan t-\tau\ran^{-\f32} \Big\| \cF\Big( (-\Delta)^{-1} \na^T\cdot b_{\sim}  \,b_{1}\Big) \Big\|_{L^1_{\xi_1}L^1_{\xi_2}} d\tau\non\\
&\quad +\int^\f{t}{2} _0\Big\| \f{|\xi_1|}{|\xi|} e^{-\f{|\xi_1|^2}{|\xi|^2} (t-\tau)} \cF\Big( (-\Delta)^{-1} \na^T\cdot  b_{\sim}  \,(\p_1 u_1-u\cdot\na b+b\cdot\na u\Big) \Big\|_{L^1_{\xi_1}L^1_{\xi_2}} d\tau,
\end{align*}
where we use
\begin{align}\label{na-1}
\Big \|\cF\Big( (-\Delta)^{-1} \na^{T} \cdot b_{\sim}\Big)\Big\|_{L^1_{\xi_1}L^1_{\xi_2}}
&\lesssim\Big \|\cF\Big(|\na |^{-1} b_{2, \sim}\Big)\Big\|_{L^1_{\xi_1}L^1_{\xi_2}}\non\\
&\lesssim \sum_{\lan t \ran^{-s_2}\leq j\leq \lan t \ran^{-s_1} }\Big \|\cF\Big( P_j (|\na |^{-1} b_{2}) \Big)\Big\|_{L^1_{\xi_1}L^1_{\xi_2}}\non\\
&\lesssim \sum_{\lan t \ran^{-s_2}\leq j\leq \lan t \ran^{-s_1} } j^{-1} \Big \|\cF\Big( P_j  b_{2} \Big)\Big\|_{L^1_{\xi_1}L^1_{\xi_2}}\non\\
&\lesssim  \sum_{\lan t \ran^{-s_2}\leq j\leq \lan t \ran^{-s_1} }\|P_j b_2\|_{L^2}\non\\
&\lesssim  \|b_2\|_{L^2} \sum_{\lan \tau \ran^{-s_2}\leq j\leq \lan \tau\ran^{-s_1} }j^{\delta} j^{-\delta} \non\\
&\lesssim \lan t \ran^{-\f12+\delta} E(t),
\end{align}
and
\begin{align*}
&\int^\f{t}{2} _0\Big\| \f{|\xi_1|}{|\xi|} e^{-\f{|\xi_1|^2}{|\xi|^2} (t-\tau)} \cF\Big( (-\Delta)^{-1} \na^T\cdot  b_{\sim}  \p_1 u_1\Big) \Big\|_{L^1_{\xi_1}L^1_{\xi_2}} d\tau\non\\
&\lesssim \int^\f{t}{2} _0\Big\| \f{|\xi_1|^2}{|\xi|} e^{-\f{|\xi_1|^2}{|\xi|^2} (t-\tau)} \cF\Big( (-\Delta)^{-1} \na^T\cdot  b_{\sim}  \, u_1\Big) \Big\|_{L^1_{\xi_1}L^1_{\xi_2}} d\tau\non\\
&\quad +\int^\f{t}{2} _0\Big\| \f{|\xi_1|}{|\xi|} e^{-\f{|\xi_1|^2}{|\xi|^2} (t-\tau)} \cF\Big( (-\Delta)^{-1} \na^T\cdot  \p_1 b_{\sim}  \, u_1\Big) \Big\|_{L^1_{\xi_1}L^1_{\xi_2}} d\tau\non\\
&\lesssim \int^\f{t}{2} _0\Big\| \f{|\xi_1|^2}{|\xi|^2} e^{-\f{|\xi_1|^2}{|\xi|^2} (t-\tau)} \cF\Big( \na (-\Delta)^{-1} \na^T\cdot  b_{\sim}  \, u_1+ (-\Delta)^{-1} \na^T\cdot  b_{\sim}  \, \na u_1  \Big) \Big\|_{L^1_{\xi_1}L^1_{\xi_2}} d\tau\non\\
&\quad +\int^\f{t}{2} _0\Big\| \f{|\xi_1|}{|\xi|} e^{-\f{|\xi_1|^2}{|\xi|^2} (t-\tau)} \cF\Big( b_{2, \sim}  \, u_1\Big) \Big\|_{L^1_{\xi_1}L^1_{\xi_2}} d\tau\non\\
&\lesssim \int^\f{t}{2} _0 \lan t-\tau\ran^{-1} \Big\| \cF\Big( \na (-\Delta)^{-1} \na^T\cdot b_\sim \, u_1\Big)\Big\|_{L^1_{\xi_1}L^1_{\xi_2}} d\tau\non\\
&\quad+\int^{\f{t}{2}} _0  \lan t-\tau\ran^{-1} \sum_{j\geq 0} \Big\| e^{-\f{|\xi_1|^2}{j^2} (t-\tau)}  \cF\Big( P_j \big((-\Delta)^{-1} \na^T\cdot  b_{\sim}  \, \na u_1 \big) \Big)\Big\|_{L^1_{\xi_1}L^1_{\xi_2}}    d\tau\non\\
&\quad +\int^\f{t}{2} _0\Big\| \f{|\xi_1|}{|\xi|} e^{-\f{|\xi_1|^2}{|\xi|^2} (t-\tau)} \cF\Big( b_{2, \sim}  \, u_1\Big) \Big\|_{L^1_{\xi_1}L^1_{\xi_2}} d\tau;
\end{align*}
for  the above inequality, the second term can be estimated as
\begin{align*}
&\int^{\f{t}{2}} _0  \lan t-\tau\ran^{-1} \sum_{j\geq 0} \Big\| e^{-\f{|\xi_1|^2}{j^2} (t-\tau)}  \cF\Big( P_j \big((-\Delta)^{-1} \na^T\cdot  b_{\sim}  \, \na u_1 \big) \Big)\Big\|_{L^1_{\xi_1}L^1_{\xi_2}}    d\tau\non\\
&\lesssim \int^{\f{t}{2}} _0  \lan t-\tau\ran^{-1} \sum_{j\geq 0} \Big\| e^{-\f{|\xi_1|^2}{j^2} (t-\tau)}  \Big\|_{L^2_{\xi_1}} \Big\| \cF\Big( P_j \big((-\Delta)^{-1} \na^T\cdot  b_{\sim}  \, \na u_1 \big) \Big)\Big\|_{L^2_{\xi_1}L^2_{\xi_2}}    \Big( \int _{|\xi_2|\leq j}  d\xi_2\Big) ^\f12 d\tau\non\\
&\lesssim \int^{\f{t}{2}} _0  \lan t-\tau\ran^{-\f54} \sum_{j\geq 0}  j^\f12 \Big\| \cF\Big( P_j \big((-\Delta)^{-1} \na^T\cdot  b_{\sim}  \, \na u_1 \big) \Big)\Big\|_{L^2_{\xi_1}L^2_{\xi_2}}    \Big( \int _{|\xi_2|\leq j}  d\xi_2\Big) ^\f12 d\tau\non\\
&\lesssim \int^{\f{t}{2}} _0  \lan t-\tau\ran^{-\f54} \sum_{j\geq 0}  j \Big\| \cF\Big(P_j (-\Delta)^{-1} \na^T\cdot  b_{\sim}\Big)\Big\|_{L^1_{\xi_1}L^1_{\xi_2}}\Big\|\cF(\na u_1 )\Big\|_{L^2_{\xi_1}L^2_{\xi_2}}   d\tau\non\\
&\lesssim \int^{\f{t}{2}} _0  \lan t-\tau\ran^{-\f54} \Big(\sum_{j\geq 1}  j^{-\delta}  \Big\| \cF\Big(P_j  b_{\sim}\Big)\Big\|_{L^1_{\xi_1}L^1_{\xi_2}}\Big\|\cF(\na^{1+\delta} u_1 )\Big\|_{L^2_{\xi_1}L^2_{\xi_2}}  \non\\
&\qquad +
\sum_{0\leq j\leq 1}  j \Big\| \cF\Big((-\Delta)^{-1} \na^T\cdot  b_{\sim}\Big)\Big\|_{L^1_{\xi_1}L^1_{\xi_2}}\Big\|\cF(\na u_1 )\Big\|_{L^2_{\xi_1}L^2_{\xi_2}} \Big) d\tau\non\\
&\lesssim \int^\f{t}{2} _0 \lan t-\tau\ran^{-\f54}\lan \tau\ran^{-1-\delta}  d\tau\Big( E^2(t)+\cE^2(t)\Big) \non\\
&\lesssim \lan t\ran^{-1} \Big( E^2(t)+\cE^2(t)\Big),
\end{align*}
and the third term, like \eqref{32delta}, we have
\begin{align*}
&\int^\f{t}{2} _0\Big\| \f{|\xi_1|}{|\xi|} e^{-\f{|\xi_1|^2}{|\xi|^2} (t-\tau)} \cF\Big( b_{2, \sim}  \, u_1\Big) \Big\|_{L^1_{\xi_1}L^1_{\xi_2}} d\tau\non\\
&\lesssim  \int^{\f{t}{2}}_0 \lan t-\tau\ran^{-1}  \Big\|\lan   \na \ran ^{\f32+\delta} (b_{2, \sim}  u)\Big\|_{L^1_{x_1}L^2_{x_2}}  d\tau\non\\
&\lesssim  \int^{\f{t}{2}}_0 \lan t-\tau\ran^{-1}  \Big(\|\na^{\f32+\delta } b_2\|_{L^2}\|u\|_{L^2}^\f12\|\p_2 u\|_{L^2}^\f12+\|b_2\|_{L^2}^\f12\|\p_2b_2\|_{L^2}^\f12 \|\na^{\f32+\delta} u\|_{L^2}\Big)d\tau\non\\
&\lesssim  \int^{\f{t}{2}}_0 \lan t-\tau\ran^{-1}  \Big(\|\p_1 b\|_{L^2}^{\f34-\f{\delta}{2}}\|\p_1 \na^2 b \|_{L^2}^{\f14+\f{\delta}{2}}\|u\|_{L^2}^\f12\|\p_2 u\|_{L^2}^\f12+\|b_2\|_{L^2}^\f12\|\p_1 b_1\|_{L^2}^\f12 \|\na u\|_{L^2}^{\f34-\f{\delta}{2}}\|\na^3  u\|_{L^2}^{\f14+\f{\delta}{2}}\Big)d\tau\non\\
&\lesssim \int^\f{t}{2} _0 \lan t-\tau\ran^{-1}\lan \tau\ran^{-1-\delta}  d\tau\Big( E^2(t)+\cE^2(t)+\int ^t_0 \cF^2(\tau) d\tau\Big) \non\\
&\lesssim \lan t\ran^{-1} \Big( E^2(t)+\cE^2(t)+\int ^t_0 \cF^2(\tau) d\tau\Big).
\end{align*}

{\bf Step 2.} The decay rate of $b_N$

We have the $L^2$ estimate for $N_{1, 3, 4, 6,7}$ as
\begin{align*}
&\Big\|\int^t_0e^{-\f{|\xi_1|^2}{|\xi|^2} (t-\tau)}\cF\Big(u_1\p_1 b_1+b\cdot\na u_1\Big)(\tau) d\tau\Big\|_{L^2_{\xi_1}L^2_{\xi_2}}\non\\
&\lesssim\int^t_0 \sum_{j\geq 0} \Big\| e^{-\f{|\xi_1|^2}{j^2} (t-\tau)} \Big\|_{L^2_{\xi_1}} \Big\| \cF\Big(P_j (u_1\p_1 b_1+b\cdot\na u_1)\Big)(\tau)\Big\|_{L^\infty_{\xi_1}L^2_{\xi_2}} d\tau\non\\
&\lesssim \int^t_0 \lan t-\tau\ran^{-\f14} \Big(\sum_{0\leq j\leq 1} j^{\f12}
\Big\| \cF\Big(P_j (u_1\p_1 b_1+b\cdot\na u_1)\Big)\Big\|_{L^\infty_{\xi_1}L^2_{\xi_2}}\non\\
&\qquad+ \sum_{j\geq 1} j^{-\delta}
\Big\| j^{\f12+\delta} \cF\Big(P_j (u_1\p_1 b_1+b\cdot\na u_1)\Big)\Big\|_{L^\infty_{\xi_1}L^2_{\xi_2}} \Big)d\tau\non\\
&\lesssim \int^t_0 \lan t-\tau\ran^{-\f14} \Big\|\lan \na\ran ^{\f12+\delta}(u_1\p_1 b+b\cdot\na u)\Big\|_{L^1_{x_1}L^2_{x_2}}  d\tau \non\\
&\lesssim  \int^{t}_0\lan t-\tau\ran^{-\f14}  \lan \tau \ran ^{-1-\delta} d\tau \, \Big( E^2(t)+\cE^2(t)+\int ^t_0 \cF^2(\tau) d\tau\Big)\non\\
&\lesssim \lan t\ran^{-\f14} \Big( E^2(t)+\cE^2(t)+\int ^t_0 \cF^2(\tau) d\tau\Big),
\end{align*}
where we use \eqref{bnau} and
\begin{align}\label{h3}
&\Big\|\na^{\f12+\delta} u\, \p_1 b+ u\, \na^{\f12+\delta} \p_1 b\Big\|_{L^1_{x_1}L^2_{x_2}}\non\\
&\leq \|\na^{\f12+\delta} u\|_{L^2}^\f12\|\na ^{\f32+\delta} u\|_{L^2}^\f12 \|\p_1 b\|_{L^2}+\|u\|_{L^2}^\f12\|\na u\|_{L^2 }^\f12 \|\na ^{\f12+\delta} \p_1 b\|_{L^2}\non\\
&\leq \|u\|_{L^2}^{\f14-\f{\delta}{2}}  \|\na u\|_{L^2}^{\f58+\f{\delta}{4}} \|\na ^3 u\|_{L^2}^{\f18+\f{\delta}{4}} \|\p_1 b\|_{L^2}+ \|u\|_{L^2}^\f12\|\na u\|_{L^2}^\f12\|\p_1 b\|_{L^2}^{\f34-\f{\delta}{2}}\|\p_1 \na^2 b \|_{L^2}^{\f14+\f{\delta}{2}}.
\end{align}
For the term
\begin{align*}
&\Big\|\int^t_0e^{-\f{|\xi_1|^2}{|\xi|^2} (t-\tau)}\cF\Big(u_2\p_2 b_1\Big)(\tau) d\tau\Big\|_{L^2_{\xi_1}L^2_{\xi_2}}\non\\
&=\Big\|\int^t_0e^{-\f{|\xi_1|^2}{|\xi|^2} (t-\tau)}\cF\Big(\p_2 (u_2 b_1)\Big)(\tau) d\tau\Big\|_{L^2_{\xi_1}L^2_{\xi_2}}+\Big\|\int^t_0  e^{-\f{|\xi_1|^2}{|\xi|^2} (t-\tau)}\cF\Big(\p_1 u_1 b_1\Big)(\tau) d\tau\Big\|_{L^2_{\xi_1}L^2_{\xi_2}},
\end{align*}
we use
\begin{align*}
\Big\|\int^t_0e^{-\f{|\xi_1|^2}{|\xi|^2} (t-\tau)}\cF\Big(\p_1 u_1 b_1\Big)(\tau) d\tau\Big\|_{L^2_{\xi_1}L^2_{\xi_2}}
&\lesssim \int^t_0 \lan t-\tau\ran^{-\f14} \Big\|\lan \na\ran ^{\f12+\delta}(\p_1 u_1 b_1)\Big\|_{L^1_{x_1}L^2_{x_2}}  d\tau \non\\
&\lesssim  \int^t _0\lan t-\tau\ran^{-\f14}  \lan \tau \ran ^{-1-\delta} d\tau \, \Big( E^2(t)+\cE^2(t)\Big)\non\\
&\lesssim \lan t\ran^{-\f14} \Big( E^2(t)+\cE^2(t)\Big),
\end{align*}
and
\begin{align*}
&\Big\|\int^t_0e^{-\f{|\xi_1|^2}{|\xi|^2} (t-\tau)}\cF\Big(\p_2 (u_2 b_1)\Big)(\tau) d\tau\Big\|_{L^2_{\xi_1}L^2_{\xi_2}}\non\\
&\leq \Big\|\int^t_0 |\xi| e^{-\f{|\xi_1|^2}{|\xi|^2} (t-\tau)}\cF\Big(\big(u_{2, <\lan \tau\ran^{-s_1}}+u_{2, >\lan \tau\ran^{-s_2}} +u_{2, \sim}\big) b_1\Big)(\tau) d\tau\Big\|_{L^2_{\xi_1}L^2_{\xi_2}}.
\end{align*}
For the first two parts,  similarly to \eqref{32delta}, we have
\begin{align*}
&\Big\|\int^t_0 |\xi| e^{-\f{|\xi_1|^2}{|\xi|^2} (t-\tau)}\cF\Big(\big(u_{2, <\lan \tau\ran^{-s_1}}+u_{2, >\lan \tau\ran^{-s_2}} \big) b_1\Big)(\tau) d\tau\Big\|_{L^2_{\xi_1}L^2_{\xi_2}}\non\\
&\lesssim\int^t _0 \lan t-\tau\ran^{-\f14} \sum _{j\geq 0} j^{\f32}\Big\|\cF\Big(P_j\big( (u_{2, <\lan \tau\ran^{-s_1}}+u_{2, >\lan \tau\ran^{-s_2}} )b_1\big) \Big)\Big\|_{L^\infty_{\xi_1}L^2_{\xi_2}} d\tau\non\\
&\lesssim\int^t_0 \lan t-\tau\ran^{-\f14} \Big[\sum_{j\geq 1} j^{\f32}  j^{-\f32-\delta}
\Big\|  \na^{\f32+\delta} \Big( (u_{2, <\lan \tau\ran^{-s_1}}+u_{2, >\lan \tau\ran^{-s_2}} ) b_1\Big)  \Big\|_{L^1_{x_1}L^2_{x_2}} \non\\
&\quad+ \sum_{0\leq j\leq 1} j ^{\f32}\Big\|  (u_{2, <\lan \tau\ran^{-s_1}}+u_{2, >\lan \tau\ran^{-s_2}} )  b_1\Big \|_{L^1_{x_1}L^2_{x_2}} \Big] d\tau\non\\
&\lesssim\int^t_0 \lan t-\tau\ran^{-\f14}
\Big\| \lan \na\ran^{\f32+\delta}  \Big( (u_{2, <\lan \tau\ran^{-s_1}}+u_{2, >\lan \tau\ran^{-s_2}} ) b_1\Big)\Big \|_{L^1_{x_1}L^2_{x_2}} d\tau\non\\
&\lesssim \int^t_0 \lan t-\tau\ran^{-\f14}\lan \tau\ran^{-1-\delta}  d\tau\Big( E^2(t)+\cE^2(t)\Big) \non\\
&\lesssim \lan t\ran^{-\f14} \Big( E^2(t)+\cE^2(t)\Big).
\end{align*}
For the last part, similarly to  \eqref{middle}, we have
\begin{align*}
&\Big\|\int^t_0 |\xi| e^{-\f{|\xi_1|^2}{|\xi|^2} (t-\tau)}\cF\Big(u_{2, \sim}  b_1\Big)(\tau) d\tau\Big\|_{L^2_{\xi_1}L^2_{\xi_2}}\non\\
&=\int^t _0\Big\| |\xi| e^{-\f{|\xi_1|^2}{|\xi|^2} (t-\tau)} \cF\Big( (-\Delta)^{-1} \na^T\cdot \p_1 u_\sim  \,b_1\Big) \Big\|_{L^2_{\xi_1}L^2_{\xi_2}} d\tau\non\\
&=\int^t _0\Big\||\xi|  e^{-\f{|\xi_1|^2}{|\xi|^2} (t-\tau)} \cF\Big( (-\Delta)^{-1} \na^T\cdot (\p_\tau b+u\cdot\na b-b\cdot\na u)_{\sim}  \,b_1\Big) \Big\|_{L^2_{\xi_1}L^2_{\xi_2}} d\tau;
\end{align*}
 we only estimate the most trouble term
\begin{align*}
&\int^t_0\Big\| |\xi| e^{-\f{|\xi_1|^2}{|\xi|^2} (t-\tau)} \cF\Big( (-\Delta)^{-1} \na^T\cdot \p_\tau b_{\sim}  \,b_1\Big) \Big\|_{L^2_{\xi_1}L^2_{\xi_2}} d\tau\non\\
&\lesssim\Big\||\xi| \cF\Big( (-\Delta)^{-1} \na^T\cdot b_{\sim}  \,b_{1}\Big) \Big\|_{L^2_{\xi_1}L^2_{\xi_2}}+\Big\| |\xi|  e^{-\f{|\xi_1|^2}{|\xi|^2} t} \cF\Big( (-\Delta)^{-1} \na^T\cdot b_{\sim, 0}  \,b_{1, 0}\Big) \Big\|_{L^2_{\xi_1}L^2_{\xi_2}} \non\\
&\quad +\int^t_0\Big\| \f{|\xi_1|^2}{|\xi|}  e^{-\f{|\xi_1|^2}{|\xi|^2} (t-\tau)} \cF\Big( (-\Delta)^{-1} \na^T\cdot  b_{\sim}  \,b_1\Big) \Big\|_{L^2_{\xi_1}L^2_{\xi_2}} d\tau\non\\
&\quad +\int^t _0\Big\| |\xi| e^{-\f{|\xi_1|^2}{|\xi|^2} (t-\tau)} \cF\Big( (-\Delta)^{-1} \na^T\cdot  b_{\sim}  \,\p_\tau b_1\Big) \Big\|_{L^2_{\xi_1}L^2_{\xi_2}} d\tau\non\\
&\lesssim  \Big\| \na \Big( (-\Delta)^{-1} \na^T\cdot b_{\sim}  \,b_{1}\Big) \Big\|_{L^2} + \lan t\ran^{-\f14} \Big\| \lan\na \ran^{\f32+\delta} \Big( (-\Delta)^{-1} \na^T\cdot b_{\sim, 0}  \,b_{1, 0}\Big) \Big\|_{L^1_{x_1}L^2_{x_2}}\non\\
&\quad +\int^t_0 \lan t-\tau\ran^{-1} \Big\| \na \Big( (-\Delta)^{-1} \na^T\cdot b_{\sim}  \,b_{1}\Big) \Big\|_{L^2} d\tau\non\\
&\quad +\int^t_0\Big\| |\xi| e^{-\f{|\xi_1|^2}{|\xi|^2} (t-\tau)} \cF\Big( (-\Delta)^{-1} \na^T\cdot  b_{\sim}  \,(\p_1 u_1-u\cdot\na b+b\cdot\na u)\Big) \Big\|_{L^2_{\xi_1}L^2_{\xi_2}} d\tau,
\end{align*}
where we use \eqref{na-1} to have
\begin{align*}
&\int^t_0 \lan t-\tau\ran^{-1} \Big\| \na \Big( (-\Delta)^{-1} \na^T\cdot b_{\sim}  \,b_{1}\Big) \Big\|_{L^2} d\tau\non\\
&\leq \int^t_0 \lan t-\tau\ran^{-1}\Big( \|b_{\sim}\|_{L^2}\|b_1\|_{L^\infty} +\|(-\Delta)^{-1} \na^{T} \cdot b_{\sim} \|_{L^\infty}\|\na b_1\|_{L^2}\Big)  d\tau\non\\
&\lesssim \lan t\ran^{-\f14} \Big( E^2(t)+\cE^2(t)\Big);
\end{align*}
and  similarly to  \eqref{na-1}, \eqref{h3}, we have
\begin{align*}
&\int^t_0\Big\| |\xi| e^{-\f{|\xi_1|^2}{|\xi|^2} (t-\tau)} \cF\Big( (-\Delta)^{-1} \na^T\cdot  b_{\sim}  \p_1 u_1\Big) \Big\|_{L^2_{\xi_1}L^2_{\xi_2}} d\tau\non\\
&\lesssim \int^t _0\Big\||\xi_1||\xi| e^{-\f{|\xi_1|^2}{|\xi|^2} (t-\tau)} \cF\Big( (-\Delta)^{-1} \na^T\cdot  b_{\sim}  \, u_1\Big) \Big\|_{L^2_{\xi_1}L^2_{\xi_2}} d\tau\non\\
&\quad +\int^t_0\Big\| |\xi| e^{-\f{|\xi_1|^2}{|\xi|^2} (t-\tau)} \cF\Big( (-\Delta)^{-1} \na^T\cdot  \p_1 b_{\sim}  \, u_1\Big) \Big\|_{L^2_{\xi_1}L^2_{\xi_2}} d\tau\non\\
&\lesssim \int^t _0 \lan t-\tau\ran^{-\f12} \Big\|  \na^2\Big(  (-\Delta)^{-1} \na^T\cdot  b_{\sim}  \, u_1 \Big) \Big\|_{L^2_{x_1}L^2_{x_2}} d\tau+\int^t _0  \lan t-\tau\ran^{-\f14} \Big\|\lan \na \ran^{\f32+\delta} \Big(b_{2, \sim} u_1\Big)\Big\|_{L^1_{x_1}L^2_{x_2}}    d\tau \non\\
&\lesssim \int^t_0 \lan t-\tau\ran^{-\f12} \Big\|\na b_\sim\,  u_1+ (-\Delta)^{-1} \na^T\cdot b_\sim \, \na ^2 u_1\Big\|_{L^2_{x_1}L^2_{x_2}} d\tau\non\\
&\quad+\int^t _0  \lan t-\tau\ran^{-\f14} \Big\|\na^{\f12+\delta} \p_1 b_{\sim }\na u_1+b_{2, \sim} \na^{\f32+\delta} u_1 \Big\|_{L^1_{x_1}L^2_{x_2}}        d\tau\non\\
&\lesssim \int^t_0 \lan t-\tau\ran^{-\f12} \Big( \|\na b_\sim\|_{L^2} \|  u_1\|_{L^\infty}+ \|(-\Delta)^{-1} \na^T\cdot b_\sim\|_{L^\infty}  \|\na ^2 u_1\|_{L^2} \Big) d\tau\non\\
&\quad+\int^t _0  \lan t-\tau\ran^{-\f14} \Big( \|\na^{\f12+\delta} \p_1 b_{\sim }\|_{L^2} \|\na u_1\|_{L^2}^\f12\|\na ^2 u_1\|_{L^2}^\f12+\|b_{2, \sim}\|_{L^2}^\f12\|\p_1 b\|_{L^2}^\f12 \| \na^{\f32+\delta} u_1 \|_{L^2}  \Big)  d\tau\non\\
&\lesssim \int^t_0 \lan t-\tau\ran^{-\f12} \Big(\|b\|_{L^2}^\f12\|\na^2 b\|_{L^2}^\f12\|u\|_{L^\infty}+\|(-\Delta)^{-1}\na^{T}\cdot\na b_\sim\|_{L^\infty} \|\na u\|_{L^2}^\f12\|\na^3 u\|_{L^2}^\f12\Big) d\tau\non\\
&\quad +\int^t_0 \lan t-\tau\ran^{-\f14}\Big(\|\p_1 b\|_{L^2}^{\f12-\delta}\|\na \p_1 b\|_{L^2}^{\f12+\delta}\|\na u\|_{L^2}^\f12\|\na^2 u\|_{L^2}^\f12+\|b_2\|_{L^2}^\f12\|\p_1 b\|_{L^2}^\f12 \|\na u\|_{L^2}^{\f34-\f{\delta}{2}}\|\na^3 u\|_{L^2}^{\f14+\f{\delta}{2}} \Big)d\tau \non\\
&\lesssim \lan t\ran^{-\f14} \Big( E^2(t)+\cE^2(t)+\int ^t_0 \cF^2(\tau) d\tau\Big).
\end{align*}
Now we consider the $L^2$ estimate for $N_{2,5}$. Since
\begin{align*}
\div (-u\cdot\na b+b\cdot\na u)=0\quad\Rightarrow\quad \p_2(-u\cdot\na b_2+b\cdot\na u_2)=\p_1(u\cdot\na b_1-b\cdot\na u_1),
\end{align*}
similarly to \eqref{bnau}, \eqref{32delta} and \eqref{middle}, we have
\begin{align}\label{divN}
&\Big\|\int^t_0e^{-\f{|\xi_1|^2}{|\xi|^2} (t-\tau)}|\xi_1|^{-\f12}\cF\Big((-u\cdot\na b_2+b\cdot\na u_2)\Big)(\tau) d\tau\Big\|_{L^2_{\xi_1}L^1_{\xi_2}}\non\\
&=\Big\|\int^t_0e^{-\f{|\xi_1|^2}{|\xi|^2} (t-\tau)}\f{|\xi_1|^{-\f12}}{|\xi|}\cF\Big(\na(-u\cdot\na b_2+b\cdot\na u_2)\Big)(\tau) d\tau\Big\|_{L^2_{\xi_1}L^1_{\xi_2}}\non\\
&=\Big\|\int^t_0e^{-\f{|\xi_1|^2}{|\xi|^2} (t-\tau)}\f{|\xi_1|^{\f12}}{|\xi|}\cF\Big(u\cdot\na b-b\cdot\na u\Big)(\tau) d\tau\Big\|_{L^2_{\xi_1}L^1_{\xi_2}}\non\\
&\lesssim \int^{t}_0\lan t-\tau\ran^{-\f14}\sum_{j\geq 0}\Big\|j^{-\f12}  e^{-\f{|\xi_1|^2}{j^2} (t-\tau)} \cF\Big(u\cdot\na b-b\cdot\na u\Big)(\tau)  \Big\|_{L^2_{\xi_1}L^1_{\xi_2}}d\tau\non\\
&\lesssim    \int^{t}_0\lan t-\tau\ran^{-\f14} \sum_{j\geq 0} j^{-\f12}\Big\| e^{-\f{|\xi_1|^2}{j^2} (t-\tau)}\Big\|_{L^2_{\xi_1}} \Big\|\cF\Big(u\cdot\na b-b\cdot\na u\Big)\Big\|_{L^\infty_{\xi_1}L^2_{\xi_2}} \Big(\int_{|\xi_2|\leq j} d \xi_2\Big)^\f12\non\\
&\lesssim  \int^{t}_0\lan t-\tau\ran^{-\f12}  \Big\|\lan \na \ran ^{\f12+\delta} (u\cdot\na b-b\cdot\na u) \Big\|_{L^1_{x_1}L^2_{x_2}}d\tau \non\\
&\lesssim  \int^{t}_0\lan t-\tau\ran^{-\f12}  \Big\|\lan \na \ran ^{\f32+\delta}\Big( \big(u_{2, <\lan \tau\ran^{-s_1}}+u_{2, >\lan \tau\ran^{-s_2}} +u_{2, \sim}\big) b\Big) +\lan \na \ran ^{\f12+\delta}(u_1\p_1b+b\cdot\na u) \Big\|_{L^1_{x_1}L^2_{x_2}}d\tau \non\\
&\lesssim  \int^{t}_0\lan t-\tau\ran^{-\f12} \lan \tau \ran ^{-1-\delta} d\tau \Big( E^2(t)+\cE^2(t)+\int ^t_0 \cF^2(\tau) d\tau\Big) \non\\
&\lesssim \lan t\ran^{-\f12} \Big( E^2(t)+\cE^2(t)+\int ^t_0 \cF^2(\tau) d\tau\Big).
\end{align}

The $L^\infty$ estimate of $b_1$ is obtained as
\begin{align}\label{b1infty}
&\Big\| \int^t_0e^{-\f{|\xi_1|^2}{|\xi|^2} (t-\tau)}\cF\Big(b\cdot\na u_1\Big)(\tau) d\tau\Big\|_{L^1_{\xi_1}L^1_{\xi_2}}\non\\
&= \Big\| \int^t_0e^{-\f{|\xi_1|^2}{|\xi|^2} (t-\tau)}\cF\Big(b_1\p_1 u_1+b_2\p_2 u_1\Big)(\tau) d\tau\Big\|_{L^1_{\xi_1}L^1_{\xi_2}}.
\end{align}
We use the magnetic field equation $\eqref{eq:MHDT}_2$ to estimate the first part of \eqref{b1infty},
\begin{align*}
& \Big\| \int^{\f{t}{2}}_0e^{-\f{|\xi_1|^2}{|\xi|^2} (t-\tau)}\cF\Big(b_1\p_1 u_1\Big)(\tau) d\tau\Big\|_{L^1_{\xi_1}L^1_{\xi_2}}\non\\
&= \Big\| \int^{\f{t}{2}}_0e^{-\f{|\xi_1|^2}{|\xi|^2} (t-\tau)}\cF\Big(b_1(\p_{\tau} b_1+u\cdot\na b_1-b\cdot\na u_1)\Big)(\tau) d\tau\Big\|_{L^1_{\xi_1}L^1_{\xi_2}}\non\\
&\lesssim\Big\|e^{-\f{|\xi_1|^2}{2|\xi|^2} t}\cF (b_1^2)\Big\|_{L^1_{\xi_1}L^1_{\xi_2}}+\Big\|e^{-\f{|\xi_1|^2}{|\xi|^2} t}\cF (b_{1, 0}^2)\Big\|_{L^1_{\xi_1}L^1_{\xi_2}}+\int^{\f{t}{2}}_0 \Big\|\f{|\xi_1|^2}{|\xi|^2}e^{-\f{|\xi_1|^2}{|\xi|^2} ( t-\tau)}\cF (b_1^2)\Big\|_{L^1_{\xi_1}L^1_{\xi_2}} d\tau \non\\
&+\Big\| \int^{\f{t}{2}}_0e^{-\f{|\xi_1|^2}{|\xi|^2} (t-\tau)}\cF\Big(b_1(u\cdot\na b_1-b\cdot\na u_1)\Big)(\tau) d\tau\Big\|_{L^1_{\xi_1}L^1_{\xi_2}},
\end{align*}
there are no difficulties here.

For the second part of \eqref{b1infty}, we have
\begin{align*}
&\Big\| \int^t_0e^{-\f{|\xi_1|^2}{|\xi|^2} (t-\tau)}\cF\Big(b_2\p_2 u_1\Big)(\tau) d\tau\Big\|_{L^1_{\xi_1}L^1_{\xi_2}}\non\\
&\lesssim\int^t_0 \sum_{j\geq 0} \Big\| e^{-\f{|\xi_1|^2}{j^2} (t-\tau)} \Big\|_{L^1_{\xi_1}} \Big\| \cF\Big(P_j (b_2\p_2 u_1)\Big)(\tau)\Big\|_{L^\infty_{\xi_1}L^2_{\xi_2}}  \Big(\int_{|\xi_2|\leq  j} d\xi_2\Big)^\f12 d\tau \non\\
&\lesssim \int^t_0 \lan t-\tau\ran^{-\f12} \Big(\sum_{0\leq j\leq 1} j^{\f32}
\Big\| \cF\Big(P_j (b_2\p_2 u_1)\Big)\Big\|_{L^\infty_{\xi_1}L^2_{\xi_2}}+ \sum_{j\geq 1} j^{-\delta}
\Big\| j^{\f32+\delta} \cF\Big(P_j (b_2\p_2 u_1)\Big)\Big\|_{L^\infty_{\xi_1}L^2_{\xi_2}} \Big)d\tau\non\\
&\lesssim \int^t_0 \lan t-\tau\ran^{-\f12} \Big\|\lan \na\ran ^{\f32+\delta}(b_2\p_2 u_1)\Big\|_{L^1_{x_1}L^2_{x_2}} d\tau \non\\
&\lesssim  \int^{t}_0\lan t-\tau\ran^{-\f12}  \lan \tau \ran ^{-1-\delta} d\tau \, \Big( E^2(t)+\cE^2(t)+\int ^t_0 \cF^2(\tau) d\tau\Big)\non\\
&\lesssim \lan t\ran^{-\f12} \Big( E^2(t)+\cE^2(t)+\int ^t_0 \cF^2(\tau) d\tau\Big),
\end{align*}
where  we use
\begin{align*}
\|\na^{\f32+\delta} b_2 \,\p_2 u_1\|_{L^1_{x_1}L^2_{x_2}}
&\lesssim \|\na^{\f32+\delta} b_2\|_{L^2}\|\p_2 u_1\|_{L^2}^\f12\|\p_2^2 u_1\|_{L^2}^\f12\non\\
&\lesssim \| \p_1 b\|_{L^2}^{\f12-\delta}\|\na \p_1b\|_{L^2}^{\f12+\delta}\|\p_2 u_1\|_{L^2}^\f12\|\p_2^2 u_1\|_{L^2}^\f12,
\end{align*}
\begin{align*}
\|b_2\na^{\f32+\delta} \p_2 u_1\|_{L^1_{x_1}L^2_{x_2}}
&\lesssim\|b_2\|_{L^2}^\f12\|\p_1 b_1\|_{L^2}^\f12\|\na^{\f32+\delta} \p_2 u\|_{L^2}\non\\
&\lesssim\|b_2\|_{L^2}^\f12\|\p_1 b_1\|_{L^2}^\f12\|\na^2 u\|_{L^2}^{\f34-\f{\delta}{2}}\|\na^4 u\|_{L^2}^{\f14+\f{\delta}{2}}.
\end{align*}

For
\begin{align}\label{b1infty2}
&\Big\|\int^t_0 e^{-\f{|\xi_1|^2}{|\xi|^2} (t-\tau)}\cF\Big(u\cdot\na b_1 \Big)(\tau) d\tau\Big\|_{L^1_{\xi_1}L^1_{\xi_2}}\non\\
&\lesssim \int^t_0\Big\| \f{|\xi_1|}{|\xi|} e^{-\f{|\xi_1|^2}{|\xi|^2} (t-\tau)} \cF\Big(\na(u_1 b_1)\Big)  \Big\|_{L^1_{\xi_1}L^1_{\xi_2}} d\tau+\int^t_0\Big\||\xi|  e^{-\f{|\xi_1|^2}{|\xi|^2} (t-\tau)} \cF(u_2  b_1) \Big\|_{L^1_{\xi_1}L^1_{\xi_2}} d\tau,
\end{align}

the trouble term is the second term, similarly to \eqref{32delta}, we divided it as
\begin{align*}
&\int^t _0\Big\||\xi|  e^{-\f{|\xi_1|^2}{|\xi|^2} (t-\tau)} \cF\Big( (u_{2, <\lan \tau\ran^{-s_1}}+u_{2, >\lan \tau\ran^{-s_3}} ) b_1\Big) \Big\|_{L^1_{\xi_1}L^1_{\xi_2}} d\tau\non\\
&\lesssim\int^t _0  \sum _{j\geq 0}  j \Big\|e^{-\f{|\xi_1|^2}{j^2} (t-\tau)}\Big\|_{L^1_{\xi_1}}\Big\|\cF\Big(P_j\big( (u_{2, <\lan \tau\ran^{-s_1}}+u_{2, >\lan \tau\ran^{-s_3}} ) b_1\big) \Big)\Big\|_{L^\infty_{\xi_1}L^2_{\xi_2}} \Big(\int_{|\xi_2|\leq j} d\xi_2\Big)^\f12 d\tau\non\\
&\lesssim\int^t _0 \lan t-\tau\ran^{-\f12} \Big[\sum_{j\geq 1} j^{\f52}  j^{-\f52-\delta}
\Big\|  \na^{\f52+\delta} \Big( (u_{2, <\lan \tau\ran^{-s_1}}+u_{2, >\lan \tau\ran^{-s_3}} ) b_1\Big)  \Big\|_{L^1_{x_1}L^2_{x_2}} \non\\
&\quad+ \sum_{0\leq j\leq 1} j ^{\f52}\Big\|  (u_{2, <\lan \tau\ran^{-s_1}}+u_{2, >\lan \tau\ran^{-s_3}} ) b_1\Big \|_{L^1_{x_1}L^2_{x_2}} \Big] d\tau\non\\
&\lesssim\int^t_0 \lan t-\tau\ran^{-\f12}
\Big\| \lan \na\ran^{\f52+\delta}  \Big( (u_{2, <\lan \tau\ran^{-s_1}}+u_{2, >\lan \tau\ran^{-s_3}} ) b_1\Big)\Big \|_{L^1_{x_1}L^2_{x_2}} d\tau\non\\
&\lesssim \int^t_0 \lan t-\tau\ran^{-\f12}\lan \tau\ran^{-1-\delta}  d\tau\Big( E^2(t)+\cE^2(t)\Big) \non\\
&\lesssim \lan t\ran^{-\f12} \Big( E^2(t)+\cE^2(t)\Big),
\end{align*}
with ($s_3<\f{1}{12}-\f{13}{6}\delta$)
\begin{align*}
&\|u_{2, >\lan \tau\ran^{-s_3}} \na^{\f52+\delta}  b_1\|_{L^1_{x_1}L^2_{x_2}}\non\\
&\lesssim \|u_{2, >\lan \tau\ran^{-s_3}}\|_{L^2}^\f12 \|\p_2 u_{2, >\lan \tau\ran^{-s_3}}\|_{L^2}^\f12\Big( \|\na^{\f52+\delta}  b_{1, <\lan \tau\ran^{\f{1}{12}}}\|_{L^2}+ \|\na^{\f52+\delta}  b_{1, >\lan \tau\ran^{\f{1}{12}}}\|_{L^2}\Big)\non\\
&\lesssim  \Big(\lan\tau\ran ^{s_3}\|\p_2 u_2 \|_{L^2}\Big)^\f12  \|\p_1 u_{1, >\lan \tau\ran^{-s_3}}\|_{L^2}^\f12\Big(\lan \tau\ran^{\f{1}{12}(\f52+\delta)} \|b_1\|_{L^2}+\lan \tau\ran^{-\f{1}{12} (\f12-\delta)} \|\na ^3 b_1\|_{L^2}\Big) \non\\
&\lesssim \lan \tau\ran^{-1+\f{s_3}{2}-\f{1}{12}(\f12-\delta)} E(t) \cE(t).
\end{align*}

And we claim that
\beno
 \int^t_0\Big\||\xi|  e^{-\f{|\xi_1|^2}{|\xi|^2} (t-\tau)} \cF\Big( u_{2,\sim} b_1\Big) \Big\|_{L^1_{\xi_1}L^1_{\xi_2}} d\tau\lesssim \lan t\ran^{-\f12} \Big( E^2(t)+\cE^2(t)+\int ^t_0 \cF^2(\tau) d\tau\Big).
\eeno

Indeed,  we only need to give the estimate of the most trouble terms, using the magnetic field equation,
\begin{align*}
&\int^\f{t}{2} _0\Big\| |\xi|e^{-\f{|\xi_1|^2}{|\xi|^2} (t-\tau)} \cF( u_{2,\sim} \,b_1) \Big\|_{L^1_{\xi_1}L^1_{\xi_2}} d\tau\non\\
&=\int^\f{t}{2} _0\Big\||\xi|e^{-\f{|\xi_1|^2}{|\xi|^2} (t-\tau)} \cF\Big( (-\Delta)^{-1} \na^T\cdot \p_1 u_\sim  \,b_1\Big) \Big\|_{L^1_{\xi_1}L^1_{\xi_2}} d\tau\non\\
&=\int^\f{t}{2} _0\Big\| |\xi| e^{-\f{|\xi_1|^2}{|\xi|^2} (t-\tau)} \cF\Big( (-\Delta)^{-1} \na^T\cdot (\p_\tau b+u\cdot\na b-b\cdot\na u)_{\sim}  \,b_1\Big) \Big\|_{L^1_{\xi_1}L^1_{\xi_2}} d\tau,
\end{align*}
where
\begin{align*}
&\int^\f{t}{2} _0\Big\||\xi| e^{-\f{|\xi_1|^2}{|\xi|^2} (t-\tau)} \cF\Big( (-\Delta)^{-1} \na^T\cdot \p_\tau b_{\sim}  \,b_1\Big) \Big\|_{L^1_{\xi_1}L^1_{\xi_2}} d\tau\non\\
&\lesssim  \Big\| \cF\Big(\na\big( (-\Delta)^{-1} \na^T\cdot b_{\sim}  \,b_{1}\big)\Big) \Big\|_{L^1_{\xi_1}L^1_{\xi_2}} +\lan t\ran^{-\f12} \Big\| \lan\na \ran^{\f52+\delta} \Big( (-\Delta)^{-1} \na^T\cdot b_{\sim, 0}  \,b_{1, 0}\Big) \Big\|_{L^1_{x_1}L^2_{x_2}}\non\\
&\quad +\int^{\f{t}{2}}_0 \lan t-\tau\ran^{-1} \Big\| |\xi| \cF\Big( (-\Delta)^{-1} \na^T\cdot b_{\sim}  \,b_{1}\Big) \Big\|_{L^1_{\xi_1}L^1_{\xi_2}} d\tau\non\\
&\quad +\int^\f{t}{2} _0\Big\||\xi| e^{-\f{|\xi_1|^2}{|\xi|^2} (t-\tau)} \cF\Big( (-\Delta)^{-1} \na^T\cdot  b_{\sim}  \,(\p_1 u_1-u\cdot\na b_1+b\cdot\na u_1)\Big) \Big\|_{L^1_{\xi_1}L^1_{\xi_2}} d\tau.
\end{align*}
The most trouble term is
\begin{align*}
&\int^\f{t}{2} _0\Big\| |\xi| e^{-\f{|\xi_1|^2}{|\xi|^2} (t-\tau)} \cF\Big( (-\Delta)^{-1} \na^T\cdot  b_{\sim}  \p_1 u_1\Big) \Big\|_{L^1_{\xi_1}L^1_{\xi_2}} d\tau\non\\
&\lesssim \int^\f{t}{2} _0\Big\| \f{|\xi_1|}{|\xi|}|\xi|^2 e^{-\f{|\xi_1|^2}{|\xi|^2} (t-\tau)} \cF\Big( (-\Delta)^{-1} \na^T\cdot  b_{\sim}  \, u_1\Big) \Big\|_{L^1_{\xi_1}L^1_{\xi_2}} d\tau\non\\
&\quad +\int^\f{t}{2} _0\Big\||\xi| e^{-\f{|\xi_1|^2}{|\xi|^2} (t-\tau)} \cF\Big( (-\Delta)^{-1} \na^T\cdot  \p_1 b_{\sim}  \, u_1\Big) \Big\|_{L^1_{\xi_1}L^1_{\xi_2}} d\tau\non\\
&\lesssim \int^\f{t}{2} _0 \lan t-\tau\ran^{-\f12} \Big\|e^{-\f{|\xi_1|^2}{|\xi|^2} (t-\tau)} \cF\Big( \na^2\big( (-\Delta)^{-1} \na^T\cdot b_\sim \, u_1\big)\Big)\Big\|_{L^1_{\xi_1}L^1_{\xi_2}} d\tau\non\\
&\quad +\int^\f{t}{2} _0\Big\| e^{-\f{|\xi_1|^2}{|\xi|^2} (t-\tau)} \cF\Big(\na\big( b_{2, \sim}  \, u_1\big)\Big) \Big\|_{L^1_{\xi_1}L^1_{\xi_2}} d\tau,
\end{align*}
where
\begin{align*}
&\int^\f{t}{2} _0 \lan t-\tau\ran^{-\f12} \Big\|e^{-\f{|\xi_1|^2}{|\xi|^2} (t-\tau)} \cF\Big( \na^2\big( (-\Delta)^{-1} \na^T\cdot b_\sim \, u_1\big)\Big)\Big\|_{L^1_{\xi_1}L^1_{\xi_2}} d\tau\non\\
&\lesssim \int^{\f{t}{2}} _0  \lan t-\tau\ran^{-\f12} \sum_{j\geq 0} \Big\| e^{-\f{|\xi_1|^2}{j^2} (t-\tau)}  \Big\|_{L^2_{\xi_1}} \Big\| \cF\Big( P_j \na^2\big((-\Delta)^{-1} \na^T\cdot  b_{\sim}  \, u_1 \big) \Big)\Big\|_{L^2_{\xi_1}L^2_{\xi_2}}    \Big( \int _{|\xi_2|\leq j}  d\xi_2\Big) ^\f12 d\tau\non\\
&\lesssim \int^{\f{t}{2}} _0  \lan t-\tau\ran^{-\f34}  \Big(\| \cF\na^{2+\delta} b\|_{L^2_{\xi_1}L^2_{\xi_2}} \|\cF u\|_{L^1_{\xi_1}L^1_{\xi_1}}+\|\cF(-\Delta)^{-1}\na^{T} \cdot b_\sim\|_{L^1_{\xi_1}L^1_{\xi_2}}\|\cF\na^{3+\delta} u\|_{L^2_{\xi_1}L^2_{\xi_2}}\Big) d\tau \non\\
&\lesssim \int^{\f{t}{2}} _0  \lan t-\tau\ran^{-\f34}  \Big(\|\na^{2+\delta} b\|_{L^2_{x_1x_2}} \|\cF u\|_{L^1_{\xi_1}L^1_{\xi_1}}+\|\cF(-\Delta)^{-1}\na^{T} \cdot b_\sim\|_{L^1_{\xi_1}L^1_{\xi_2}} \|\na^2 u\|_{L^2}^{\f12-\delta}\|\na^4 u\|_{L^2}^{\f12+\delta}\Big) d\tau \non\\
&\lesssim \lan t\ran^{-\f12} \Big( E^2(t)+\cE^2(t)+\int ^t_0 \cF^2(\tau) d\tau\Big),
\end{align*}
and
\begin{align*}
&\int^\f{t}{2} _0\Big\| e^{-\f{|\xi_1|^2}{|\xi|^2} (t-\tau)} \cF\Big(\na\big( b_{2, \sim}  \, u_1\big)\Big) \Big\|_{L^1_{\xi_1}L^1_{\xi_2}} d\tau\non\\
&=\int^\f{t}{2} _0\Big\| e^{-\f{|\xi_1|^2}{|\xi|^2} (t-\tau)} \cF\Big(\p_1b_{1, \sim}  \, u_1+b_{2, \sim} \na u_1\Big) \Big\|_{L^1_{\xi_1}L^1_{\xi_2}} d\tau\non\\
&\lesssim\int^\f{t}{2} _0\Big\|\f{|\xi_1|}{|\xi|} e^{-\f{|\xi_1|^2}{|\xi|^2} (t-\tau)} \cF\Big(\na(b_{1, \sim}  \, u_1)\Big) \Big\|_{L^1_{\xi_1}L^1_{\xi_2}} d\tau+\int^\f{t}{2} _0\Big\|e^{-\f{|\xi_1|^2}{|\xi|^2} (t-\tau)} \cF\Big(b_{1, \sim}  \, \p_1u_1\Big) \Big\|_{L^1_{\xi_1}L^1_{\xi_2}} d\tau\non\\
&\quad+\int^\f{t}{2} _0\Big\| e^{-\f{|\xi_1|^2}{|\xi|^2} (t-\tau)} \cF\Big(b_{2, \sim} \na u_1)\Big) \Big\|_{L^1_{\xi_1}L^1_{\xi_2}} d\tau\non\\
&\lesssim \lan t\ran^{-\f12} \Big( E^2(t)+\cE^2(t)+\int ^t_0 \cF^2(\tau) d\tau\Big)£¬
\end{align*}
where we use \eqref{b1infty}.

Since the  estimate of $b_2$ is easier than $b_1$, we omit the details.

{\bf Step 3.} The decay rate of $ \|\p_1 u_N\|_{H^1}$ and $\|\na u_N\|_{H^1}$.

We first consider the $L^2$ decay rate of $\p_1 u$, for $M_{1, 3, 4, 6}$
\begin{align*}
&\Big\|\int^t_0|\xi_1| \f{|\xi_1|}{|\xi|^2} e^{-\f{|\xi_1|^2}{|\xi|^2} (t-\tau)}\cF\Big(u\cdot\na u+b\cdot\na b+u\cdot\na b+b\cdot\na u\Big)(\tau) d\tau\Big\|_{L^2_{\xi_1}L^2_{\xi_2}}\non\\
&\lesssim \int^t_0 \lan t-\tau\ran^{-1}\Big(\|u\|_{L^\infty}\|\na u\|_{L^2} +\|b_1\|_{L^\infty} \|\p_1 b\|_{L^2}+\|b_2\|_{L^\infty}\|\p_2 b\|_{L^2}\non\\
&\quad +\|u\|_{L^\infty}\|\na b\|_{L^2}+\|b_1\|_{L^\infty} \|\p_1 u \|_{L^2}+\|b_2\|_{L^\infty}\|\na u\|_{L^2} \Big) d\tau\non\\
&\lesssim \int^t_0 \lan t-\tau\ran^{-1}\lan \tau\ran^{-1-\delta}  d\tau  \Big( E^2(t)+\cE^2(t)\Big) \non\\
&\lesssim \lan t\ran^{-1} \Big( E^2(t)+\cE^2(t)\Big),
\end{align*}
where we use
\beno
\|\na b\|_{L^2}\leq \|b\|_{L^2}^\f12\|\na^2 b\|_{L^2}^\f12, \quad \|\na u\|_{L^2}\leq \|u\|_{L^2}^\f12\|\na^2 u\|_{L^2}^\f12,
\eeno
and for $M_{2, 5}$,  we use the result of the linear part to have
\begin{align*}
&\Big\|\int^t_0|\xi_1|^{-\f12}|\xi_1| \f{|\xi_1|}{|\xi|^2} e^{-\f{|\xi_1|^2}{|\xi|^2} (t-\tau)}\cF\Big(u\cdot\na b_2+b\cdot\na u_2\Big)(\tau) d\tau\Big\|_{L^2_{\xi_1}L^1_{\xi_2}}\non\\
&\lesssim \int^t_0 \lan t-\tau\ran^{-1}\Big\|\lan \na\ran ^{\f12+\delta}( u\cdot\na b_2+b\cdot\na u_2)\Big\|_{L^1_{x_1}L^2_{x_2}} d\tau\non\\
&\lesssim \int^t_0 \lan t-\tau\ran^{-1}\lan \tau\ran^{-1-\delta}  d\tau  \Big( E^2(t)+\cE^2(t)+\int ^t_0 \cF^2(\tau) d\tau\Big) \non\\
&\lesssim \lan t\ran^{-1} \Big( E^2(t)+\cE^2(t)+\int ^t_0 \cF^2(\tau) d\tau\Big),
\end{align*}
and
\begin{align*}
&\Big\|\int^t_0|\xi_1|^{-\f12}|\xi_1| \f{|\xi_1|^2}{|\xi|^4} e^{-\f{|\xi_1|^2}{|\xi|^2} (t-\tau)}\cF\Big(\p_i(u_i u)+\p_2(b_2b)\Big)(\tau) d\tau\Big\|_{L^2_{\xi_1}L^1_{\xi_2}}\non\\
&=\Big\|\int^t_0 \f{|\xi_1|^\f52}{|\xi|^3} e^{-\f{|\xi_1|^2}{|\xi|^2} (t-\tau)}\cF\Big(u_i u+b_2 b\Big)(\tau) d\tau\Big\|_{L^2_{\xi_1}L^1_{\xi_2}}\non\\
&\lesssim \int^t_0 \lan t-\tau\ran^{-1}\Big(\sum_{j\leq 1}\Big\|e^{-\f{|\xi_1|^2}{j^2} (t-\tau)}\Big\|_{L^\infty_{\xi_1}}\Big\|\cF(P_j (u_i u+b_2 b))\Big\|_{L^2_{\xi_1}L^2_{\xi_2}}\Big(\int_{|\xi_2|\leq j} d\xi_2\Big)^\f12\non\\
&\qquad+  \sum_{j\geq 1}\Big\|e^{-\f{|\xi_1|^2}{j^2} (t-\tau)}\Big\|_{L^\infty_{\xi_1}}\Big\|\cF(P_j (u_i u+b_2 b))\Big\|_{L^2_{\xi_1}L^2_{\xi_2}}\Big(\int_{|\xi_2|\leq j} d\xi_2\Big)^\f12\Big)d\tau\non\\
&\lesssim \int^t_0 \lan t-\tau\ran^{-1}\lan \tau\ran^{-1-\delta}  d\tau  \Big( E^2(t)+\cE^2(t)\Big) \non\\
&\lesssim \lan t\ran^{-1} \Big( E^2(t)+\cE^2(t)\Big),
\end{align*}
and similarly to \eqref{p1}, we have
\begin{align*}
\Big\|\int^t_0|\xi_1|^{-\f12}|\xi_1| \f{|\xi_1|^2}{|\xi|^4} e^{-\f{|\xi_1|^2}{|\xi|^2} (t-\tau)}\cF\Big(\p_1(b_1b)\Big)(\tau) d\tau\Big\|_{L^2_{\xi_1}L^1_{\xi_2}}\lesssim \lan t\ran^{-1} \Big( E^2(t)+\cE^2(t)\Big).
\end{align*}

We consider the $H^1$ decay rate of $\p_1 u$. For $M_{1, 3, 4, 6}$, we have
\begin{align*}
&\Big\|\int^t_0|\xi_1| \f{|\xi_1|}{|\xi|^2} e^{-\f{|\xi_1|^2}{|\xi|^2} (t-\tau)}|\xi| \cF\Big(u\cdot\na b+b\cdot\na u \Big)(\tau) d\tau\Big\|_{L^2_{\xi_1}L^2_{\xi_2}}\non\\
&\lesssim \int^t_0 \lan t-\tau\ran^{-1}\Big(\|\na u_1\|_{L^\infty} \|\p_1 b\|_{L^2}+\|u_1\|_{L^\infty} \|\p_1 \na b\|_{L^2}+\|\na u_2 \|_{L^2_{x_1}L^2_{x_1}} \|\p_2 b\|_{L^2_{x_1}L^\infty_{x_2}}+\|u_2\|_{L^\infty} \|\p_2\na b\|_{L^2}\non\\
&\quad +\|\na b_1\|_{L^2_{x_2}L^\infty_{x_1}} \|\p_1 u\|_{L^2_{x_1}L^\infty_{x_2}}+\|b_1\|_{L^\infty} \|\p_1 \na u\|_{L^2}+\|\na b_2 \|_{L^2}\|\p_ 2u \|_{L^\infty}+ \|b_2\|_{L^\infty} \| \p_2\na u\|_{L^2} \Big) d\tau\non\\
&\lesssim \lan t\ran^{-1} \Big( E^2(t)+\cE^2(t)+\int ^t_0 \cF^2(\tau) d\tau\Big),
\end{align*}
where
\begin{align*}
&\int^t_0 \lan t-\tau\ran^{-1}\|\na u\|_{L^\infty} \|\p_1b\|_{L^2} d\tau\non\\
&\lesssim \int^t_0 \lan t-\tau\ran^{-1}\|\na u\|_{L^2}^\f12\|\na ^3 u\|_{L^2}^\f12\|\p_1b\|_{L^2} d\tau\non\\
&\lesssim \int^t_0 \lan t-\tau\ran^{-1}\big(\lan \tau\ran^{-\f34} E(t)\big)^\f12 \|\na^3 u\|_{L^2}^\f12 \lan \tau\ran^{-\f34} E(t) d\tau\non\\
&\lesssim  E(t)^\f32\Big(\int^t_0 \big(\lan t-\tau\ran^{-1} \lan \tau\ran^{-\f98}\big)^\f43 d\tau\Big)^{\f34} \Big(\int^t_0\big( \|\na^3 u\|_{L^2}^\f12\big)^4 d\tau \Big)^\f14\non\\
&\lesssim \lan t\ran^{-1} \Big(E^2(t) +\int ^t_0 \cF^2(\tau) d\tau \Big).
\end{align*}
For $M_{2, 5}$,  we use the result of the linear part to have
\begin{align*}
&\Big\|\int^t_0|\xi_1|^{-\f12}|\xi_1||\xi| \f{|\xi_1|}{|\xi|^2} e^{-\f{|\xi_1|^2}{|\xi|^2} (t-\tau)}\cF\Big(u\cdot\na b_2+b\cdot\na u_2\Big)(\tau) d\tau\Big\|_{L^2_{\xi_1}L^1_{\xi_2}}\non\\
&\lesssim \int^t_0 \lan t-\tau\ran^{-1}\Big\|\lan \na\ran ^{\f32+\delta}( u\cdot\na b_2+b\cdot\na u_2)\Big\|_{L^1_{x_1}L^2_{x_2}} d\tau\non\\
&\lesssim \int^t_0 \lan t-\tau\ran^{-1}\lan \tau\ran^{-1-\delta}  d\tau  \Big( E^2(t)+\cE^2(t)+\int ^t_0 \cF^2(\tau) d\tau\Big) \non\\
&\lesssim \lan t\ran^{-1} \Big( E^2(t)+\cE^2(t)+\int ^t_0 \cF^2(\tau) d\tau\Big),
\end{align*}
and
\begin{align*}
&\Big\|\int^t_0|\xi_1|^{-\f12}|\xi_1| |\xi|\f{|\xi_1|^2}{|\xi|^4} e^{-\f{|\xi_1|^2}{|\xi|^2} (t-\tau)}\cF\Big(\p_i(u_i u)+\p_2(b_2b)\Big)(\tau) d\tau\Big\|_{L^2_{\xi_1}L^1_{\xi_2}}\non\\
&=\Big\|\int^t_0 \f{|\xi_1|^\f52}{|\xi|^3} e^{-\f{|\xi_1|^2}{|\xi|^2} (t-\tau)}\cF\Big(\na(u_i u+b_2 b)\Big)(\tau) d\tau\Big\|_{L^2_{\xi_1}L^1_{\xi_2}}\non\\
&\lesssim \int^t_0 \lan t-\tau\ran^{-1}\Big(\sum_{j\leq 1}\Big\|e^{-\f{|\xi_1|^2}{j^2} (t-\tau)}\Big\|_{L^\infty_{\xi_1}}\Big\|\cF(P_j (\na(u_i u+b_2 b)))\Big\|_{L^2_{\xi_1}L^2_{\xi_2}}\Big(\int_{|\xi_2|\leq j} d\xi_2\Big)^\f12\non\\
&\qquad+  \sum_{j\geq 1}\Big\|e^{-\f{|\xi_1|^2}{j^2} (t-\tau)}\Big\|_{L^\infty_{\xi_1}}\Big\|\cF(P_j (\na(u_i u+b_2 b)))\Big\|_{L^2_{\xi_1}L^2_{\xi_2}}\Big(\int_{|\xi_2|\leq j} d\xi_2\Big)^\f12\Big)d\tau\non\\
&\lesssim \int^t_0 \lan t-\tau\ran^{-1}\lan \tau\ran^{-1-\delta}  d\tau  \Big( E^2(t)+\cE^2(t)\Big) \non\\
&\lesssim \lan t\ran^{-1} \Big( E^2(t)+\cE^2(t)\Big),
\end{align*}
and similarly to \eqref{p1}, we have
\begin{align*}
\Big\|\int^t_0|\xi_1|^{-\f12}|\xi_1||\xi| \f{|\xi_1|^2}{|\xi|^4} e^{-\f{|\xi_1|^2}{|\xi|^2} (t-\tau)}\cF\Big(\p_1(b_1b)\Big)(\tau) d\tau\Big\|_{L^2_{\xi_1}L^1_{\xi_2}}\lesssim \lan t\ran^{-1} \Big( E^2(t)+\cE^2(t)\Big).
\end{align*}

Now we consider the $L^2$ decay rate of $\na  u$. For $M_{1, 3, 4, 6}$, by \eqref{h3}, we have
\begin{align*}
&\Big\|\int^t_0|\xi| \f{|\xi_1|}{|\xi|^2} e^{-\f{|\xi_1|^2}{|\xi|^2} (t-\tau)}\cF\Big(u_1 \p_1 b+b\cdot\na u\Big)(\tau) d\tau\Big\|_{L^2_{\xi_1}L^2_{\xi_2}}\non\\
&\lesssim \int^t_0 \lan t-\tau\ran^{-\f12}
\sum_{j\geq 0}  \Big\| e^{-\f{|\xi_1|^2}{j^2} (t-\tau)} \Big\|_{L^2_{\xi_1}} \Big\| \cF\Big(P_j (u_1 \p_1 b+b\cdot\na u)\Big)(\tau)\Big\|_{L^\infty_{\xi_1}L^2_{\xi_2}}d\tau \non\\
&\lesssim \int^t_0 \lan t-\tau\ran^{-\f34} \Big[\sum_{j\geq 1} j^{-\delta}
\Big\|j^{\f12+\delta} \cF\Big(P_j (u_1 \p_1 b+b\cdot\na u)\Big)\Big\|_{L^\infty_{\xi_1}L^2_{\xi_2}}+\sum_{0\leq j\leq 1} j^\f12
\Big\|  \cF\Big(P_j (u_1 \p_1 b+b\cdot\na u)\Big)\Big\|_{L^\infty_{\xi_1}L^2_{\xi_2}} \Big]d\tau\non\\
&\lesssim \int^t_0 \lan t-\tau\ran^{-\f34}\Big \|\lan \na\ran ^{\f12+\delta}(u_1 \p_1 b+b\cdot\na u)\Big \|_{L^1_{x_1}L^2_{x_2}} d\tau\non\\
&\lesssim \int^t_0 \lan t-\tau\ran^{-\f34}\lan \tau\ran^{-1-\delta}  d\tau \Big( E^2(t)+\cE^2(t)+\int ^t_0 \cF^2(\tau) d\tau\Big) \non\\
&\lesssim \lan t\ran^{-\f34} \Big( E^2(t)+\cE^2(t)+\int ^t_0 \cF^2(\tau) d\tau \Big).
\end{align*}
Then we rewrite  $u_2 \p_2 b_1$ as $\big(u_{2, <\lan \tau\ran^{-s_1}}+u_{2, >\lan \tau\ran^{-s_2}} +u_{2, \sim}\big) \p_2 b_1$ to have
 \begin{align*}
\Big\|\int^t_0|\xi| \f{|\xi_1|}{|\xi|^2} e^{-\f{|\xi_1|^2}{|\xi|^2} (t-\tau)}\cF\Big(u_2  \p_2 b_1\Big)(\tau) d\tau\Big\|_{L^2_{\xi_1}L^2_{\xi_2}}
\lesssim \lan t\ran^{-\f34} \Big( E^2(t)+\cE^2(t)\Big),
\end{align*}
which is very similar as the decay estimate of $\|b_1\|_{L^2} $.

For $M_{2, 5}$,  we use the same way with \eqref{divN} to have
\begin{align*}
&\Big\|\int^t_0|\xi_1|^{-\f12}|\xi| \f{|\xi_1|}{|\xi|^2} e^{-\f{|\xi_1|^2}{|\xi|^2} (t-\tau)}\cF\Big(-u\cdot\na b_2+b\cdot\na u_2\Big)(\tau) d\tau\Big\|_{L^2_{\xi_1}L^1_{\xi_2}}\non\\
&=\Big\|\int^t_0 \lan t-\tau\ran^{-\f12}|\xi_1|^{-\f12} e^{-\f{|\xi_1|^2}{|\xi|^2} (t-\tau)}\cF\Big(-u\cdot\na b_2+b\cdot\na u_2\Big)(\tau) d\tau\Big\|_{L^2_{\xi_1}L^1_{\xi_2}}\non\\
&\lesssim \lan t\ran^{-1} \Big( E^2(t)+\cE^2(t)\Big).
\end{align*}
Similarly, we use the same way to estimate the  $L^2$ decay rate of $\na^2  u$.

{\bf Step 4.} The decay rate of $ \|\p_1 b_N\|_{L^2}$.

For $N_{1, 3, 4, 6, 7}$, we have
\begin{align*}
&\Big\|\int^t_0\f{|\xi_1|}{|\xi|} e^{-\f{|\xi_1|^2}{|\xi|^2} (t-\tau)}|\xi| \cF\Big(u\cdot\na b+b\cdot\na u\Big)(\tau) d\tau\Big\|_{L^2_{\xi_1}L^2_{\xi_2}}\non\\
&\lesssim\int^t_0 \lan t-\tau\ran^{-\f12}  \sum_{j\geq 0}  j\Big\| e^{-\f{|\xi_1|^2}{j^2} (t-\tau)} \Big\|_{L^2_{\xi_1}} \Big\| \cF\Big(P_j (u\cdot\na b+b\cdot\na u)\Big)(\tau)\Big\|_{L^\infty_{\xi_1}L^2_{\xi_2}} d\tau\non\\
&\lesssim \int^t_0 \lan t-\tau\ran^{-\f34} \Big\|\lan \na\ran ^{\f32+\delta}(u\cdot\na b+b\cdot\na u )\Big\|_{L^1_{x_1}L^2_{x_2}}  d\tau \non\\
&\lesssim  \int^{t}_0\lan t-\tau\ran^{-\f34}  \lan \tau \ran ^{-1-\delta} d\tau \, \Big( E^2(t)+\cE^2(t)+\int ^t_0 \cF^2(\tau) d\tau\Big)\non\\
&\lesssim \lan t\ran^{-\f34} \Big( E^2(t)+\cE^2(t)+\int ^t_0 \cF^2(\tau) d\tau\Big),
\end{align*}
which had been given in $\|b_1\|_{L^\infty}$.

For $N_{2, 5}$, we have
\begin{align*}
&\Big\|\int^t_0e^{-\f{|\xi_1|^2}{|\xi|^2} (t-\tau)}|\xi_1|^{-\f12}|\xi_1|\cF\Big((-u\cdot\na b_2+b\cdot\na u_2)\Big)(\tau) d\tau\Big\|_{L^2_{\xi_1}L^1_{\xi_2}}\non\\
&=\Big\|\int^t_0e^{-\f{|\xi_1|^2}{|\xi|^2} (t-\tau)}\f{|\xi_1|^{\f12}}{|\xi|}\cF\Big(\na(-u\cdot\na b_2+b\cdot\na u_2)\Big)(\tau) d\tau\Big\|_{L^2_{\xi_1}L^1_{\xi_2}}\non\\
&\lesssim \lan t\ran^{-\f34} \Big( E^2(t)+\cE^2(t)+\int ^t_0 \cF^2(\tau) d\tau\Big).
\end{align*}

{\bf Step 5.} The decay rate of $ \|\p_1 u_N\|_{L^\infty}$.

We have
\begin{align*}
&\Big\|\int^{t}_0\f{|\xi_1|^2}{|\xi|^2} e^{-\f{|\xi_1|^2}{|\xi|^2} (t-\tau)}\cF\Big( u\cdot\na b+b\cdot\na u \Big)(\tau) d\tau\Big\|_{L^1_{\xi_1}L^1_{\xi_2}}\non\\
&\lesssim\int^{t}_0 \lan t-\tau\ran ^{-1} \sum_{j\geq 0} \Big\| e^{-\f{|\xi_1|^2}{j^2} (t-\tau)} \Big\|_{L^2_{\xi_1}} \Big\| \cF\Big(P_j (u\cdot\na b+b\cdot\na u)\Big)(\tau)\Big\|_{L^2_{\xi_1}L^2_{\xi_2}}\Big( \int_{|\xi_2|\leq j} d\xi_2\Big)  ^{\f12}d\tau \non\\
&\lesssim \int^{t}_0 \lan t-\tau\ran^{-\f54}\Big\|\lan \na\ran ^{1+\delta} (u\cdot\na b+b\cdot\na u)\Big\|_{L^2_{x_1}L^2_{x_2}}d\tau\non\\
&\lesssim \int^{t}_0 \lan t-\tau\ran^{-\f54} \Big(\|\na^{1+\delta}u_1\|_{L^2} \|\p_1 b\|_{L^\infty}+\|\na^{1+\delta} u_2 \|_{L^2}\|\p_2 b\|_{L^\infty}+\|u_1\|_{L^\infty} \|\p_1 \na^{1+\delta} b\|_{L^2}+\|u_2\|_{L^\infty}\|\p_2 \na^{1+\delta} b\|_{L^2}\non\\
&\quad+\|\na^{1+\delta} b_1\|_{L^2}\|\p_1 u\|_{L^\infty}+\|\na^{1+\delta} b_2\|_{L^2}\|\p_2 u\|_{L^\infty}+\|b_1\|_{L^\infty}\|\p_1 \na^{1+\delta} u\|_{L^2}+\|b_2\|_{L^\infty} \|\p_2 \na^{1+\delta} u\|_{L^2}\Big)d\tau\non\\
&\lesssim \int^{t}_0 \lan t-\tau\ran^{-\f54}\lan \tau\ran^{-1-\delta}  d\tau\Big( E^2(t)+\cE^2(t)+\int ^t_0 \cF^2(\tau) d\tau\Big) \non\\
&\lesssim \lan t\ran^{-1-\delta} \Big( E^2(t)+\cE^2(t)+\int ^t_0 \cF^2(\tau) d\tau\Big).
\end{align*}

This completes the proof of Proposition 3.1.
\end{proof}

\section{Local well-posedness}\label{local}
We give the local well-posedness of systems \eqref{eq:MHDT} in half space $\Omega$  without proof for completeness.
\begin{theorem}\label{Local}
	Assume that the initial data $(u_0,b_0)\in H^3(\Omega)$ $, \, u_0\in H^1_0(\Om)$, and $\div{u_0}=0$ in $\Omega$, $b_{1, 0}=\p_2 b_{1, 0}=0$ on $\p\Om$, $\mathcal{P}(\Delta u_0 -u_0 \cdot \na u_0 +b_0 \cdot \na b_0) \in H_0^1(\Om)$. Then there exists a $T>0$ such that system  \eqref{eq:MHDT} admits a unique solution $(u,b)$ on $[0,T]$ satisfying
	\begin{align*}
		\displaystyle
		(u,b) \in C([0,T];H^3(\Omega)),
	\end{align*}
\end{theorem}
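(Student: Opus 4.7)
The plan is a standard Picard-type iteration, but the non-resistive structure forces us to work in a carefully designed function space in which the magnetic field retains $H^3$ regularity via a transport equation while the velocity gains regularity through the Stokes operator. Define the sequence $(u^{n+1},b^{n+1})$ by solving, with $(u^0,b^0)=(u_0,b_0)$,
\begin{equation*}
\left\{
\begin{aligned}
&\p_t u^{n+1}-\Delta u^{n+1}+\na p^{n+1}=\p_1 b^n-u^n\cdot\na u^n+b^n\cdot\na b^n,\\
&\p_t b^{n+1}+u^n\cdot\na b^{n+1}=b^n\cdot\na u^n+\p_1 u^n,\\
&\div u^{n+1}=0,\quad u^{n+1}|_{\p\Om}=0,\quad (u^{n+1},b^{n+1})|_{t=0}=(u_0,b_0),
\end{aligned}\right.
\end{equation*}
where the first two equations are a non-stationary Stokes system and a linear transport equation, respectively. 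Both are classically solvable on $[0,T]$ for $T$ small, producing $u^{n+1}\in C([0,T];H^3)$ with $\p_t u^{n+1}\in C([0,T];H^1_0)$ and $b^{n+1}\in C([0,T];H^3)$. The conditions $b_1^{n+1}=\p_2 b_1^{n+1}=0$ on $\p\Om$ are propagated by the transport equation, since restricting the $b_1$-equation to $x_2=0$ gives a linear ODE in $t$ along the tangential characteristics (using $u_2^n|_{\p\Om}=0$), and an analogous computation works for $\p_2 b_1^{n+1}$.

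The core of the argument is the uniform a priori bound. Define
\begin{equation*}
\cX(t):=\|(u,b)(t)\|_{H^3}^2+\|\p_t u(t)\|_{H^1}^2+\int_0^t\bigl(\|\na u(s)\|_{H^3}^2+\|\p_t u(s)\|_{H^2}^2\bigr)\,ds.
\end{equation*}
For each iterate one derives, by applying $\p_t$ (which preserves the no-slip boundary condition and commutes with everything), taking the Helmholtz projection $\cP$ to kill the pressure, and using the Stokes regularity estimate
\begin{equation*}
\|u\|_{H^3}\lesssim \|\p_t u\|_{H^1}+\|\cP(\p_1 b-u\cdot\na u+b\cdot\na b)\|_{H^1},
\end{equation*}
an inequality of the form $\cX^{n+1}(t)\le C_0\cX(0)+C(T+\sqrt{T})\,P(\sup_{[0,T]}\cX^n)$, with $P$ polynomial. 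The compatibility assumption $\cP(\Delta u_0-u_0\cdot\na u_0+b_0\cdot\na b_0)\in H^1_0(\Om)$ is exactly what is needed to give meaning (and boundedness) to $\p_t u|_{t=0}\in H^1_0(\Om)$, so that $\cX(0)<\infty$. Choosing $T$ small yields the uniform bound $\sup_n\sup_{[0,T]}\cX^n\le M$ for some $M$ depending only on the initial data.

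The main obstacle is the $H^3$ control of $b$, because $b$ obeys a transport equation with no diffusion; applying $\na^3$ produces commutators $[\na^3,u\cdot\na]b$ that need $\|\na u\|_{L^\infty}$, and Gronwall then gives
\begin{equation*}
\|b^{n+1}(t)\|_{H^3}\le \|b_0\|_{H^3}\exp\Bigl(C\int_0^t\|\na u^n\|_{L^\infty}\,ds\Bigr)+\cdots,
\end{equation*}
which on a sufficiently small time interval is controlled by $\int_0^T\|u^n\|_{H^3}\,ds\lesssim T^{1/2}\cX^n(T)^{1/2}$, closing the loop. The boundary conditions $b_1=\p_2 b_1=0$ are crucial here: they allow the integration-by-parts in the $\p_2^3 b_1$ and mixed-derivative energy identities to proceed without uncontrolled boundary terms, exactly as in Remark 1.2.

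Finally, contraction in a lower-order norm is obtained by subtracting consecutive iterates: set $\delta u^n=u^{n+1}-u^n$, $\delta b^n=b^{n+1}-b^n$ and run a basic $L^2$ (or $H^1$) energy estimate for $(\delta u^n,\delta b^n)$. Using the uniform $H^3$ bound to absorb nonlinear terms one obtains
\begin{equation*}
\sup_{[0,T]}\bigl(\|\delta u^n\|_{L^2}^2+\|\delta b^n\|_{L^2}^2\bigr)+\int_0^T\|\na\delta u^n\|_{L^2}^2\,ds\le \tfrac12\sup_{[0,T]}\bigl(\|\delta u^{n-1}\|_{L^2}^2+\|\delta b^{n-1}\|_{L^2}^2\bigr)
\end{equation*}
for $T$ small. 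This yields a Cauchy sequence in $C([0,T];L^2)$ whose limit, by interpolation with the uniform $H^3$ bound and weak-* compactness, lies in $C([0,T];H^3)$ and solves \eqref{eq:MHDT}; uniqueness follows from the same contraction inequality applied to the difference of two solutions.
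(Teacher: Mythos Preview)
Your proposal is correct and follows essentially the same approach the paper indicates: the paper does not actually supply a proof of Theorem~\ref{Local}, but merely states that one constructs an iteration scheme, derives uniform bounds, and passes to the limit, referring to the analogous Theorem~3.1 in \cite{RXZ}. Your Picard iteration (Stokes for $u^{n+1}$, transport for $b^{n+1}$), uniform $H^3$ bound via Stokes regularity plus Gronwall on the transport equation, and low-norm contraction are precisely a fleshed-out version of this sketch.
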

In fact, we can first construct an interation scheme for the system \eqref{eq:MHDT} in half space to obtain the approximate solution and then derive uniform bounds to pass the limit (Similar  Theorem 3.1 in \cite{RXZ}). This procedure is more or less standard and thus we omit their details.

\smallskip

\section{High order energy estimate}
In this section, we will prove the high order a-priori estimate in the half space using  the low order decay rate (which had been proved in section 3).

We first introduce the following energy
\begin{align}\label{cE}
\cE^2(t):=\|u(t)\|^2_{H^3}+\|b(t)\|^2_{H^3}+\|u_t(t)\|_{H^1}^2,
\end{align}
and the dissipated energy
\begin{align}\label{cF}
\cF^2(t):=\|\na u(t)\|^2_{H^3}+\|\p_1b(t)\|^2_{H^2}+\|u_t(t)\|_{H^1}^2+\|b_t(t)\|_{H^1}^2+\|\p_t ^2 u(t)\|_{L^2}^2.
\end{align}

\begin{proposition}\label{high order}
Assume that the solution $(u,b)$ of the system \eqref{eq:MHDT} satisfies
\beno
\sup\limits_{0\le t\le T}\big(\|u(t)\|_{H^3}^2+\|b(t)\|_{H^3}^2\big)\leq  c_0^2.  \eeno
If $ c_0  $ is suitably small, then there holds that
\beno
\cE^2(t)+ \int_0^t\cF^2(s)ds\lesssim \|u_0\|^2_{H^3}+\|b_0\|^2_{H^3}+\cE^2(t)\int^t_0\Big(\|b_2\|_{L^\infty}^2+\|b_2\|_{L^\infty}^4+\|\p_1 u_1\|_{L^\infty}\Big) ds
\eeno
for any  $t\in [0,T]$.
\end{proposition}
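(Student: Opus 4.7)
The plan is a two-tier energy argument in the spirit of \cite{GT}, built around the observation that $\p_t$ and $\p_1$ preserve the boundary conditions $u|_{\p\Om}=0$, $b_2|_{\p\Om}=0$ (and, thanks to the propagated conditions discussed in the remarks following Theorem~\ref{thm:main}, also $b_1|_{\p\Om}=\p_2 b_1|_{\p\Om}=0$), whereas $\p_2$ does not. I first derive basic and time-differentiated energy identities directly on the equations, then recover full spatial $H^3$ regularity of $u$ via Stokes regularity applied to the elliptic part of the velocity equation, and of $b$ via direct transport-type energy estimates. Finally, I produce the $\p_1 b$ dissipation through a cross estimate obtained by testing the velocity equation against $\p_1 b$. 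Smallness of $c_0$ is used throughout to absorb quadratic and cubic remainders that carry a factor of $\|(u,b)\|_{H^3}\ll 1$ via 2D Sobolev embedding.

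\textbf{Basic and time-differentiated energies.} Testing \eqref{eq:MHDT}$_1$ against $u$ and \eqref{eq:MHDT}$_2$ against $b$ and adding, the linear coupling $-\int\p_1 b\cdot u-\int\p_1 u\cdot b$ vanishes after integration by parts (legal by $u|_{\p\Om}=0$), and the nonlinearities vanish by $\div u=\div b=0$ together with the boundary conditions, yielding the basic $L^2$ bound with dissipation $\int_0^t\|\na u\|_{L^2}^2$. Applying $\p_t$ to the system, and noting that $u_t|_{\p\Om}=0$ and $b_{2,t}|_{\p\Om}=0$, the same computation gives
\begin{align*}
\f{d}{dt}\bigl(\|u_t\|_{L^2}^2+\|b_t\|_{L^2}^2\bigr)+2\|\na u_t\|_{L^2}^2\lesssim |\text{commutator terms}|,
\end{align*}
where typical commutators such as $\int (u_t\cdot\na u)\cdot u_t$ are bounded by $\|\na u\|_{L^\infty}\|u_t\|_{L^2}\|\na u_t\|_{L^2}$ and absorbed using $c_0\ll 1$. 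A further application of $\p_t$, followed by pairing with $\p_t^2 u$ after using the Helmholtz projection $\cP$ to eliminate $\na p_t$, supplies the $\|\p_t^2 u\|_{L^2}^2$ piece of $\cF$.

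\textbf{Recovery of $H^3$ regularity via Stokes and transport.} For the velocity, I interpret the equation as the instantaneous Stokes problem
\begin{align*}
-\Delta u+\na p=-\p_t u+\p_1 b-u\cdot\na u+b\cdot\na b,\quad \div u=0,\quad u|_{\p\Om}=0,
\end{align*}
and invoke standard half-space Stokes $H^k$ regularity to bound $\|u\|_{H^{k+2}}+\|\na p\|_{H^k}$ by the $H^k$ norm of the right-hand side. Combined with the previously obtained $\|u_t\|_{H^1}$ and $\|\p_t^2 u\|_{L^2}$ controls, this yields $\|u\|_{H^3}^2+\int\|\na u\|_{H^3}^2$ up to nonlinear remainders and the $\|\p_1 b\|_{H^1}^2$ contribution to $\cF$. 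For the magnetic field I apply $\na^\alpha$ with $|\alpha|\le 3$ to \eqref{eq:MHDT}$_2$ and test against $\na^\alpha b$; the coupling $\int\na^\alpha\p_1 u\cdot\na^\alpha b$ is controlled by $\epsilon\|\na^\alpha\p_1 u\|_{L^2}^2+C\|\na^\alpha b\|_{L^2}^2$, and the first term is absorbed into the Stokes dissipation. The delicate top-order nonlinearity $\int\na^3(b\cdot\na u)\cdot\na^3 b$ is split via the anisotropic decomposition $b=b_1\mathbf{e}_1+b_2\mathbf{e}_2$; after integration by parts and using $\p_2 u_2=-\p_1 u_1$, the remaining $L^\infty\times L^2\times L^2$ estimates produce exactly $\bigl(\|b_2\|_{L^\infty}^2+\|b_2\|_{L^\infty}^4+\|\p_1 u_1\|_{L^\infty}\bigr)\cE^2$, matching the right-hand side of the proposition.

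\textbf{Cross estimate for $\p_1 b$ dissipation; main obstacle.} To realize the $\|\p_1 b\|_{H^2}^2$ component of $\cF$, I apply $\na^k$ ($0\le k\le 2$) to \eqref{eq:MHDT}$_1$ and pair with $\na^k\p_1 b$. The $-\p_1 b$ entry yields $-\|\na^k\p_1 b\|_{L^2}^2$, the sought dissipation; $\p_t u$ contributes $\f{d}{dt}\int\na^k u\cdot\na^k\p_1 b-\int\na^k u\cdot\na^k\p_1\p_t b$, and substituting $\p_t b=\p_1 u-u\cdot\na b+b\cdot\na u$ from \eqref{eq:MHDT}$_2$ converts the second integral into $\|\na^k\p_1 u\|_{L^2}^2$ plus nonlinear terms absorbable into the RHS. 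The pressure contribution vanishes because $\div(\p_1 b)=0$, and $\Delta u$ gives $\|\na^{k+1}u\|_{L^2}\|\na^{k+1}\p_1 b\|_{L^2}$, absorbed by Young. The main obstacle here is to control the boundary integrals generated whenever $\p_2$ lands on $b$-factors during these integrations by parts: these are precisely the integrals that would diverge if $b_1$ did not vanish on $\p\Om$. They are killed by the propagated identities $b_1|_{\p\Om}=\p_2 b_1|_{\p\Om}=b_2|_{\p\Om}=0$ together with $\p_2 b_2=-\p_1 b_1$, which is the structural reason the compatibility conditions were imposed on the initial data. Summing the basic, $\p_t$, Stokes, and cross estimates with appropriate small weights, and using $c_0\ll 1$ to absorb cubic $O(c_0)\cdot\cF^2$ terms into the left-hand side, closes the a priori bound and produces the stated inequality.
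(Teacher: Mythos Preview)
There is a genuine gap in your treatment of the high-order spatial energy for $b$. When you apply $\na^\alpha$ with $|\alpha|=3$ to the $b$-equation alone and test against $\na^\alpha b$, you bound the linear coupling by
\[
\Big|\int \na^\alpha\p_1 u\cdot\na^\alpha b\Big|\le \epsilon\|\na^\alpha\p_1 u\|_{L^2}^2 + C_\epsilon\|\na^\alpha b\|_{L^2}^2.
\]
After time integration the second piece becomes $C_\epsilon\int_0^t\|\na^3 b(s)\|_{L^2}^2\,ds$. This term is neither $O(c_0)\cdot\int\cF^2$ (the constant $C_\epsilon$ is large, not small) nor contained in $\int\cF^2$, since $\|\na^3 b\|_{L^2}^2$ lies in $\cE^2$ and not in $\cF^2$. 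Nothing in your scheme absorbs it, so the inequality does not close uniformly in $t$; Gr\"onwall only yields exponential-in-time control of $\|b\|_{H^3}$, which is useless for the proposition.

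The paper never discards the linear coupling. At the $\dot H^2$ and $\dot H^3$ levels for $b$ it tests the $b$-equation against $\cA b=-\cP\Delta b$ (respectively against $\p_1\cA b$ and $\p_2\cA b$) and then \emph{substitutes the velocity equation} $\Delta u=u_t-\p_1 b+\na p+u\cdot\na u-b\cdot\na b$ into the resulting coupling $\lan\Delta b,\p_1\cA u\ran$. This converts the coupling into the beneficial dissipation $\|\na\p_1 b\|_{L^2}^2$ (respectively $\|\p_1\na^2 b\|_{L^2}^2$) plus a cross term $\lan\na\p_1 b,\na u_t\ran$ that is balanced against the companion estimate for $(\na u_t,\na b_t)$. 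This algebraic reuse of the velocity equation inside the $b$-estimate is the structural mechanism your outline is missing. A secondary issue is your cross estimate: applying raw $\na^k$ with $k=2$ to the velocity equation and pairing with $\na^2\p_1 b$ generates pressure and boundary contributions (since $\p_2^2 u$ and $\p_2^2 p$ do not vanish on $\p\Om$) that you have not addressed; the paper sidesteps this entirely by producing all higher-order $\p_1 b$ dissipation through the substitution mechanism above rather than through a differentiated cross estimate.
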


\begin{proof} For the half space problem, we will prove the a-priori estimate step by step.

{\bf Step 1.} $L^2$ estimate of $(u,b)$.

Thanks to $u=b=0$ on $\p\Om$, we take the $L^2$ inner product of equations $(\ref{eq:MHDT})_1$ and $(\ref{eq:MHDT})_2$ with $u$ and $b$, respectively, and integrate by parts to obtain
\begin{align}\label{L2}
\f12\f d {dt}\big(\|u(t)\|^2_{L^2}+\|b(t)\|^2_{L^2}\big)+\|\na u(t)\|^2_{L^2}=0
\end{align}
for any $t\in [0, T]$.

{\bf Step 2.} $\dot{H}^1$ estimate of $(u,b)$.

To obtain the $\dot{H}^1$ estimate of $u$, we introduce the  Helmholtz projection
\beno
\cP: \ L^2(\Om)\rightarrow L^2_{\sigma}(\Om), \qquad L^2_{\sigma}(\Om)=\big\{v\:|\: v\in L^2, \div v=0, v\cdot n=0 \  \mathrm{on}\ \p\Om\big\}
\eeno
 to  eliminate the pressure term.

  We take the $L^2$ product of equation $(\ref{eq:MHDT})_1$ with $\cA u:=-\cP\Del u$, apply $\na$ to equation $(\ref{eq:MHDT})_2$ and then take the $L^2$ inner product of the resulting equation with $\na b$, to have
\begin{align}\label{H1}
&\f12\f d {dt}\big(\|\na u\|^2_{L^2}+\|\na b\|^2_{L^2}\big)+\|\cA u\|^2_{L^2}\non\\
&=-\lan u\cdot\na u,\cA u\ran+\lan b\cdot\na b,\cA u\ran-\lan \na(u\cdot\na b),\na b\ran
+\lan\na(b\cdot\na u),\na b\ran,
\end{align}
where we use the integration by parts and boundary condition.

The nonlinear estimate in \eqref{H1}    is similar as \cite{RXZ}, here  we only give the different terms:
\begin{align*}
&\lan b_2\p_2 b, \cA u\ran+\lan b_2 \p_2^2 u_1, \p_2 b_1\ran\non\\
&\lesssim\|b_2\|_{L^2_{x_1} L^\infty_{x_2}} \|\p_2 b\|_{L^2_{x_2}L^\infty_{x_1}}\|\na^2 u\|_{L^2}\non\\
&\lesssim\|b\|_{H^1}^2 \Big( \|\p_1 b\|_{H^1}^2 + \|\na^2 u\|_{L^2}^2\Big),
\end{align*}
and
\begin{align*}
\lan (\p_2 b_1)^2, \p_1 u_1\ran&=2\lan \p_1\p_2 b_1 \,\p_2 b_1, u_1\ran\non\\
&\lesssim \|\p_1 \p_2 b\|_{L^2}\|\p_2 b\|_{L^2_{x_2}L^\infty_{x_1}}\|u\|_{L^2_{x_1}L^\infty_{x_2}}\non\\
&\lesssim\|(u, b)\|_{H^1}^2 \Big( \|\na \p_1 b\|_{L^2}^2 + \|\na u\|_{L^2}^2\Big),
\end{align*}
where we use
\beno
\|\cA u\|_{L^2}\lesssim \|\na ^2 u\|_{L^2} , \qquad \|f\|_{L^\infty(\R)} \lesssim \|f\|_{L^2(\R)}^\f12\|\na f\|_{L^2(\R)}^\f12.
\eeno

{\bf Step 3.} Dissipation estimate of  $\p_1b$.

By taking the $L^2$ product of equations $(\ref{eq:MHDT})_1$ and  $(\ref{eq:MHDT})_2$ with $-\p_1 b$ and $\p_1 u$, respectively, and using the integration by parts,  we deduce that
\begin{align}\label{p1b}
&\f d {dt}\lan b,\p_1 u\ran+\|\p_1 b\|_{L^2}^2-\|\p_1 u\|_{L^2}^2+\lan \Del u, \p_1 b\ran\nonumber\\
&=\lan \p_1 b,u\cdot\na u\ran-\lan \p_1b,b\cdot\na b\ran-\lan\p_1 u,u\cdot\na b\ran+\lan\p_1u,b\cdot\na u\ran,
\end{align}
the different estimate with \cite{RXZ} in \eqref{p1b}  is:
\begin{align*}
&\lan \p_1b_1,b^2\p_2 b_1\ran+\lan \p_1 u_1, u_2 \p_2 b_1\ran\\
&\leq \|\p_1 b\|_{L^2}\|b_2\|_{L^2_{x_1}L^\infty_{x_2}}\|\p_2 b\|_{L^2_{x_2}L^\infty_{x_1}}+\|\p_1 u\|_{L^2}\|u\|_{L^2_{x_1}L^\infty_{x_2}}\|\p_2 b\|_{L^2_{x_2}L^\infty_{x_1}}\\
&\lesssim \|(u, b)\|_{H^1}^2 \Big( \| \p_1 b\|_{H^1}^2 + \|\na u\|_{L^2}^2\Big).
\end{align*}

{\bf Step 4.} Dissipation estimate of $(u_t,b_t)$.

Taking the $L^2$ product of  equations $(\ref{eq:MHDT})_1$ and  $(\ref{eq:MHDT})_2$  with $u_t$ and $b_t$, respectively, and using the integration by parts,   we deduce that
\begin{align}\label{bt}
&\f12\f d {dt}\|\na u\|^2_{L^2}+\big(\|u_t\|^2_{L^2}+\|b_t\|_{L^2}^2\big)-\lan u_t,\p_1 b\ran-\lan b_t,\p_1 u\ran\nonumber\\
&=-\lan u_t,u\cdot\na u\ran+\lan u_t, b\cdot\na b\ran
 -\lan b_t,u\cdot\na b\ran+\lan b_t,b\cdot\na u \ran,
\end{align}
the different estimate with \cite{RXZ} in \eqref{bt}  is:
\begin{align*}
&\lan u_t, b_2 \p_2 b\ran+\lan b_t, u_2 \p_2 b\ran\\
&\leq \|u_t\|_{L^2}\|b_2\|_{L^2_{x_1}L^\infty_{x_2}}\|\p_2 b\|_{L^2_{x_2}L^\infty_{x_1}}+\|b_t\|_{L^2}\|u\|_{L^2_{x_1}L^\infty_{x_2}}\|\p_2 b\|_{L^2_{x_2}L^\infty_{x_1}}\\
&\lesssim \|(u, b)\|_{H^1}^2\Big(\|u_t\|_{L^2}^2+\|b_t\|_{L^2}^2+\|\na u\|_{L^2}^2+\|\p_1 b\|_{H^1}^2\Big).
\end{align*}

{\bf Step 5.} Energy estimate of $u_t$.

Applying $\p_t$ to equation $(\ref{eq:MHDT})_1$ and taking the $L^2$ inner product of the resulting equation with $u_t$, we obtain
\begin{align}\label{H2u}
&\f12\f d {dt}\|u_t\|_{L^2}^2+\|\na u_t\|_{L^2}^2+\lan b_t, \p_1 u_t\ran\non\\
&=-\lan u_t,\p_t(u\cdot\na u)\ran+\lan u_t,\p_t(b\cdot\na b)\ran\non\\
&=-\lan u_t, u_t \cdot\na u\ran+\lan u_t, b_t\cdot\na b+b\cdot\na b_t\ran\non\\
&=\lan u_t\cdot\na u_t, u\ran-\lan b_t\cdot\na u_t, b\ran-\lan b\cdot\na u_t, b_t\ran\non\\
&\lesssim \|u_t\|_{L^2}\|\na u_t \|_{L^2}\|u\|_{L^\infty}+\|b_t\|_{L^2}\|\na u_t\|_{L^2}\|b\|_{L^\infty}\non\\
&\lesssim \|(u, b)\|_{H^2}\Big(\|u_t\|_{H^1}^2+\|b_t\|_{L^2}^2\Big).
\end{align}

{\bf Step 6.} $\dot{H}^2$ estimate of $b$ and the dissipation estimate of $\na\p_1b$.

We apply $\Del$ to equation  $(\ref{eq:MHDT})_2$ and take the $L^2$ inner product of the resulting equation with $-\cA b$ to obtain
\begin{align}\label{h2b}
&\f12\f d {dt}\|\cA b\|^2_{L^2}+\|\na \p_1b\|_{L^2}^2-\lan \na\p_1 b, \na u_t\ran\non\\
&=\lan \Del b,\p_1\cP(u\cdot\na u-b\cdot\na b)\ran
-\lan\Del b, \Del\big(u\cdot\na b-b\cdot\na u\big)\ran, \non\\
&:=K_1+K_2.
\end{align}
where we use the magnetic field equation to rewrite $\Delta u$ as $u_t-\p_1 b+\na p+u\cdot\na u-b\cdot\na b$ and an observation that $\Del(u\cdot\na b-b\cdot\na u)  \in L^2_{\sigma}$.

We mention that the trouble term of $K_1$ is
\begin{align*}
&\lan \p_2^2 b, \p_1 \cP(u\cdot\na u-b\cdot\na b)\ran\non\\
=&\lan \p_1\p_2 b,\p_2\cP(u\cdot\na u-b\cdot\na b)\ran-\int_{\R}  \p_1\p_2 b_1 \, \Big(\cP(u\cdot\na u-b\cdot\na b)\Big)_1\Big|_{x_2=0} dx_1  \non\\
:=&K_{11}+K_{12}.
\end{align*}
The trouble term of  $K_{11}$ is
\begin{align*}
\lan \p_1\p_2 b, \p_2 \cP(b_2 \p_2 b)\ran&\lesssim \|\p_1 \p_2 b\|_{L^2}\|\p_2b_2 \p_2 b+b_2 \p_2 ^2 b\|_{L^2}\non\\
&\lesssim \|\p_1 \p_2 b\|_{L^2} \Big( \|\p_1 b\|_{L^4} \|\p_2 b\|_{L^4}+\|b_2\|_{L^\infty} \|\p_2^2 b\|_{L^2}\Big)\non\\
&\lesssim \|b_2\|_{L^\infty}^2\|\p_2 ^2 b\|_{L^2}^2+(\|b\|_{H^2} +\e)\|\p_1  b\|_{H^1}^2.
\end{align*}
For $K_{12}$, we have
\beno
K_{12}\lesssim \|\p_1 \p_2 b_1\|_{H^{-\f12}_{x_1}}\|\cP(u\cdot\na u-b\cdot\na b)\|_{H^\f12_{x_1}},
\eeno
with ($E$ is the extension operator in the $x_2$ direction)
\begin{align*}
\|\p_1 \p_2 b_1\|_{H^{-\f12}_{x_1}}
&\lesssim \Big(E^2\big(\|\p_1 \p_2 b_1\|_{H^{-\f12}_{x_1}} \big)\Big)^\f12\non\\
&\lesssim \Big(\int^\infty_{x_2}\p_{y_2} E^2 \big(\|\p_1\p _2 b_1\|_{H^{-\f12}_{x_1}} \big)dy_2\Big)^\f12\non\\
&\lesssim \Big(\int^\infty_{x_2}\int_{\R} \p_2^2 b\, \p_1\p_2 b \, dy_1dy_2\Big)^\f12\non\\
&\lesssim \|\p_2^2 b\|_{L^2}^\f12 \|\p_1 \p_2 b\|_{L^2}^\f12,
\end{align*}
 and
\begin{align}\label{trouble1}
&\|\cP(u\cdot\na u-b\cdot\na b)\|_{H^\f12_{x_1}}\non\\
&\lesssim \Big(E^2\big(\|\cP(u\cdot\na u-b\cdot\na b)\|_{H^\f12_{x_1}} \big)\Big)^\f12\non\\
&\lesssim \Big(\int^\infty_{x_2}\p_{y_2} E^2 \big(\|\cP(u\cdot\na u-b\cdot\na b)\|_{H^\f12_{x_1}} \big)dy_2\Big)^\f12\non\\
&\lesssim \Big(\int^\infty_{x_2}\int_{\R} \p_2 \cP(u\cdot\na u-b\cdot\na b) \ \p_1\cP(u\cdot\na u-b\cdot\na b) \, dy_1dy_2\Big)^\f12\non\\
&\lesssim \|\p_2 (u\cdot\na u-b\cdot\na b) \|_{L^2}^\f12 \|\p_1 (u\cdot\na u-b\cdot\na b) \|_{L^2}^\f12,
\end{align}
where the trouble term of \eqref{trouble1} is treated as
\begin{align*}
&\|\p_2 (b_2 \p_2 b_1)\|_{L^2}^\f12\|\p_1 (b_2 \p_2 b_1) \|_{L^2}^\f12\non\\
& \lesssim \Big( \|\p_1b_1\|_{L^2_{x_1}L^\infty_{x_2}}\|\p_2 b_1\|_{L^2_{x_2}L^\infty_{x_1}} +\|b_2\|_{L^\infty} \|\p_2 ^2 b_1\|_{L^2}\Big)^\f12 \Big(\|\p_1 b_2\|_{L^2_{x_1}L^\infty_{x_2}}\|\p_2 b_1\|_{L^2_{x_2}L^\infty_{x_1}} +\|b_2\|_{L^\infty} \|\p_1\p_2  b_1\|_{L^2}\Big)^\f12\non\\
&\lesssim \|\p_1 b\|_{H^1}^\f32\|\p_2 b\|^\f12_{H^1}+\|\p_1 b\|_{H^1}^\f54\|\p_2 b\|_{L^2}^\f14\|b_2\|_{L^\infty}^\f12+\|b_2\|_{L^\infty} \|\p_2 b\|_{H^1}^\f34\|\p_1 b\|_{H^1}^\f34+\|b_2\|_{L^\infty} \|\p_2^2 b\|_{L^2}^\f12 \|\p_1 \p_2 b\|_{L^2}^\f12.
\end{align*}
Thus we give the estimate of  the typical term of $K_{12}$
\begin{align*}
&\|b_2\|_{L^\infty}^\f12\|\p_2 b\|_{H^1}^\f34\|\p_1b\|_{H^1}^\f74+\|b_2\|_{L^\infty}\|\p_2 b\|_{H^1}^\f74\|\p_1 b \|_{H^1}^\f54\non\\
&\lesssim \Big(\|b_2\|_{L^\infty}^\f12\|\p_2 b\|_{H^1}^\f14\Big)^8+\Big(\|\p_2 b\|_{H^1}^\f12\|\p_1b\|_{H^1}^\f74\Big)^\f87+\Big(\|b_2\|_{L^\infty}\|\p_2 b\|_{H^1}^\f34\Big)^\f83+\Big(\|\p_2 b\|_{H^1}\|\p_1 b \|_{H^1}^\f54\Big)^\f85\non\\
&\lesssim (\|b_2\|_{L^\infty}^4+\|b_2\|_{L^\infty}^\f83 )\|\p_2 b\|_{H^1}^2 +(\|\p_2 b\|_{H^1}^\f47+\|\p_2 b\|_{H^1}^\f47) \|\p_1 b\|_{H^1}^2.
\end{align*}

Now we consider
\begin{align*}
K_2=-\lan \Delta b, \Delta(u\cdot\na b)\ran+\lan \Delta b, \Delta (b\cdot\na u)\ran:= K_{21}+K_{22},
\end{align*}
by the divergence free condition, the trouble term of $K_{21}$ are treated as
\beno
-\lan \p_2^2 b_1, \p_2 u_2 \p_2^2 b_1\ran\leq \|\p_1 u_1\|_{L^\infty}\|\p_2^2 b_1\|_{L^2}^2,
\eeno
and
\begin{align}\label{bh2-trouble}
&-\lan\p_2^2 b_1, \p_2^2u_2\p_2 b_1\ran\non\\
&=\lan \p_2^2 b_1, \p_1\p_2 u_1 \p_2 b_1\ran\non\\
&=\lan \p_2^2 b_1\p_2 b_1,\p_2 b_{1, t}\ran+\lan\p_2^2 b_1\p_2 b_1,\p_2(u\cdot\na b_1-b\cdot\na u_1)\ran  \non\\
&=\f12\f d {dt}\lan \p_2^2 b_1,(\p_2 b_1)^2\ran-\f12\lan \p_2^2(\p_1 u_1-u\cdot\na b_1+b\cdot\na u_1), (\p_2b_1)^2 \ran+\lan \p_2^2 b_1\p_2 b_1,\p_2(u\cdot\na b_1-b\cdot\na u_1)\ran,
\end{align}
where we use the magnetic field equation and
\beno
\f12\lan  u\cdot\na \p_2^2b_1, (\p_2b_1)^2\ran+\lan\p_2^2b_1\p_2b_1, u\cdot\na\p_2b_1\ran=0,
\eeno
the different estimate with \cite{RXZ} in \eqref{bh2-trouble}  is:
\begin{align*}
\lan \p_2 u_2 \, \p_2^2 b_1, (\p_2 b_1)^2 \ran
&=-\lan \p_1 u_1 \, \p_2^2 b_1, (\p_2 b_1)^2\ran\non\\
&\leq\|\p_1 u_1\|_{L^\infty}\|\p_2^2 b_1\|_{L^2}\|\p_2 b_1\|_{L^4}^2.
\end{align*}
The trouble term of $K_{22}$ is
\begin{align*}
&\lan \p^2_2 b_1, \p_2^2(b\cdot\na u_1)\ran\non\\
&=\lan \p^2_2 b_1,  \p_2^2 (b_1\p_1 u_1+b_2 \p_2 u_1)\ran \non\\
&=\lan \p_2^2 b_1, \p_2^2 b_1 \p_1 u_1+\p_2 b_1 \p_1\p_2 u_1+b_1 \p_1\p_2^2 u_1-\p_1\p_2 b_1 \p_2 u_1-\p_1 b_1 \p_2^2 u_1+b_2 \p_2^3 u_1\ran,
\end{align*}
where we only need to check
\begin{align*}
&\lan \p_2^2 b_1, b_1\p_1 \p_2^2 u_1\ran+\lan \p_2^2 b_1, b_2 \p_2^3 u_1\ran\non\\
&=-\lan \p_1\p_2^2 b_1, b_1 \p_2^2 u_1\ran-\lan \p_2^2 b_1, \p_1 b _1 \p_2^2 u_1\ran+\lan \p_2^2 b_1, b_2 \p_2^3 u_1\ran\non\\
&=\lan \p_1 \p_2 b_1, \p_2 b_1\p_2^2 u_1 \ran+\lan\p_1 \p_2 b_1, b_1 \p_2^3 u_1 \ran-\lan \p_2^2 b_1, \p_1 b _1 \p_2^2 u_1\ran+\lan \p_2^2 b_1, b_2 \p_2^3 u_1\ran\non\\
&\lesssim \|b\|_{H^2} \Big(\|\p_1 b\|_{H^1}^2  +\|\na ^2 u\|_{H^1}^2 \Big)+\|b_2\|_{L^\infty}^2 \|\p_2^2 b_1\|_{L^2}^2 +\e \|\p_2^3 u_1\|_{L^2}^2.
\end{align*}

{\bf Step 7.} Dissipation estimate of $(\na u_t,\na b_t)$.

Taking the $L^2$ product of  equations $(\ref{eq:MHDT})_1$ and  $(\ref{eq:MHDT})_2$  with $\cA u_t$ and $\cA b_t$, respectively, and using the integration by parts,   we deduce that
\begin{align}\label{nbt}
&\f12\f d {dt}\|\cA u\|^2_{L^2}+\big(\|\na u_t\|^2_{L^2}+\|\na b_t\|_{L^2}^2\big)-\lan \na u_t, \na \p_1 b\ran-\lan \na b_t, \na \p_1 u\ran \non\\
&=\lan \cA u_t,(- u\cdot\na u+ b\cdot\na b)\ran
 +\lan \cA b_t,(-u\cdot\na b+b\cdot\na u) \ran\non\\
&=\lan \na u_t, \na \cP(u\cdot\na u-b\cdot\na b)\ran+\lan \na b_t, \na \cP(u\cdot\na b-b\cdot\na u)\ran
\end{align}
where we use
\beno
\lan \cA u_t, \p_1 b\ran=\lan \na u_t, \na \p_1 b\ran, \quad \lan \cA b_t, \p_1 u\ran =\lan \na b_t, \na \p_1 u\ran.
\eeno
The trouble term of \eqref{nbt}  is
\begin{align*}
&\lan \na u_t, \na \cP(b_2 \p_2 b) \ran+\lan \na  b_t, \na \cP(u_2 \p_2 b) \ran\\
&\leq \|\na u_t\|_{L^2}\Big(\|\na b_2\|_{L^4}\|\p_2 b\|_{L^4}+\|b_2\|_{L^2_{x_1}L^\infty_{x_2}}\|\na \p_2 b\|_{L^2_{x_2}L^\infty_{x_1}}\Big)\non\\
&\quad +\|\na b_t\|_{L^2}\Big(\|\na u_2\|_{L^4}\|\p_2 b\|_{L^4}+\|u_2\|_{L^2_{x_1}L^\infty_{x_2}}\|\na \p_2 b\|_{L^2_{x_2}L^\infty_{x_1}}\Big) \\
&\lesssim \|(u, b)\|_{H^2}\Big(\|\na u_t\|_{L^2}^2+\|\na b_t\|_{L^2}^2+\|\na u\|_{H^1}^2+\|\p_1 b\|_{H^2}^2\Big).
\end{align*}

{\bf Step 8.} Dissipation estimate of $ \p_{t}^2 u$.

Applying $\p_t$ to equation $(\ref{eq:MHDT})_1$ and taking the $L^2$ inner product of the resulting equation with $\p_{t}^2 u$, we obtain
\begin{align}\label{H3u}
&\f12\f d {dt}\|\na u_t\|_{L^2}^2+\|\p_t^2 u\|_{L^2}^2-\lan \p_t^2 u, \p_1b_t\ran\non\\
&=-\lan \p_t^2 u,\p_t(u\cdot\na u)\ran+\lan\p_t^2 u, \p_t(b\cdot\na b)\ran\non\\
&\lesssim \|\p_t ^2 u \|_{L^2}\Big(\| u_t \|_{L^4}\|\na u\|_{L^4}+\|u\|_{L^\infty} \|\na u_t \|_{L^2}+\|b_t\|_{L^4}\|\na b\|_{L^4}+\|b\|_{L^\infty}\|\na  b_t\|_{L^2}\Big)\non\\
&\lesssim \|(u, b)\|_{H^2}\Big(\|\p_t^2 u\|_{L^2}^2+\|u_t\|_{H^1}^2+\| b_t\|_{H^1}^2\Big).
\end{align}

{\bf Step 9.} $\dot{H}^3$ estimate of $b$ and the dissipation estimate of $\p_1\na^2 b$.

Similarly to the $\dot{H}^2$ estimate of $b$, we apply $\p_1\Del$ to equation  $(\ref{eq:MHDT})_2$ and take the $L^2$ inner product of the resulting equation with $-\p_1\cA b$ to obtain
\begin{align*}
&\f12\f d {dt}\|\p_1\cA b\|^2_{L^2}+\|\p_1^2\na b\|_{L^2}^2-\lan \p_1^2\na b, \p_1\na u_t\ran\non\\
&=\lan \p_1\Del b,\p_1^2\cP(u\cdot\na u-b\cdot\na b)\ran
-\lan\p_1\Del b, \p_1\Del\big(u\cdot\na b-b\cdot\na u\big)\ran.
\end{align*}
The trouble case is the vertical direction. We apply $\p_2\cA$ to equation  $(\ref{eq:MHDT})_2$ and take the $L^2$ inner product of the resulting equation with $\p_2\cA b$ to obtain
\begin{align*}
&\f12\f d {dt}\|\p_2\cA b\|^2_{L^2}+\|\p_1^2\p_2 b\|_{L^2}^2+\|\cP\p_1  \p_2^2 b\|_{L^2}^2+\lan \p_2 \cA b, \p_2 \p_1 u_t\ran\non\\
&=-\lan \p_2\cA b,\p_2\p_1\cP(u\cdot\na u-b\cdot\na b)\ran
-\lan\p_2\cA b, \p_2\cA\big(u\cdot\na b-b\cdot\na u\big)\ran\non\\
&:=L_1+L_2,
\end{align*}
where we use
\begin{align*}
-\lan \p_2 \cA b, \p_2 \cA \p_1 u\ran&= \lan \p_2 \cA b, \p_2 \cP \p_1(u_t-\p_1 b+\na p+u\cdot\na u-b\cdot\na b)\ran\non\\
&=\lan \p_2 \cA b, \p_2 \p_1 u_t-\p_2 \p_1^2 b+\p_2 \p_1 \cP (u\cdot\na u-b\cdot\na b)\ran,
\end{align*}
and
\begin{align*}
-\lan \p_2 \cA b, \p_2 \p_1 ^2 b\ran&=\lan \p_2\cP \Delta b, \p_2 \p_1^2 b\ran\non\\
&=\lan \p_2\cP (\p_1^2+\p_2^2) b, \p_2 \p_1^2 b\ran\non\\
&=\|\p_1^2 \p_2 b\|_{L^2}^2+\|\cP\p_1  \p_2^2 b\|_{L^2}^2
\end{align*}
by the boundary condition $\p_2 b_1=0$ on $\p\Omega$.

We mention that
\begin{align*}
\lan \p_2 \cA b, \p_2 \p_1 u_t\ran&=-\f{d}{dt} \lan \p_2 \cP\Delta b, \p_2\p_1 u\ran+\lan \p_2 \cP \Delta b_t, \p_2 \p_1 u\ran\non\\
&=-\f{d}{dt} \lan \p_2 \cP\Delta b, \p_2\p_1 u\ran+\lan \p_2 \cP \Delta \p_1 u, \p_2 \p_1 u\ran-\lan \p_2  \Delta(u\cdot\na b-b\cdot\na u), \p_2 \p_1 u\ran
\end{align*}
with  the trouble term
\beno
\lan u\cdot\na \p_2 \Delta b, \p_2 \p_1 u\ran=\lan u\cdot\na\p_2 \p_1 u, \p_2 \Delta b\ran.
\eeno

And
\begin{align}\label{p1h2b}
\|\p_1 \na^2 b\|_{L^2}\lesssim \|\p_1 \cA b\|_{L^2}+\|\p_1 \na b\|_{L^2}.
\end{align}

The typical term of $L_1$  is
\begin{align*}
&\lan\p_2 \cP \p_2^2 b, \p_2\p_1 \cP(u\cdot\na u-b\cdot\na b)\ran\non\\
=&\lan \p_1\cP\p_2^2 b,\p_2^2\cP(u\cdot\na u-b\cdot\na b)\ran-\int_{\R}  \Big(\p_1\cP\p_2^2 b\Big)_1 \, \Big(\p_2\cP(u\cdot\na u-b\cdot\na b)\Big)_1\Big|_{x_2=0} dx_1,
\end{align*}
which can be estimate very similar as $K_{1}$, so here we omit the details.

Now we consider
\begin{align*}
L_2=-\lan\p_2\cA b, \p_2\cA\big(u\cdot\na b\big)\ran+\lan\p_2\cA b, \p_2\cA\big(b\cdot\na u\big)\ran:= II_{21}+II_{22}.
\end{align*}
For $L_{21}$, we use the commutator  and the integration by parts to have
\begin{align*}
L_{21}&=-\lan\p_2\cA b, [\p_2\cA, u\cdot\na]  b\ran\non\\
&=-\lan\p_2\cA b, \p_2\cA u \cdot\na b\ran\non\\
&=-\lan \p_2 \cP \Delta b, \p_2\Big( \cP \Delta u\Big)_1 \p_1 b\ran-\lan \p_2 \cP \Delta b, \p_2\Big( \cP \Delta u\Big)_2 \p_2 b\ran\non\\
&\lesssim \|b\|_{H^3} \Big(\|\p_1 b\|_{H^2}^2+\|\na u\|_{H^3}^2\Big),
\end{align*}
where
\begin{align*}
&-\lan \p_2 \cP \Delta b, \p_2\Big( \cP \Delta u\Big)_2 \p_2 b\ran\non\\
&=\lan \p_2 \cP \Delta b, \p_1\Big( \cP \Delta u\Big)_1 \p_2 b\ran\non\\
&=-\lan \p_1\p_2 \cP \Delta b, \Big( \cP \Delta u\Big)_1 \p_2 b\ran-\lan \p_2 \cP \Delta b, \Big( \cP \Delta u\Big)_1 \p_1\p_2 b\ran\non\\
&=\lan \p_1 \cP \Delta b,\p_2 \Big( \cP \Delta u\Big)_1 \p_2 b\ran+\lan \p_1 \cP \Delta b, \Big( \cP \Delta u\Big)_1 \p_2^2 b\ran-\lan \p_2 \cP \Delta b, \Big( \cP \Delta u\Big)_1 \p_1\p_2 b\ran\non\\
&\lesssim \|\p_1\cA b\|_{L^2}\|\p_2 \Delta u\|_{L^2} \|\p_2 b\|_{L^\infty}+\|\p_1\cA b\|_{L^2}\| \Delta u\|_{L^4} \|\p_2^2 b\|_{L^4}+\|\p_2\Delta b\|_{L^2}\| \Delta u\|_{L^4} \|\p_1\p_2 b\|_{L^4}
\end{align*}
by the property of operator $\cP$  and $\p_2 b_1=0$ on $\p\Omega$.

For $II_{22}$,
\begin{align*}
II_{22}&=\lan \p_2 \cA b, \p_2 \cA b\cdot\na u+b\cdot\na \p_2 \cA u\ran\non\\
&=\lan  \p_2 \cA b, \p_2 \Big(\cP\Delta b\Big)_1\p_1 u\ran+ \lan  \p_2 \cA b,\p_2 \Big(\cP\Delta b\Big)_2\p_2 u\ran+\lan  \p_2 \cA b, b_1\p_1 \p_2 \cA u\ran+\lan  \p_2 \cA b, b_2 \p_2^2 \cA u\ran\non\\
&=\lan  \p_2 \cA b, \p_2 \Big(\cP\Delta b\Big)_1\p_1 u\ran- \lan  \p_2 \cA b,\p_1\Big(\cP\Delta b\Big)_1\p_2 u\ran+\lan \p_1  \cA b, \p_2 b_1 \p_2 \cA u\ran+\lan \p_1  \cA b,  b_1 \p_2^2 \cA u\ran\non\\
&\quad-\lan  \p_2 \cA b, \p_1 b_1 \p_2 \cA u\ran +\lan  \p_2 \cA b, b_2 \p_2^2 \cA u\ran\non\\
&\lesssim \|\p_2 \cA b\|_{L^2}^2 \|\p_1 u\|_{L^\infty}+\|\p_2 \cA b\|_{L^2}\|\p_1 \cA b\|_{L^2} \|\p_2 u\|_{L^\infty} +\|\p_1 \cA b\|_{L^2} \|\p_2 b_1\|_{L^\infty}\|\p_2 \cA u\|_{L^2}\non\\
&\quad+\|\p_1 \cA b\|_{L^2} \|b_1\|_{L^\infty}\|\p_2^2 \cA u\|_{L^2}+
\|\p_2 \cA b\|_{L^2}\|\p_1 b\|_{L^\infty} \|\p_2 \cA u\|_{L^2}+\|\p_2 \cA b\|_{L^2}\|b_2\|_{L^\infty} \|\p_2^2 \cA u\|_{L^2}\non\\
&\lesssim \|b\|_{H^3} \Big(\|\p_1 b\|_{H^2}^2  +\|\na  u\|_{H^3}^2 \Big)+\Big(\|b_2\|_{L^\infty}^2+\|\p_1 u\|_{L^\infty}\Big) \|\p_2\cA b\|_{L^2}^2 +\e \|\p_2^2\cA u_1\|_{L^2}^2.
\end{align*}

{\bf Step 10.} $\dot{H}^3$ estimate of $u$ and the dissipation estimate of $\na^4 u$.

To get the dissipation estimate of $\na^2u$, we rewrite equations  \eqref{eq:MHDT}  as
\ben\label{smhd1}
\left\{
\begin{array}{l}
-\Delta u+\na p=\p_1 b-\p_t u-u\cdot\na u+b\cdot \na b, \ \ \ \ x\in\Omega,\\
\div u=0,  \ \ \ \ x\in\Omega,\\
u=0, \ \ \ \ x\in\p\Omega.
\end{array}\right.
\een
By the estimates for the Stokes system, we have
\begin{align*}
\|\na^2u\|_{L^2}+\|\na p\|_{L^2}
\lesssim& \|b\|_{H^1}+\| u_t\|_{L^2}+\|u\|_{L^\infty}\|\na u\|_{L^2}+\|b\|_{L^\infty}\|\na b\|_{L^2}\non\\
\lesssim& \|b\|_{H^1}+\| u_t\|_{L^2}+c_0\| u\|_{H^1}+c_0\| b\|_{H^1}\non\\
\lesssim & \|(u, b)\|_{H^1}+\| u_t\|_{L^2},
\end{align*}
\begin{align}\label{stokes1}
&\|\na^2 u\|_{H^1}+\|\na p\|_{H^1}\non\\
&\lesssim \|\p_1b\|_{H^1}+\| u_t\|_{H^1}+\|\na (u\cdot\na u)\|_{L^2}+\|u\cdot\na u\|_{L^2}+\|\na(b\cdot\na b)\|_{L^2}+\|b\cdot\na b\|_{L^2}\non\\
&\lesssim \|\p_1b\|_{H^1}+\| u_t\|_{H^1}+\||\na u|^2 \|_{L^2}+\|u\na^2 u\|_{L^2}+\|u\cdot\na u\|_{L^2}\non\\
&\quad+ \|\na b_1 \p_1 b+b_1 \p_1 \na b+\na b_2 \p_2 b+b_2 \na \p_2 b\|_{L^2}+\|b_1 \p_1 b +b_2 \p_2 b \|_{L^2}\non\\
&\lesssim \|\p_1b\|_{H^1}+\| u_t\|_{H^1}+\|u\|_{H^2} \|\na u\|_{H^1}+ \|b\|_{H^2}\|\p_1 b\|_{H^1}+\|b_2\|_{L^2_{x_1}L^\infty_{x_2}}\|\p_2^2 b_1\|_{L^2_{x_2}L^\infty_{x_1}}\non\\
&\lesssim  \|\p_1b\|_{H^1}+\| u_t\|_{H^1}+c_0\|\na u\|_{H^1} +c_0 \|\p_1 b\|_{H^2}\non\\
&\lesssim  \|\p_1b\|_{H^2}+\| u_t\|_{H^1}+\|\na u\|_{H^1},
\end{align}
and
\begin{align}\label{h4}
&\|\na^2 u\|_{H^2}+\|\na p\|_{H^2}\non\\
&\lesssim \|\p_1b\|_{H^2}+\| u_t\|_{H^2}+\|\na^2 (u\cdot\na u)\|_{L^2}+\|u\cdot\na u\|_{H^1}+\|\na^2(b\cdot\na b)\|_{L^2}+\|b\cdot\na b\|_{H^1}\non\\
&\lesssim  \|\p_1b\|_{H^2}+\| u_t\|_{H^2}+\|\na^2  u \cdot \na u+u\cdot\na^3 u\|_{L^2}+c_0\|\na u\|_{H^1} +c_0 \|\p_1 b\|_{H^2}\non\\
&\quad +\|\na^2 b_1\p_1b+\na b_1\p_1 \na b+b_1\p_1 \na^2 b+\na^2 b_2 \p_2 b+\na b_2\p_2 \na b+b_2 \p_2 \na^2 b\|_{L^2} \non\\
&\lesssim  \|\p_1b\|_{H^2}+\| u_t\|_{H^2}+ \|u\|_{H^2}\|\na ^2 u\|_{H^1}+\|b\|_{H^2} \|\p_1 b\|_{H^2}+c_0\|\na u\|_{H^1}+\|b_2\|_{L^\infty}\|\p_2^3 b_1\|_{L^2}\non\\
&\lesssim  \|\p_1b\|_{H^2}+\| u_t\|_{H^2}+\|\na u\|_{H^2} +\|b_2\|_{L^\infty}\|\p_2^3 b_1\|_{L^2}.
\end{align}
What is more, applying $\p_t$ to \eqref{smhd1}, we have
$$
\left\{
\begin{array}{l}
-\Delta u_t+\na p_t=\p_1 b_t-\p_t^2 u-\p_t(u\cdot\na u)+\p_t(b\cdot \na b), \ \ \ \ x\in\Omega,\\
\div u_t=0,  \ \ \ \ x\in\Omega,\\
u_t=0, \ \ \ \ x\in\p\Omega,
\end{array}\right.
$$
and hence
\begin{align}\label{smhd2}
\|\na^2u_t\|_{L^2}+\|\na p_t\|_{L^2}
\lesssim& \|b_t\|_{H^1}+\| \p_t^2 u\|_{L^2}+\|u\|_{H^2}\|u_t\|_{H^1}+\|b\|_{H^2}\|b_t\|_{H^1}\non\\
\lesssim& \|b_t\|_{H^1}+\| \p_t^2 u\|_{L^2}+c_0\| u_t\|_{H^1}+c_0\| b_t\|_{H^1}\non\\
\lesssim &  \|b_t\|_{H^1}+\| \p_t^2 u\|_{L^2}+\| u_t\|_{H^1}.
\end{align}
Taking \eqref{stokes1}, \eqref{smhd2} into \eqref{h4}, we have
\begin{align*}
&\|\na^2 u\|_{H^2}+\|\na p\|_{H^2}\non\\
&\lesssim  \|\p_1b\|_{H^2}+\| \p_t^2 u\|_{L^2}+\| u_t\|_{H^1}+\| b_t\|_{H^1}+\|\na u\|_{H^1} +\|b_2\|_{L^\infty}\|\p_2^3 b_1\|_{L^2}.
\end{align*}

{\bf Step 11.} Closing of the {\it a priori} estimates.

Combining Step 1-9 together, we have
\begin{align*}
&\Big(\|u(t)\|^2_{H^1}+\|b(t)\|^2_{H^2}+\|u_t(t)\|_{H^1}^2+\|\na \cA b\|_{L^2}^2\Big) \non\\
&\quad +\int_0^t\Big(\|\na u(s)\|^2_{H^1}+\|\p_1b(s)\|^2_{H^1}+\|u_t(s)\|_{H^1}^2+\|b_t(s)\|_{H^1}^2+\|\p_t^2 u(s)\|_{L^2}^2+\|\p_1\cA  b(s)\|_{L^2}^2 \Big)ds\non\\
&\lesssim \|u_0\|^2_{H^3}+\|b_0\|^2_{H^3}+(c_0+\e)\int_0^t\cF^2(s)ds+ \cE^2(t) \, \int_0^t \|b_2\|_{L^\infty}^2+\|b_2\|_{L^\infty}^4+\|\p_1 u_1\|_{L^\infty} dt
\end{align*}
for any $t\in[0, T]$, where we use
$$
\|u_t\|_{H^1}\lesssim \|u_0\|_{H^3}+\|b_0\|_{H^3},
$$
which follows from the following equation
\beno
u_t(0)=\Delta u_0+\p_1 b_0-u_0\cdot\na u_0+b\cdot\na b_0-\na p_0,
\eeno
where the pressure $p_0$ is determined by
\beno
-\Delta p_0=\na \cdot(u_0\cdot\na u_0-b_0\cdot\na b_0), \quad \text{in} \ \Omega, \quad \na p_0\cdot n=(\Delta u_0)\cdot n, \quad \text{on} \ \p\Omega
\eeno

By the Stokes estimate in Step 10 and \eqref{p1h2b}, we have
\begin{align*}
&\Big(\|u(t)\|^2_{H^3}+\|b(t)\|^2_{H^3}+\|u_t(t)\|_{H^1}^2\Big) \non\\
&\quad +\int_0^t\Big(\|\na u(s)\|^2_{H^2}+\|\p_1b(s)\|^2_{H^2}+\|u_t(s)\|_{H^1}^2+\|b_t(s)\|_{L^2}^2+\|\p_t^2 u(s)\|_{L^2}^2\Big)ds\non\\
&\lesssim  \|u_0\|^2_{H^3}+\|b_0\|^2_{H^3}+ \cE^2(t) \, \int_0^t \|b_2\|_{L^\infty}^2+\|b_2\|_{L^\infty}^4+\|\p_1 u_1\|_{L^\infty} dt
\end{align*}
for suitable $c_0$.
That is,
\begin{align*}
\cE^2(t)+\int_0^t\cF^2(s)ds
\lesssim \|u_0\|^2_{H^3}+\|b_0\|^2_{H^3}+ \cE^2(t) \, \int_0^t \|b_2\|_{L^\infty}^2+\|b_2\|_{L^\infty}^4+\|\p_1 u_1\|_{L^\infty} dt.
\end{align*}
 This completes the proof of Proposition \ref{high order}.

\end{proof}

\vspace{0.2cm}

\no{\bf Proof of  Theorem \ref{thm:main}.} We use Proposition \ref{high order}  and Proposition \ref{nonlinear} to obtain
\begin{align*}
\cE^2(t)+\int_0^t\cF^2(s)ds
\lesssim \cE^2(0)+\e \cE^2(t);
\end{align*}
taking $\e$ small enough, we conclude Theorem \ref{thm:main} by a continuous argument.  \ef

\section{acknowledgement}
The authors would like to express their hearty gratitude to Professor Zhifei Zhang for suggesting this problem and for insightful discussions.

\text{\quad}\\
\author{Jiakun Jin, Xiaoxia Ren and Lei Wang} \\ \address{Department of Mathematics and Physics\\ North China Electric Power University\\ 102206, Beijing, People's Republic of China}\\ \email{e-mail: jinjiakun18@163.com(Jiakun Jin)} \\ \email{e-mail: xiaoxiaren@ncepu.edu.cn(Xiaoxia Ren)} \\
 \email{e-mail: 50901924@ncepu.edu.cn(Lei Wang)} \\

\text{\quad}\\
\author{Yoshiyuki Kagei} \\ \address{Department of Mathematics\\
Tokyo Institute of Technology\\
2-12-1, Ookayama, Meguro-ku,
Tokyo 152-8551, JAPAN}\\ \email{e-mail: kagei@math.titech.ac.jp } \\

\text{\quad}\\
\author{Cuili Zhai} \\ \address{School of Mathematics and Physics\\
University of Science and Technology Beijing\\
100083, Beijing, People's Republic of China}\\ \email{e-mail: zhaicuili035@126.com } \\


\begin{thebibliography}{99}

\bibitem{A} H. Alfven,  {\it Existence of electromagnetic-hydrodynamic waves},  Nature, 150(1942), 405-406.

\bibitem{B} C. Bardos, C. Sulem and P.L. Sulem, {\it Longtime dynamics of a conductive fluid in the presence of a strong magnetic field}, Trans. Am. Math. Soc. 305 (1988), 175-191.




\bibitem{Cab} H. Cabannes,  {\it Theoretical Magneto-Fluid Dynamics}, Academic Press, New York, London, 1970.




\bibitem{CC} F. Califano and C. Chiuderi,  {\it Resistivity-independent dissipation of magnetrodydrodynamic waves in an inhomogeneous plasma},
Phy. Rev. E,  60(1999), part B, 4701-4707.




\bibitem{CL} Y. Cai and Z. Lei, {\it Global well-posedness of the incompressible magnetohydrodynamics}, Arch. Ration. Mech. Anal. 2018 (228), No.3, 969-993.

\bibitem{CW} C. Cao and J. Wu, {\it Global regularity for the 2D MHD equations with mixed partial dissipation and magnetic diffusion}, Adv. Math. 226 (2011), 1803-1822.


\bibitem{D} G. Duvaut and J. Lions, {\it Inequations en thermoelasticite et magnetohydrodynamique}, Arch. Rational Mech. Anal. 46 (1972), 241-279.


\bibitem{DR} L. Dong and X. Ren, {\it Asymptotic stability of the 2D MHD equations without magnetic diffusion}, J. Math. Phys. DOI: 10.1063/5.0112577.

\bibitem{DZ} W. Deng and P. Zhang, {\it Large Time Behavior of Solutions to 3-D MHD System with Initial Data Near Equilibrium},  Arch. Rat. Mech. Anal., 230(2018), 1017-1102.

\bibitem{F} C. Fefferman, D. McCormick, J. Robinson and J. Rodrigo, {\it Local existence for the non-resistive MHD equations in nearly optimal Sobolev spaces}, Arch. Ration. Mech. Anal. 223 (2017), 677-691.



\bibitem{GT} Y. Guo and I. Tice, {\it Almost exponential decay of periodic viscous surface waves without surface tension},
Arch. Rat. Mech. Anal., 207(2013), 459-531.



\bibitem{HXY} L. He, L. Xu and P. Yu, {\it On global dynamics of three dimensional magnetohydrodynamics: nonlinear stability of Alfven waves}, Ann. PDE 4(2018), Art.5, 105 pp.


\bibitem{JJ} F. Jiang and S. Jiang, {\it On magnetic inhibition theory in non-resistive magnetohydrodynamic fluids}, Arch. Ration. Mech. Anal. 233 (2019), 749-798.



\bibitem{KK} Y. Kagei and T. Kobayashi, {\it Asymptotic behavior of solutions of the compressible Navier-Stokes equations on the half space}. Arch. Rat. Mech. Anal. 177 (2005), 231-330.

\bibitem{Li} D Li, {\it On Kato-Ponce and fractional Leibniz}. Rev. Mat. Iberoam. 35(2019), 23-100.

\bibitem{LXZ} F. Lin, L. Xu and P. Zhang,
{\it Global small solutions of 2-D incompressible MHD system},
J.  Differential Equations, (259) 2015, 5440-5485.


\bibitem{LZ} F. Lin and P. Zhang,
 {\it Global small solutions to MHD type system (I): 3-D case},
 Comm. Pure. Appl. Math., 67(2014), 531-580.

\bibitem{PZZ} R. Pan, Y. Zhou and Y. Zhu, {\it Global classical solutions of three dimensional viscous MHD system without magnetic diffusion on periodic boxes}, Arch. Rational Mech. Anal. 227 (2018), 637-662.

\bibitem{RWXZ} X. Ren, J. Wu, Z. Xiang and Z. Zhang,
 {\it Global existence and decay of smooth solution for the 2-D MHD equations without magnetic diffusion}, J. Funct. Anal. 267(2), 503-541,

\bibitem{RXZ} X. Ren,  Z. Xiang and Z. Zhang,
 {\it Global well-posedness for the 2D MHD equations without magnetic diffusion in a strip  domain},
Nonlinearity, 29(2016) 1257-1291.

\bibitem{S} M. Sermange and R. Temam, {\it Some mathematical questions related to the MHD equations}, Comm. Pure Appl. Math. 36 (1983), 635-664.

\bibitem{TW} Z. Tan and Y. Wang,
{\it Global well-posedness of an initial-boundary value problem for viscous non-resistive {MHD} systems}, SIAM J. Math. Anal., 50(2018),1432-1470.



\bibitem{W} R. Wan,
{\it On the temporal decay for the 2D non-resistive incompressible MHD equations}, Annales Henri Poincare, DOI: 10.1007/s00023-023-01268-3.

\bibitem{WZ1} D. Wei and Z. Zhang, {\it Global well-posedness of the MHD equations in a homogeneous magnetic field}, Anal. PDE 10 (2017), 1361-1406.


\bibitem{WZ2} D. Wei and Z. Zhang, {\it Global well-posedness for the 2-D MHD equations with magnetic diffusion}, Commun. Math. Res. 36 (2020), 377-389.

\bibitem{WW} J. Wu and Y. Wu, {\it Global small solutions to the compressible 2D magnetohydrodynamic system without magnetic diffusion}, Adv. Math. 310 (2017), 759-888.


\bibitem{WZ} J. Wu and Y. Zhu, {\it Global solutions of 3D incompressible MHD system with mixed partial dissipation and magnetic diffusion near an equilibrium}, Adv. Math. 377 (2021), 107466.

\bibitem{Z} T. Zhang, {\it An elementary proof of the global existence and uniqueness theorem to 2-D incompressible non-resistive MHD system}, arXiv:1404.3081.







\end{thebibliography}
\end{document}